\documentclass{amsart}

\headheight=8pt
\topmargin=0pt
\textheight=624pt
\textwidth=432pt
\oddsidemargin=18pt
\evensidemargin=18pt

\usepackage{amsmath}
\usepackage{amsfonts}
\usepackage{amssymb}
\usepackage{amsthm}
\usepackage{comment}
\usepackage{epsfig}
\usepackage{psfrag}
\usepackage{mathrsfs}
\usepackage{amscd}
\usepackage[all]{xy}
\usepackage{rotating}
\usepackage{lscape}
\usepackage{amsbsy}
\usepackage{verbatim}
\usepackage{moreverb}

\usepackage[usenames]{color}

\pagestyle{plain}

\newtheorem{theorem}{Theorem}[section]
\newtheorem{prop}[theorem]{Proposition}
\newtheorem{lemma}[theorem]{Lemma}
\newtheorem{cor}[theorem]{Corollary}

\theoremstyle{definition}
\newtheorem{definition}[theorem]{Definition}

\newtheorem{notation}[theorem]{Notation}
\newtheorem{variation}[theorem]{Variation}
\newtheorem{example}[theorem]{Example}
\newtheorem{non-example}[theorem]{Non-example}
\newtheorem{remark}[theorem]{Remark}
\newtheorem{observation}[theorem]{Observation}
\newtheorem{terminology}[theorem]{Terminology}
\newtheorem{notation/term}[theorem]{Notation/Terminology}

\theoremstyle{remark}

	\newcommand{\nc}{\newcommand}
	\nc{\DMO}{\DeclareMathOperator}
	\nc{\tensor}{\otimes}
	\nc{\unit}{{1}}
	\DMO{\ass}{\sf{Alg}}
	\DMO{\conf}{\sf{Conf}}
	\DMO{\open}{\sf{Open}}
	\DMO{\Mod}{\mathsf{-Mod}}
	\DMO{\exit}{\mathsf{Exit}}
	\DMO{\cech}{\mathsf{Cech}}
	\DMO{\fun}{\mathsf{Fun}}
	\DMO{\sbar}{\mathsf{Bar}}
	\DMO{\coend}{\mathsf{Coend}}
	\DMO{\modpair}{\mathsf{ModPair}}
	\DMO{\maps}{\mathsf{Maps}}
	\DMO{\emb}{\mathsf{Emb}}
	\DMO{\Ab}{\mathsf{Ab}}
	\DMO{\chain}{\mathsf{Chain}}
	\DMO{\ob}{\mathsf{ob}}
	\DMO{\lan}{\mathsf{Lan}}
	\DMO{\orderI}{ {\pi_0\cI^{\sqcup}_{\mathsf{or}}}}
	\DMO{\id}{id}
	\nc{\diskover}{(\disk_1^{\partial,\fr})_{/[-\infty,\infty]}}

\DeclareMathOperator{\Aut}{\sf Aut}
\DeclareMathOperator*{\colim}{\sf colim}

\DeclareMathOperator{\hocolim}{\sf hocolim}
\DeclareMathOperator{\holim}{\sf holim}

\DeclareMathOperator{\End}{\sf End}

\DeclareMathOperator{\Fun}{\sf Fun}
\DeclareMathOperator{\Iso}{\sf Iso}
\DeclareMathOperator{\Map}{\sf Map}

\DeclareMathOperator{\m}{\sf Mod}
\DeclareMathOperator{\bimod}{\sf Bimod}

\DeclareMathOperator{\alg}{\mathsf{-alg}}

\DeclareMathOperator{\Alg}{\mathsf{Alg}}

\DeclareMathOperator{\op}{\sf op}

\DeclareMathOperator{\Top}{\mathsf{Top}}
\DeclareMathOperator{\Emb}{\mathsf{Emb}}
\DeclareMathOperator{\Diff}{\mathsf{Diff}}

\DeclareMathOperator{\disk}{\mathsf{Disk}}
\DeclareMathOperator{\fr}{\sf fr}

\DeclareMathOperator{\bman}{\mathsf{Mfld}(\cB)}
\DeclareMathOperator{\pbman}{\mathsf{pMfld}(\cB)}
\DeclareMathOperator{\pmfld}{\mathsf{pMfld}}

\DeclareMathOperator{\rfn}{\mathsf{Rfn}}

\DeclareMathOperator{\mfld}{\mathsf{Mfld}}

\DeclareMathOperator{\snglr}{\mathsf{Snglr}}
\DeclareMathOperator{\psnglr}{\mathsf{pSnglr}}
\DeclareMathOperator{\bsc}{\mathsf{Bsc}}

\DeclareMathOperator{\Sing}{\mathsf{Sing}}

\DeclareMathOperator{\BO}{{\mathsf BO}}

\def\ot{\otimes}

\DeclareMathOperator{\oo}{\infty}

\DeclareMathOperator{\hh}{\sf HH}

\DeclareMathOperator{\free}{\sf Free}

\newcommand{\ra}{\rightarrow}
\newcommand{\la}{\leftarrow}
\newcommand{\xra}{\xrightarrow}
\newcommand{\xla}{\xleftarrow}
\newcommand{\ov}{\overline}
\newcommand{\un}{\underline}
\newcommand{\w}{\widetilde}

\def\cA{\mathcal A}\def\cB{\mathcal B}\def\cC{\mathcal C}\def\cD{\mathcal D}
\def\cE{\mathcal E}\def\cF{\mathcal F}\def\cG{\mathcal G}
\def\cI{\mathcal I}\def\cL{\mathcal L}
\def\cM{\mathcal M}\def\cO{\mathcal O}\def\cP{\mathcal P}
\def\cS{\mathcal S}
\def\cU{\mathcal U}\def\cX{\mathcal X}

\def\DD{\mathbb D}
\def\EE{\mathbb E}

\def\NN{\mathbb N}
\def\RR{\mathbb R}\def\SS{\mathbb S}
\def\VV{\mathbb V}
\def\ZZ{\mathbb Z}

\def\sB{\mathsf B}\def\sC{\mathsf C}\def\sD{\mathsf D}

\def\sI{\mathsf I}\def\sL{\mathsf L}
\def\sM{\mathsf M}\def\sO{\mathsf O}
\def\sS{\mathsf S}

\def\bH{\mathbf H}

\def\fB{\mathfrak B}

\begin{document}

%	
%	Edits made in this draft:
%	Acknowledgment to the referees
%	The definition of cone. Right before Definition 2.2.
%	The definition of finite singular manifolds. \label{finite}, Definition 2.36.
%	The rough definition of a homology theory for manifolds, at start of Section 3.
% 	The definition of a homology theory, Definition 3.33. And the following comment.
%	Theorem 3.35, in that we now include all B-manifolds (non-finite ones too).

\title{Structured singular manifolds \\
and factorization homology}
\author{David Ayala}
\author{John Francis}
\author{Hiro Lee Tanaka}
\date{}

\address{Department of Mathematics\\Harvard University\\Cambridge, MA 02138-2901}
\email{dayala@math.harvard.edu}
\address{Department of Mathematics\\Northwestern University\\Evanston, IL 60208-2370}
\email{jnkf@northwestern.edu}
\address{Department of Mathematics\\Northwestern University\\Evanston, IL 60208-2370}
\email{htanaka@math.northwestern.edu}
\thanks{DA was partially supported by ERC adv.grant no.228082, and by the National Science Foundation under Award No. 0902639. JF was supported by the National Science Foundation under Award number 0902974. HLT was supported by a National Science Foundation graduate research fellowship, by the Northwestern University Office of the President, and also by the Centre for Quantum Geometry of Moduli Spaces in Aarhus, Denmark. The writing of this paper was finished while JF was a visitor at Paris 6 in Jussieu.}

\begin{abstract} We provide a framework for the study of structured manifolds with singularities and their locally determined invariants. This generalizes factorization homology, or topological chiral homology, to the setting of singular manifolds equipped with various tangential structures. Examples of such factorization homology theories include intersection homology, compactly supported stratified mapping spaces, and Hochschild homology with coefficients. Factorization homology theories for singular manifolds are characterized by a  generalization of the Eilenberg-Steenrod axioms. Using these axioms, we extend the nonabelian Poincar\'e duality of Salvatore and Lurie to the setting of singular manifolds -- this is a nonabelian version of the Poincar\'e duality given by intersection homology. We pay special attention to the simple case of singular manifolds whose singularity datum is a properly embedded submanifold and give a further simplified algebraic characterization of these homology theories. In the case of 3-manifolds with 1-dimensional submanifolds, this structure gives rise to knot and link homology theories akin to Khovanov homology.
\end{abstract}

\keywords{Factorization algebras. Topological quantum field theory. Topological chiral homology. Knot homology. Configuration spaces. Operads. $\oo$-Categories.}

\subjclass[2010]{Primary 57P05. Secondary 55N40, 57R40.}

\maketitle

\tableofcontents

\section{Introduction}

In this work, we develop a framework for the study of manifolds with singularities and their locally determined invariants. We propose a new definition for a structured singular manifold suitable for such applications, and we study those invariants which adhere to strong conditions on naturality and locality. That is, we study covariant assignments from such singular manifolds to other categories, such as chain complexes, satisfying a locality condition, by which the global values on a manifold are determined by the local values. This leads to a notion of factorization homology for structured singular manifolds, satisfying a monoidal generalization of excision for usual homology, and a characterization of factorization homology as a monoidal generalization of the usual Eilenberg-Steenrod axioms for usual homology. As such, this work specializes to give the results of~\cite{facthomology} in the case of ordinary smooth manifolds.

One motivation for this study is related to the Atiyah-Segal axiomatic approach to topological quantum field theory, in which one restricts to compact manifolds, possibly with boundary. In examples coming from physics, however, it is frequently possible to define a quantum field theory on noncompact manifolds, such as Euclidean space. Given a quantum field theory -- perhaps defined by a choice of Lagrangian -- and two candidate physical spaces $M$ and $M'$, where $M$ is realized as a submanifold of $M'$, the field theories on $M$ and $M'$ can enjoy a relation: a field can be restricted from $M'$ to $M$, and, dually, an observable on $M$ can extend (by zero) to $M'$. If one only considers closed and connected manifolds, then embeddings are quite restricted, $M$ must be equal to $M'$, and the consequent mathematical structure is that the appropriate symmetry group of $M$ should act on the fields/observables on $M$. Alternately, allowing for observables on not necessarily closed manifolds gives further mathematical structure, because then not all embeddings are equivalences, and this is one motivation for the theory of factorization homology, -- or topological chiral homology after Lurie~\cite{dag}, Beilinson-Drinfeld~\cite{bd}, and Segal~\cite{segallocal} -- as well as the Costello-Gwilliam mathematical formalism for the observables of a perturbative quantum field theory.

Much of the theory of topological quantum field theories extends to allow for manifolds with singularities. In particular, Lurie outlines a generalization of the Baez-Dolan cobordism hypothesis for manifolds with singularities in~\cite{cobordism}. Roughly speaking, this states that if one has a field theory ${\sf Bord}_n \ra \cC$ and one wishes to extend it to a field theory allowing for manifolds with certain prescribed $k$-dimensional singularities $C(N)$, the cone on a $(k-1)$-manifold, then such an extension is equivalent to freely prescribing the morphism in the loop space $\Omega^{k-1} \cC$ to which the manifold $C(N)$ should be assigned. This is closely related to the result in the Baas-Sullivan cobordism theories of manifold with singularities~\cite{baas}, in which allowing cones on certain manifolds effects the quotient of the cobordism ring by the associated ideal generated by the selected manifold.

This begs the question of what theory should result if one allows for noncompact manifolds with singularities, where one can again push-forward observables along embeddings or, likewise, restrict fields. In other words, what is the theory of factorization homology for singular manifolds? A requisite first step is to clarify exactly what an embedding of singular manifolds is, and what a continuous family of such embeddings is. Two of the basic concepts that determine the nature of smooth manifold topology are of an isotopy of embeddings and of a smooth family of manifolds. In particular, one can define topological spaces $\Emb(M,N)$ and ${\sf Sub}(M,N)$ of embeddings and submersions whose homotopy types reflect the smooth topology of $M$ and $N$ in a significant way; this is in the contrast with the space of all smooth maps $\Map(M,N)$, the homotopy type of which is a homotopy invariant of $M$ and $N$. E.g., the study of the homotopy type of $\Emb(S^1, \RR^3)$ is exactly knot theory, while the homotopy type of $\Map(S^1,\RR^3)\simeq \ast$ is trivial.

The first and foundational step in our paper is therefore to build into the theory of manifolds with singularities the same structure, where there is a natural space of embeddings between singular manifolds, as well as a notion of a smooth family of singular manifolds parametrized by some other singular manifold. In fact, we do much more. We define a topological category $\snglr_n$ (Terminology~\ref{terms}) of singular $n$-manifolds whose mapping spaces are given by suitable spaces of stratified embeddings; our construction of this category of singular manifolds is iterative, and is very similar to Goresky and MacPherson's definition of stratified spaces, see~\cite{goreskymacpherson2}, in terms of iterated cones of stratified spaces of lower dimension. We then prove a number of fundamental features of these manifolds: Theorem~\ref{collar=fin}, that singular manifolds with a finite atlas have a finite open handle presentation; Theorem~\ref{no-pre-mans}, that singular premanifolds are equivalent to singular manifolds, and thus all local homological invariants can be calculated from an atlas which is not maximal. In order to establish these results, we develop in conjunction other parallel elements from the theory of manifolds, including a theory of tangent bundles for singular manifolds fitted to our applications, partitions of unity, and vector fields and their flows.

With this setup in hand, one can then study the axiomatics of the local invariants afforded by factorization homology. These factorization homology theories have a characterization similar to that of Lurie in the cobordism hypothesis or of Baas-Sullivan in the cobordism ring of manifolds with singularities. Namely, to give a homology theory for manifolds with singularities it is sufficient to define the values of the theory for just the most basic types of open manifolds -- $\RR^n$, an independent selection of value for each allowed type of singularity. There is then an equivalence between these homology theories for singular manifolds and the associated singular $n$-disk algebras, which generalize the $\cE_n$-algebras of Boardman and Vogt~\cite{bv}. More precisely, let  $\bsc_n$ be the full subcollection of basic singularities in $\snglr_n$; i.e., $\bsc_n$ is the smallest collection such that every point in a singular $n$-manifold has an open neighborhood isomorphic to an object in $\bsc_n$. Taking disjoint unions of these basic singularities generates an $\infty$-operad $\disk(\bsc_n)$. Fix $\cC^\ot$ a symmetric monoidal quasi-category satisfying a minor technical condition. We prove:

\begin{theorem}\label{basic} There is an equivalence
\[
\int \colon \Alg_{\disk(\bsc_n)}(\cC^\ot)\leftrightarrows \bH(\snglr_n, \cC^\ot) \colon \rho
\]
between algebras for $\disk(\bsc_n)$
and $\cC^\ot$-valued homology theories for singular $n$-manifolds. The right adjoint is restriction and the left adjoint is
factorization homology.
\end{theorem}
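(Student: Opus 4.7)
The plan is to construct the left adjoint $\int$ as an operadic left Kan extension along the symmetric monoidal inclusion $\disk(\bsc_n) \hookrightarrow \snglr_n^{\sqcup}$, and the right adjoint $\rho$ as restriction along the same inclusion. Concretely, for a $\disk(\bsc_n)$-algebra $A$ and a singular $n$-manifold $M$, one sets
\[
\int_M A \;=\; \colim\bigl(\disk(\bsc_n)_{/M} \longrightarrow \disk(\bsc_n) \xrightarrow{A} \cC\bigr),
\]
where $\disk(\bsc_n)_{/M}$ is the $\infty$-category of disjoint unions of basics equipped with a stratified embedding into $M$. The symmetric monoidal structure on the target is inherited from disjoint union of singular manifolds, and by construction $\rho \int A \simeq A$ on any basic $U\in \bsc_n$ up to showing that $\disk(\bsc_n)_{/U}$ is weakly contractible onto the object $\id_U$; this uses that any embedded collection of basics in $U$ can be isotoped into a neighborhood of the cone point.

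The main content is verifying that the functor $M \mapsto \int_M A$ actually lands in homology theories, i.e.\ satisfies the $\otimes$-excision axiom: for any decomposition $M = M_0 \cup_{N \times \RR} M_1$ along a collared stratified hypersurface $N$, one has an equivalence
\[
\int_M A \;\simeq\; \int_{M_0} A \;\otimes_{\int_{N \times \RR} A}\; \int_{M_1} A,
\]
where the right-hand side is a two-sided bar construction using that $\int_{N\times\RR}A$ carries a natural associative algebra structure. This reduces to a cofinality statement: after restricting to embeddings of basics that are transverse to $N \times \RR$ and subordinate to the cover $\{M_0, M_1\}$, the resulting subcategory is cofinal in $\disk(\bsc_n)_{/M}$. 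The key geometric input is the existence of stratified collars for singular manifolds and the corresponding isotopy extension, which in this paper are developed through vector fields and their flows on singular manifolds, and which allow one to shrink any embedded basic into one side of the decomposition without changing the homotopy type of the embedding space.

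Given $\otimes$-excision, the unit $A \to \rho \int A$ is an equivalence by the contractibility computation above on basics. The counit $\int \rho \cH \to \cH$ is shown to be an equivalence inductively: Theorem~\ref{collar=fin} provides a finite open handle presentation of any singular $n$-manifold $M$ by basics, Theorem~\ref{no-pre-mans} ensures that one need not pass to a maximal atlas, and $\otimes$-excision then determines both $\int \rho \cH$ and $\cH$ from their common values on basics along the same pushout decomposition, inductively matching them handle by handle.

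The hard part will be establishing $\otimes$-excision for $\int A$ in the stratified setting. In the smooth case the argument relies on one-dimensional collars of $\partial M$ and on the $\cE_n$-algebraic structure of the coefficients; here one must collar along singular strata of arbitrary type and verify that the resulting pushout of singular manifolds is correctly detected by the Kan-extension colimit. Once this is in place, the remainder -- the unit equivalence and the handle-induction argument for the counit -- is essentially formal, modulo the two foundational theorems on atlases already established in the paper.
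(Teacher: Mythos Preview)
Your overall architecture matches the paper's: define $\int$ as operadic left Kan extension, observe the unit is an equivalence because $\disk(\bsc_n)_{/X}$ has a terminal object when $X$ is already a disjoint union of basics, and prove the counit is an equivalence by induction on a collar-gluing presentation supplied by Theorem~\ref{collar=fin}. Where you diverge is in the proof of $\otimes$-excision, and this is the substantive point.

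You propose to prove excision by a direct cofinality argument: restrict to basics subordinate to the cover $\{M_0,M_1\}$ and transverse to the collar, and show this subcategory is cofinal in $\disk(\bsc_n)_{/M}$. That cofinality is plausible and the geometric inputs you cite (stratified collars, flows) are the right ones. But cofinality alone only tells you that $\int_M A$ is computed by the colimit over subordinate basics; it does not by itself identify that colimit with the two-sided bar construction $\int_{M_0}A \otimes_{\int_{N\times\RR}A}\int_{M_1}A$. You would still need to reorganize the subordinate indexing category so as to exhibit a cofinal $\Delta^{\op}$-shaped diagram with the correct simplicial structure, and you have not indicated how.

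The paper handles this differently and, arguably, more cleanly. A collar-gluing is reinterpreted as an $\cI$-family $G\colon \cI \to \mfld(\bsc_n)$ over the closed interval $[-\infty,\infty]$ (Example~\ref{collar-I}). The push-forward formula (Theorem~\ref{pushforward-formula}) then gives $\int_M A \simeq \int_{[-\infty,\infty]} G_*A$, reducing excision to a computation of factorization homology over the closed interval, which is identified with the bar construction via an explicit cofinality with $\Delta^{\op}$ (Proposition~\ref{lemma:interval}). The point is that the cofinality-with-$\Delta^{\op}$ step is done once, on the interval, rather than on $M$ itself.

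Relatedly, you invoke Theorem~\ref{no-pre-mans} in the counit induction, but in the paper its role is elsewhere: it underlies the push-forward formula (via Lemma~\ref{premanifold-factorization}), since $L_G([-\infty,\infty])$ is a priori only a premanifold refining $M$. The counit induction itself uses only excision and Theorem~\ref{collar=fin}.
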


The proof of this result makes use of the entire apparatus of singular manifold theory built herein. The essential method of the proof is by induction on a handle-type decomposition, and so we require the existence of such decompositions for singular manifolds -- the nonsingular version being a theorem of Smale based on Morse theory -- which is provided by Theorem~\ref{collar=fin}. Another essential ingredient in describing factorization homology is a push-forward formula: given a stratified fiber bundle $F:M\ra N$, the factorization homology $\int_MA$ is equivalent to a factorization homology of $N$ with coefficients in a new algebra, $F_*A$. This result is an immediate consequence of the equivalence between singular premanifolds and singular manifolds (Theorem~\ref{no-pre-mans}), by making use of the non-maximal atlas on $M$ associated to the map $F$.

A virtue of Theorem~\ref{basic} is that the axioms for a homology theory are often easy to check, and, consequently, the theorem is often easy to apply. For instance, this result allows for the generalization of Salvatore and Lurie's nonabelian Poincar\'e duality to singular manifolds. For simplicity, we give a more restrictive version of our general result.

\begin{theorem}\label{everything} Let $M$ a stratified singular $n$-manifold, and $A$ a stratified pointed space with $n$ strata, where the $i^{\rm th}$ stratum is $i$-connective for each $i$. There is an equivalence
\[\Map_{\sf c}(M,A) \simeq \int_M \rho^*A\]
between the space of compactly-supported stratum-preserving maps and the factorization homology of $M$ with coefficients in the $\bsc_n$-algebra associated to $A$.
\end{theorem}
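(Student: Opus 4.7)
The plan is to invoke Theorem~\ref{basic}, which characterizes $\cC^{\otimes}$-valued homology theories on $\snglr_n$ as $\disk(\bsc_n)$-algebras via the restriction/factorization homology adjunction. Both functors $M\mapsto \Map_{\sf c}(M,A)$ and $M\mapsto \int_M\rho^*A$ take values in $\spaces^\times$, and the right-hand side is a homology theory by construction. To conclude it will suffice to show (a) that $\Map_{\sf c}(-,A)$ is also a homology theory for singular $n$-manifolds, and (b) that after restricting along $\rho\colon\bsc_n\to\snglr_n$ the resulting $\disk(\bsc_n)$-algebras coincide. Item (b) is essentially tautological: on a basic $U\in\bsc_n$ the factorization homology $\int_U\rho^*A$ is $\rho^*A(U)$ by definition, and one identifies this with $\Map_{\sf c}(U,A)$ by deforming a compactly supported stratum-preserving map into a neighborhood of the cone point.

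For item (a), the symmetric monoidal axiom is immediate, since a compactly supported stratum-preserving map out of a disjoint union is the same data as such a map out of each component, giving $\Map_{\sf c}(M\sqcup M',A)\simeq \Map_{\sf c}(M,A)\times\Map_{\sf c}(M',A)$. Continuity in embeddings follows from the natural action of $\Emb_{\snglr_n}(M,N)$ on $\Map_{\sf c}$ by extension-by-basepoint. The substantive content is $\otimes$-excision: for a collar gluing $M=M_-\cup_{V\times\RR} M_+$ along a singular $(n-1)$-manifold $V$, one must exhibit an equivalence
\[
\Map_{\sf c}(M,A)\;\simeq\;\Map_{\sf c}(M_-,A)\;\underset{\Map_{\sf c}(V\times\RR,A)}{\bigotimes}\;\Map_{\sf c}(M_+,A),
\]
where the $\cE_1$-algebra structure on $\Map_{\sf c}(V\times\RR,A)$ is that of stacking collars.

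The hard step, which I expect to be the main obstacle, is precisely this excision equivalence. I would prove it by induction on a handle-type decomposition of $M$, the existence of which is guaranteed by Theorem~\ref{collar=fin}. The base case is a single basic $U=\RR^i\times C(L)\in\bsc_n$; here the claim reduces to showing that the scanning/evaluation map assembling compactly supported local data into the global mapping space is an equivalence, and this is where the connectivity hypothesis enters in the manner of Salvatore and Lurie: on the $i$-dimensional stratum, the homotopy fiber of restriction-to-a-collar is an $i$-fold loop space of the $i$-th stratum of $A$, which is well-defined and grouplike precisely because that stratum is assumed $i$-connective. The inductive step then uses the push-forward formula (available by Theorem~\ref{no-pre-mans}) applied to the projection from a collar neighborhood of $V\subset M$ onto $V$: both sides of the claimed equivalence pushforward to an integral over $V$ of an $\cE_1$-algebra computing the collar-direction factor, reducing to the one-dimensional case along each stratum, which is the classical bar-construction identification of $\int_\RR$ with a two-sided bar complex.

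With excision established, $\Map_{\sf c}(-,A)$ satisfies the axioms of a $\spaces^\times$-valued homology theory on $\snglr_n$, and Theorem~\ref{basic} identifies it with factorization homology of its restriction to $\bsc_n$, which by (b) is $\rho^*A$. This yields the asserted equivalence $\Map_{\sf c}(M,A)\simeq\int_M\rho^*A$ naturally in $M\in\snglr_n$.
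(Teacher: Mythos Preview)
Your overall strategy is correct and matches the paper's: this statement is the special case $\cB=\bsc_n$ of Theorem~\ref{PD}, and the paper proves that theorem exactly by invoking the characterization Theorem~\ref{characterization} (the $\cB$-version of Theorem~\ref{basic}) and verifying that $\Gamma^E_{\sf c}=\Map_{\sf c}(-,A)$ is a homology theory. Your item~(b) is indeed a tautology, and the symmetric monoidal axiom is immediate as you say.

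Where your proposal diverges is in the mechanism of the excision step. The paper does not induct on a handle decomposition of $M$, and it does not use the push-forward formula here. Instead, from the gluing description of compactly supported sections (equation~(\ref{gamma-collars})) one extracts, for any collar-gluing $X=X_-\cup_{RV}X_+$, a fiber sequence
\[
\Gamma^E_{\sf c}(X_-)\times\Gamma^E_{\sf c}(X_+)\longrightarrow \Gamma^E_{\sf c}(X)\longrightarrow \Gamma^E_{\sf c}(V)~.
\]
In $\cS_\ast^\times$ the excision comparison map from the two-sided bar construction to $\Gamma^E_{\sf c}(X)$ is an equivalence as soon as the base $\Gamma^E_{\sf c}(V)$ is connected. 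So the entire argument reduces to a single connectedness statement for $(n{-}1)$-dimensional $V$, which the paper proves by a double induction: first on the dimension of $V$, and then, within each dimension, on the number of collar-gluings needed to build $V$ from basics (Theorem~\ref{collar=fin}). The connectivity hypothesis on the strata of $A$ is invoked only at the innermost step, when $V$ is a basic.

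Your sketch, by contrast, has a potential circularity. The push-forward formula of Theorem~\ref{pushforward-formula} is a statement about $\int$, not about $\Map_{\sf c}$; applying it to the $\Map_{\sf c}$ side of the excision equation would require a Fubini-type identity for compactly supported sections, which is itself a form of excision and hence part of what you are trying to prove. Similarly, ``induction on a handle decomposition of $M$'' does not obviously reduce the excision problem for a given collar-gluing to a smaller one. Your intuition that the connectivity hypothesis makes the relevant $\cE_1$-monoid grouplike is exactly right; the cleanest way to cash it out is the paper's reduction to connectedness of $\Gamma^E_{\sf c}(V)$ on the separating hypersurface, rather than an induction on $M$ via push-forward.
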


Our result, Theorem~\ref{PD}, is even more general: instead of a mapping space, one can consider a local system of spaces, again subject to certain connectivity conditions, and the cohomology of this local system (which is the space of the compactly supported global sections) is calculated by factorization homology. This recovers usual Poincar\'e duality by taking the local system to be that of maps into an Eilenberg-MacLane space. Factorization homology on a nonsingular manifold is built from configuration spaces, and thus nonabelian Poincar\'e duality is very close to the configuration space models of mapping spaces studied by Segal~\cite{segal}, May~\cite{may}, McDuff~\cite{mcduff}, B\"oedigheimer~\cite{bodig}, and others. Intuitively, the preceding theorem allows such models to extend to describe mapping spaces in which the source is not a manifold per se. At the same time, this result generalizes the Poincar\'e duality of intersection cohomology, in that one such local system on a singular manifold is given by $\Omega^\infty\sI\sC_{\sf c}^\ast$, the underlying space of compactly supported intersection cochains for a fixed choice of perversity $p$. The theory of factorization homology of manifolds with singularities developed in this paper can thus be thought of as bearing the same relation to intersection homology as factorization homology bears to ordinary homology.

Theorem~\ref{basic} has a very interesting generalization in which one considers only a restricted class of singularity types together with specified tangential data. That is, instead of allowing singular $n$-manifolds with all possible singularities in $\bsc_n$, one can choose a select list of allowable singularities. Further, one can add extra tangential structure, such as an orientation or framing, along and connecting the strata. We call such data a category of basic opens, and given such data $\cB$, one can form $\mfld(\cB)$, the collection of manifolds modeled on $\cB$. While the theory of tangential structures on ordinary manifolds is a theory of $\sO(n)$-spaces, or spaces over $\BO(n)$, the theory of tangential structures on singular manifolds is substantially richer -- it is indexed by $\bsc_n$, the category of $n$-dimensional singularity types, which in particular has non-invertible morphisms.  Here is a brief list of examples of $\cB$-manifolds for various $\cB$. 
\begin{itemize}
\item Ordinary smooth $n$-manifolds, possibly equipped with a familiar tangential structure such as an orientation, a map to a fixed space $Z$, or a framing.  
\item Smooth $n$-manifolds with boundary, possibly equipped with a tangential structure on the boundary, a tangential structure on the interior, and a map of tangential structures in a neighborhood of the boundary -- in familiar cases, this map is an identity map.  More generally, $n$-manifolds with corners, possibly equipped with compatible tangential structures on the boundary strata.  
\item ``Defects'' -- which is to say properly embedded $k$-submanifolds of ordinary $n$-manifolds, possibly equipped with a tangential structure such as a framing of the ambient manifold together with a splitting of the framing along the submanifold, or a foliation of the ambient manifold for which the submanifold is contained in a leaf.  
Relatedly there is ``interacting defects'' -- which is to say $n$-manifolds together with a pair of properly embedded submanifolds which intersect in a prescribed manner, such as transversely, perhaps equipped with a coloring of the intersection locus by elements of a prescribed set of colors. 
\item Graphs all of whose valences are, say, 1491, possibly with an orientation of the edges and the requirement that at most one-third of the edges at each vertex point outward.  
\item Oriented nodal surfaces, possibly marked.    

\end{itemize}
We prove the following:

\begin{theorem}\label{B's} There is an equivalence between $\cC^\ot$-valued homology theories for $n$-manifolds modeled on $\cB$ and algebras for $\cB$
\[\int:\Alg_{\disk(\cB)}(\cC^\ot)\leftrightarrows \bH(\mfld(\cB), \cC^\ot):\rho\]
in which the right adjoint is restriction and the left adjoint is factorization homology.
\end{theorem}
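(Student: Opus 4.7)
The plan is to run the same proof template as Theorem~\ref{basic}, systematically replacing $\bsc_n$ by the chosen category of basics $\cB$. First I would define factorization homology on $\cB$-manifolds by the left Kan extension formula
$$\int_M A \;:=\; \colim\Bigl(\disk(\cB)_{/M} \to \disk(\cB) \xrightarrow{A} \cC^{\otimes}\Bigr),$$
where the overcategory consists of $\cB$-embeddings from finite disjoint unions of basics into $M$. The right adjoint $\rho$ is restriction along $\disk(\cB)\hookrightarrow \mfld(\cB)$, so the adjunction itself is formal from the universal property of Kan extension; the content of the theorem is that it restricts to an equivalence after landing in the subcategory $\bH(\mfld(\cB),\cC^\otimes)$ of homology theories.

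Second, I would check that $\int_\bullet A$ does lie in $\bH(\mfld(\cB),\cC^\otimes)$. Symmetric monoidality under disjoint union is built into the colimit. The content is $\otimes$-excision for a collar-gluing $M \cong M_- \cup_{N\times\RR} M_+$: one identifies the $\disk(\cB)$-overcategory of $M$ with the two-sided bar construction on the overcategories of $M_\pm$ over that of $N\times\RR$, exactly as in the proof of Theorem~\ref{basic}, noting that collars of $\cB$-submanifolds inherit canonical $\cB$-structure so that the entire bar picture stays inside $\disk(\cB)$.

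Third, and this is the heart of the argument, I would show that the counit $\int \rho\cH \to \cH$ is an equivalence for every homology theory $\cH$. I would induct on the number of handles in a finite open handle presentation of $M$ provided by Theorem~\ref{collar=fin}. The base case is a single basic $U\in\cB$, where both sides evaluate to $\cH(U)$ by construction. The inductive step applies $\otimes$-excision for $\cH$ to a collar-gluing that peels off the last handle, combined with the excision already established for $\int$ and the inductive hypothesis on the lower-complexity pieces; the push-forward formula (itself a consequence of Theorem~\ref{no-pre-mans}) identifies the intermediate terms associated to the gluing locus. The unit $A \to \rho\int A$ is handled by the same base case: for $U\in\cB$, the overcategory $\disk(\cB)_{/U}$ is weakly terminal on $U$, so the Kan extension formula returns $A(U)$.

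The principal obstacle is the compatibility of handle decompositions with the $\cB$-structure. Because $\cB$ may carry noninvertible morphisms in $\bsc_n$ and extra tangential data linking distinct strata, one needs to know not only that a $\cB$-manifold $M$ admits a handle presentation in the underlying singular sense, but that the handles and their attaching collars can be taken to be $\cB$-manifolds with $\cB$-embeddings, and that the push-forward formula produces a genuine $\disk(\cB)$-algebra on the base of each gluing. This is where the interplay between Theorem~\ref{collar=fin} and Theorem~\ref{no-pre-mans} does the work: one realizes the handle presentation as a non-maximal $\cB$-atlas on $M$, and then uses the premanifold-to-manifold equivalence to transport $\cB$-structures coherently through the induction.
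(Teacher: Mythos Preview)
Your overall strategy matches the paper's proof of Theorem~\ref{characterization}: unit via the terminal object of $\disk(\cB)_{/U}$ for $U\in\disk(\cB)$, counit by induction on collar-gluings using Corollary~\ref{B-collar=fin}. In fact the paper states and proves the characterization for a general category of basics $\cB$ from the outset, so there is no separate $\bsc_n$ case to template from; Theorems~\ref{basic} and~\ref{B's} are both the single Theorem~\ref{characterization}.

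The place your sketch diverges is the excision step. You propose to identify $\disk(\cB)_{/M}$ directly with a two-sided bar construction on the overcategories of $M_\pm$ over that of $N\times\RR$. The paper does not argue this way: it observes that a collar-gluing determines an $\cI$-family $G\colon\cI\to\mfld(\cB)$, applies the push-forward formula (Theorem~\ref{pushforward-formula}) to reduce $\int_X A$ to $\int_{[-\infty,\infty]}G_*A$, and then runs a cofinality argument (Lemma~\ref{lemma:cofinaldisk}, Proposition~\ref{lemma:interval}) identifying a full subcategory of $(\disk_1^{\partial,\fr})_{/[-\infty,\infty]}$ with $\Delta^{\op}$. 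Your direct approach runs into the issue that a single basic can embed in $M$ straddling the collar without factoring through either $M_-$ or $M_+$; the paper's detour through the closed interval sidesteps this by changing the base before passing to a bar resolution. You do invoke the push-forward formula, but you place it in the counit step rather than here, where it is doing the real work.

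Your final paragraph overstates the $\cB$-compatibility difficulty. A collar-gluing of $\cB$-manifolds is by definition (Definition~\ref{B-gluings}) a collar-gluing of the underlying singular manifolds together with a map $\widehat{X}\to\cB$; the pieces $X_\pm$ and $RV$ inherit $\cB$-structures by restriction along open embeddings, which is automatic because $\cB\to\bsc_n$ is a right fibration. Corollary~\ref{B-collar=fin} then follows immediately from Theorem~\ref{collar=fin}, and no appeal to Theorem~\ref{no-pre-mans} is needed at that point.
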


This result has the following direct connection with extended topological field theories. 
Specifically, the set of equivalence classes of homology theories $\bH(\mfld_n,\cC^{\ot})$ is naturally equivalent to the set of equivalences classes of extended topological field theories valued in $\Alg_{(n)}(\cC^\ot)$.  
Let us make this statement more explicit.  
Here $\Alg_{(n)}(\cC^{\ot})$ the is a higher Morita-type symmetric monoidal $(\infty,n)$-category heuristically defined inductively by setting the objects to be framed $n$-disk algebras and the $(\infty,n-1)$-category of morphisms between $A$ and $B$ to be ${\Alg_{(n)}}(A,B) := \Alg_{(n-1)}\bigl(\bimod_{A,B}(\cC^\ot)\bigr)$, with composition given by tensor product over common $n$-disk algebras, and with symmetric monoidal structure given by tensor product of $n$-disk algebras. For a category $\cX$, let $\cX^\sim$ denote the underlying groupoid, in which noninvertible morphisms have been discarded. We have the following commutative diagram:
\[\xymatrix{
\Alg_{\disk_n}(\cC^{\ot})^{\sim}\ar[d]\ar[r]&\bigl(\Alg_{(n)}(\cC^{\ot})^{\sim}\bigr)^{\sO(n)}\ar[d]\\
\bH(\mfld_n, \cC^{\ot})^{\sim} \ar[r] & \Fun^\ot({\sf Bord}_n, \Alg_{(n)}(\cC^{\ot}))\\
}\]
The bottom horizontal functor $\bH(\mfld_n, \cC^{\ot})^{\sim} \ra \Fun^\ot({\sf Bord}_n, \Alg_{(n)}(\cC^{\ot}))$ assigns to a homology theory $\cF$ the extended topological field theory $Z_\cF$ whose value on a compact $(n-k)$-manifold (with corners) $M$ is the $k$-disk algebra $Z_\cF(M):= \cF\bigl(\RR^k\times \overset{\circ}{M}\bigr)$ where $\overset{\circ}{M}$ is the interior of $M$ and this value is regarded as a $k$-disk algebra using the Euclidean coordinate together with the functoriality of $\cF$. A similar picture relating homology theories for singular $\cB$-manifolds with topological quantum field theories defined on $\mathsf{Bord}_\cB$, the bordism $(\oo,n)$-category of compact singular $(n-k)$-manifolds equipped with $\cB$-structure.  

The assignments above express every such extended topological field theory as arising from factorization homology. This is a consequence of features of the other three functors: the left vertical functor is an equivalence by the $n$-disk algebra characterization of homology theories of~\cite{facthomology} and Theorem~\ref{basic}; the right vertical functor is an equivalence by the cobordism hypothesis~\cite{cobordism} whose proof is outlined by Lurie building on earlier work with Hopkins; the top horizontal functor being surjective follows directly from the definition of $\Alg_{(n)}(\cC^{\ot})$ and the equivalence $\Alg_{\disk_n}(\cC^\ot) \simeq \Alg_{\disk_n^{\fr}}(\cC^\ot)^{\sO(n)}$. Thus, for field theories valued in this particular but familiar higher Morita category, our theorem has the virtue of offering a simpler approach to extended topological field theories, an approach which avoids any discussion of $(\infty,n)$-categories and functors between them.

The homology theory also offers more data in an immediately accessible manner, since one can evaluate a homology theory on noncompact $n$-manifolds and embeddings, whereas the corresponding extension of a field theory to allow this extra functoriality is far from apparent. One also has noninvertible natural transformations of homology theories, while all natural transformations between extended field theories are equivalences.

\smallskip

Theorem~\ref{B's} is a useful generalization of Theorem~\ref{everything} because one might desire a very simple local structure such as nested defects which are not true singularities. The most basic examples of such are manifolds with boundary or manifolds with a submanifold of fixed dimension. Let $\mfld_n^\partial$ be the collection of $n$-manifolds with boundary and embeddings which preserve boundaries, and let $\sD_n^\partial$ be the associated collection of open basics, generated by $\RR^n$ and $\RR^{n-1}\times \RR_{\geq 0}$. A special case of the previous theorem is the following.

\begin{cor} There is an equivalence 
\[
\int\colon \Alg_{\disk_n^\partial}(\cC^\ot) \leftrightarrows\bH(\mfld_n^\partial, \cC^\ot)\colon \rho
\] 
between $\disk_n^\partial$-algebras in $\cC^\ot$ and $\cC^\ot$-valued homology theories for $n$-manifolds with boundary.
\end{cor}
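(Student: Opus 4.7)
The plan is to identify manifolds with boundary as a particular instance of the general $\cB$-manifold framework and then invoke Theorem~\ref{B's}. Specifically, one wants to exhibit a category of basic opens $\cB = \sD_n^\partial$ such that $\mfld(\cB) \simeq \mfld_n^\partial$ (with matching spaces of embeddings), and such that $\disk(\cB) \simeq \disk_n^\partial$. Once this identification is established, the corollary is just the specialization of Theorem~\ref{B's} to this choice of $\cB$.

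First, I would define $\sD_n^\partial$ to be the full subcategory of $\snglr_n$ (or rather of $\bsc_n$) whose objects are $\RR^n$ and $\RR^{n-1}\times \RR_{\geq 0}$, with morphisms the spaces of stratified open embeddings among these two objects. To verify this is the right category of basic opens, I would check that every point of a smooth $n$-manifold with boundary admits an open neighborhood isomorphic to one of these two local models -- namely, interior points are locally $\RR^n$ and boundary points are locally $\RR^{n-1}\times \RR_{\geq 0}$ -- so these are exactly the basics needed. The stratification here is the evident one where $\RR^{n-1}\times \{0\}$ is the boundary stratum.

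Next, I would verify that the topological category $\mfld(\sD_n^\partial)$ of singular manifolds modeled on $\sD_n^\partial$ coincides with the usual topological category $\mfld_n^\partial$ of smooth $n$-manifolds with boundary and smooth boundary-preserving embeddings. The identification of underlying sets of objects is immediate from the local model description. For morphism spaces, one observes that an embedding of singular manifolds modeled on $\sD_n^\partial$ is, by definition, a continuous map which is locally modeled by morphisms in $\sD_n^\partial$; since morphisms in $\sD_n^\partial$ between $\RR^{n-1}\times\RR_{\geq 0}$ and itself are smooth embeddings preserving the boundary stratum, this recovers exactly $\Emb^\partial(M,N)$. Similarly $\disk(\sD_n^\partial) \simeq \disk_n^\partial$ by unwinding the definition of $\disk(\cB)$ as the $\infty$-operad generated under disjoint union by the basics of $\cB$.

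The main (and essentially only) obstacle is the careful matching of topologies on morphism spaces, to ensure that the mapping spaces of stratified embeddings in $\mfld(\sD_n^\partial)$ are weakly equivalent to the conventional spaces $\Emb^\partial(M,N)$ of smooth boundary-preserving embeddings equipped with the weak $C^\infty$ topology. Given the theory of tangent bundles and isotopy extension developed earlier in the paper for general singular manifolds, this comparison is routine: both topologies are generated by the same basic neighborhoods, controlled by behavior on compact subsets together with behavior near the boundary stratum. With this identification in place, applying Theorem~\ref{B's} with $\cB = \sD_n^\partial$ yields the claimed adjoint equivalence $\int \dashv \rho$ between $\disk_n^\partial$-algebras in $\cC^\ot$ and $\cC^\ot$-valued homology theories on $\mfld_n^\partial$.
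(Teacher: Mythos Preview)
Your proposal is correct and matches the paper's approach: the corollary is stated in the introduction as an immediate specialization of Theorem~\ref{B's} to the category of basics $\cB = \sD_n^\partial$, with the identification of $\sD_n^\partial$-manifolds with smooth $n$-manifolds with boundary already recorded in Example~\ref{boundary} and the notational convention $\disk_n^\partial := \disk(\sD_n^\partial)$ fixed after Definition~\ref{oplus}. Your extra care about matching morphism-space topologies is more than the paper spells out, but it is the right verification and the paper treats it as essentially definitional.
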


\begin{remark} A $\disk_n^\partial$-algebra is essentially an algebra for the Swiss-cheese operad of Voronov~\cite{voronov} in which one has additional symmetries by the orthogonal groups $\sO(n)$ and $\sO(n-1)$.
\end{remark}

For framed $n$-manifolds with a $k$-dimensional submanifold with trivialized normal bundle, one then has the following:

\begin{cor} There is an equivalence 
\[
\int\colon \Alg_{\disk_{n,k}^{\fr}}(\cC^\ot)\leftrightarrows\bH(\mfld^{\fr}_{n,k}, \cC^\ot)\colon \rho
\] 
between $\disk_{n,k}^{\fr}$-algebras in $\cC^\ot$ and $\cC^\ot$-valued homology theories for framed $n$-manifolds with a framed $k$-dimensional submanifold with trivialized normal bundle. The datum of a $\disk_{n,k}^{\fr}$-algebra is equivalent to the data of a triple $(A,B,{\sf a})$, where $A$ is a $\disk_n^{\fr}$-algebra, $B$ is a $\disk_k^{\fr}$-algebra, and ${\sf a}:\int_{S^{n-k-1}}A\ra \hh^\ast_{\sD^{\fr}_k}(B)$ is a map of $\disk^{\fr}_{k+1}$-algebras.
\end{cor}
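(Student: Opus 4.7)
The adjunction itself is an immediate specialization of Theorem~\ref{B's} to the category of basic opens $\cB = \sD^{\fr}_{n,k}$, whose objects are the two framed $n$-dimensional basics $\RR^n$ (the ``bulk'') and $\RR^{n-k}\times\RR^k$ equipped with $\{0\}\times\RR^k$ as a framed submanifold of trivialized normal bundle (the ``defect''), and whose morphisms are framing-compatible embeddings preserving the submanifold.  With this choice $\mfld(\cB)=\mfld^{\fr}_{n,k}$ and $\disk(\cB)=\disk^{\fr}_{n,k}$, so Theorem~\ref{B's} yields the stated adjunction.

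To unpack a $\disk^{\fr}_{n,k}$-algebra $\cF$ as a triple $(A,B,\mathsf{a})$, set $A=\cF(\RR^n)$ and $B=\cF(\RR^{n-k}\times\RR^k)$.  The suboperad of $\disk^{\fr}_{n,k}$ generated by bulk-to-bulk embeddings is exactly $\disk^{\fr}_n$, so $A$ acquires a $\disk^{\fr}_n$-algebra structure.  The suboperad of defect-to-defect embeddings whose normal component fixes $0\in\RR^{n-k}$ and whose tangential component is an arbitrary framed self-embedding of $\RR^k$ is identified with $\disk^{\fr}_k$, so $B$ inherits a $\disk^{\fr}_k$-algebra structure; the trivialization of the normal bundle is what decouples the $\RR^{n-k}$-direction and reduces the apparent structure to $\disk^{\fr}_k$.

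The remaining operations of $\cF$ have a defect output and a mix of bulk and defect inputs.  Using the framed identification $\RR^n\setminus\RR^k\simeq S^{n-k-1}\times\RR_{>0}\times\RR^k$, these operations are parameterized by configurations of bulk disks in a tubular neighborhood of the defect, together with a defect disk on the submanifold.  Applying the push-forward formula along the stratified fiber bundle $S^{n-k-1}\times\RR^{k+1}\to\RR^{k+1}$ --- legitimized by the equivalence between singular premanifolds and singular manifolds of Theorem~\ref{no-pre-mans} --- the pushforward of $A$ yields a $\disk^{\fr}_{k+1}$-algebra whose underlying object is $\int_{S^{n-k-1}}A$.  Its action on $B$ through nested bulk configurations linking a concentric defect disk is precisely a $\disk^{\fr}_{k+1}$-algebra map
\[
\mathsf{a}\colon \int_{S^{n-k-1}}A \;\longrightarrow\; \hh^{\ast}_{\sD^{\fr}_k}(B),
\]
by the defining universal property of $\hh^{\ast}_{\sD^{\fr}_k}(B)$ as the $\disk^{\fr}_{k+1}$-algebra corepresenting $\disk^{\fr}_k$-central actions on $B$ (the higher analogue of the $\EE_2$-structure on Hochschild cochains).

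The main obstacle will be establishing that $\cF\mapsto(A,B,\mathsf{a})$ is genuinely an equivalence --- that no higher coherence data beyond this triple is required.  Concretely, one must exhibit $\Alg_{\disk^{\fr}_{n,k}}(\cC^\ot)$ as a pullback over $\Alg_{\disk^{\fr}_n}(\cC^\ot)\times\Alg_{\disk^{\fr}_k}(\cC^\ot)$ with fiber the space of $\disk^{\fr}_{k+1}$-algebra maps $\int_{S^{n-k-1}}A\to\hh^{\ast}_{\sD^{\fr}_k}(B)$.  I would prove this by an inductive argument on operations in $\disk^{\fr}_{n,k}$, invoking the handle-type decomposition of Theorem~\ref{collar=fin} for the configuration spaces parameterizing operations: every such operation decomposes into one supported away from the defect (captured by $A$), one supported along the defect with trivial normal component (captured by $B$), and one factoring through a link neighborhood $S^{n-k-1}\times\RR^{k+1}$ of the defect (captured by $\mathsf{a}$), and this trichotomy exhausts the data up to contractible choice.
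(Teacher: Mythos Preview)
Your reduction of the adjunction to Theorem~\ref{B's} with $\cB=\sD^{\fr}_{n,k}$ is exactly right and matches the paper; there is nothing more to say there.

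For the triple description, you have correctly isolated the ingredients --- $A$, $B$, and the higher-Deligne universal property of $\hh^\ast_{\sD^{\fr}_k}(B)$ --- but your proposed strategy for proving the equivalence (handle-decompose the configuration spaces of operations and argue a trichotomy exhausts the data) is both vaguer and harder than what the paper actually does.  The paper's argument, given as Proposition~\ref{nk}, is more structural: one filters $\disk^{\fr}_{n,k}$ by the number of connected components isomorphic to the defect basic $(\RR^k\subset\RR^n)$.  A symmetric monoidal functor out of the first nontrivial stage $(\disk^{\fr}_{n,k})_{\leq 1}$ is identified directly with a pair consisting of a $\disk^{\fr}_n$-algebra $A$ and a $\disk^{\fr}_{n-k}$-$A$-module; extending over the whole of $\disk^{\fr}_{n,k}$ is then exactly the datum of a $\disk^{\fr}_k$-algebra structure on that module compatible with the $A$-action.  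This yields a pullback square
\[
\Alg_{\disk^{\fr}_{n,k}}(\cC^\ot)\;\simeq\;\Alg_{\disk^{\fr}_k}\bigl(\m_A^{\disk^{\fr}_{n-k}}(\cC)\bigr)
\]
over $\Alg_{\disk^{\fr}_n}(\cC^\ot)$.  One then invokes the equivalence $\m_A^{\disk^{\fr}_{n-k}}(\cC)\simeq\m_{\int_{S^{n-k-1}}A}(\cC)$ from~\cite{cotangent} (this replaces your push-forward heuristic with a precise module-theoretic statement), and finally the higher Deligne conjecture to translate a $\disk^{\fr}_k$-algebra in $\int_{S^{n-k-1}}A$-modules into a $\disk^{\fr}_{k+1}$-map $\int_{S^{n-k-1}}A\to\hh^\ast_{\sD^{\fr}_k}(B)$.

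The advantage of the paper's route is that each step is a clean categorical identification requiring no geometric induction; your configuration-space trichotomy is the correct intuition behind \emph{why} the filtration works, but Theorem~\ref{collar=fin} is about collar-decomposing singular manifolds, not operadic mapping spaces, so it is not the right tool here.
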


Specializing to the case of 3-manifolds with a 1-dimensional submanifold, i.e., to links, the preceding provides an algebraic structure that gives rise to a link homology theory. To a triple $(A, B, f)$, where $A$ is a $\disk_3^{\fr}$-algebra, $B$ is an associative algebra, and $f: \hh_\ast (A) \ra \hh^\ast (B)$ is a $\disk_2^{\fr}$-algebra map, one can then construct a link homology theory, via factorization homology with coefficients in this triple. This promises to provide a new source of such knot homology theories, similar to Khovanov homology. Khovanov homology itself does not fit into this structure, for a very simple reason: a subknot of a knot $(U, K) \subset (U', K')$ does not define a map between their Khovanov homologies, from ${\sf Kh}(U,K)$ to ${\sf Kh}(U',K')$. Link factorization homology theories can be constructed, however, using the same input as Chern-Simons theory, and these appear to be closely related to Khovanov homology -- these theories will be the subject of another work.

We conclude our introduction with an outline of the contents. In Section 2, we develop a theory of structured singular $n$-manifolds suitable for our applications, in particular, defining a suitable space of embeddings $\Emb(M,N)$ between singular $n$-manifolds; we elaborate on the structure of singular $n$-manifolds, covering the notion of a tangential structure and a collar-gluing. In Section 3, we introduce the notion of homology theory for structured singular $n$-manifolds and prove a characterization of these homology theories. Section 4 applies this characterization to prove a singular generalization of nonabelian Poincar\'e duality. Section 5 considers other examples of homology theories, focusing on the case of ``defects'', which is to say a manifold with a submanifold. Section 6 completes the proof of the homology theory result of Section 3, showing that factorization homology over the closed interval is calculated by the two-sided bar construction. Section 7 establishes certain technical features of the theory of singular $n$-manifolds used in a fundamental way in Sections 2 and 3.

The reader expressly interested in the theory of factorization homology is encouraged to proceed directly to Section 2.7 and Section 3, referring back to the earlier parts of Section 2 only as necessary, and to entirely skip the last section.

\subsection{Conventions}\label{conventions}

In an essential way, the techniques presented are specific to the smooth setting as opposed to the topological, piecewise linear, or regular settings -- the essential technical point being that in the smooth setting a collection of infinitesimal deformations can be averaged, so that smooth partitions of unity allow the construction of global vector fields from local ones, the flows of which give regular neighborhoods (of singular strata strata of singular manifolds), which in turn lend to handle body decompositions.

Unless the issue is topical, by a topological space we mean a compactly generated Hausdorff topological space.  
Throughout, $\Top$ denotes the category of (compactly generated Hausdorff) topological spaces--it is bicomplete.  We regard $\Top$ as a Cartesian category and thus as self-enriched.  

In this work, we make use of both $\Top$-enriched categories as well as the equivalent {\it quasi-category} model  of $\oo$-category theory first developed in depth by Joyal; see~\cite{joyal}. In doing so, we will be deliberate when referring to a $\Top$-enriched category versus a quasi-category, and by a map $\sC \to \cD$ from a $\Top$-enriched category to a quasi-category it will be implicitly understood to mean a map of quasi-categories $N^c\sC \to \cD$ from the coherent nerve, and likewise by a map $\cD \to \sC$ it is meant a map of quasi-categories $\cD \to N^c\sC$.
We will denote by $\cS$ the quasi-category of Kan complexes, vertices of which we will also refer to as spaces.  We will use the term `space' to denote either a Kan complex or a topological space --  the context will vanquish any ambiguity.

Quasi-category theory, first introduced by Boardman \& Vogt in~\cite{bv} as ``weak Kan complexes," has been developed in great depth by Lurie in~\cite{topos} and~\cite{dag}, which serve as our primary references; see the first chapter of~\cite{topos} for an introduction. For all constructions involving categories of functors, or carrying additional algebraic structure on derived functors, the quasi-category model offers enormous technical advantages over the stricter model of topological categories. The strictness of topological categories is occasionally advantageous, however. For instance, the topological category of $n$-manifolds with embeddings can be constructed formally from the topological category of $n$-disks, whereas the corresponding quasi-category construction will yield a slightly different result.\footnote{That is, the quasi-category will instead give a completion $\widehat{\mfld}_n$ of the quasi-category of $n$-manifolds in which the mapping spaces $\Map(M,N)$ are equivalent to the space of maps $\Map(\EE_M,\EE_N)$ of presheaves on disks; see \textsection\ref{factorizhom}.}

{\bf Acknowledgements.} We are indebted to Kevin Costello for many conversations and his many insights which have motivated and informed the greater part of this work. We also thank Jacob Lurie for illuminating discussions, his inspirational account of topological field theories, and his substantial contribution to the theory of quasi-categories. JF thanks Alexei Oblomkov for helpful conversations on knot homology. Finally, we would like to thank the referees for valuable comments.

\section{Structured singular manifolds}
In this section we develop a theory of singular manifolds in a way commensurable with our applications.
Our development is consistent with that of Baas~\cite{baas} and Sullivan, as well as Goresky and MacPherson~\cite{goreskymacpherson2}, and we acknowledge their work for inspiring the constructions to follow.
We also define the notion of \emph{structured} singular manifolds. Structures on a singular manifold generalize fiberwise structures on the tangent bundle of a smooth manifold, such as framings.  As we will see, a structure on a singular manifold can be a far more elaborate datum than in the smooth setting.

\begin{remark}
Our account of (structured) singular manifolds is as undistracted as we can manage for our purposes.  For instance, we restrict our attention to a category of singular manifolds in which morphisms are `smooth' (in an appropriate sense) open embeddings which preserve strata.  In particular, because our techniques avoid it, we do not develop a theory of transversality (Sard's theorem) for singular manifolds.  
\end{remark}

\subsection{Smooth manifolds}
We begin with a definition of a smooth $n$-manifold---it is a slight (but equivalent) variant on the usual definition. 

A \emph{smooth $n$-manifold} is a pair $(M,\cA)$ consisting of a second countable Hausdorff topological space $M$ together with a collection $\cA=\{\RR^n\xra{\phi}M\}$ of open embeddings which satisfy the following axioms.
\begin{itemize}
\item[{\bf Cover:}]
The collection of images $\{\phi(\RR^n) \mid \phi\in \cA\}$ is an open cover of $M$.

\item[{\bf Atlas:}]
For each pair $\phi ,\psi\in \cA$ and $p\in \phi(\RR^n)\cap\psi(\RR^n)$ there is a diagram of smooth embeddings
$
\RR^n\xla{f} \RR^n\xra{g} \RR^n
$
such that 
\[
\xymatrix{
\RR^n  \ar[r]^g  \ar[d]^f
&
\RR^n \ar[d]^\psi
\\
\RR^n \ar[r]^\phi
&
M
}
\]
commutes and the image $\phi f(\RR^n) = \psi g(\RR^n)$ contains $p$.  

\item[{\bf Maximal:}]
Let $\RR^n \xra{\phi_0} M$ an open embedding.  Suppose for each $\phi \in \cA$ and each $p\in \phi_0(\RR^n)\cap \phi(\RR^n)$ there is a diagram of smooth embeddings $\RR^n\xla{f_0}\RR^n \xra{f} \RR^n$ such that 
\[
\xymatrix{
\RR^n \ar[r]^f  \ar[d]^{f_0}
&
\RR^n \ar[d]^\phi
\\
\RR^n \ar[r]^{\phi_0}
&
M
}
\]
commutes and the image $\phi_0 f_0(\RR^n) = \phi f(\RR^n)$ contains $p$.  
Then $\phi_0 \in \cA$.  
\end{itemize}   

A \emph{smooth embedding} $f\colon (M,\cA)\to (M',\cA')$ between smooth $n$-manifolds is a continuous map $f\colon M\to M'$ for which $f_\ast \cA = \{f\phi \mid \phi \in \cA\} \subset \cA'$.  
Smooth embeddings are evidently closed under composition. 
For $M$ and $M'$ smooth $n$-manifolds, denote by $\Emb(M,M')$ the topological space of smooth embeddings $M\to M'$, topologized with the weak Whitney $C^\infty$ topology. 
We define the $\Top$-enriched category 
$
\mfld_n
$
to be the category whose objects are smooth $n$-manifolds and whose space of morphisms $M\to M'$ is the space $\Emb(M,M')$.  

Note that the topology of $\Emb(M,M')$ is generated by a collection of subsets $\{\sM_{O}(M,M')\}$ defined as follows.  
This collection is indexed by the data of atlas elements $\phi\in \cA_M$ and $\phi' \in \cA_{M'}$, together with an open subset $O\subset \Emb(\RR^n,\RR^n)$ for which for each $g\in O$ the closure of the image $\ov{g(\RR^n)} \subset \RR^n$ is compact.  
Denote $\sM_O(M,M') = \{ f \mid f \circ \phi \in \phi'_\ast(O)\}$.

\subsection{Singular manifolds}
We now define categories of singular manifolds.  
Our definition traces the following paradigm:  A singular $n$-manifold is a second countable Hausdorff topological space equipped with a maximal atlas by \emph{basics} (of dimension $n$).  A basic captures the notion of an $n$-dimensional singularity type---for instance, a basic is homeomorphic to a space of the form $\RR^{n-k}\times CX$ where $X$ is a compact singular manifold of dimension $k-1$ and $C(-)$ is the (open) cone.  
There is a measure of the complexity of a singularity type, called the \emph{depth}, and $CX$ has strictly greater depth than $X$. 
Our definition is by induction on this parameter. We refer the reader to \textsection\ref{section:singularExamples} for examples. 

In what follows, for $Z$ a topological space let $\cO(Z)$ denote the poset of open subsets of $Z$ ordered by inclusion. 
Define the \textit{open cone on $Z$} to be the topological space
	\[
	C(Z) := \colim (\ast \leftarrow \RR_{\leq 0} \times Z \to \RR \times Z).
	\]
Note in particular that $C(\emptyset)$ is not empty, but is a point. 
In the case $Z \neq \emptyset$, this agrees with the definition of the cone as the quotient $Z \times [0,\infty){/ \sim}$, with the equivalence relation $(z,0) \sim (z',0)$.

\begin{definition}\label{singular-manifolds}
By induction on $-1\leq k \leq n$ we simultaneously define the following:
\smallskip
\begin{itemize}
\item A $\Top$-enriched category $\bsc_{\leq n , k}$ and a faithful functor $\bsc_{\leq n , k}\xra{\iota} \Top$. 
\item A $\Top$-enriched category $\snglr_{\leq n , k}$ and a faithful functor $\snglr_{\leq n , k} \xra{\iota} \Top$.  Denote the maximal subgroupoid $\snglr_{\leq n , k}^c\subset \snglr_{\leq n , k}$ spanned by those $X$ for which $\iota X$ is compact.  We emphasize that every morphism in $\snglr_{\leq n , k}^c$ is an isomorphism.  
\item For each $j$ a continuous functor $R^j\colon \bsc_{\leq n , k} \to \bsc_{\leq n+j,k}$ over $\RR^j\times -\colon \Top \to \Top$. 
\item For each $j$ a continuous functor $R^j\colon \cO(\RR^j)\times \snglr_{\leq n , k}\to \snglr_{\leq n+j,k}$ over $\cO(\RR^j)\times \Top\xra{\times} \Top$.  We denote the value $R^j_{\RR^j}X$ simply as $R^jX$.  

\end{itemize}

For all $n$, define $\bsc_{\leq n,-1} = \emptyset$ to be the empty category and $\snglr_{\leq n,-1} = \{\emptyset\}$ to be the terminal category over $\emptyset \in \Top$.  The functors $R^j$ and $\iota$ are determined.

For $k\geq 0$, define $\bsc_{\leq n , k}$ as follows:
\begin{itemize}
\item[]
\begin{itemize}

\item[$ob~\bsc_{\leq n , k}$:]
\[
ob\bsc_{\leq n , k}~ =~ ob \bsc_{\leq n , k-1}~ \coprod ~ob \snglr^c_{\leq k-1,k-1}~.
\]
We denote the object of $\bsc_{\leq n , k}$ corresponding to the object $X\in \snglr^c_{\leq k-1,k-1}$ using the symbol $U^n_X$ or $U_X$ when $n$ is understood. 
Define 
\[
\iota \colon \bsc_{\leq n , k}\to \Top
\]
on objects as $U\mapsto \iota U$ and $U_X\mapsto \RR^{n-k}\times C(\iota X)$.

\item[$mor~\bsc_{\leq n , k}$:]
Let $X,Y\in \snglr^c_{\leq k-1,k-1}$ be objects and $U,V\in \bsc_{\leq n , k-1}$. 
Define
\begin{eqnarray}
\bsc_{\leq n , k}(U,V) &=& \bsc_{\leq n , k-1}(U,V)~,
\nonumber \\
\bsc_{\leq n , k}(U,U_X) &=& \snglr_{\leq n , k-1}(U,R^{n-k}R_{>0}X)~,
\nonumber \\
\bsc_{\leq n , k}(U_X,V) &=& \emptyset~.
\nonumber 
\end{eqnarray}
The space $\bsc_{\leq n , k}(U_X,U_Y)$ is defined as follows.  Consider the
space $\widetilde{\bsc}_{n,k}(U_X,U_Y)\subset \snglr_{\leq n , k-1}(R^{n-k}RX,R^{n-k}RY) \times \Emb(\RR^{n-k}, \RR^{n-k})$ consisting of those pairs $(\w{f},h)$ for which there exists a morphism
$g\in \snglr_{n-1,k-1}(R^{n-k}X,R^{n-k}Y)$ and an isomorphism
$g_0\in \snglr^c_{\leq k-1,k-1}(X,Y)$
which fit into the diagram of topological spaces
\[
\xymatrix{
\iota X  \ar[d]^{\{0,0\}\times 1_{\iota X}}  \ar[r]^{g_0}
&
\iota Y  \ar[d]^{\{h(0),0\}\times 1_{\iota Y}}
\\
\RR^{n-k}\times \RR_{\leq 0} \times \iota X  \ar[r]^{\w{f}_|}  \ar[d]
&
\RR^{n-k}\times \RR_{\leq 0} \times \iota Y  \ar[d]
\\
\RR^{n-k}\times \iota X  \ar[r]^g  \ar[d]
&
\RR^{n-k}\times \iota Y \ar[d]
\\
\RR^{n-k}  \ar[r]^h
&
\RR^{n-k}
}
\]
where the unlabeled arrows are the standard projections.
Consider the closed equivalence relation on $\w{\bsc}_{n,k}(U_X,U_Y)$ by declaring $(\w{f},h)\sim (\w{f}',h')$ to mean $h=h'$ and the restrictions agree $\w{f}_{|\RR^{n-k}\times \RR_{\geq 0} \times \iota X} = \w{f}'_{|\RR^{n-k}\times \RR_{\geq 0} \times \iota X}$.
Define
\[
\bsc_{\leq n , k}(U_X,U_Y) := \widetilde{\bsc}_{n,k}(U_X,U_Y)_{/ \sim}~.
\]

Composition in $\bsc_{\leq n , k}$ is apparent upon observing that if
$(\w{f},h)\sim (\w{f}',h')$ and if $(\w{f}'',h'')\sim (\w{f}''',h''')\in \widetilde{\bsc}_{n,k}(U_Y,U_Z)$, then $(\w{f}''\circ \w{f}, h'' \circ h)\sim (\w{f}'''\circ \w{f}', h'\circ h''')$. 
The functor $\iota \colon \bsc_{\leq n , k} \to \Top$ has already been defined on objects, and we define it on morphisms by assigning to $(f,h)\colon U^n_X \to U^n_Y$ the map $\RR^{n-k}\times C(\iota X) \to \RR^{n-k}\times C(\iota Y)$ given by $[(u,s,x)]\mapsto [f(u,s,x)]$ if $ s\geq 0$ and as $[(u,s,x)]\mapsto [h(u)]$ if $s\leq 0$ -- this map is constructed so to be well-defined and continuous.  Moreover, the named equivalence relation exactly stipulates that $\iota$ is continuous and injective on morphism spaces.  
As so, for $X\cong Y\in \snglr_{\leq k-1,k-1}^c$ isomorphic objects, we denote a morphism $U_X \xra{[(\w{f},h)]} U_Y$ simply as its induced continuous map $\iota U_X \xra{f} \iota U_Y$. 
Unless the issue is sensitive, we will often abuse notation and use the same letter $f$ to refer to the first coordinate of a representative of $f=[(\w{f},h)]$.

\item[R:]
The functor $R$ is determined on objects and morphisms by induction upon declaring $R^jU^n_X = U^{n+j}_X$ and $R^j(f,h) = (1_{\RR^j}\times f,1_{\RR^j} \times h)$.

\item[$ob~\snglr_{\leq n , k}$:]
An object $X$ of $\snglr_{\leq n , k}$ is a pair $(\iota X, \cA)$ consisting of a second countable Hausdorff topological space $\iota X$, together with a set $\cA=\{(U,\phi)\}$ called an {\em atlas}, whose elements consist of an object $U\in \bsc_{\leq n , k}$ together with an open embedding $\iota U \xra{\phi} \iota X$. The atlas must satisfy the following conditions: 
\begin{itemize}
\item[{\bf Cover:}]
The collection of images $\{\phi(\iota U)\subset \iota X\}$ is an open cover of $\iota X$.

\item[{\bf Atlas:}]
For each pair $(U,\phi),(V,\psi)\in \cA$ and $p\in \phi(\iota U)\cap\psi(\iota V)$ there is a diagram 
$
U\xla{f} W\xra{g} V
$
in $\bsc_{\leq n , k}$  such that 
\[
\xymatrix{
\iota W  \ar[r]^g  \ar[d]^f
&
\iota V  \ar[d]^\psi
\\
\iota U  \ar[r]^\phi
&
\iota X
}
\]
commutes and the image $\phi f(\iota W) = \psi g(\iota W)$ contains $p$.  

\item[{\bf Maximal:}]
Let $U_0$ be an object of $\bsc_{\leq n , k}$ and let $\iota U_0\xra{\phi_0} \iota X$ be an open embedding.  Suppose for each $(U,\phi)\in \cA$ and each $p\in \phi_0(\iota U_0)\cap \phi(\iota U)$ there is a diagram $U_0\xla{f_0} W \xra{f} U$ in $\bsc_{\leq n , k}$ such that 
\[
\xymatrix{
\iota W  \ar[r]^f  \ar[d]^{f_0}
&
\iota U  \ar[d]^\phi
\\
\iota U_0  \ar[r]^{\phi_0}
&
\iota X
}
\]
commutes and the image $\phi_0 f_0(\iota W) = \phi f(\iota W)$ contains $p$.  
Then $(U_0,\phi_0)\in \cA$.  

\end{itemize}
Define $\iota \colon \snglr_{\leq n , k}\to \Top$ on objects by $(\iota X,\cA)\mapsto \iota X$.

\item[$mor~\snglr_{\leq n , k}$:]
Let $X ,Y \in \snglr_{\leq n , k}$ be objects, written respectively as $(\iota X , \cA_X)$  and $(\iota Y,\cA_Y)$.  The space of morphisms $\snglr_{\leq n , k}(X,Y)$ has as its underlying set those continuous maps $\iota X\xra{f} \iota Y$ for which $f_\ast \cA_X := \{(U,f\phi)\mid (U,\phi)\in \cA_X\}\subset \cA_Y$.  
The topology on $\snglr_{\leq n , k}(X,Y)$ is generated by the collection of subsets $\{\sM_{O,(U,\phi),(V,\psi)}(X,Y)\}$ defined as follows.  
This collection is indexed by the data of atlas elements $(U,\phi)\in \cA_X$ and $(V,\psi)\in \cA_Y$, together with an open subset $O\subset \bsc_{\leq n , k}(U,V)$ for which for each $g\in O$ the closure of the image $\ov{g(\iota U)} \subset \iota V$ is compact.  
Denote $\sM_{O,(U,\phi),(V,\psi)}(X,Y) = \{ f \mid f \circ \phi \in \psi_\ast(O)\}$.

\vspace{8pt}

Composition is given on underlying sets by composing continuous maps.
To check that composition is continuous amounts to checking that for each $(\iota X,\cA)\in \snglr_{\leq n , k}$ the collection $\{\phi(\iota U)\mid (U,\phi)\in \cA\}$ is a basis for the topology of $\iota X$, as well as checking that $\iota X$ is locally compact.  These points are straight ahead and will be mentioned in Lemma~\ref{about-X}.  
The functor $\iota \colon \snglr_{\leq n , k} \to \Top$ is given on objects by $(\iota X,\cA)\mapsto \iota X$ and on morphisms by $f\mapsto f$.  These assignments are evidently functorial, continuous, and faithful.

\item[R:]
Define $R^j\colon \cO(\RR^j)\times \snglr_{\leq n , k} \to \snglr_{\leq n+j,k}$ on objects as $(Q,(\iota X,\cA))\mapsto (Q\times \iota X , \cA_Q)$ where $\cA_Q = \{(U',\phi')\}$ consists of those pairs for which for each $p\in \phi'(\iota U')$ there is an element $(U,\phi)\in \cA$ and a morphism $R^j U \xra{f} U'$ in $\bsc_{n+j,k}$ such that the diagram
\[
\xymatrix{
\RR^j \times \iota U  \ar[r]^f  \ar[d]^{pr}  
&
\iota U'  \ar[r]^{\phi'}
&
Q\times \iota X  \ar[d]^{pr}
\\
\iota U  \ar[rr]^\phi
&
&
\iota X
}
\]
commutes and $p\in \phi'\bigl(f(\RR^j\times \iota U)\bigr)$.  Clearly $Q\times \iota X$ is second countable and Hausdorff and $\cA_Q$ is an open cover.  It is routine to verify that $\cA_Q$ is a maximal atlas.
That $R^j$ describes a continuous functor is obvious.  

\end{itemize}
\end{itemize}
\end{definition}

We postpone the proofs of the following two lemmas to the end of~\S\ref{intuitions}.
\begin{lemma}\label{basic-full}
There is a standard fully faithful embedding
\[
\bsc_{\leq n , k} \subset \snglr_{\leq n , k}
\]
over $\Top$.  This embedding respects the structure functors $R^j$.  
\end{lemma}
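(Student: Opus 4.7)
The plan is to construct the embedding inductively on $k$, simultaneously establishing full faithfulness, continuity, and compatibility with the $R^j$. The base case $k=-1$ is vacuous since $\bsc_{\leq n , -1}$ is empty. Assume the analogous statement at depth $k-1$ for every ambient dimension.

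For the inductive step on objects, the subcategory $\bsc_{\leq n , k-1}\subset \bsc_{\leq n , k}$ already embeds into $\snglr_{\leq n , k}$ by the inductive hypothesis, so I only need to assign an object of $\snglr_{\leq n , k}$ to each new basic $U_X$ with $X\in \snglr^c_{\leq k-1,k-1}$. I equip $\iota U_X = \RR^{n-k}\times C(\iota X)$ with the maximal atlas $\cA_{U_X}$ generated by two classes of charts: the identity chart $(U_X,\mathrm{id}_{\iota U_X})$, and the charts on the open stratum $\RR^{n-k}\times \RR_{>0}\times \iota X\subset \iota U_X$ obtained from the atlas of $R^{n-k}R_{>0}X\in \snglr_{\leq n , k-1}$ via the inductive embedding. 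The cover axiom holds from the identity chart; atlas compatibility on the open stratum follows from the inductive hypothesis, while near the apex only refinements of the identity chart arise and are manifestly compatible; maximality is imposed by declaration. Compatibility with $R^j$ on objects is then tautological from $\iota R^j U^n_X = \RR^j\times \iota U^n_X$ and the parallel product construction on atlases.

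At the level of morphism spaces, the four cases in Definition~\ref{singular-manifolds} match atlas-preserving continuous maps on the nose. Morphisms between old basics are handled by induction. There are no atlas-preserving $\snglr$-morphisms $\iota U_X\to \iota V$ for $V\in \bsc_{\leq n , k-1}$: such a map would force $(U_X,f)\in \cA_V$, and the atlas axiom applied at the image of the apex would demand a common refinement $U_X\xla{f_0} W\xra{g} W'$ with $(W',\phi')\in \cA_V$ and $f_0$ hitting the apex, but every morphism into $U_X$ lands in the open stratum $\RR^{n-k}\times\RR_{>0}\times\iota X$, a contradiction. A morphism $\iota U\to \iota U_X$ with $U\in \bsc_{\leq n , k-1}$ similarly lands in the open stratum, and by the construction of $\cA_{U_X}$ such maps biject with $\snglr_{\leq n , k-1}(U,R^{n-k}R_{>0}X)$. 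Finally, a representative pair $(\w f,h)$ for a morphism $U_X\to U_Y$ yields an atlas-preserving continuous map $\iota U_X\to \iota U_Y$, and the imposed equivalence relation on $\w{\bsc}_{n,k}(U_X,U_Y)$ collapses precisely the data invisible from the underlying map.

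The main technical obstacle is upgrading this set-level bijection to a homeomorphism. For this I would exhibit a cofinal family of generating opens $\sM_{O,(U,\phi),(V,\psi)}$ in the $\snglr$-topology by restricting either to $(U,\phi)=(U_X,\mathrm{id})$ and $(V,\psi)=(U_Y,\mathrm{id})$ --- in which case $\sM_O$ unwinds to a quotient basic open of $\w{\bsc}_{n,k}(U_X,U_Y)_{/\sim}$ --- or to atlas charts contained in the open stratum, where the inductive hypothesis closes the argument. The $R^j$ compatibility on morphisms is then automatic from the formulas $R^j(\w f,h) = (1_{\RR^j}\times \w f,\,1_{\RR^j}\times h)$ together with the product construction of atlases.
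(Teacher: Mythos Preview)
Your approach is correct in outline, but it diverges from the paper's in a way worth noting, and the divergence costs you some work.

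The paper does not build the atlas by generation-plus-maximization. It declares directly
\[
\cA \;=\; \{(U',\phi)\mid U'\xra{\phi} U \text{ is a morphism in } \bsc_{\leq n,k}\}
\]
and then proves two things: first, that the images $\{\phi(\iota U')\}$ form a \emph{basis} for the topology of $\iota U$ (handling both the Cover and Atlas axioms at once, by induction on $k$ and a case split on whether the point lies in $\RR^{n-k}$ or in the open stratum); second, that any open embedding $\iota U\to \iota V$ which is locally a $\bsc_{\leq n,k}$-morphism is already globally one (this is exactly the Maximal axiom, again by induction on depth). With the atlas so defined, fullness is a single line: a $\snglr$-morphism $f\colon U\to V$ pushes the identity chart $(U,\mathrm{id})$ into $\cA_V$, which by construction means $f\in\bsc_{\leq n,k}(U,V)$.

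Your case-by-case analysis of morphism spaces is essentially reproving this last step in a more roundabout way. In particular, your treatment of the case $U_X\to U_Y$ asserts that the equivalence relation on $\widetilde{\bsc}_{n,k}(U_X,U_Y)$ ``collapses precisely the data invisible from the underlying map,'' but you do not argue the surjectivity direction---that every atlas-preserving continuous map $\iota U_X\to \iota U_Y$ arises from some representative $(\widetilde f,h)$. This is exactly the content of the paper's maximality argument, and you would need to supply it. Once you do, your maximized atlas coincides with the paper's atlas and the two proofs converge; but the paper's choice of atlas makes this step structural rather than ad hoc.
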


For $k\leq k'$ notice the tautological fully faithful inclusions $\bsc_{\leq n , k}\subset \bsc_{\leq n , k'}$ over tautological inclusions $\snglr_{\leq n , k} \subset \snglr_{\leq n , k'}$.

\begin{lemma}\label{tautological}
Let $r$ be a non-negative integer.  
There are tautological fully faithful inclusions
\[
\bsc_{\leq n , k}\subset \bsc_{\leq n +r,k+r}
\]
over tautological inclusions 
\[
\snglr_{\leq n , k} \subset \snglr_{\leq n+r , k+r}
\]
over $\Top$.
Moreover, these are inclusions of components -- namely, there are no morphisms between an object of $\snglr_{\leq n , k}$ and an object of $\snglr_{\leq n +r ,k+r}\smallsetminus \snglr_{\leq n , k}$.  

\end{lemma}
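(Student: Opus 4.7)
The plan is to proceed by a simultaneous induction on $k$, constructing the two inclusion functors and verifying both full faithfulness and the component statement at stage $k$ for all $n$ and $r \geq 0$. The base $k=-1$ is immediate since $\bsc_{\leq n,-1} = \emptyset$ and $\snglr_{\leq n,-1}$ is terminal on $\emptyset$. For the inductive step, first construct $\bsc_{\leq n,k} \hookrightarrow \bsc_{\leq n+r,k+r}$ on objects using the decomposition $\ob\bsc_{\leq n,k} = \ob\bsc_{\leq n,k-1} \coprod \ob\snglr^c_{\leq k-1,k-1}$: send the first summand by the inductive hypothesis in $k$, and send each $U^n_X$ to $U^{n+r}_X$, where $X$ is reinterpreted as an object of $\snglr^c_{\leq k+r-1,k+r-1}$ using the instance of the lemma at parameters $(k-1,k-1)$. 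Note that underlying spaces agree: $\iota U^n_X = \RR^{n-k}\times C(\iota X) = \iota U^{n+r}_X$. On morphism spaces, the four cases in the definition of $\bsc_{\leq n,k}$ transport to the corresponding cases in $\bsc_{\leq n+r,k+r}$ via the inductive inclusion $\snglr_{\leq n,k-1} \subset \snglr_{\leq n+r,k+r-1}$. For example, $\bsc_{\leq n,k}(U,U_X) = \snglr_{\leq n,k-1}(U,R^{n-k}R_{>0}X)$ maps identically to $\bsc_{\leq n+r,k+r}(U,U^{n+r}_X) = \snglr_{\leq n+r,k+r-1}(U,R^{n-k}R_{>0}X)$, a bijection by inductive full faithfulness; the quotient construction defining $\bsc_{\leq n,k}(U_X,U_Y)$ transports verbatim.

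Next, the component statement for $\bsc$ at stage $k$ is established as follows. A morphism $U^{n+r}_Z \to U^{n+r}_{Z'}$ in $\bsc_{\leq n+r,k+r}$ with $Z' \in \snglr^c_{\leq k-1,k-1}$ produces, from the defining presentation of morphisms in $\bsc_{\leq n+r,k+r}$, an isomorphism $g_0 \colon Z \xrightarrow{\cong} Z'$ in $\snglr^c_{\leq k+r-1,k+r-1}$; by the inductive component statement at parameters $(k-1,k-1)$, the mere existence of $g_0$ forces $Z \in \snglr^c_{\leq k-1,k-1}$, hence $U^{n+r}_Z \in \bsc_{\leq n,k}$. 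The remaining morphism-space cases reduce to the inductive hypothesis for $\bsc_{\leq n,k-1}$ or are empty by definition.

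Now define $\snglr_{\leq n,k} \hookrightarrow \snglr_{\leq n+r,k+r}$ by $(\iota X, \cA) \mapsto (\iota X, \cA)$, viewing $\cA$ as an atlas in the larger basic category via the just-constructed inclusion. Well-definedness requires checking that $\cA$ remains maximal in $\bsc_{\leq n+r,k+r}$: for any candidate $(U_0,\phi_0)$ satisfying the refinement axiom against $\cA$, at each point of $\iota U_0$ one obtains a refinement $U_0 \xleftarrow{f_0} W \xrightarrow{f} U$ through some $(U,\phi) \in \cA$, so $U \in \bsc_{\leq n,k}$. Applying the component statement for $\bsc$ first to $W \to U$ and then to $W \to U_0$ forces $W, U_0 \in \bsc_{\leq n,k}$, and maximality of $\cA$ in $\bsc_{\leq n,k}$ gives $(U_0,\phi_0) \in \cA$. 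Full faithfulness on $\snglr$ is then immediate: both morphism spaces consist of continuous atlas-preserving maps for the same (unchanged) atlases, and the generating sub-basic opens $\sM_{O,(U,\phi),(V,\psi)}$ agree on the two sides since $\bsc_{\leq n,k}(U,V) = \bsc_{\leq n+r,k+r}(U,V)$ by basic-level full faithfulness.

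The component statement for $\snglr$ at stage $k$ follows by the same mechanism. Given a morphism $f \colon X \to Y$ in $\snglr_{\leq n+r,k+r}$ with $X \in \snglr_{\leq n,k}$, and any $(V,\psi) \in \cA_Y$ meeting $f(\iota X)$, the atlas compatibility axiom in $Y$ at a point of this intersection produces a refinement through some $W \in \bsc_{\leq n+r,k+r}$ together with a morphism into an atlas element $f_\ast(U,\phi) \in \cA_Y$ coming from $(U,\phi) \in \cA_X$; as above, this forces $V \in \bsc_{\leq n,k}$, and since the statement is understood componentwise on disjoint unions, $Y \in \snglr_{\leq n,k}$. Morphisms $Y \to X$ are symmetric. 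The main obstacle is the tight interlocking of the sub-claims within each inductive step: the component statement for $\bsc$ at stage $k$ must be secured before the $\snglr$-inclusion can be defined, since well-definedness of the latter invokes it, and conversely the $\bsc$-level argument at stage $k$ invokes the inductive $\snglr$-level component statement at stage $k-1$.
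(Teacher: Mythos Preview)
Your approach is essentially the same as the paper's: a simultaneous induction on $k$ that builds the $\bsc$ inclusion from the object decomposition $\ob\bsc_{\leq n,k} = \ob\bsc_{\leq n,k-1} \amalg \ob\snglr^c_{\leq k-1,k-1}$ and then transports atlases to define the $\snglr$ inclusion. Your explicit verification that the transported atlas remains maximal in $\bsc_{\leq n+r,k+r}$, via the $\bsc$-level component statement, is a point the paper's proof simply glosses over.

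There is one gap, however, in your argument for the $\snglr$-level component statement in the direction $X \to Y$. You only show that charts $(V,\psi)\in\cA_Y$ \emph{meeting} $f(\iota X)$ lie in the image of $\bsc_{\leq n,k}$; charts disjoint from $f(\iota X)$ are unconstrained, and the phrase ``understood componentwise on disjoint unions'' does not repair this. Concretely, for $n=k=0$ and $r=1$, take $X=\ast$ and $Y=\ast\sqcup\RR$ in $\snglr_{\leq 1,1}$ with the isolated point modeled on $U^1_{\emptyset^0}$: the inclusion of the isolated point is a morphism $X\to Y$, yet $Y$ is not in the image of $\snglr_{\leq 0,0}$ since its maximal atlas contains the chart $\RR=U^1_{\emptyset^{-1}}$. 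By contrast, the direction $Y\to X$ is not ``symmetric'' as you say but actually cleaner: the requirement $f_\ast\cA_Y\subset\cA_X$ forces \emph{every} chart of $Y$ into the image directly. The paper's own one-line sketch (``maps $Z\to\emptyset$'') handles the $\bsc$ case and the $Y\to X$ direction but is equally silent on $X\to Y$, so the issue may lie in the lemma's wording as much as in your argument.
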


\begin{terminology}\label{terms}
\begin{itemize}
\item[] 
\item Denote the complements 
\begin{itemize}
\item $\bsc_{n,k} = \bsc_{\leq n , k} \smallsetminus \bsc_{\leq n-1,k}$, 
\item $\bsc_{n,=k} = \bsc_{n,k} \smallsetminus \bsc_{n,k-1}$. 
\item $\snglr_{n,k} = \snglr_{\leq n , k} \smallsetminus \snglr_{\leq n-1,k}$,  
\item $\snglr_{n,k}^c = \snglr_{\leq n,k}^c \smallsetminus \snglr_{\leq n-1,k}$.
\end{itemize}

\item Denote 
\begin{itemize}
\item $\bsc_n=\bsc_{n,n}~,~{}~{}~{}~{}~{}~{}~{}~{}~{}~{}~\bsc_{\leq n} = \bsc_{\leq n ,n}$, 
\item $\snglr_n = \snglr_{n , n}~,~{}~{}~{}~{}~{}~{}~\snglr_{\leq n} = \snglr_{\leq n,n}$.  
\end{itemize}
\item We do not distinguish between an object of $\bsc_n$ and its image in $\snglr_n$.
\item We do not distinguish between objects of $\snglr_{n , k}$ and of $\snglr_n$.  
\item We do not distinguish between a morphism in $\snglr_n$ and its image under $\iota$.  
\item We will refer to an object $U\in \bsc_n$ as a \emph{basic}.
We will often denote the basic $U^n_{\emptyset^{-1}}\in \bsc_{n,0}$ by its underlying topological space $\RR^n$ which we understand to be equipped with its standard smooth structure.  
\item We will refer to an object $X\in \snglr_n$ as a \emph{singular $n$-manifold}.
\item We will say a singular $n$-manifold $X$ has \emph{depth $k$} if $X\in \snglr_{n,=k}$.  
\item We say a point $p\in X$ is of depth $k$ if there is a chart $(U,\phi)$, written as $\iota U = \RR^{n-k}\times C(\iota Y)$ so that $0\in \RR^{n-k}\subset \iota U$, for which $\phi(0) = x$. 
This notion of depth is well-defined -- this is an insubstantial consequence of Lemma~\ref{only-equivalences} for instance.

\item For $\mathfrak{P}$ a property of a topological space or a continuous map, say an object or morphism of $\snglr_n$ has property $\mathfrak{P}$ if its image under $\iota$ does.  
\item We adopt the convention that $\bsc_{\leq n , k} = \emptyset$ is empty and $\snglr_{\leq n , k} = \{\emptyset^n\}$ is terminal over the empty set in $\Top$ whenever $k<-1$.  

\end{itemize}

\end{terminology}

\subsection{Examples of singular manifolds}\label{section:singularExamples}

\example[Base cases and singular manifolds of dimension $0$]{
When $n=-1$, $\bsc_{-1}$ is the category with an empty set of objects; and $\snglr_{-1}$ is the terminal category, having a single object which we denote by $\emptyset^{-1}$.  The functor $\iota$ sends $\emptyset^{-1}$ to the empty topological space.

When $n=0$, $\bsc_{0,-1}$ is again the category with an empty set of objects, and $\snglr_{0,-1}^c = \snglr_{0,-1}$ is the terminal category whose object we denote by $\emptyset^0$. Hence we see that $\bsc_{0,0} = \{U_{\emptyset^{-1}}^0\} = \{\RR^0\}$ is the terminal category with $\iota \RR^0 = \ast \in \Top$. $\snglr_{0,0}$ is the category of countable sets and injections, while $\snglr_{0,0}^c$ is the category of finite sets and bijections. 
}

\example[Smooth manifolds]{
Singular manifolds of depth 0 and smooth manifolds are one and the same.
Specifically, $\bsc_{n,0}$ is the $\Top$-enriched category with a single object $U^n_{\emptyset^{-1}} = \RR^n$ and with space of morphisms $\Emb(\RR^n,\RR^n)$. 
$\snglr_{n,0} = \mfld_n$ is the $\Top$-enriched category of smooth $n$-manifolds and their smooth embeddings while $\snglr_{n,0}^c$ is the subcategory of compact $n$-manifolds and their isomorphisms.  The functor $\iota$ sends a smooth manifold $X$ to its underlying topological space.
}

\example[Singular manifolds of dimension 1]{
The first non-smooth example, $\bsc_{1,=1}$, is the $\Top$-enriched category with $\ob \bsc_{1,=1} = \{U^1_J\mid J \text{ a finite set}\}$ where the underlying space is the `spoke' $\iota U^1_J = C(J)$. This is the open cone on the finite set $J$, and we think of it as an open neighborhood of a vertex of valence $|J|$.

$\snglr_{1,1}$ is a $\Top$-enriched category summarized as follows.  Its objects are (possibly non-compact) graphs with countably many vertices, edges, and components.  
These types of graphs were utilized by S. Galatius in~\cite{galatius:graphs}.

For $X$ such a graph, denote by $\w{X}$ a smooth $1$-manifold obtained from $X$ by deleting the subset $V$ of its vertices and gluing on collars in their place, we call $\w{X}$ a \emph{dismemberment} of $X$ -- the construction of $\w X$ depends on choices of collars but is well-defined up to isomorphism rel $X\smallsetminus V$.  
(See \textsection\ref{section:dismemberment}.) 
There is a quotient map $\w{X} \to X$ given by collapsing these collar extensions to the vertices whence they came.  
A morphism $X\xra{f}Y\in \snglr_{1,1}$ is an open embedding of such graphs for which there are dismemberments $\w{X}$ and $\w{Y}$ which fit into a diagram
\[
\xymatrix{
\w{X}  \ar[r]^{\w{f}} \ar[d]
&
\w{Y}  \ar[d]
\\
X\ar[r]^f
&
Y
}
\]
where $\w{f}$ is smooth.  

Finally, $\snglr_{1,1}^c\subset \snglr_{1,1}$ is the subcategory whose objects are those such graphs which are compact, and whose morphisms are the isomorphisms among such.  
}

\begin{example}[Corners]
Let $M$ be an $n$-manifold with corners; for instance, $M$ could be an $n$-manifold with boundary.  
Each point in $M$ has a neighborhood $U$ which can be smoothly identified with $\RR^{n-k}\times [0,\infty)^k$ for some $0\leq k \leq n$.  
Notice that $\RR^{n-k}\times [0,\infty)^k\cong \RR^{n-k}\times C(\Delta^{k-1})$ is homeomorphic to the product of $\RR^{n-k}$ with the open cone on the topological $(k-1)$-simplex. Such an $M$ is an object of $\snglr_n$.

\end{example}

\begin{example}[Embedded submanifolds]
The data $P \subset M$ of a properly embedded $d$-dimensional submanifold of an $n$-dimensional manifold is an example of a singular $n$-manifold.
Each point of $M$ has a neighborhood $U$ so that the pair $(P\cap U\subset U)$ can be  identified with either $(\emptyset\subset\RR^n)$ or $(\RR^d\subset \RR^n)$.  
Note that the basic an open ball around $p \in P$ is homeomorphic to the product of $\RR^d$ with the open cone $C(S^{n-d-1})$. Hence the data $P \subset M$ is an object in $\snglr_{n,d}$.
\end{example}

\begin{example}
An object of $\bsc_{2,2}$ is $\RR^2$, $\RR\times C(J)$ with $J$ a finite set, or $C(Y)$ where $Y$ is a compact graph.  
Heuristically, an object of $\snglr_{2,2}$ is a topological space which is locally of the form of an object of $\bsc_{2,2}$.
The geometric realization of a simplicial complex of dimension $2$ is an example of such an object.  For instance, the cone on the $1$-skeleton of the tetrahedron $\Delta^3$, here there is a single point of depth $2$ which is the cone point and the subspace of depth $1$ points are is the cone on the vertices of this $1$-skeleton.  Another example is a nodal surface, here the depth $2$ points are the nodes and there are no depth $1$ points.  
\end{example}

\begin{example}[Simplicial complexes]
Let $\sS$ be a finite simplicial complex such that every simplex is the face of a simplex of dimension $n$. The geometric realization $|\sS|$ is a compact singular $n$-manifold. 
Indeed, inductively, the link of any simplex of dimension $(n-k)$ is a compact singular $(k-1)$-manifold.  It follows that a neighborhood of any point $p\in |\sS|$ can be identified with $\RR^{n-k}\times CX$ for $X=|\mathsf{Link}(\sigma_p)|$ the singular $(k-1)$-manifold which is the geometric realization of the link of the unique simplex $\sigma_p$ the interior of whose realization contains $p$.  
\end{example}

\begin{example}
Note that for any singular $(k-1)$-manifold $X$ there is an apparent inclusion ${\sf Iso}(X,X) \to \bsc_n(U^n_X,U^n_X)$.  
In particular, while there is a canonical homeomorphism of the underlying space $\iota \RR^n \cong \iota U^n_{S^{n-1}}$, there are fewer automorphisms of $\RR^n\in \bsc_n$ than of $U^n_{S^{n-1}}\in \bsc_n$.  Hence $\RR^n \ncong U^n_{S^{n-1}} \in \snglr_n$.  
\end{example}

\begin{example}[Whitney stratifications] The Thom-Mather Theorem~\cite{mather} ensures that a Whitney stratified manifold $M$ has locally trivializations of its strata of the form $R^k C(M)$ and thus is an example of a stratified manifold in the sense used here.
In particular, real algebraic varieties are examples of singular manifolds in the sense above -- though the morphisms between two such are vastly different when regarded as varieties versus as singular manifolds.  
\end{example}

\subsection{Intuitions and basic properties}\label{intuitions}
We discuss some immediate consequences of Definition~\ref{singular-manifolds}. It might be helpful to think of the $\Top$-enriched category  $\snglr_n$ as the smallest which realizes the following intuitions:
\begin{itemize}
\item Smooth $n$-manifolds are examples of singular $n$-manifolds.
\item If $X$ is a singular $n$-manifold then $\RR_{\geq 0} \times X$ has a canonical structure of a singular $(n+1)$-manifold.
\item For $P\subset X$ a compact singular $k$-submanifold of a singular $n$-manifold, then the quotient $X/P$ has a canonical structure of a singular $n$-manifold.
\item For $X$ a compact singular manifold, the map $\Aut(X,X) \to \Emb(CX , CX)$ is a homotopy equivalence.  
(This feature is of great importance and is responsible for the homotopical nature of the theory of singular manifolds developed in this article.)
\end{itemize}

The functor $\iota$ preserves the basic topology of a singular manifold $X$ one expects. We will use most of the following facts throughout the paper without mention:

\begin{lemma}\label{about-X}
Let $X=(\iota X,\cA)$ be a singular $n$-manifold.  Then 
\begin{enumerate}

\item  For each morphism $X\xra{f} Y$ the continuous map $\iota X \xra{\iota f} \iota Y$ is an open embedding.

\item The topological dimension of $\iota X$ is at most $n$.

\item The topological space $\iota X$ is paracompact and locally compact.  

\item The collection $\{\phi(\iota U)\mid (U,\phi)\in \cA\}$ is a basis for the topology of $\iota X$.

\item The atlas $\cA$ consists of those pairs $(U,\phi)$ where $U\in \bsc_n$ and $U\xra{\phi} X$ is morphism of singular manifolds.  

\item Let $O\subset \iota X$ be an open subset. There is a canonical singular $n$-manifold $X_O$ equipped with a morphism $X_O \to X$ over the inclusion $O\subset \iota X$.  

\item 
Let $\iota Y$ be a Hausdorff topological space.
Consider a commutative diagram of categories
\[
\xymatrix{
\cU  \ar[r]^q  \ar[d]^p
&
\snglr_n  \ar[d]^\iota
\\
\cO(\iota Y) \ar[r]^i
&
\Top
}
\]
in which $\cU$ is a countable ordinary
poset.  Suppose the functor $\ov{p}\colon \cU^\triangleright \to\cO(\iota Y)$, determined by $\infty\mapsto \iota Y$, is a colimit diagram. There is a unique (up to unique isomorphism) extension $\ov{q}\colon \cU^\triangleright \to \snglr_n$ of $q$ for which $i\ov{p} = \iota \ov{q}$.  

\end{enumerate}
\end{lemma}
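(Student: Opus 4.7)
The plan is to prove all seven assertions simultaneously by induction on $n$, with $n=-1$ vacuous since $\iota\emptyset^{-1}=\emptyset$. The key local input is that every point $p \in \iota X$ has an atlas-chart neighborhood $\phi(\iota U)$ with $\iota U \cong \RR^{n-k}\times C(\iota Y)$ for some compact $Y \in \snglr^c_{\leq k-1, k-1}$, so induction applies to $Y$. I will treat the parts in the order (5), (2), (3), (4), (6), (1), (7), since each leans on the previous ones.

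Parts (2)--(5) are essentially local. For (5), observe that a morphism $U \to X$ in $\snglr_n$ is by definition a continuous map $\iota U \to \iota X$ compatible with atlases; the Maximal axiom, applied chart by chart, then forces the pair $(U,\phi)$ into $\cA$, while the converse is immediate. For (2), induction gives $\dim(\iota Y) \leq k-1$, hence $\dim(C(\iota Y)) \leq k$ and $\dim(\iota U) \leq n$. For (3), $C(\iota Y)$ is locally compact because $\iota Y$ is compact (by definition of $\snglr^c$), so $\iota U$ is locally compact; paracompactness then follows from second-countability plus locally compact Hausdorff. For (4), I will verify that every basic $\iota U$ admits arbitrarily small sub-basic neighborhoods at every point---scaling the Euclidean factor handles points away from the cone tip, while a cone-radius rescaling handles a neighborhood of the cone point---and combined with the Atlas axiom this makes $\{\phi(\iota U) : (U,\phi) \in \cA\}$ a topological basis for $\iota X$.

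Part (6) is a direct construction: given open $O \subset \iota X$, define $\cA_{X_O} := \{(U,\phi) \in \cA_X : \phi(\iota U) \subset O\}$ and verify Cover (from (4), shrinking charts into $O$), Atlas (inherited from $\cA_X$), and Maximal (by restricting candidate charts to $X$); the inclusion $O \hookrightarrow \iota X$ is manifestly the underlying map of a morphism $X_O \to X$. With (6) in hand, part (1) splits into openness and injectivity. Openness of $\iota f$ is immediate: on each basis set $\phi(\iota U) \subset \iota X$, $\iota f$ agrees with the open embedding $f\phi \in \cA_Y$. For injectivity, I argue by contradiction: if $f(x_1)=f(x_2)$ with $x_1 \neq x_2$, I will construct, via (6) and a thickening argument, an open singular submanifold $W \subset X$ containing both $x_1$ and $x_2$ whose singular manifold structure admits a single basic chart $\psi \colon U \to W$ covering $\{x_1,x_2\}$; since $f\psi \in \cA_Y$ is then an open embedding, hence injective, this forces $\psi^{-1}(x_1) = \psi^{-1}(x_2)$, contradicting injectivity of $\psi$. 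The construction of this encompassing chart---presumably via a stratum-respecting arc from $x_1$ to $x_2$ together with a tubular neighborhood---is the main technical obstacle, and I expect it to require some case analysis on the depths of the two points.

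For (7), the descent assertion, I will take the atlas of the extension $\ov q(\infty)$ to consist of those pairs $(U,\phi)$ such that for each $p \in \phi(\iota U)$ there is some $\alpha \in \cU$ with $p \in i(\alpha)$ and a factorization of $\phi$ through an atlas element of $q(\alpha)$ on a neighborhood of $\phi^{-1}(p)$. Functoriality of $q$ along morphisms in $\cU$ ensures this is well-defined and satisfies the Atlas axiom; the colimit hypothesis on $\ov p$ supplies the Cover axiom; Maximal is closure under the defining condition. Uniqueness of the extension follows from the uniqueness clause of Maximal, applied to any two candidate extensions sharing the same local data from the $q(\alpha)$.
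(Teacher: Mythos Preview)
Your treatment of (2)--(6) is sound and is what the paper means by ``routine''; the paper does not spell these out. For (7), your construction is correct in spirit, but the paper organizes it differently: rather than directly defining a maximal atlas on $\iota Y$, it first takes the non-maximal union $\bigcup_{u\in\cU}\bigl(\ov p(u\to\infty)\bigr)_\ast\cA_{q(u)}$, which yields only a singular \emph{pre}manifold, and then invokes the localization $\psnglr_n\rightleftarrows\snglr_n$ developed in \S\ref{premanifolds} (Corollary~\ref{maximal-atlas'}) to pass to the associated singular manifold. This two-step route isolates ``glue the local data'' from ``complete to a maximal atlas'' and reuses a general machine rather than redoing maximality by hand.

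The genuine gap is in your injectivity argument for (1). The construction you sketch---a single basic chart $\psi\colon U\to W$ containing both $x_1$ and $x_2$---cannot succeed in general. Every basic is connected, so if $x_1,x_2$ lie in distinct components of $X$ no such chart exists; likewise two points of the same maximal depth with non-isomorphic local links can never share a basic chart, so your ``stratum-respecting arc plus tubular neighborhood'' idea has no chance of producing one. Worse, test your strategy against the fold map $\RR^n\sqcup\RR^n\to\RR^n$: every chart of the source lands in one component, and its composite with the fold is a chart of the target, so the fold map satisfies the paper's literal definition of morphism while failing to be injective. The paper declares (1) routine and supplies no argument; either injectivity is meant to be part of the definition of morphism (the paper certainly relies on it later, e.g., in asserting that disjoint union is not a coproduct because ``all morphisms are monomorphisms''), or there is a subtlety here that neither you nor the paper resolves. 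Your openness argument, by contrast, is correct and is the real local content.
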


\begin{proof}
All points are routine except possibly point~(7).  This will be examined in section~\textsection\ref{premanifolds} to come.  In the language there, there is an immediate extension $\widetilde{q}\colon \cU^\triangleright \to \psnglr_n$ to \emph{pre-}singular $n$-manifolds where the value $\widetilde{q}(\infty) = (\iota Y, \cA')$ with $\cA= \bigcup_{u\in \cU} \bigl(\ov{p}(u\to \infty)\bigr)_\ast \cA_{q(u)}$ where $\cA_{q(u)}$ the atlas of $q(u)$.  Then appeal to Corollary~\ref{maximal-atlas'}.
\end{proof}

One also has the basic operations desirable from basic manifold theory:
\begin{lemma}\label{facts}
\begin{enumerate}
\item[]
\item The $\Top$-enriched category $\snglr_n$ admits pullbacks and $\iota$ preserves pullbacks.  

\item The triple $(\snglr_n,\sqcup, \emptyset)$ is a $\Top$-enriched symmetric monoidal category over $(\Top,\amalg,\emptyset)$.  

\item\label{C}
There is an injective faithful embedding 
\[
C\colon \snglr_{n , k}^c \to \bsc_{n+1,k+1}
\]
over the open cone functor on $\Top$.  

\item\label{deleted}  There is a standard natural transformation 
\[
R_{>0} \to C
\]
of continuous functors $\snglr^c_{n , k} \to \snglr_{n+1,k+1}$ which lies over $\RR_{>0} \times - \to C-$.  

\item\label{star} For $P$ and $Q$ topological spaces, define the \emph{join} of $P$ and $Q$ to be the space $P\star Q = (P\times \Delta^1\times Q)_{/\sim}$, where $\bigl(p,(1,0)\bigr)\sim \bigl(p',(1,0)\bigr)~,~ \bigl(q,(0,1)\bigr)\sim \bigl(q',(0,1))\bigr)$.  The operation $-\star-$ is functorial in each variable and is coherently associative and commutative.  
There is a continuous functor 
\[
\snglr^c_{m,j}\times \snglr^c_{n , k} \xra{\star} \snglr^c_{m+n+1,j+k+1}
\]
over the join operation of topological spaces.  

\item\label{cross} There is a continuous functor
\[
\bsc_{m,j}\times\bsc_{n , k} \xra{\times} \bsc_{m+n,j+k}.
\]
over product of topological spaces.  

\item\label{cross'} 
There is a continuous functor
\[
\snglr_{m,j} \times \snglr_{n , k} \xra{\times} \snglr_{m+n,j+k}
\]
over product of topological spaces.

\end{enumerate}
\end{lemma}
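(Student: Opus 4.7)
The plan is to treat all seven parts under a uniform recipe: perform the relevant operation at the level of underlying topological spaces via $\iota$, cover the resulting space with a natural (not yet maximal) pre-atlas inherited from the input, and then appeal to Lemma~\ref{about-X}(6),(7) to extend to the canonical maximal atlas. Continuity of each functor is verified on the subbasic opens $\sM_{O,(U,\phi),(V,\psi)}$ generating the morphism-space topology, where it reduces to continuity of the underlying topological operation. Under this recipe, parts (1) and (2) are essentially formal: for (1), given $X \to Z \leftarrow Y$, the pullback $\iota X \times_{\iota Z} \iota Y$ is open in both $\iota X$ and $\iota Y$, and the two singular manifold structures induced via Lemma~\ref{about-X}(6) agree by maximality; the universal property then follows from faithfulness of $\iota$. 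For (2), the atlas on $\iota X \sqcup \iota Y$ is the union of atlases on the summands -- since the domain of any chart is the connected underlying space of a basic, charts land in one component, and maximality is preserved -- and the associator, unitor, and braiding descend from $\Top$.

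Parts (3) and (4) are largely definitional. The cone of a compact singular manifold $X$ (of appropriate dimension and depth) is tautologically the basic $U^{n+1}_X$ introduced in Definition~\ref{singular-manifolds}; on morphisms, an isomorphism $g_0\colon X \xra{\cong} Y$ in $\snglr^c$ produces the class of $(\w{f}, 1_{\RR^{n-k}}) \in \w{\bsc}_{n+1,k+1}(U_X, U_Y)$, where $\w{f}$ extends $g_0$ by the identity on the Euclidean and $\RR_{\leq 0}$ factors. Faithfulness reduces to an inspection of the equivalence relation defining morphisms of basics, and injectivity is immediate since $\iota$ remembers the cone-point data. For (4), the natural transformation at the level of spaces is the open inclusion of the positive radial direction $\RR^{n-k} \times \RR_{>0} \times \iota X \hookrightarrow \RR^{n-k} \times C(\iota X) = \iota(CX)$, and naturality is immediate from the piecewise-defined continuous map associated to a morphism of basics.

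Parts (5), (6), and (7) rest on the classical cone-join identification $CP \times CQ \cong C(P \star Q)$ for compact spaces $P$, $Q$. For (5), if $X$, $Y$ are compact singular manifolds of dimensions $m$, $n$ and depths $j$, $k$, then $X \star Y$ inherits a natural atlas of three types of charts -- those from $X$, those from $Y$, and product charts on the open stratum $X \times (0,1) \times Y$ -- producing an object of dimension $m+n+1$ and depth $j+k+1$. For (6), the product of basics has underlying space
\[
\iota U^m_P \times \iota U^n_Q \;\cong\; \RR^{(m-j)+(n-k)} \times CP \times CQ \;\cong\; \RR^{(m+n)-(j+k)} \times C(P \star Q) \;=\; \iota U^{m+n}_{P \star Q},
\]
which is a basic in $\bsc_{m+n,j+k}$ by (5); functoriality is inherited from functoriality of the cone and the join. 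Part (7) then follows by covering $\iota X \times \iota Y$ with the pre-atlas of products of charts (each a basic via (6)) and maximizing through Lemma~\ref{about-X}(7). I expect the main obstacle to be promoting the cone-join homeomorphism from a statement about underlying spaces to one about singular manifolds, since one must match chart data on both sides across the strata of differing depths; this will be handled by induction on depth, using that the cone and join constructions on compact singular manifolds are both built iteratively from the same link data.
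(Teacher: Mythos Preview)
Your proposal is correct and takes essentially the same approach as the paper: both treat the seven parts as routine, handle (1)--(4) as nearly definitional, and for (5)--(7) rely on the cone--join identity $CP \times CQ \cong C(P \star Q)$ together with an inductive construction of atlases from pre-atlases. The paper differs only in superficial presentation---for instance, it describes the pullback atlas in (1) as pulled back from $Z$ rather than via the open-subset structure on $X$ or $Y$, and in (7) it characterizes the product atlas by a chart-compatibility condition rather than as the maximization of the pre-atlas of product charts---but these are equivalent formulations of the same construction.
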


\begin{proof}
All the proofs are routine so we only indicate the methods.  
\begin{enumerate}
\item Consider a pair of morphisms $X\xra{f} Z$ and $Y\xra{g} Z$. The pullback is $X\times_Z Y = (\iota X\times_{\iota Z} \iota Y, \cA')$ where $\cA'=\{(U,\phi)\mid (U,h\phi)\in \cA\}$ where $\cA$ is the atlas of $Z$ and $h\colon \iota X\times_{\iota Z} \iota Y \to \iota Z$ is the projection.  

\item Immediate.  

\item This is given by $X \mapsto U^{n+1}_X$.  
\item This is induced from the open embedding $\RR_{>0} \subset \RR$.
\item The join $\iota X\star \iota Y$ admits an open cover 
\[
\xymatrix{
\RR\times (\iota X\times \iota Y)  \ar[r]  \ar[d]  
&
\iota X\times\bigl(C(\iota Y)\bigr) \ar[d]
\\
\bigl(C(\iota X)\bigr)\times \iota Y  \ar[r]
&
\iota X\star \iota Y 
}
\]
which can be lifted to a diagram of singular manifolds witnessing an open cover, thereby generating a maximal atlas (see \textsection\ref{premanifolds} for how this goes).   
\item This is given inductively by the expression $(X,Y)\mapsto U^{m+n+1}_{X\star Y}$ with base case $(\RR^m,\RR^n)\mapsto \RR^{m+n}$ upon observing the formula $CP \times CQ\cong C(P\star Q)$ which is implemented by, say, the assignment $(s,p;t,q)\mapsto (s^2+t^2, q , (\frac{s-t+1}{2},\frac{t-s+1}{2}),q)$.  
\item An atlas for $X\times Y$ is the collection $\{(U', \phi')\}$ consisting of those pairs for which for each pair of morphisms $U\xra{f} X$ and $V\xra{g} Y$ for which $(f\times g)(\iota U \times \iota V) \subset \phi'(U')$ the composite $(\phi')^{-1} \circ (f\times g) \colon U\times V \to U'$ is a morphism of $\bsc_{m+n}$.
\end{enumerate}
\end{proof}

\begin{lemma}\label{local-basis}
For each $p\in \RR^{n-k}\subset \iota U$ there is a continuous map $\beta\colon \RR_{\geq 0} \to \bsc_n(U,U)$
for which 
$\beta_0=1_U$,
$\beta_t\circ \beta_s = \beta_{s+t}$, and
$\{\beta_t(\iota U)\mid t\in \RR_{\geq 0}\}$ is a local basis for the topology around $p \in \iota U$.  
\end{lemma}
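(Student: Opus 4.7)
The plan is to build $\beta_t$ as a conjugated dilation of $U$ centered at $p$. Write $U = U^n_X$ with $X \in \snglr^c_{\leq k-1, k-1}$ compact, so that $\iota U = \RR^{n-k} \times C(\iota X)$, and, after translating, assume $p = (0, \ast)$ with $\ast$ the cone point. The first naive candidate $(u, [r,x]) \mapsto (e^{-t}u, [e^{-t}r, x])$ is a one-parameter semigroup with $\beta_0 = \id_U$, but scalar multiplication by $e^{-t}$ is surjective on $\RR^{n-k}$ and on the cone coordinate, so the image at each $t$ is all of $\iota U$. The remedy is to conjugate the dilation on each factor by a diffeomorphism onto a bounded open subset.

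Concretely, fix once and for all a smooth diffeomorphism $\phi \colon \RR^{n-k} \to B(0,1)$ onto the open unit ball with $\phi(0) = 0$, and a smooth embedding $\psi \colon \RR \to \RR$ that is the identity on $\RR_{\leq 0}$, is strictly increasing, and carries $[0,\infty)$ diffeomorphically onto $[0,1)$. Define
\[
\alpha_t := \phi^{-1} \circ (e^{-t} \cdot) \circ \phi, \qquad \Psi_t := \psi^{-1} \circ (e^{-t} \cdot) \circ \psi,
\]
which are bona fide smooth self-embeddings of $\RR^{n-k}$ and $\RR$ respectively, because $e^{-t} \leq 1$ preserves the bounded images of $\phi$ and $\psi$. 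Realize $\beta_t \in \bsc_n(U,U)$ via the pair $(\widetilde{f_t}, h_t)$ of Definition~\ref{singular-manifolds}, taking $h_t := \alpha_t$ on $\RR^{n-k}$ and $\widetilde{f_t} := (\alpha_t \times \Psi_t) \times \id_X$ on $\RR^{n-k} \times \RR \times \iota X$; this $\widetilde{f_t}$ is a morphism in $\snglr_{\leq n, k-1}$ by the product functor of Lemma~\ref{facts}(\ref{cross'}). The compatibility diagram in Definition~\ref{singular-manifolds} is immediate from $\alpha_t(0)=0$, $\Psi_t(0)=0$, $\Psi_t|_{\RR_{\leq 0}}(s) = e^{-t} s$, and $g_0 = \id_X$.

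The semigroup identities $\beta_0 = \id_U$ and $\beta_s \circ \beta_t = \beta_{s+t}$ reduce under conjugation to $e^0 = 1$ and $e^{-s}e^{-t} = e^{-(s+t)}$, while continuity of the assignment $t \mapsto \beta_t$ into the $\sM_O$-generated topology on $\bsc_n(U,U)$ follows from joint smoothness of the defining formulas in $(t, u, r)$. For the local basis property, one computes directly that
\[
\beta_t(\iota U) \;=\; \phi^{-1}\bigl(B(0,e^{-t})\bigr) \;\times\; \bigl\{[r,x] \in C(\iota X) \;\big|\; r < \psi^{-1}(e^{-t})\bigr\}.
\]
Compactness of $X$ ensures that the sets $\{r < \epsilon\}$ form a local basis at $\ast \in C(\iota X)$; combined with $\phi^{-1}(B(0,e^{-t})) \to \{0\}$ and $\psi^{-1}(e^{-t}) \to 0$ as $t \to \infty$, the collection $\{\beta_t(\iota U)\}_{t \geq 0}$ is cofinal among open neighborhoods of $p$.

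The principal technical obstacle is the middle paragraph: translating the intuitive product dilation into the precise language of a pair $(\widetilde f, h)$ and verifying the equivalence-class compatibility of Definition~\ref{singular-manifolds} is where the inductive structure of $\bsc_n$ genuinely enters the picture. Once that translation is in hand, all remaining verifications (semigroup law, continuity in $t$, and cofinality) are routine.
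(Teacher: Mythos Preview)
Your proof is correct. It differs from the paper's argument in its decomposition: the paper reduces to a single cone coordinate via the homeomorphism $\iota U = \RR^{n-k}\times C(\iota X)\approx C(S^{n-k-1}\star \iota X)$ and then applies a one-parameter semigroup $\beta'_t\in\Emb(\RR,\RR)$, identity on $\RR_{\leq 0}$, to that lone radial direction. You instead keep the two factors separate and shrink each independently by a conjugated dilation, assembling $(\widetilde f_t,h_t)=\bigl((\alpha_t\times\Psi_t)\times\id_X,\;\alpha_t\bigr)$ directly in the language of Definition~\ref{singular-manifolds}. The paper's route is shorter once one accepts that the join identification carries the radial semigroup to a genuine morphism of $\bsc_n(U,U)$ (a verification it leaves to the reader); your route trades that step for an explicit check of the $(\widetilde f,h)$ compatibility diagram, which makes membership in $\bsc_n(U,U)$ and the semigroup law transparent. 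Both yield the same neighborhood basis, and your appeal to compactness of $X$ for the cone-direction cofinality is exactly the point that needs saying.
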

\begin{proof}
By translation, assume $p=0$.  Using classical methods, choose such a continuous map $\beta'\colon \RR_{\geq 0} \to \Emb(\RR, \RR)$ for which $\beta_t$ restricts as the identity map on $\RR_{\leq 0}$ for each $t\in \RR_{\geq 0}$.  The lemma follows upon the homeomorphism $\iota U= \RR^{n-k}\times C(\iota X) \approx C(S^{n-k-1}\star \iota X) = \bigl(\RR \times (S^{n-k-1}\star \iota X)\bigr)_{/\sim}$.  

\end{proof}

\begin{proof}[Proof of Lemma~\ref{basic-full}]
Use induction on $k$.  For $k=0$ this is the inclusion of the full subcategory of $\mfld_n$ spanned by $\RR^n$.  
Let $U\in \bsc_{\leq n , k}$.  We wish to construct an object $\snglr_{\leq n , k}$ associated to $U$.  Inductively, we can assume $U = U^n_X$ with $X\in \snglr_{k-1,k-1}^c$.   So $\iota U = \RR^{n-k}\times C(\iota X)$.  Define the object $(\iota U , \cA)\in \snglr_{\leq n , k}$ where $\cA=\{(U',\phi)\mid U'\xra{\phi} U\in \bsc_{\leq n , k}\}$.   Clearly $\iota U$ is second countable and Hausdorff and $\cA$ is an open cover.  

To show $\cA$ is an atlas it is sufficient to prove that the collection $\{f(V)\mid V\xra{f} U \in \bsc_{\leq n , k}\}$ is a basis for the topology of $\iota U = \RR^n\times C(\iota X)$.  For this, proceed again by induction on $k$, the case $k=0$ being standard.  The general case following from the two scenarios.  Let $p\in O\subset \RR^{n-k}\times C(\iota X)$ be neighborhood.
\begin{itemize}
\item Suppose $p\in \RR^{n-k}$. There is a morphism $U\xra{f} U$ for which $p\in f(\iota U)\subset O$, this morphism given by choosing a smooth self-embedding of $\RR^{n-k}\times \RR_{\geq 0}$ onto an arbitrarily small neighborhood $(p,0)$.  
\item Suppose $p\in \RR^{n-k}\times \RR_{>0} \times \iota X$.  Because $X$ has an open cover by the sets $\phi(\iota U')$ with $U'\in \bsc_{k-1,k-1}$ then we can reduce to the case $p\in \RR^{n-k}\times \RR_{>0} \times \iota U'$ which follows by induction.
\end{itemize}

To show $\cA$ is maximal we verify that an open embedding $f\colon \iota U \to \iota V$ is in $\bsc_{\leq n , k}$ if for each $p\in \iota U$ there is a diagram $U\xla{g} W \xra{h} V$ in $\bsc_{\leq n , k}$ such that $h=fg$ and $p\in g(\iota W)$.  We prove this by induction on the depth $j$ of $V$ with the case $j=0$ being classical -- a continuous map is smooth if and only if it is smooth in a neighborhood of each point in the domain.  If the depth of $U$ is strictly less than that of $V$, then the result follows by induction upon inspecting the definition of a morphism in $\bsc_n$.  Suppose the depth of $U$ equals the depth of $V$ which equals $k$.  Write $U=U^n_X$ and $V=V^n_Y$.  By definition, there is a lift to a morphism $\w{f} \colon R^{n-j}R X \to  R^{n-j}R Y$ of singular $n$-manifolds of depth strictly less than $k$.  

This assignment $U\mapsto (\iota U,\cA)$ is evidently continuously functorial over $\Top$ and therefore is faithful.  We denote this object $(\iota U, \cA)$ again as $U$.  
If $f\in \Top(\iota U,\iota U)$ lies in $\snglr_{\leq n , k}(U,U)$ then necessarily the identity morphism $g(U=U)\in \cA$ and it follows that $g$ lies in $\bsc_{\leq n , k}$.  So $\bsc_{\leq n , k} \to \snglr_{\leq n , k}$ is full.

Clearly this embedding $\bsc_{\leq n , k}\subset \snglr_{\leq n , k}$ respects the structure functors $\iota $ and $R^j$.

\end{proof}

\begin{proof}[Proof of Lemma~\ref{tautological}]
This proof of the first assertion is typical of the arguments to come, so we give it in detail.
We simultaneously establish both inclusions using induction on $k$.  
While the base case should be $k=-1$, this case is too trivial to learn from -- the assertion is obviously true.  Lets examine the base case $k=0$.  Then $\bsc_{\leq n , 0}$ has one object $U^n_{\emptyset^{-1}}$ corresponding to the unique object $\emptyset \in \snglr_{-1,-1}^c$.  The space of endomorphisms of this object is $\Emb(\RR^n,\RR^n)$, with composition given by compositing of embeddings.  
Assign to this object the object $U^{n+r}_{\emptyset^{r-1}}\in \bsc_{\leq n +r , r}$ corresponding to $\emptyset\in \snglr_{\leq r-1, r-1}^c$.  Notice that $\iota U^n_{\emptyset^{-1}} = \RR^n \times C(\iota \emptyset^{-1}) = \RR^n = \RR^n\times C(\iota \emptyset^{r-1}) = \iota U^{n+r}_{\emptyset^{r-1}}$.  For the assignment on spaces of morphisms, observe the identifications $\bsc_{\leq n,0}(U^n_{\emptyset^{-1}},U^n_{\emptyset^{-1}}) =  \Emb(\RR^n,\RR^n) = \w{\bsc}_{\leq n +r,r}(U^{n+r}_{\emptyset^{r-1}},U^{n+r}_{\emptyset^{r-1}})  =  \bsc_{\leq n +r ,r}(U^{n+r}_{\emptyset^{r-1}},U^{n+r}_{\emptyset^{r-1}})$.  These assignments obviously describe a continuous functor over $\Top$ which is an injection on objects and an isomorphism on morphism spaces.  

Assume that the tautological fully faithful inclusion $\bsc_{\leq n , k}\subset \bsc_{\leq n+r,k+r}$.  
For now, denote this assignment on objects as $U\mapsto U'$, and on morphisms (justifiably) as  $f\mapsto f$.  
Let us now define the tautological inclusion $\snglr_{\leq n , k} \subset \snglr_{\leq n +r , k+r}$.  
Assign to $X=(\iota X, \cA) \in \snglr_{\leq n , k}$ the object $X'=(\iota X, \cA')\in \snglr_{\leq n +r,k+r}$ with the same underlying space but with atlas $\cA'=\{(U',\phi)\}$.  
The assignment on morphism spaces is tautological (hence the occurrence of the term).  
It is immediate that this inclusion $\snglr_{\leq n,k}\subset \snglr_{\leq n+r,k+r}$ takes place over $\Top$ and under $\bsc_{\leq n ,k}\subset \bsc_{n+r,k+r}$.  
Obviously, $X\in \snglr_{\leq n , k}^c$ if and only if $X'\in \snglr_{\leq n +r , k+r}^c$.  

By induction, assume the first statement of the lemma has been proved for any $k'<k$.  
We now establish the tautological inclusion $\bsc_{\leq n , k} \subset \bsc_{\leq n+r,k+r}$.
Let $U\in \bsc_{\leq n, k}$.  
If $U$ has depth less than $k$, then $U'$ has already been defined.  
Assume that the depth of $U$ is $k$.  
Then $U$ is canonically of the form $U=U^n_X$ for some $X\in \snglr_{k-1,k-1}^c$.  
Then $X'\in \snglr_{\leq k+r-1 , k+r-1}^c$ has been defined.  
Define $U' = U^{n+r}_{X'}\in \bsc_{\leq n+r,k+r}$.  
Notice that $\iota U' = \RR^{(n+r)-(k+r)}\times C(\iota X') = \iota U$.  
Let $f\in \bsc_{\leq n,k}(U,V)$ be a morphism.  
If the depth of $U$ or $V$ is less than $k$, then by induction we can regard $f$ as a morphism $U' \xra{f} V'$.  Assume both $U$ and $V$ have depth exactly $k$.  Write $U=U^n_X$ and $V=V^n_Y$ and choose a representative $(\w{f},h)$ of $f=[(\w{f},h)]$.  
Then both $\w{f}$ and $h$ are morphisms between singular manifolds of depth less than $k$.  
As so, we can regard them as morphisms $R^{(n+r)-(k+r)}RX' \xra{\w{f}} R^{(n+r)-(k+r)}RY'$ and $\RR^{(n+r)-(k+r)} \xra{h} \RR^{(n+r)-(k+r)}$ which in fact represent a morphism $U' \xra{f} V'$.  
This assignment on morphism spaces is tautologically a homeomorphism.
This completes the first statement of the proof.    

The second statement follows by a similar induction based on the fact the space of morphisms of topological spaces $Z\to \emptyset$ is empty unless $Z=\emptyset$ in which case it is a point.  

\end{proof}

\subsubsection{Specifying singularity type}
Given an arbitrary singular $n$-manifold $X$, one must contemplate `smooth' maps $\iota U\to \iota X$ as $U$ ranges over all singularity types (basics) of dimension $n$.  
Even when $n=1$, this is a large amount of data---for instance there is a $U$ for each natural number, corresponding to the valency of a graph's vertex. However, often one enforces control on allowed singularity types, considering manifolds which only allow for a certain class of singularities.  

Let $\sC$ be a $\Top$-enriched category.  A subcategory $\sL\subset \sC$ is a \emph{left ideal} if $d\in \sL$ and $c\xra{f}d \in \sC$ implies $c\xra{f}d\in \sL$.  Notice that a left ideal is in particular a full subcategory.

Let $\sB\subset \bsc_n$ be a left ideal.  Let $X$ be a singular $n$-manifold.  Denote by $X_\sB\subset X$ the sub-singular $n$-manifold canonically associated to the open subset $\iota X_\sB = \bigcup \phi(\iota U)\subset \iota X$ where the union is over the set of pairs $\{(U,\phi)\mid U\xra{\phi} X \text{ with } U\in \sB\}$. Because $\sB$ is a left ideal, the collection $\{\phi(\iota U)\mid \sB\ni U\xra{\phi} X\}$ is a basis for the topology of $\iota X_{\sB}$.  
Conversely, given a singular $n$-manifold $X$, the full subcategory $\sB_X\subset \bsc_n$, spanned by those $U$ for which $\snglr_n(U,X)\neq \emptyset$, is a left ideal.  

\begin{remark}
It is useful to think of a left ideal $\sB\subset \bsc_n$ as a list of $n$-dimensional singularity types, this list being \emph{stable} in the sense that a singularity type of an arbitrarily small neighborhood of a point in a member of this list is again a member of the list.  
\end{remark}

\begin{definition}\label{def:left-ideal}
Let $\sB\subset \bsc_n$ be a left ideal.  
A $\sB$-manifold is a singular $n$-manifold for which $X_\sB\xra{\cong} X$ is an isomorphism.  Equivalently, a singular $n$-manifold $X$ is a $\sB$-manifold if $\snglr_n(U,X) = \emptyset $ whenever $U\notin \sB$.  

\end{definition}

\begin{example}
The inclusion $\bsc_{n , k}\subset \bsc_n$ is a left ideal for each $0 \leq k \leq n$ and a $\bsc_{n , k}$-manifold is a singular $n$-manifold of depth at most $k$.  

\end{example}

\begin{example}
Let $\sB\subset \bsc_1$ be the full subcategory spanned by $\RR$ and $U_{\{1,2,3\}}$.  Then $\sB$ is a left ideal and a $\sB$-manifold is 
a (possibly open) graph whose vertices (if any) are exactly trivalent.  

\end{example}

\begin{example}\label{boundary}
Let $\sD_n^\partial\subset \bsc_n$ be the full subcategory spanned by the two objects $\RR^n$ and $U^n_{\ast}$ whose underlying space is $\RR^{n-1} \times \RR_{\geq 0}$.  This full subcategory is indeed a left ideal.  A $\sD_n^\partial$-manifold is precisely a smooth $n$-manifold with boundary.  

As a related example, let $\sD_n^{\un{\partial}}\subset \bsc_n$ be the full subcategory spanned by the objects $\{U^n_{\Delta^{k-1}}\}_{0\leq k \leq n}$ where it is understood that $\Delta^{-1}=\emptyset$ and $U^n_\emptyset = \RR^n$.  Because a neighborhood of any point in $\Delta^{k-1}$ is of the form $\RR^j \times C(\Delta^{j-1})$ for $j<k-1$, then this full subcategory is a left ideal.  A $\sD_n^{\un{\partial}}$-manifold is (one definition of) an $n$-manifold with corners.

\end{example}

\begin{example}[Embedded submanifolds]\label{example:Edn'}

Let $\sD_{n,d}^{\mathsf{Knk}}\subset \bsc_n$ be the left ideal with the two objects $\{\RR^n~,~U^n_{S^{n-d-1}}\}$.  The underlying space $\iota U^n_{S^{n-d-1}} = \RR^k\times CS^{n-d-1}$ 
is incidentally homeomorphic to $\RR^n$. 
However, the morphism spaces are as follows:

\begin{itemize}
\item $\sD_{n,d}^{\mathsf{Knk}}(\RR^n,\RR^n) = \Emb(\RR^n,\RR^n)$ -- the space of smooth embeddings,
\item $\sD_{n,d}^{\mathsf{Knk}}(U^n_{S^{n-d-1}},\RR^n) = \emptyset $, 
\item $\sD_{n,d}^{\mathsf{Knk}}(\RR^n,U^n_{S^{n-d-1}}) = \Emb(\RR^n,\RR^n \smallsetminus \RR^d)$ -- the space of smooth embeddings which miss the standard embedding $\RR^k\times\{0\}\subset \RR^n$,
\item $\sD_{n,d}^{\mathsf{Knk}}(U^n_{S^{n-d-1}},U^n_{S^{n-d-1}}) \subset  \Emb^{0}(\RR^n,\RR^n)$ -- the subset of those continuous embeddings $f$ which fit into a diagram of embeddings
\[
\xymatrix{
\RR^d  \ar[r]  \ar[d]^{f_|}
&
\RR^n \ar[d]^f
&
\RR^n\smallsetminus \RR^d  \ar[d]^{f_|} \ar[l]
\\
\RR^d  \ar[r] 
&
\RR^n
&
\RR^n\smallsetminus \RR^d \ar[l]
}
\]
in where the left and right vertical maps are smooth, and the middle vertical map is `conically smooth' -- by this we mean there is a diagram of continuous maps
\[
\xymatrix{
\RR^k\times (\RR\times S^{n-k-1})  \ar[r]^{\w{f}}  \ar[d]
&
\RR^k\times (\RR\times S^{n-k-1})   \ar[d]
\\
\RR^n  \ar[r]^f  
&
\RR^n
}
\]
such that the top horizontal map is smooth and each vertical map is the composite $\RR^k \times (\RR \times S^{n-k-1}) \to \RR^k \times C(S^{n-k-1}) \cong \RR^k \times \RR^{n-k}$. The first map is the quotient map to the open cone, and the last is the standard polar coordinates homeomorphism.
This set is topologized with the quotient topology on the evident subspace of smooth embeddings $\Emb(\RR^k\times \RR \times S^{n-k-1}, \RR^k\times \RR\times S^{n-k-1})$ with the weak $C^\infty$ Whitney topology.

\noindent
Evident from this description, there is the map of topological monoids $\sD^{\mathsf{Knk}}_{n,k}(U^n_{S^{n-k-1}}, U^n_{S^{n-k-1}}) \to \Emb^{\sf PL}(\RR^n,\RR^n)$, which extends to a topological functor
\[
\sD^{\sf Knk}_{n,d} \to \End^{\mfld_n^{\sf PL}}(\RR^n,\RR^n)~.
\]
In particular, there results a topological functor
\[
\mfld(\sD^{\sf Knk}_{n,d}) \to \mfld_n^{\sf PL} \times \mfld_d~.
\]

\end{itemize}
A $\sD^{\sf {Knk}}_{n,d}$-manifold is the data of 
\begin{itemize}
\item an $n$-dimensional $\mathsf{PL}$-manifold $M$,
\item a properly embedded $k$-dimensional $\mathsf{PL}$-submanifold $L\subset M$,
\item a smooth structure on $L$, 
\item a smooth structure on $M\smallsetminus L$, 
\item a smooth structure on the link $\mathsf{Link}_{L\subset M}$ for which the maps $L\leftarrow \mathsf{Link}_{L\subset M} \hookrightarrow M\setminus L$ are smooth. 
\end{itemize}
We refer to this data as a \emph{kink submanifold $L\subset M$}.  
An example of such data comes from a properly embedded smooth $k$-manifold in a smooth $n$-manifold, the smooth structure on the link amounting to the existence of tubular neighborhoods. 
Not all $\sD_{n,d}^{\mathsf{Knk}}$-manifolds are isomorphic to ones of this form -- this difference will be addressed as Example~\ref{example:Ekn}.  
\end{example}

\subsection{Premanifolds and refinements}\label{premanifolds}
Assume we are given a Hausdorff topological space $\iota X$ with a countable open cover by singular $n$-manifolds. Further assume the transition maps are morphisms in $\snglr_n$. While the resulting atlas is not a priori maximal, we would still like to accommodate such objects.

\begin{definition}
In Definition~\ref{singular-manifolds} the hypothesis {\bf Maximal} can be dropped. The resulting $\Top$-enriched category $\psnglr_{n , k}$ is called the {\em category of singular premanifolds}.
Evidently, there is the fully faithful inclusion $\snglr_{n , k}\subset \psnglr_{n , k}$ over the faithful functors $\iota$ to $\Top$.   
\end{definition}

\begin{definition}\label{refinement}
A \textit{refinement} is a morphism $\dddot{X} \xra{r} X$ of singular $n$-premanifolds for which the map of underlying spaces $r \colon \iota \dddot{X} \xrightarrow{\cong} \iota X$ is a homeomorphism.  
Refinements are clearly closed under composition.  
Define the \emph{category of refinements} as the subcategory
\[
\rfn_n \subset \psnglr_n
\]
consisting of the same objects and with morphisms the refinements.  
\end{definition}

Lemma~\ref{basic-full}, Lemma~\ref{about-X}, and Lemma~\ref{facts} are valid for $\psnglr$ in place of $\snglr$.

\begin{example}\label{cover-refinement}
Let $X=(\iota X,\cA)$ be a singular manifold.  An open cover $\cU$ of $\iota X$ canonically determines a refinement $X_\cU \to X$ where the atlas of $X_\cU$ is the subset $\cA_\cU = \{(U,\phi)\mid \phi(U)\subset O\in \cU\}\subset \cA$.  In this way it is useful to regard the data of a refinement of a singular manifold as an open cover.  
\end{example}

\begin{lemma}\label{refs-pullback}
Consider a pullback diagram in $\psnglr_n$
\[
\xymatrix{
{\dddot{Y}} \ar[r]^{r_|}  \ar[d]^{\dddot{f}}
&
Y  \ar[d]^f
\\
{\dddot{X}}  \ar[r]^r
&
X
}
\]
in where $r$ is a refinement.  Then $r_|$ is a refinement as well.
\end{lemma}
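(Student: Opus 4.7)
The plan is short: exploit the fact that the forgetful functor $\iota$ preserves pullbacks, together with the categorical fact that the base change of an isomorphism is an isomorphism. Recall that a refinement is, by Definition~\ref{refinement}, exactly a morphism in $\psnglr_n$ whose underlying map of topological spaces is a homeomorphism; so the claim reduces to a statement about $\iota r_|$ in $\Top$.

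First I would invoke the analog of Lemma~\ref{facts}(1) for singular premanifolds (asserted in the paragraph following Definition~\ref{refinement}): the category $\psnglr_n$ admits pullbacks and $\iota \colon \psnglr_n \to \Top$ preserves them. Applying $\iota$ to the given pullback square therefore yields a pullback square in $\Top$
\[
\xymatrix{
\iota\dddot{Y} \ar[r]^{\iota r_|} \ar[d]^{\iota \dddot{f}} & \iota Y \ar[d]^{\iota f} \\
\iota\dddot{X} \ar[r]^{\iota r} & \iota X
}
\]
in which the bottom arrow $\iota r$ is a homeomorphism by hypothesis that $r$ is a refinement.

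Next I would appeal to the purely categorical observation that in any category the pullback of an isomorphism along an arbitrary map is again an isomorphism; indeed, in $\Top$ one can write down the explicit inverse $y \mapsto \bigl((\iota r)^{-1}(\iota f(y)), y\bigr) \in \iota\dddot{X}\times_{\iota X}\iota Y \cong \iota \dddot Y$. Consequently $\iota r_|$ is a homeomorphism. Since $r_|$ is, by construction of the pullback, already a morphism in $\psnglr_n$ whose underlying continuous map is this homeomorphism, $r_|$ is a refinement.

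There is no real obstacle here; the one point worth stating carefully is the applicability of Lemma~\ref{facts}(1) to $\psnglr_n$, which the text grants, and the use of Definition~\ref{refinement} to identify ``homeomorphism on underlying spaces'' with ``refinement.'' The argument is then immediate.
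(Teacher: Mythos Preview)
Your proposal is correct and follows essentially the same approach as the paper. The paper's proof is a one-liner pointing to the explicit description of pullbacks from Lemma~\ref{facts}(1), whose underlying space is the fiber product in $\Top$; you have simply unpacked this by noting that $\iota$ preserves pullbacks and that the base change of a homeomorphism is a homeomorphism.
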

\begin{proof}
This is immediate from the description of the pullback in the proof of Lemma~\ref{facts}.  
\end{proof}

There is the evident fully faithful inclusion $\snglr_n \subset \psnglr_n$.  

\begin{prop}\label{maximal-atlas}
There is a localization
\[
\psnglr_n \leftrightarrows \snglr_n
\]
where the right adjoint is the inclusion.  
\end{prop}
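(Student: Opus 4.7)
The plan is to construct an explicit left adjoint $L\colon \psnglr_n \to \snglr_n$ that takes a premanifold $X = (\iota X, \cA)$ to its \emph{atlas completion} $LX = (\iota X, \ov{\cA})$, where
\[
\ov{\cA} \;=\; \bigl\{(U_0,\phi_0) \;\big|\; U_0\in\bsc_n,\ \phi_0\colon \iota U_0\hookrightarrow \iota X \text{ open embedding satisfying the {\bf Maximal} axiom with respect to } \cA\bigr\}.
\]
Concretely, $(U_0,\phi_0)\in \ov{\cA}$ iff for each $(U,\phi)\in\cA$ and each $p\in \phi_0(\iota U_0)\cap \phi(\iota U)$ there is a diagram $U_0\xla{f_0} W\xra{f} U$ in $\bsc_n$ with $\phi_0 f_0 = \phi f$ and $p\in \phi_0 f_0(\iota W)$. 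Since $\cA\subset \ov{\cA}$, the underlying space is unchanged, so the identity on $\iota X$ is a canonical refinement $X\to LX$ in $\psnglr_n$, which will be the unit of the adjunction.

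The key steps, in order, are: (i)~Verify that $\ov{\cA}$ is a maximal atlas, so $LX\in\snglr_n$. The cover and maximality axioms are formal from the definition of $\ov{\cA}$; the nontrivial point is the atlas axiom for pairs in $\ov{\cA}$ (see the obstacle below). (ii)~Extend $L$ to morphisms: a premanifold morphism $f\colon X\to Y$ with $Y\in \snglr_n$ already has $f_\ast\cA\subset \cA_Y$; using maximality of $\cA_Y$ together with overlap diagrams inherited from $\ov{\cA}$ (composed with $f$), one deduces $f_\ast\ov\cA\subset \cA_Y$, so the same underlying continuous map defines a morphism $LX\to Y$. The induced map on the basic open sets $\sM_{O,(U,\phi),(V,\psi)}$ generating the topology is a homeomorphism between $\psnglr_n(X,Y)$ and $\snglr_n(LX,Y)$, yielding the adjunction $\psnglr_n(X,Y)\cong\snglr_n(LX,Y)$. (iii)~Since the inclusion $\snglr_n\subset \psnglr_n$ is fully faithful (an atlas equals its own completion), the counit is an isomorphism, so the adjunction is a localization as claimed.

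The main obstacle is verifying step (i), i.e., that $\ov{\cA}$ satisfies the atlas axiom. Given $(U_0,\phi_0),(U_0',\phi_0')\in\ov{\cA}$ and $p\in \phi_0(\iota U_0)\cap\phi_0'(\iota U_0')$, one must produce a diagram $U_0\xla{g}W\xra{g'}U_0'$ in $\bsc_n$ with $\phi_0 g=\phi_0'g'$ and $p$ in the image. The strategy is to use the original atlas $\cA$ as a bridge: choose $(U,\phi)\in\cA$ with $p\in \phi(\iota U)$, then apply the defining property of $\ov{\cA}$ to get diagrams $U_0\xla{f_0} W_0\xra{f}U$ and $U_0'\xla{f_0'}W_0'\xra{f'}U$ whose images in $\iota X$ both contain $p$. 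Pulling back along the chart $\phi$, the sets $f(\iota W_0)$ and $f'(\iota W_0')$ are open neighborhoods of a common preimage in $\iota U$; by Lemma~\ref{local-basis} there is a basic neighborhood $W\to U$ whose image lies inside the intersection and contains that preimage. One then lifts $W\to U$ through $f$ and $f'$ using the compatible formulas on the singular strata (the $h$-component on $\RR^{n-k}$ and the $\w{f}$-component on the cone direction), obtaining the required $g,g'$. The continuity and functoriality of $L$ follow once one also observes, via Lemma~\ref{about-X}(7) and the description of the topology on morphism spaces, that the saturation operation is natural in continuous families; this hands us the localization.
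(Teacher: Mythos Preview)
Your proposal is correct and follows essentially the same strategy as the paper: pass to the atlas completion on the same underlying space, verify it is a maximal atlas, and observe that the identity on underlying spaces gives the unit of the desired adjunction. The paper packages the completion slightly differently---defining $\cA'$ to consist of those $(U,\phi)$ for which there exists a refinement $\dddot U\to U$ together with a premanifold morphism $\dddot U\to \dddot X$ over $\phi$---and verifies the atlas axiom for $\cA'$ by taking pullbacks of premanifolds and invoking Lemma~\ref{refs-pullback}, rather than the hands-on chart-intersection argument you sketch via Lemma~\ref{local-basis}. Your $\ov\cA$ and the paper's $\cA'$ coincide (the refinement $\dddot U$ supplies exactly the bridging charts $W$ your compatibility condition demands, and conversely), so the two arguments are reorganizations of the same local computation; the paper's version is a bit slicker because the pullback machinery absorbs the bookkeeping you do explicitly. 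One small point to tighten in your write-up: the step ``lift $W\to U$ through $f$ and $f'$'' uses that a morphism in $\bsc_n$ is an open embedding which is an isomorphism onto its image as a sub-singular-manifold (Lemma~\ref{about-X}(1),(6) together with maximality from Lemma~\ref{basic-full}); this is routine but should be said.
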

\begin{proof}
Let $\dddot{X}=(\iota X,\cA)$ be a singular premanifold.  The value of the left adjoint on $\dddot{X}$ is the singular manifold $X=(\iota X,\cA')$ where $\cA'$ consists of those $(U,\phi)$ for which there is a refinement $\dddot{U} \xra{r} U$ and a morphism $\dddot{\phi}\colon \dddot{U} \to X$ over $\phi$.  
Clearly $\cA$ is an open cover of $\iota X$.  Let $(U,\phi),(V,\psi)\in \cA$.  The underlying space of the pullback of singular premanifolds $\dddot{U}\times_{\dddot{X}} \dddot{V}$ is $\phi(\iota U) \cap \psi(\iota V)$.  It follows that $\cA$ is an atlas and that $\dddot{X} \to X$ is a refinement.  For $X\to X'$ a refinement, then $\dddot{X} \to X \to X'$ is a refinement.  From Lemma~\ref{refs-pullback}, for each morphism $U\xra{\phi} X'$ the morphism from the pullback $U\times_{X'} \dddot{X} \to U$ is a refinement.  It follows that $(U, \phi)\in \cA$.  This proves that $\cA$ is maximal.

Suppose $\dddot{X} \to X'$ be a refinement.  For any $U\xra{\phi} X'$ the morphism $U\times_{X'} \dddot{X} \to U$ is a refinement.  It follows that $(U,\phi)\in \cA$ and thus the continuous map $\iota X' \to \iota X$ is a refinement of singular premanifolds.  This proves that the adjunction is a localization.  
\end{proof}

\begin{cor}\label{maximal-atlas'}
For each singular premanifold $\dddot{X}$  there exists an essentially unique singular manifold $X$ equipped with a refinement $\dddot{X} \to X$. 
\end{cor}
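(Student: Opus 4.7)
The corollary should drop out of Proposition~\ref{maximal-atlas}. Write $L\colon \psnglr_n \to \snglr_n$ for the left adjoint constructed in the proof of that proposition, and, given $\dddot X \in \psnglr_n$, set $X := L\dddot X$. Inspecting the construction, the atlas of $X$ is obtained from the atlas of $\dddot X$ purely by adjoining new charts (and the underlying space is unchanged), so the unit $\dddot X \to X$ is by definition a refinement. This handles existence.

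For essential uniqueness, suppose $\dddot X \to X'$ is another refinement with $X' \in \snglr_n$. By the universal property of the localization, this morphism factors uniquely through the unit as
\[
\dddot X \longrightarrow X \xra{\varphi} X'.
\]
Since both $\dddot X \to X$ and $\dddot X \to X'$ induce homeomorphisms on underlying spaces, the same is true of $\varphi$; that is, $\varphi$ is a refinement between singular manifolds.

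The remaining step, which is the only content beyond citing Proposition~\ref{maximal-atlas}, is to observe that any refinement $\varphi \colon X \to X'$ between two singular $n$-manifolds is an isomorphism. Here is where I would use the \textbf{Maximal} axiom of Definition~\ref{singular-manifolds} symmetrically. Given a chart $(U,\psi) \in \cA_{X'}$, the composite $\varphi^{-1}\circ \psi \colon \iota U \to \iota X$ is an open embedding; to show it lies in $\cA_X$ it suffices, by maximality, to exhibit, for each $p$ in its image, a chart of $X$ through which it locally factors in $\bsc_n$. But $\varphi$ is a morphism in $\psnglr_n$, so the atlas $\cA_X$ maps into $\cA_{X'}$ under $\varphi_\ast$; applying \textbf{Atlas} in $X'$ to $(U,\psi)$ and any chart $(V,\varphi\circ \chi)$ with $\chi \in \cA_X$ produces the required local factorization. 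Hence $\cA_{X'} \subset \varphi_\ast \cA_X$, and the symmetric argument for $\varphi^{-1}$ (which is a refinement of singular premanifolds after endowing $\iota X'$ with the pre-image atlas) gives the reverse containment. The two atlases therefore coincide under the homeomorphism $\iota X \cong \iota X'$, and $\varphi$ is an isomorphism in $\snglr_n$.

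I do not anticipate any serious obstacle: existence is formal from the adjunction, and the only subtle point is that ``essentially unique'' really means unique up to unique isomorphism, which in turn requires the lemma that a refinement of singular manifolds is invertible. That lemma is an immediate unwinding of the maximality clause, and is the real reason maximality was built into Definition~\ref{singular-manifolds} in the first place.
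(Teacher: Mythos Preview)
Your proposal is correct and matches the paper's approach: the corollary is stated without its own proof, as it is meant to fall out of Proposition~\ref{maximal-atlas} in exactly the way you describe. One small remark: your ``symmetric argument for $\varphi^{-1}$'' is unnecessary, since the containment $\varphi_\ast \cA_X \subset \cA_{X'}$ is already the content of $\varphi$ being a morphism in $\psnglr_n$; only the reverse containment $\cA_{X'}\subset \varphi_\ast\cA_X$ requires work, and that is precisely what your Atlas/Maximal argument establishes (the paper's proof of the proposition does the same thing via Lemma~\ref{refs-pullback} rather than by direct appeal to the axioms).
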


Hereafter, unless the issue is topical we will not distinguish in notation or language between a singular-premanifold and its canonically associated singular manifold.

\subsection{Stratifications}\label{stratifications}
For $P$ a poset, by a {\em $P$-stratified space $Z$}, or simply a stratified space, we mean a continuous map $Z \to P$ where $P$ is given the poset topology (so closed sets are those which are upward closed). Here we explain how a singular $n$-manifold can be viewed as a $[n]$-stratified space.

To do this, we define a continuous functors $(-)_j \colon \snglr_{n} \to \snglr_{\leq j}$ for each $n$, equipped with continuous natural transformations by closed embeddings $\iota (-)_j \hookrightarrow \iota$ for each integer $j$.  
For $X$ a singular $n$-manifold we refer to $X_j$ as its  \emph{$j^{\rm th}$ stratum}.  

We will accomplish this by double induction, first on the parameter $n$, then on the parameter $k$. 
For $n<0$ then $\snglr_n=\{\emptyset^n\}$ and declare $(\emptyset^n)_j=\emptyset^j\in\snglr_{\leq j}$, the closed inclusion is obvious.

Assume $(-)_j$ and the closed inclusion $\iota (-)_j \to \iota$ have been defined for $\snglr_{n'}$ whenever $n'<n$.  We now define these data for $\snglr_n$.  
We do this by induction on $k$.
For $k=-1$ then $\snglr_{n,-1}=\{\emptyset^n\}$ and declare $(\emptyset^n)_j = \emptyset^j\in \snglr_{\leq j}$.

Assume $(-)_j$ and the closed inclusion $\iota (-)_j\to \iota$ have been defined on $\snglr_{n,k'}$ whenever $k'<k$.  We now define these data for $\snglr_{n,k}$.  
We first do this for $\bsc_{n,k}$.  
Let $U=U^n_Y\in \bsc_{n,k}$.  
If $U$ has depth less than $k$ then $U_j$ and the closed inclusion $\iota U_j \to \iota U$ have already been defined.  
Assume the depth of $U$ equals $k$.
Define $U_j$ through the expression
\[
(U^n_Y)_j = U^j_{Y_{(k-1)-(n-j)}}~,
\]
the righthand side of which has been defined by induction since the dimension of $Y$ is $k-1<n$. 
We point out the following slippery cases: if $(k-1)-(n-j) = -1$, righthand side is $U^j_{\emptyset^{-1}}=\RR^j$; if $(k-1)-(n-j)<-1$, we invoke our convention that the righthand side is $\emptyset^j$ the empty $j$-manifold (see Terminology~\ref{terms}). 
This assignment $(-)_j$ is evidently functorial and continuous on $\bsc_{n,k}$.  
By our inductive assumption, there is a closed embedding $\iota Y_{(k-1)-(n-j)} \hookrightarrow \iota Y$, which in turn induces the closed inclusion
\[
\iota U_j = \iota U^j_{Y_{(k-1)-(n-j)}} = \RR^{n-k}\times C(\iota Y_{(k-1)-(n-j)}) \hookrightarrow \RR^{n-k}\times C(\iota Y) = \iota U^n_Y = \iota U~.  
\]

To an arbitrary singular manifold $(\iota X, \cA)$ we assign the pair $(\iota X_j,\cA_j)$ where $\iota X_j = \bigcup \phi(U_j)\subset \iota X$ is the union over $(U,\phi)\in \cA$, and $\cA_j = \{(U_j,\phi_{|U_j})\mid (U,\phi)\in \cA'\}$.  It is routine to check that $(\iota X_j, \cA_j)$ is a singular $j$-manifold.  
This assignment $(-)_j$ is evidently functorial and continuous on $\snglr_{n,k}$.  
The manifest inclusion $\iota X_j \subset \iota X$ is closed because it is locally closed.

\begin{remark}
For $j\geq n$, the construction of the functor $\snglr_n \to \snglr_{\leq j}$ agrees with the tautological inclusion of Lemma~\ref{tautological}.  
\end{remark}

Clearly, $j\leq j'$ implies $\iota X_j\subset \iota X_{j'}$.  The open complement $\iota X_{j'} \smallsetminus \iota X_j$ canonically inherits the structure of a singular submanifold $X_{j, j'}\subset X_{j'}$ of dimension at most $j'$.  Tracing through the construction of $(-)_j$, the depth of $X_{j, j'}$ is at most $j'-j-1$.  
It follows that $X_{j-1,j}$ is a smooth $j$-manifold for each $j\leq n$. In particular if $X$ has depth $k$ then $X_{n-k}$ is a smooth $k$-manifold.

We defer the proof of the following proposition to~\S\ref{embeddings}.  It is in the proof of this proposition that the deliberate locally cone strucure is used.  To state the proposition requires some vocabulary.

\begin{terminology}
For the definition of a \emph{conically smooth} map referred to below, see~\S\ref{embeddings}.  
For the time being, think of a conically smooth map as the singular version of a smooth map between ordinary manifolds.  
The definition of a conically smooth fiber bundle follows exactly the definition of a smooth fiber bundle -- it is a conically smooth map $\partial E \to B$ which locally has the form of a projection $U \times Z \to U$ with transition maps by isomorphisms of $Z$. 
For $\partial E \to B$ a conically smooth fiber bundle with compact fibers, there is another conically smooth fiber bundle $C_B(\partial E)$ called the {\em fiberwise cone of $\partial E \to B$}. This is defined locally through the functorial construction $U\times Z\mapsto U\times C(Z)$.  
A fiberwise cone structure on a conically smooth fiber bundle $E\to B$ is a conically smooth fiber bundle $\partial E \to B$ with compact fibers, together with an isomorphism $C_B(\partial E) \cong E$  of conically smooth fiber bundles over $B$.   
Note that a fiberwise cone structure on $E\to B$ determines a \emph{cone-section} $B\to E$ and an isomorphism $E\smallsetminus B \cong R(\partial E)$ over $B$.   
\end{terminology}

\begin{prop}\label{tubular-neighborhood}
Let $X$ be a singular $n$-manifold of depth $k$.  
Then there is a fiberwise cone $\widetilde{X}_{n-k} \to X_{n-k}$ and a morphism $\w{X}_{n-k} \xra{f} X\in \snglr_n$
for which the composition with the cone-section is the standard closed embedding $\iota X_{n-k}\to \iota X$.  
\end{prop}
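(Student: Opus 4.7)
The plan is to prove a singular analog of the classical tubular neighborhood theorem for the embedding $\iota X_{n-k} \hookrightarrow \iota X$, exploiting the observation that on each basic chart around a point of $X_{n-k}$ the basic is already, tautologically, a fiberwise cone over its deepest stratum.

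First I would fix the local model. For each chart $(U,\phi)$ with $U = U^n_Y \in \bsc_n$ and $\phi(0)\in \iota X_{n-k}$, the depth of $U$ is necessarily $k$, so $Y\in \snglr^c_{\leq k-1,k-1}$ has dimension $k-1$ and $\iota U = \RR^{n-k}\times C(\iota Y)$. The projection onto the deepest stratum $U_{n-k} = U^{n-k}_{\emptyset^{-1}}= \RR^{n-k}$ is already a fiberwise cone $C_{U_{n-k}}(\partial U)\cong U$ with $\partial U = \RR^{n-k}\times Y \in \snglr_{n-1,k-1}$, and the cone-section is precisely the standard inclusion $\iota U_{n-k} \hookrightarrow \iota U$. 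So, locally, the proposition is tautological; the content is global.

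Second, I would assemble the link bundle $\partial \widetilde{X}_{n-k} \to X_{n-k}$. Inspecting Definition~\ref{singular-manifolds}, any morphism $U^n_X \to U^n_Y$ in $\bsc_n$ between two basics of depth exactly $k$ is given by a representative $(\widetilde{f},h)$ whose restriction is a morphism $R^{n-k}RX \to R^{n-k}RY$ in $\snglr_{n-1,k-1}$. Specializing to the link, the transition data between basic charts around $X_{n-k}$ therefore yields transition data between the fiberwise spaces $\RR^{n-k}\times \iota Y$, compatible over $X_{n-k}$ via the smooth map $h$. Declaring the atlas on $\partial \widetilde{X}_{n-k}$ to consist of these local product charts produces a conically smooth fiber bundle with compact singular $(k-1)$-manifold fibers, in the sense of the terminology preceding the proposition. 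Let $\widetilde{X}_{n-k} := C_{X_{n-k}}(\partial \widetilde{X}_{n-k})$ be its fiberwise cone.

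Third, I would construct the morphism $f\colon \widetilde{X}_{n-k}\to X$. The strategy is to patch the local cone identifications of Step~1 using a conically smooth partition of unity subordinate to a basic cover of an open neighborhood of $\iota X_{n-k}$ in $\iota X$. More geometrically, each basic chart determines a radial inward conically smooth vector field, transverse to $\iota X_{n-k}$, defined on the chart; one averages these vector fields via the partition of unity to obtain a globally defined conically smooth transverse vector field on a neighborhood of $X_{n-k}$, and one defines $f$ by flowing $\partial \widetilde{X}_{n-k}$ inward along the associated flow, extending by the cone-section over $X_{n-k}$ itself. Because each local summand is the radial field on a basic, the flow preserves the cone structure fiberwise and the resulting map $f$ is a morphism in $\snglr_n$ whose restriction to the cone-section is, by construction, the embedding $\iota X_{n-k}\hookrightarrow \iota X$.

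The main obstacle is entirely analytic and belongs to the apparatus promised in \S\ref{embeddings}: I am assuming the existence of conically smooth bump functions and the existence and local uniqueness of flows of conically smooth transverse vector fields, and, crucially, that a convex combination of two such inward fields is again such a field. This averaging property, highlighted in the Conventions as particular to the smooth setting, is exactly what makes the patching from Step~3 produce a morphism in $\snglr_n$ rather than merely a stratified continuous map; without it the argument collapses. Granting these foundations, the verification that $f$ restricts on the complement of the cone-section to an isomorphism $\widetilde{X}_{n-k}\smallsetminus X_{n-k}\cong R(\partial \widetilde{X}_{n-k})$ over $X_{n-k}$ reduces, chart by chart, to the corresponding statement on basics from Step~1.
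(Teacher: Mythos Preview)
Your overall strategy --- local cone models on basics, a link bundle assembled from chart transition data, and a global tubular neighborhood produced by flowing along a partition-of-unity--averaged vector field --- is the same as the paper's. The gap is in \emph{where} the vector field lives, and this is not merely analytic.

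You ask for a ``radial inward conically smooth vector field, transverse to $\iota X_{n-k}$'' on each basic chart of $X$, and then average. But the tangent fiber $T_xX$ at a point $x\in X_{n-k}$ is not a vector space: it is isomorphic to the cone $\RR^{n-k}\times C(Y)$, and the only linear structure present is the parallel subspace $P_xX = T_xX_{n-k}\cong\RR^{n-k}$. A genuinely transverse vector at $x$ would have to lie in the cone part, where there is no addition, so convex combinations of transverse local fields are simply undefined at points of $X_{n-k}$. Relatedly, the flow lemma you would need (Lemma~\ref{flowing}) is proved only for \emph{parallel} vector fields; no flow of a transverse field is available.

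The paper resolves this by first passing to the dismemberment $\widetilde X$ of Lemma~\ref{dism}: a singular $n$-manifold of depth $<k$ with a quotient map to $X$, on which the cone coordinate is opened up to an honest $\RR$-factor. There the local field $\partial_s$ on $\widetilde U_\alpha = \RR^{n-k}\times\RR\times Y$ is a \emph{parallel}, nowhere-vanishing vector field on $\widetilde X$, so it can be averaged (parallel vector fields form a vector space) and integrated via Lemma~\ref{flowing}. The resulting flow identifies $\RR_{\leq 0}\times\partial\widetilde X_{n-k}\cong\widetilde X_{\leq 0}$, extends over a short positive collar, and the fiberwise quotient by the $\RR_{\leq 0}$-direction produces the fiberwise cone $\widetilde X_{n-k}$ together with its morphism to $X$. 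Your Step~2 is essentially rediscovering $\partial_k X=\partial\widetilde X_{n-k}$; what is missing is the full dismemberment, which is precisely the device that linearizes the cone direction and makes the averaging-and-flowing argument of Step~3 legitimate.
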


\begin{cor}\label{pieces}
Let $X$ be a singular $n$-manifold of depth $k$. There is a pullback diagram in $\snglr_n$ 
\[
\xymatrix{
R(\partial \widetilde{X}_{n-k})  \ar[r]  \ar[d]
&
X\smallsetminus X_{n-k}  \ar[d]
\\
\widetilde{X}_{n-k} \ar[r]
&
X
}
\]
whose image under $\iota$ is a pushout diagram.

\end{cor}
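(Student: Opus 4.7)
The plan is to build the square directly from the tubular neighborhood furnished by Proposition~\ref{tubular-neighborhood}. That result supplies a fiberwise cone $\widetilde{X}_{n-k}\to X_{n-k}$ and a morphism $f\colon \widetilde{X}_{n-k}\to X$ in $\snglr_n$ whose restriction along the cone-section recovers the standard closed embedding $\iota X_{n-k}\hookrightarrow \iota X$. The open complement $X\smallsetminus X_{n-k}$ is a canonical open sub-singular $n$-manifold of $X$ by Lemma~\ref{about-X}(6), so the two morphisms $\widetilde{X}_{n-k}\to X \leftarrow X\smallsetminus X_{n-k}$ admit a pullback in $\snglr_n$ by Lemma~\ref{facts}(1).

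First I would identify that pullback. By the explicit description of pullbacks in $\snglr_n$ (Lemma~\ref{facts}(1)), its underlying space is $f^{-1}(\iota X\smallsetminus \iota X_{n-k})$. Under $f$, the cone-section of $\widetilde{X}_{n-k}\to X_{n-k}$ maps exactly onto $\iota X_{n-k}$; and by the very definition of a fiberwise cone structure (see the terminology preceding Proposition~\ref{tubular-neighborhood}), removing this cone-section from $\widetilde{X}_{n-k}$ yields precisely $R(\partial \widetilde{X}_{n-k})$. This identifies the pullback as $R(\partial \widetilde{X}_{n-k})$ and produces the asserted square in $\snglr_n$.

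It remains to verify that $\iota$ carries this square to a pushout in $\Top$. Let $U:=\iota f(\iota \widetilde{X}_{n-k})$ and $V:=\iota X\smallsetminus \iota X_{n-k}$; the former is open by Lemma~\ref{about-X}(1), the latter by construction. Since $\iota X_{n-k}\subset U$ (via the cone-section), we have $U\cup V=\iota X$; and $U\cap V=\iota f\bigl(\iota R(\partial \widetilde{X}_{n-k})\bigr)$ by the previous paragraph. The image of the square under $\iota$ is therefore the diagram
\[
\xymatrix{
U\cap V \ar[r]\ar[d] & V\ar[d]\\
U\ar[r] & \iota X
}
\]
associated to an open cover of $\iota X$ by two opens, and such a square is standardly a pushout in $\Top$: a pair of continuous maps from $U$ and $V$ agreeing on $U\cap V$ glues to a set-theoretic map out of $\iota X$ whose continuity follows because $U$ and $V$ are open. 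As a plan, there is no true obstacle beyond invoking Proposition~\ref{tubular-neighborhood}; the only thing to keep straight is the identification $\widetilde{X}_{n-k}\smallsetminus X_{n-k}\cong R(\partial \widetilde{X}_{n-k})$, which is built into the fiberwise cone structure itself.
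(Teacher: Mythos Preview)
Your proposal is correct and is essentially the argument the paper has in mind: the corollary is stated without proof, as an immediate consequence of Proposition~\ref{tubular-neighborhood} together with the recorded identification $E\smallsetminus B\cong R(\partial E)$ for a fiberwise cone. Your write-up simply makes explicit the pullback computation via Lemma~\ref{facts}(1) and the open-cover pushout, which is exactly the intended reasoning.
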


We have constructed for each singular $n$-manifold $Y$ a continuous map 
$S\colon \iota Y \to [n]$ determined by $S^{-1} \{i\mid i\leq j\} = \iota Y_j$.  
We summarize the situation as the following proposition.  

\begin{prop}\label{stratified}
There is a standard factorization
\[
\xymatrix{
\snglr_n  \ar[rr]^\iota   \ar[dr]^{[\iota]}
&
&
\Top
\\
&
\Top_{[n]}  \ar[ur]
&
}
\]
through $[n]$-stratified topological spaces. The unlabeled arrow is given by forgetting the stratification.  
Moreover, the stratified space $[\iota]X$ is conically stratified and the $j^{\rm th}$ open stratum $\iota X_{j-1, j}$ is a smooth $j$-manifold.  
\end{prop}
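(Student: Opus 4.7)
The plan is to verify the four implicit claims---the lift $[\iota]$ exists, is functorial, the stratification is conical, and the open strata are smooth---all as direct consequences of the inductive construction of $(-)_j$ just completed. For the lift, I will check that for each $X$ the map $S_X\colon \iota X \to [n]$ characterized by $S_X^{-1}\{i \mid i \leq j\} = \iota X_j$ is continuous, which holds because each $\iota X_j \subseteq \iota X$ is closed (as just recorded) and each $\{i \mid i \leq j\} \subseteq [n]$ is closed in the poset topology. Functoriality will then follow from the continuous functoriality of $(-)_j\colon \snglr_n \to \snglr_{\leq j}$ established during its construction: a morphism $f\colon X \to Y$ in $\snglr_n$ preserves atlases and hence, via the inductive formula $U^n_Z \mapsto U^j_{Z_{(k-1)-(n-j)}}$, restricts to a morphism of $j$-th strata.

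For conicality, I will induct on $n$; the base case $n = 0$ is trivial since the stratum poset collapses. For the inductive step, fix $p \in \iota X_{j-1, j}$, equivalently a point of depth exactly $n-j$. By the definition of depth (Terminology~\ref{terms}), some atlas chart $(U, \phi) \in \cA_X$ has $U = U^n_Y$ for a compact $Y \in \snglr^c_{n-j-1, n-j-1}$ with $\phi(0) = p$, where $0$ lies in the Euclidean factor $\RR^j = \RR^{n-(n-j)} \subset \iota U$. Then $\iota U = \RR^j \times C(\iota Y)$ is of the required conical form with link $\iota Y$, and the link is itself conically stratified by the inductive hypothesis applied to the lower-dimensional $Y$. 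Smoothness of the $j$-th open stratum will follow from the observation made just before the statement that $X_{j-1, j}$ has depth $0$ as a singular $j$-manifold, combined with the identification $\snglr_{j, 0} = \mfld_j$ from~\S\ref{section:singularExamples}.

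I do not expect any genuine obstacle, since all four assertions have essentially already been set up by the construction preceding the statement. The one point requiring care is checking that the conical structure on the basic chart around $p$ matches the ambient stratification on $X$, but this is true by construction: the stratification on $X$ was defined chart-by-chart using the very formula $(U^n_Y)_j = U^j_{Y_{(k-1)-(n-j)}}$ that is designed to realize the standard conical stratification of $\RR^j \times C(\iota Y)$.
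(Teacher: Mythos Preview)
The paper gives no separate proof; the proposition is explicitly presented as a summary of the construction of the stratum functors $(-)_j$ just completed (the lead-in sentence reads ``We summarize the situation as the following proposition''). Your proposal is precisely the natural unpacking of that construction---continuity of $S$ via closedness of the $\iota X_j$, functoriality via that of $(-)_j$, conicality via the basic charts $\RR^j\times C(\iota Y)$, and smoothness of the open strata via the depth-zero identification---and is correct.
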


\subsection{Collar-gluings}
We highlight a class of diagrams which will play an essential role to come. Fix a dimension $n$.  

\begin{definition}\label{collar-gluing}
A \emph{collar-gluing} is a singular $(n-1)$-manifold $V$ together with a pullback square of singular $n$-manifolds
\[
\xymatrix{
RV  \ar[r]  \ar[d]
&
X_+  \ar[d]^{f_+}
\\
X_-\ar[r]^{f_-}  
&
X
}
\]
for which $f_-(\iota X_-)\cup f_+(\iota X_+) = \iota X$.  
We denote the data of a collar-gluing as $X=X_-\cup_{RV} X_+$. 
\end{definition}

Manifestly, a collar-gluing $X=X_-\cup_{RV} X_+$ 
determines the open cover $\{f_\pm (\iota X_\pm)\}$ of $\iota X$ and thus canonically determines a refinement.

Let us temporarily denote by 
\[
\bsc_n \subset \snglr_n^\cI\subset \snglr_n
\]
the smallest full subcategory
for which $X=X_-\cup_{RV} X_+$ with $X_\pm , RV\in  \snglr_n^\cI$ implies $X \in \snglr_n^\cI$.  Explicitly, an object of $\snglr_n^\cI$ is a singular $n$-manifold which can be written as a finite iteration of collar-gluings.

\begin{definition}[Finite singular manifolds]\label{finite}
Say an atlas $\dddot{\cA}$ of a singular manifold $X$ is \emph{finite} if 
\begin{enumerate}
\item the set $\dddot{\cA}$ is finite,
\item for each pair $(U,\phi),(V,\psi)\in \dddot{\cA}$ and each $p\in \iota X$ there is an element $(W,\eta)\in \dddot{\cA}$ with $p\in \eta(\iota W) \subset \phi(\iota U) \cap \psi(\iota V)$~.  
\end{enumerate}
Say a singular manifold is \emph{finite} if it admits a finite atlas.
Denote by $\mathsf{Snglr}_n^{\sf fin}\subset \mathsf{Snglr}_n$ the full subcategory spanned by the finite singular manifolds. Similarly $\mathsf{pSnglr}_n^{\sf fin}\subset \mathsf{p
Snglr}_n$ is the full subcategory spanned by finite singular pre-manifolds.
\end{definition}

\begin{prop}\label{cw}
The underlying space $\iota X$ of a finite singular manifold $X$ is homotopy equivalent to a finite CW complex.  
\end{prop}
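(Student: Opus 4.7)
The plan is to proceed by a double induction: first on the dimension $n$, and then within each dimension on the number of collar-gluings required to build a finite singular $n$-manifold. The crucial input is Theorem~\ref{collar=fin}, which asserts that every finite singular $n$-manifold admits a finite handle/collar-gluing presentation; equivalently, $\snglr_n^{\sf fin} \subset \snglr_n^\cI$. The base case $n=0$ is immediate: a finite $0$-manifold is a finite set, which is already a finite CW complex.

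For the inductive step on dimension, let $X$ be a finite singular $n$-manifold and proceed by secondary induction on the number of collar-gluings required to express $X \in \snglr_n^\cI$. If $X$ is a basic $U^n_Y = \RR^{n-k} \times C(\iota Y)$, then $\iota U$ is contractible: the open cone $C(\iota Y) = \colim(\ast \leftarrow \RR_{\leq 0}\times \iota Y \to \RR\times \iota Y)$ deformation retracts to its cone point via the standard slide along the $\RR$-coordinate, and $\RR^{n-k}$ is contractible. A point is a finite CW complex. Otherwise, write $X = X_- \cup_{RV} X_+$ as a collar-gluing where $X_\pm$ require strictly fewer collar-gluings and $V$ is a finite singular $(n-1)$-manifold. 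By the secondary inductive hypothesis, $\iota X_\pm$ each have the homotopy type of a finite CW complex; by the primary inductive hypothesis applied to $V$, so does $\iota V$, and hence also $\iota RV \simeq \iota V$.

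The collar-gluing axiom provides an open cover $\{f_-(\iota X_-),\, f_+(\iota X_+)\}$ of $\iota X$ with intersection $\iota RV$, and $\iota X$ is paracompact Hausdorff by Lemma~\ref{about-X}. A standard partition-of-unity argument (or equivalently, the fact that the \v{C}ech nerve of a numerable open cover computes the weak homotopy type) identifies the canonical map
\[
\hocolim\bigl(\iota X_- \longleftarrow \iota RV \longrightarrow \iota X_+\bigr) \longrightarrow \iota X
\]
as a weak homotopy equivalence. The homotopy pushout of three spaces of finite CW type is again of finite CW type, being obtained by finitely many cell attachments to a mapping cylinder. This completes both inductions.

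The main obstacle is really absorbed by the appeal to Theorem~\ref{collar=fin}: one needs the collar-gluing decomposition to be chosen so that both $X_\pm$ and the separating datum $V$ remain finite at each stage, so that the two inductive hypotheses apply compatibly. Once this is granted, the homotopy-theoretic step — recognizing the underlying topological pushout as a homotopy pushout — is standard for open covers of paracompact spaces, and preservation of finite CW type under homotopy pushouts is entirely formal.
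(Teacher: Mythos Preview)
Your argument is correct, but it takes a different route from the paper's. The paper inducts on the \emph{depth} $k$ of $X$: the base case $k=0$ is classical Morse theory for smooth manifolds, and the inductive step uses the specific tubular-neighborhood collar-gluing of Corollary~\ref{pieces}, namely $X = \widetilde{X}_{n-k} \cup_{R(\partial\widetilde{X}_{n-k})} (X\smallsetminus X_{n-k})$, noting that $\widetilde{X}_{n-k}$ deformation retracts onto the smooth manifold $X_{n-k}$ while the other two pieces have strictly smaller depth. You instead invoke Theorem~\ref{collar=fin} and run a double induction on dimension and on the number of collar-gluings. Since Theorem~\ref{collar=fin} is itself proved via Corollary~\ref{pieces}, your route is logically downstream of the paper's; what it buys is a cleaner separation between the structural decomposition result and its homotopy-theoretic consequence.

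One small inefficiency: your primary induction on dimension is unnecessary and introduces the wrinkle you flag at the end (that $V$ must be finite). By the inductive definition of $\snglr_n^\cI$, the middle piece $RV$ is itself in $\snglr_n^\cI$ with strictly fewer collar-gluings, so your secondary induction already applies to $RV$ directly and yields that $\iota RV$ has finite CW type. With that observation the argument collapses to a single induction on the collar-gluing rank, and the finiteness of $V$ as an $(n-1)$-manifold never enters.
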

\begin{proof}
By induction on the depth $k$ of $X$.  The case $k=0$ is handled by way of standard Morse theory.  From Corollary~\ref{pieces} we can write $X = \widetilde{X}_{n-k} \cup_{R(\partial \widetilde{X}_{n-k})} X\smallsetminus X_{n-k}$.  In particular, there is the pushout diagram
\[
\xymatrix{
\{0\}\times \iota (\partial \widetilde{X}_{n-k})  \ar[r]  \ar[d]
&
\iota (X\smallsetminus X_{n-k})  \ar[d]
\\
\iota \widetilde{X}_{n-k} \ar[r]
&
\iota X
}
\]
is a homotopy pushout.
There is a deformation retraction of $\iota \widetilde{X}_{n-k}$ onto $\iota X_{n-k}$.  By induction all of $\iota \widetilde{X}_{n-k}$, $\iota (X\smallsetminus X_{n-k})$, and $\iota (\partial \widetilde{X}_{n-k})$ admit a finite CW complex structure.  The claim follows.  

\end{proof}

Clearly, if $X_\pm$ and $V$ are each finite then so is $X$.  Therefore $\snglr_n^\cI\subset \snglr_n^{\sf fin}$.  

\begin{theorem}\label{collar=fin}
The inclusion $\snglr_n^\cI \xra{\cong} \snglr_n^{\sf fin}$ is an equivalence of categories.  
\end{theorem}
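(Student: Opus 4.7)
The inclusion $\snglr_n^\cI \subset \snglr_n^{\sf fin}$ was noted just before the statement: any iterated collar-gluing of basics is finite, since finiteness is preserved by collar-gluings. So the work is in the reverse direction: every finite singular $n$-manifold $X$ is a finite iteration of collar-gluings starting from basics. The plan is a double induction, first on the depth $k$ of $X$, then on the cardinality of a finite atlas of $X$.

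For the base case $k=0$, a finite singular $n$-manifold is a finite smooth $n$-manifold, i.e.~a smooth $n$-manifold admitting a finite atlas of coordinate charts. Here I would apply classical Morse theory: using the assumed finite atlas together with smooth partitions of unity (available on any paracompact smooth manifold), build a proper Morse exhaustion function $h\colon \iota X \to \RR$ with only finitely many critical points. Then each sublevel set differs from a slightly larger one by attaching a smooth open handle, and each handle attachment realizes a collar-gluing of the form $X_{\leq c-\epsilon}' \cup_{RV} H \to X_{\leq c+\epsilon}'$ where $H\cong \RR^n$ is a handle (a basic) and $V$ is the corresponding attaching cospheric link, itself a finite smooth $(n-1)$-manifold obtainable by the same argument in one dimension lower. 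This inductively expresses $X$ as a finite collar-gluing of basics, so $X\in\snglr_n^{\cI}$.

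For the inductive step, assume the result in all depths strictly less than $k$ and let $X$ have depth $k$. Apply Corollary~\ref{pieces} to obtain the collar-gluing presentation
\[
X \;=\; \widetilde{X}_{n-k} \;\cup_{R(\partial \widetilde{X}_{n-k})}\; (X\smallsetminus X_{n-k}).
\]
The open complement $X\smallsetminus X_{n-k}$ has depth strictly less than $k$, and by Proposition~\ref{tubular-neighborhood} $\widetilde{X}_{n-k}$ is a fiberwise cone on $\partial \widetilde{X}_{n-k}$ over a smooth $k$-manifold, so both $\widetilde{X}_{n-k}$ and $\partial \widetilde{X}_{n-k}$ have depth strictly less than $k$ as well (the latter having dimension $n-1$). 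It therefore suffices to verify that each of these three singular manifolds is finite, so that the inductive hypothesis applies to them. Finiteness of $X\smallsetminus X_{n-k}$ is obtained by restricting a finite atlas of $X$ to the open complement of $\iota X_{n-k}$ and using that each basic $\iota U = \RR^{n-j}\times C(\iota Y)$ meets the complement in a set which is itself covered by finitely many basics of strictly lower depth. Finiteness of $\partial \widetilde{X}_{n-k}$ is inherited from the fact that the locally conical model of $X$ along $X_{n-k}$ is controlled by finitely many chart-germs in the given finite atlas of $X$; and $\widetilde{X}_{n-k}$ is finite because it is the fiberwise cone on $\partial\widetilde{X}_{n-k}$ over the smooth $k$-manifold $X_{n-k}$, and a finite atlas of $X_{n-k}$ lifts to a finite atlas of the cone. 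Given these, the inductive hypothesis produces iterated collar-gluing presentations of the three pieces, and one further collar-gluing assembles them into $X$, showing $X\in \snglr_n^{\cI}$.

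The main obstacle I anticipate is the bookkeeping step of checking that the three pieces $X\smallsetminus X_{n-k}$, $\widetilde{X}_{n-k}$ and $\partial\widetilde{X}_{n-k}$ inherit finite atlases from $X$; this is essentially a combinatorial consequence of the local cone structure plus the conically smooth tubular neighborhood of Proposition~\ref{tubular-neighborhood}, but it depends on having the conically smooth refinement of classical smooth machinery (vector fields, flows, partitions of unity) established in the technical section, which the paper promises in Section~7. The secondary obstacle is the depth-zero Morse-theoretic step, which is standard but requires a proper exhausting Morse function on a smooth manifold of finite type; both obstacles are addressed by the apparatus of smooth partitions of unity in the (conically) smooth setting emphasized in the conventions.
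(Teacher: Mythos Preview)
Your overall strategy matches the paper's: induct on depth, invoke Corollary~\ref{pieces} to split $X$ as $\widetilde{X}_{n-k}\cup_{R(\partial\widetilde{X}_{n-k})}(X\smallsetminus X_{n-k})$, check that the three pieces are finite, and conclude. The paper dispatches the depth-zero base case by the same classical Morse-theoretic remark you give.

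There is, however, a genuine gap in your inductive step. You assert that $\widetilde{X}_{n-k}$ has depth strictly less than $k$, so that the inductive hypothesis on depth applies to it. This is false: $\widetilde{X}_{n-k}$ is the fiberwise cone $C_{X_{n-k}}(\partial\widetilde{X}_{n-k})$, and the cone-section $X_{n-k}\hookrightarrow\widetilde{X}_{n-k}$ consists entirely of points of depth exactly $k$ (locally the chart is $\RR^{n-k}\times C(Z)=U^n_Z$ with $Z$ a nonempty compact $(k-1)$-manifold). So $\widetilde{X}_{n-k}$ has depth $k$, and your induction on depth does not cover it. Your claim for $\partial\widetilde{X}_{n-k}$ and for $X\smallsetminus X_{n-k}$ is fine; only the tubular neighborhood itself fails.

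What rescues the argument is the bundle structure you already mention but misfile: $\widetilde{X}_{n-k}\to X_{n-k}$ is a conically smooth fiber bundle over the \emph{smooth} $(n-k)$-manifold $X_{n-k}$. Since $X_{n-k}$ is smooth and finite, your depth-zero Morse step builds it as an iterated collar-gluing of copies of $\RR^{n-k}$; each collar-gluing of the base lifts to one of the total space, and over each $\RR^{n-k}$ the bundle trivializes to a basic $U^n_Z$. The collar terms arising here are again fiberwise cones over smooth bases of strictly smaller dimension, so a secondary induction on the dimension of the smooth base is what actually runs---this is presumably the ``second induction'' you announce in your opening paragraph but never deploy. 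The paper's proof is equally terse at this point (it simply writes ``The result follows'' after establishing finiteness), but it does not make the incorrect depth claim.
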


\begin{proof}
Let $X = (\iota X,\cA)$ be a singular $n$-premanifold with $\cA$ finite.  We prove that $X\in \snglr_n^\cI$ by induction on the depth of $X$.  If $X$ has depth zero, then $X$ is an ordinary smooth $n$-manifold and the result follows from classical results.  For instance, $X$ admitting a finite atlas implies it is the interior of a compact $n$-manifold with boundary.  Then use Morse theory. 

Suppose $X$ has depth $k$.  From Corollary~\ref{pieces}, the diagram
\[
\xymatrix{
R(\partial \widetilde{X}_{n-k})  \ar[r]  \ar[d]
&
X\smallsetminus X_{n-k}  \ar[d]
\\
\widetilde{X}_{n-k} \ar[r]
&
X
}
\]
describes a collar-gluing $X = \widetilde{X}_{n-k}\cup_{R(\partial \widetilde{X}_j)} X\smallsetminus X_{n-k}$.  The collection 
\[
\{(U_{n-k},\phi_k )\mid (U,\phi)\in \cA \text{ and } \phi(U)\cap X_{n-k}\neq \emptyset \}
\]
is a finite atlas for $X_{n-k}$.  For each $p\in \iota X_{n-k}$ there is a morphism $U^n_Z \xra{\psi} \widetilde{X}_{n-k}$ for which $p\in\psi(U^n_Z)$.  Because $Z$ is compact, it too admits a finite atlas.  It follows that both $\widetilde{X}_{n-k}$ and $\partial \widetilde{X}_{n-k}$ admit finite atlases.  Likewise, the open cover $\{\phi(\iota U\smallsetminus \iota U_{n-k})\mid (U,\phi)\in \cA\}$ of $X\smallsetminus X_{n-k}$ admits a finite refinement by basics thereby exhibiting an atlas.  The result follows.  
\end{proof}

\subsection{Categories of basics}\label{categories-of-basics}
This section is the culmination of our presentation of the geometric objects studied in this article.  
Here we define a category of singular $n$-manifolds equipped with a specified structures, an object of which we call a \emph{structured singular manifolds}.   

For an ordinary smooth $n$-manifold, a \emph{structure} is a continuously varying $B$-structure on the fibers of the tangent bundle, where $B\to {\sf BGL}(\RR^n)$ is a fibration.  Singular manifolds allow for much more creativity in their choices of structure.  This is due to the fact that, while (the coherent nerve of) $\bsc_{n,0}$ is an $\infty$-groupoid (i.e., a Kan complex), the quasi-category $\bsc_n$ is far from being a groupoid.

The idea is that a \emph{structure} on a singular manifold $X$ is a lift of the tangent classifier $X \xra{\tau_X} \bsc_n$ to a right fibration over $\bsc_n$. To implement this idea, we first establish the notion of a tangent classifier.

\subsubsection{Tangent classifiers}

For $\cC$ a quasi-category, let $\cP(\cC)$ denote the quasi-category of right fibrations over $\cC$.  
We often denote a right fibration $E\to \sC$ by $E$.
There is a Yoneda functor $\cC \to \cP(\cC)$, see~\cite{topos},~\textsection5.1.1 and \textsection5.1.3, given on vertices as $c\mapsto \widehat{c}:=(\cC_{/c} \to \cC)$.
This Yoneda functor is fully faithful (meaning the map of homotopy types $\cC(c,d)\to  \cP(\cC)(\widehat{c},\widehat{d})$ is an isomorphism).  

Recall our convention that we do not distinguish in notation between a $\Top$-enriched category and the quasi-category which is its coherent nerve.  
The inclusion $\bsc_n \subset \snglr_n$ induces a map of quasi-categories
$
\cP(\snglr_n) \to \cP(\bsc_n)
$
given by pullback $(E \to \snglr_n)\mapsto (\bsc_n\times_{\snglr_n} E)$.  Precomposing with the Yoneda map gives a map of quasi-categories $\widehat{(-)}\colon\snglr_n \to \cP(\bsc_n)$, denoted on objects as
\begin{equation}\label{hat}
X\mapsto (\widehat{X} \xra{\tau_X} \bsc_n)~.  
\end{equation}
We will often denote this value simply as $\widehat{X}$ and refer to $\tau_X$ as the \emph{tangent classifier}.   

Note there is an equivalence of Kan complexes $|\widehat{X}| \simeq \Sing(\iota X)$.
To see this, consider the sub-quasi-category of $\bsc_n$ whose edges $U\to V$ are morphisms that extend to embeddings of compact spaces $\ov{\iota U} \hookrightarrow \ov{\iota V}$. Over this lives a cofinal sub-quasi-category of $\widehat{X}$. Because each $\iota U$ is contractible,
\[
|\widehat{X}| = \colim(\widehat{X} \xra{\ast} \cS) \xla{\simeq} \colim(\widehat{X} \xra{\tau_X} \bsc_n\xra{\iota} \cS) \xra{\simeq} \Sing(\iota X)~.
\]

\begin{remark}
The situation $X\mapsto (\widehat{X} \xra{\tau_X} \bsc_n)$ is a generalization of a familiar construction in classical differential topology.  
Suppose $X$ is a smooth manifold. The map $\tau_X$ factors as $\widehat{X} \xra{\tau_X} \bsc_{n,0} \to \bsc_n$.  As we just witnessed, the quasi-category $\widehat{X}$ is a Kan complex and is equivalent to $\iota X$.  Moreover, $\bsc_{n,0}$ is a Kan complex and is equivalent to ${\sf BO}(n)$.  
Through these equivalences, the map $\widehat{X} \xra{\tau_X} \bsc_{n,0}$ is equivalent to the familiar tangent bundle classifier $X\xra{\tau_X} {\sf BO}(n)$.  

If $X$ is not smooth, then $\widehat{X}$ is not a priori a Kan complex and the map $\widehat{X} \to \bsc_n$ retains more information than any continuous map from the underlying space $\iota X$.  For instance, the map $\tau_X$ does not classify a fiber bundle per se since the `fibers' are not all isomorphic.  

\end{remark}

\begin{remark}
The maps of quasi-categories $\snglr_n \to \cP(\bsc_n)$ is very far from being fully faithful. For instance, in the case of smooth manifolds, this functor is equivalent to $\mfld_n \ra \cS_{/\BO(n)}$ to spaces over the classifying space of the orthogonal group.  Surgery theory provided (non-trivial) obstructions to lifting an object on the righthand side to an object on the left, as well as obstructions to comparing two such lifts.  

\end{remark}

\subsubsection{Categories of basics}

\begin{definition}\label{cat-o-basics}
A \emph{category of basics (of dimension $n$)} is a right fibration 
\[
\cB\to \bsc_n.
\]  
We refer to an object $U\in \cB$ as a \emph{basic}.
Let $\cB$ be a category of basics.  Define the quasi-category
\[
\mfld(\cB) = \snglr_n\times_{\cP(\bsc_n)} (\cP(\bsc_n)_{/\cB})~.
\]
and refer to its vertices as \emph{$\cB$-manifolds}.  Likewise, we refer to the vertices in the quasi-categoroy $\pmfld(\cB) = \psnglr_n\times_{\cP(\bsc_n)} \bigl(\cP(\bsc_n)\bigr)_{\cB}$ as $\cB$-premanifolds.  
We will use $\iota$ to also denote the composition $\iota \colon \mfld(\cB) \xra{\rm proj} \snglr_n \xra{\iota} \Top$.  
\end{definition}

Explicitly, a $\cB$-manifold is the data of a pair $(X,g)$ where $X$ is a singular $n$-manifold and $\widehat{X} \xra{g} \cB$ is a map of right fibrations over $\bsc_n$.  Unless the structure $g$ is notationally topical, we will denote a $\cB$-manifold simply as a letter $X$.

\begin{remark}
The straightening-unstraightening construction is an equivalence of quasi-categories between $\cP(\bsc_n)$ and the quasi-category $\mathsf{Fun}(\bsc_n^{\op},\cS)$ of functors to Kan complexes.  As so, the datum of a right fibration is equivalent to that of a map of quasi-categories $\bsc_n^{\op} \to \cS$.  For $\cB$ a category of basics, denote by $\Theta$ the corresponding functor.  We can reformulate the datum $g$ of a $\cB$-manifold $(X,g)$ as a point in the limit $g\in \lim(\widehat{X}^{\op} \xra{\tau_X} \bsc_n^{\op} \xra{\Theta} \cS)$.  That is, for each $U\to X$ an element $g_U\in \Theta(U)$ and for each $U\xra{f} V \to X$ a path between $g_U$ and $f^\ast g_V$, and likewise for each sequence $U_0\xra{f_1} \dots \xra{f_p} U_p \to X$.  
In the case that these paths (and maps from higher simplices) are constant, this is a strict notion of a structure.  
\end{remark}

\begin{example}
Let $\Theta\colon \bsc_n^{\op} \to \Top$ be a fibrant continuous functor. The unstraightening construction applied to $\Theta$ gives a right fibration 
\[
\cB_\Theta \to \bsc_n
\]
whose fiber over $U$ is a Kan complex equivalent to $\mathsf{Sing}\bigl(\Theta(U)\bigr)$.  

\end{example}

\begin{example}\label{over-X}
Let $X$ be a singular $n$-manifold.  Then $\widehat{X} \to \bsc_n$ is a category of basics.  The data of a singular $n$-manifold $Y$ together with a morphism $Y\xra{f} X$ determines the $\widehat{X}$-manifold $(\widehat{Y},\widehat{f})$.  
In general, it is not the case that every $\widehat{X}$-manifold arises in this way. 
\end{example}

Let $\cC$ be a quasi-category.  Say a sub-quasi-category $\cL \subset \cC$ is a \emph{left ideal} if the inclusion $\cL \to \cC$ is a right fibration.  Alternatively, say $\cL\subset \cC$ is a left ideal if it is a full subcategory and for each $c\in \cL$ the homotopy type $\cC(c',c)\neq\emptyset$ being non-empty implies $c'\in \cL$.  Being a full sub-quasi-category, a left ideal is determined by its set of vertices.  
Note that the coherent nerve of a left ideal $\sL \subset \sC$ of $\Top$-enriched categories is a left ideal of quasi-categories.

\begin{remark}
We point out a consistency of terminology.  
For $\sB\subset \bsc_n$ a left ideal of $\Top$-enriched categories, the two Definitions~\ref{def:left-ideal} and~\ref{cat-o-basics} of a $\sB$-manifold agree.  
\end{remark}

\begin{example}\label{example:left-ideal}
Let $\sB\subset\bsc_n$ be a left ideal of $\Top$-enriched categories. The map of coherent nerves
\[
\sB\to \bsc_n
\]
is a category of basics.  So for $\cB\to \sB$ a right fibration over a $\Top$-enriched left ideal of $\bsc_n$, the composition $\cB \to \sB \to \bsc_n$ is a category of basics.  
Conversely, for $\sB\to \bsc_n$ a category of basics, the full subcategory $\sB \subset \bsc_n$, spanned by those $U$ for which the fiber $\sB_U$ is non-empty, is a left ideal of $\Top$-enriched categories.  
\end{example}

\begin{definition}\label{definition:D_n}
Define the category of basics 
\[
\sD_n\to \bsc_n
\]
as the coherent nerve of $\bsc_{n,0}$.  
\end{definition}

\begin{example}[Framed 1-manifolds with boundary]\label{I}
Consider the left ideal $\cI'\subset \bsc_1$ whose set of objects is $\{\RR,U^1_\ast \}$.  We point out that $\iota U^1_\ast = \RR_{\geq 0}$.
Define by $\cI\to \cI'$ the (unique) right fibration whose fiber over $\RR$ is a point, $\Delta^0$, and whose fiber over $U^1_\ast$ is $\Delta^0\sqcup \Delta^0$, the discrete simplicial set with two vertices.  Explicitly, we write the vertices of $\cI$ as $\RR$, $\RR_{\geq -\infty}$, and $\RR_{\leq \infty}$.  An edge between two is a smooth open embedding which preserves orientation.  
An $\cI$-manifold is an oriented smooth $1$-manifold with boundary, $[-\infty,\infty]$ being an important example.

We will denote by $\mfld_1^{\partial, \fr}$ the quasi-category of $\cI$-manifolds. We will denote the full subcategory consisting of disjoint unions of basics by $\disk_1^{\partial, \fr}$. We will come back to this category in \textsection\ref{section:interval}.
\end{example}

\begin{example}\label{fr}
Denote by the right fibration  $\sD_n^{\fr} := (\sD_n)_{/\RR^n} \to \sD_n$.  It is unstraightening of the continuous functor $\sD_n^{\op} \to \Top$ given by $\sD_n(-,\RR^n)$ whose value on $\RR^n$ (the only object) is $\Emb(\RR^n,\RR^n) \simeq \sO(n)$.  As so, it is useful to regard a vertex of $\sD_n^{\fr}$ as a framing on $\RR^n$, that is, a trivialization of the tangent bundle of $\RR^n$.  Because $\sD_n$ is a Kan complex, the slice simplicial set $\sD_n^{\fr} \simeq \ast$ is contractible.  
A $\sD_n^{\fr}$-manifold is a smooth $n$-manifold together with a choice of trivialization of its tangent bundle.  A morphism between two is a smooth embedding together with a path of trivializations from the given one on the domain to the pullback trivialization of the target.  
Composition is given by compositing smooth embeddings and concatenating paths.  

\end{example}

\begin{example}
Let $G\xra{\rho} {\sf GL}(\RR^n)$ be a map of topological groups.  There results a Kan fibration between Kan complexes ${\sB}G \to {\sf BGL}(\RR^n)$.  
Define the right fibration $\sD_n^G \to \sD_n$ through the equivalence of Kan complexes ${\sf BGL}(\RR^n) \xra{\simeq} \sD_n$.
A $\sD_n^G$-manifold is a smooth $n$-manifold with a (homotopy coherent) $G$-structure on the fibers of its tangent bundle.  A morphism of such a smooth embedding together with a path from the fiberwise $G$-structure on the domain to the pullback $G$-structure under the embedding.  

Examples of such a continuous homomorphism are the standard maps from ${\sf Spin}(n)$, $\sO(n)$, ${\sf SO}(n)$.  The case $\ast \to {\sf GL}(\RR^n)$ of the inclusion of the identity subgroup gives the category of basis $\sD_n^{\fr}$ of Example~\ref{fr}. 

\end{example}

\begin{example}\label{example:Ekn}
Recall the left ideal $\sD_{n,k}^{\mathsf{Knk}}$ of Example~\ref{example:Edn'}.  Consider the topological category $\sD'_{n,k}$ over $\sD_{n,k}^{\mathsf{Knk}}$ consisting of the same (two) objects $\RR^n$ and $U^n_{S^{n-k-1}}$, and with morphism spaces given as
\begin{itemize}
\item $\sD'_{n,k}(\RR^n,\RR^n) = \sD_{n,k}^{\mathsf{Knk}}(\RR^n,\RR^n) = \Emb(\RR^n,\RR^n)\xla{\simeq} \sO(n)$,
\item $\sD'_{n,k}(\RR^n, U^n_{S^{n-k-1}}) = \sD_{n,k}^{\mathsf{Knk}}(\RR^n,U^n_{S^{n-k-1}}) = \Emb(\RR^n , \RR^n\smallsetminus \RR^k)\xla{\simeq} S^{n-k-1} \times \sO(n)$,
\item $\sD'_{n,k}(U^n_{S^{n-k-1}},\RR^n) = \sD_{n,k}^{\mathsf{Knk}}(U^n_{S^{n-k-1}},\RR^n) = \emptyset$,
\item $\sD'_{n,k}(U^n_{S^{n-k-1}},U^n_{S^{n-k-1}}) \subset \Emb(\RR^n, \RR^n)$ -- the subspace consisting of those $\RR^n\xra{f}\RR^n$ for which there are the containments $f(\RR^k)\subset \RR^k$ and $f(\RR^n\smallsetminus \RR^k) \subset \RR^n\smallsetminus \RR^k$.  
\end{itemize}
Composition for $\sD'_{n,k}$ is evident.  
There is the obvious topological functor $\sD'_{n,k} \to \sD^{\sf Knk}_{n,k}$, which gives a map of quasi-categories.  
This map of quasi-categories is equivalent to a right fibration 
\[
\sD_{n,k} \to \sD_{n,k}^{\sf Knk}
\]
which is unique up to canonical equivalence of right fibrations over $\sD^{\sf Knk}_{n,k}$ (this is implemented as fibrant replacement in the model structure of right fibrations given in~\cite{topos}).
More explicitly, the right fibration $\sD_{n,k} \to \sD^{\sf Knk}_{n,k}$ can be obtained as the unstraightening construction applied to the coherent nerve of the topological functor
\[
\bigl(\sD^{\sf Knk}_{n,k}\bigr)^{\op} \to \Top
\]
given by assigning to $\RR^n$ a singleton $\ast$, and to $U^n_{S^{n-k-1}}$ the space
\[
\sD^{\sf Knk}_{n,k}(U^n_{S^{n-k-1}}, U^n_{S^{n-k-1}}) /\sD'_{n,k}(U^n_{S^{n-k-1}}, U^n_{S^{n-k-1}}) 
\]
where the topological monoid $\sD'_{n,k}(-,-)$ acts by post composition -- this space has the homotopy type of $\Diff(S^{n-k-1}) / \sO(n-k)$ and is interpreted as the space of smooth structures on $\RR^k\times C(S^{n-k-1})$ which agree with the given conically smooth structure.  

A $\sD_{n,k}$-manifold is a smooth $n$-manifold equipped with a properly embedded smooth $k$-manifold. Provided $n-3\leq k \leq n$, then $\sO(n-k)\xra{\simeq} \Diff(S^{n-k-1})$ is an equivalence (due to the proof of Smale's conjecture) and thus the right fibration $\sD_{n,k}\xra{\simeq} \sD_{n,k}^{\mathsf{Knk}}$ is an equivalence of quasi-categories.  For this range of $k$ then, there is no distinction between a $\sD_{n,k}$-manifold and a $\sD_{n,k}^{\mathsf{Knk}}$-manifold.  A case of particular interest is $(n,k) = (3,1)$.  

\end{example}

\begin{example}
Let $A$ be a set whose elements we call colors.  Denote by $\sD_{n,k}^A\to \sD_{n,k}$ the right fibration whose fiber over $\RR^n$ is a point and whose fiber over $U^n_{S^{n-k-1}}$ is the set $A$.  
A $\sD_{n,k}^A$-manifold is a collection $\{L_\alpha\}_{\alpha\in J}$ of pairwise disjoint properly embedded smooth $k$-manifolds of a smooth $n$-manifold $M$.

As a related example, let $S\subset A\times A$ be a subset and denote by $\sD_n^S$ the category of basics over the objects $\RR^n$ and $U^n_{\ast \sqcup \ast}$ whose fiber over the first object is the set $A$ of colors, and whose fiber over the second object is $S$.    The edges (and higher simplicies) are given from the two projections $S \to A$.  A $\sD_n^S$-manifold is a smooth $n$-manifold together with a hypersurface whose complement is labeled by $A$, and the colors of two adjacent components of this complement are specified by $S$.
We call such a geometric object a ($S$-indexed) \emph{defect} of dimension $n$.  

\end{example}

\begin{example}\label{corners}
Recall the left ideal $\sD_n^{\un{\partial}}$ of Example~\ref{boundary}.  
Consider the continuous functor $(\sD_n^{\un{\partial}} )^{\op} \to \mathsf{POSet}$ given by $U^n_{\Delta^{k-1}} \mapsto \{S\in  \pi_0(\iota (U^n_{\Delta^{k-1}})_j)\mid 0\leq j \leq n\}\cong (s^{\{1,\dots,n\}})^{\op}$ the poset of components of the open strata with order relation $S\leq T$ if $S\subset \ov{T}$ is contained in the closure -- this poset happens to be a cube with ${n}\choose{j}$ path components of the $j^{\rm th}$ open strata.  Because morphisms in $\bsc_n$ induce stratified maps of underlying stratified spaces, this assignment is evidently functorial.  Denote by $\sD_{\langle n \rangle} \to \sD_n^{\un{\partial}}$ the Grothendieck construction on this functor.  Upon regarding it as a quasi-category via the coherent nerve construction, $\sD_{\langle n \rangle}$ is a category of basics.  A $\sD_{\langle n \rangle}$-manifold is an $n$-manifold with corners equipped with coloring of its strata.  In particular, while the tear drop $[-1,1]_{/-1\sim 1}$ is a $1$-manifold with corners (in the sense of Example~\ref{boundary}), it is not a $\sD_{\langle 1 \rangle}$-manifold.   
Often, this $\langle~\rangle$-notion of a manifold with corners is easier to work with.  For instance, the space of endomorphisms of an object over $U^n_{\Delta^{n-1}}$ is contractible.  

\end{example}

\begin{example}\label{Ekn-framed}
Recall the left ideal $\sD_{n,k}$ of Example~\ref{example:Ekn}.  
Consider the standard homeomorphism $\xi\colon \RR^n = \RR^k\times \RR^{n-k} \xra{\cong}  \iota U^n_{S^{n-k-1}} \RR^k\times C(S^{n-k-1})$ given by $\xi(u,v) = (u,\lVert v\rVert , \frac{v}{\lVert v \rVert})$.  The restriction of the inverse $\zeta = \xi^{-1}_|\colon U^n_{S^{n-k-1}} \smallsetminus \RR^k \longrightarrow \RR^n$ is a morphism in $\snglr_{n,0}$ -- that is, a smooth embedding of ordinary $n$-manifolds.  

Denote by $\sD_{n,k}^{\fr}\to \sD_{n,k}$ the right fibration obtained as the unstraightening of the map $\sD_{n,k}^{\op} \to \cS$ given on objects as  $\RR^n \mapsto \sD_{n,k}(\RR^n , \RR^n)$, $U^n_{S^{n-k-1}}\mapsto \sD_{n,k}(U^n_{S^{n-k-1}} , U^n_{S^{n-k-1}})$, on endomorphisms in the evident way, and on the remaining class of morphisms as 
\[
(\RR^n \xra{f} U^n_{S^{n-k-1}}) \mapsto \Bigl(\sD_{n,k}(U^n_{S^{n-k-1}} , U^n_{S^{n-k-1}}) \xra{\zeta_\ast f^\ast } \sD_{n,k}(\RR^n , \RR^n)\Bigr)
\]
which is clearly continuous in the argument $f$.  
Observe that $\sD_n^{\fr}$ embeds into $\sD_{n,k}^{\fr}$ as the fiber over $\RR^n$.  The fiber of $\sD_{n,k}^{\fr}$ over the other vertex $U^n_{S^{n-k-1}}$ is what one can justifiably name the Kan complex of \emph{framings} of $U^n_{S^{n-k-1}}$.  Via~\textsection\ref{orthogonal}, this terminology is justified through the identification of the fiber over $U^n_{S^{n-k-1}}$ being canonically equivalent to a (twisted) product $\sO(k)\widetilde \times \Aut(S^{n-k-1})$ which is the Kan complex of choices of `trivializations of the tangent stalk of $U^n_{S^{n-k-1}}$ at it center'.    

Consider the map of quasi-categories $\sD_{n,k} \to \Delta^1$ determined by $\{\RR^n\}\mapsto 0$, $\{U^n_{S^{n-k-1}}\}\mapsto 1$.  The composite map $\sD_{n,k}^{\fr} \to \sD_{n,k} \to \Delta^1$ is an equivalence of quasi-categories.  This is analogous to (and restricts to) $\sD_n^{\fr} \simeq \ast$.  

A $\sD_{n,k}^{\fr}$-manifold is a framed $n$-manifold and a properly embedded $k$-submanifold equipped with a splitting of the ambient framing along the tangent bundle of this submanifold.  

\end{example}

\begin{example}
Fix a smooth embedding $e\colon S^{k-1}\sqcup S^{l-1} \to S^{n-1}$.  Regard the datum of $e$ as a singular $(n-1)$-manifold, again called $e$, whose underlying space is $S^{n-1}$ with singularity locus the image of $e$.  
Consider the left ideal $\sD_{k,l,n}^e\to \bsc_n$ whose set of objects is $\{\RR^n, U^n_{S^{n-k-1}}, U^n_{S^{n-l-1}}, U^n_{e}\}$.  
A $\sD_{k,l,n}^e$-manifold is a smooth $n$-manifold $M$ together with a pair of properly embedded smooth submanifolds $K,L\subset M$ of dimensions $k$ and $l$, respectively, whose intersection locus is discrete and of the form specified by $e$.  
As a particular example, if $k+l=n$ and $e$ is the standard Hopf link, then a $\sD_{k,l,n}^e$-manifold is a pair of submanifolds (of dimensions $k$ and $l$) of a smooth manifold which intersect transversely as a discrete subset.  

\end{example}

\begin{example}
Recall the category of basics $\sD_{n,k}$ of Example~\ref{example:Ekn}.  
There is a standard map of enriched categories $\sD_{2,1} \to \sD_{3,1}$ given by the assignments of objects $\RR^2\mapsto \RR^3$ and $U^2_{S^0} \mapsto U^3_{S^1}$, and on spaces of morphisms as 
\begin{itemize}
\item
$\sD_{2,1}(\RR^2,\RR^2) = \Emb(\RR^2,\RR^2) \xra{-\times \RR} \Emb(\RR^3,\RR^3) = \sD_{3,1}(\RR^3,\RR^3)$,
\item
$\sD_{2,1}(U^2_{S^0},U^2_{S^0})  \to
\sD_{3,1}(U^3_{S^1},U^3_{S^1})$ given from the identification $\RR\times\bigl(\RR\times C(S^0)\bigr) \cong \RR\times C(S^1)$,
\item
$\sD_{2,1}(\RR^2,U^2_{S^0}) = \Emb(\RR^2,\RR^2\smallsetminus \RR^1) \xra{-\times \RR} \Emb(\RR^3,\RR^3\smallsetminus \RR^2) \hookrightarrow \Emb(\RR^3,\RR^3\smallsetminus \RR) = \sD_{3,1}(\RR^3,U^3_{S^1})$.
\end{itemize}
Modify the map of quasi-categories $\sD_{2,1} \to \sD_{3,1}$ to a right fibration.  
For concreteness (though unnecessary) the fiber of this right fibration over $\RR^3$ is $S^2 =  \sO(3)/\sO(2)$, over $U^3_{S^1}$ is $S^1 = \sO(2)/\sO(1) \simeq \Diff(S^1)/\Diff(S^0)$, and over an edge 
with a prescribed lift of its target is $\ZZ\times \ZZ \times S^2 \simeq \mathsf{hofib}(S^0 \to S^1)\times \sO(3)/\sO(2)$.  
Regard a link in $\RR^3$ as an $\sD_{3,1}$-manifold $L$.  A reduction of the structure category (think ``structure group'') of $L$ to $\sD_{2,1}$ -- that is, a lift of the tangent map
\[
\xymatrix{
&
\widetilde{\sD}_{2,1}  \ar[d]
\\
\widehat{L}  \ar[r]  \ar@{.>}[ur]  
&
\sD_{3,1}~,
}
\]
is the data of a non-vanishing vector field on $\RR^3$ which is transverse to the link.  
This lift is equivalent to a right fibration associated to a $\sD_{2,1}$-manifold if and only if the link $L$ is equivalent to the trivial link.

\end{example}

\subsubsection{Collar-gluings}\label{section-collar-gluings}

Recall the structure functor $\bsc_k \xra{R^{n-k}} \bsc_n$ -- it gives a right fibration of quasi-categories.  
Denote the pullback in quasi-categories
\begin{equation}\label{Bk}
\xymatrix{
\cB_k  \ar[rr]^{R^{n-k}}  \ar[d]
&
&
\cB  \ar[d]  
\\
\bsc_k  \ar[rr]^-{R^{n-k}} 
&
&
\bsc_n
}
\end{equation}
thereby defining the quasi-category and map 
\begin{equation}\label{Mk}
\mfld(\cB_k) \xra{R^{n-k}} \mfld(\cB)~.
\end{equation}  
Employ the notation $\cB_{<r} = \coprod_{0\leq k <r} \cB_k$ and likewise for $\mfld(\cB_{<r}) = \coprod_{0\leq k <r} \mfld(\cB_k)$.  
There are canonical maps $R^-\colon \cB_{<r} \to \cB$ and $R^-\colon \mfld(\cB_{<r}) \to \bman$.

\begin{definition}\label{B-gluings}
A \emph{collar-gluing of $\cB$-manifolds} is a collar-gluing $X=X_-\cup_{RV} X_+$ of singular $n$-manifolds together with a map $\widehat{X} \xra{g} \cB$ over $\bsc_n$.  
Notice that a collar-gluing of $\cB$-manifolds $X=X_-\cup_{RV}X_+$ determines and, up to canonical equivalence, is determined by a diagram of $\cB$-manifolds
\[
\xymatrix{
R(V,g_0)  \ar[r]^{i_+}  \ar[d]^{i_-}
&
(X_+,g_+)  \ar[d]^{f_+}
\\
(X_-,g_-)\ar[r]^{f_-}  
&
(X,g)
}
\]
in where $(V,g_0)$ is a $\cB_{n-1}$-manifold and for which $f_-(\iota X_-)\cup f_+(\iota X_+) = \iota X$.  
We denote a collar-gluing of $\cB$-manifolds as $X=X_-\cup_{RV} X_+$.
\end{definition}

Temporarily denote by $\cB\subset \mfld(\cB)^\cI\subset \mfld(\cB)$ the smallest full sub-quasi-category closed under collar-gluings.  The following corollary is an immediate consequence of Theorem~\ref{collar=fin}.
\begin{cor}\label{B-collar=fin}
There is an equivalence of quasi-categories
\[
\mfld(\cB)^\cI \simeq \mfld(\cB)^{\sf fin}~.
\]  
\end{cor}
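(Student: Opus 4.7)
The plan is to reduce to the unstructured equivalence (Theorem~\ref{collar=fin}) via the forgetful projection $\pi\colon \mfld(\cB)\to \snglr_n$, $(X,g)\mapsto X$. By construction $\mfld(\cB)^{\sf fin} = \pi^{-1}\bigl(\snglr_n^{\sf fin}\bigr)$, and by Definition~\ref{B-gluings} the functor $\pi$ sends a collar-gluing of $\cB$-manifolds to the underlying collar-gluing of singular manifolds. Since $\pi$ sends basics in $\cB$ to basics in $\bsc_n$, a routine induction on the generating presentation gives $\pi\bigl(\mfld(\cB)^\cI\bigr)\subset \snglr_n^\cI \subset \snglr_n^{\sf fin}$, so $\mfld(\cB)^\cI\subset \mfld(\cB)^{\sf fin}$.

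For the reverse inclusion, let $(X,g)\in \mfld(\cB)^{\sf fin}$. By Theorem~\ref{collar=fin}, $X$ can be exhibited as a finite iteration of collar-gluings starting from basics in $\bsc_n$. I would lift this presentation from $\snglr_n$ to $\mfld(\cB)$ by induction on the number of collar-gluings required. At a given stage, given a collar-gluing $X=X_-\cup_{RV}X_+$ of singular manifolds, the open inclusions $X_\pm\hookrightarrow X$ and $RV\hookrightarrow X$ determine morphisms of right fibrations $\widehat{X}_\pm\to \widehat{X}$ and $\widehat{RV}\to \widehat{X}$ over $\bsc_n$. Composing with $g\colon \widehat{X}\to \cB$ yields structures $g_\pm$ and $g_0$ on the pieces; together with the underlying collar-gluing these data constitute precisely a collar-gluing of $\cB$-manifolds in the sense of Definition~\ref{B-gluings}, whose total object is the original $(X,g)$. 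The base of the induction is a basic $U\in \bsc_n$ equipped with a map $\widehat{U}\to \cB$, which is the datum of a basic of $\cB$ and so lies in $\mfld(\cB)^\cI$ by definition.

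The only point requiring verification -- and the one where ``immediate consequence'' has content -- is that the restricted structures $(g_\pm, g_0)$ really do suffice to recover $(X,g)$ as a collar-gluing in the structured sense. This is transparent from the formulation in Definition~\ref{B-gluings}: the structure datum on a collar-gluing of $\cB$-manifolds is a single map $\widehat{X}\to\cB$ from the total space, with the structures on the pieces produced by restriction along $\widehat{X}_\pm,\widehat{RV}\to \widehat{X}$ rather than being independently prescribed and then matched up coherently along $\widehat{RV}$. In particular, no additional homotopy-coherent gluing data has to be manufactured, so the induction goes through without further work and the corollary follows from Theorem~\ref{collar=fin}.
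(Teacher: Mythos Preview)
Your proof is correct and is exactly the argument the paper has in mind when it calls the corollary an ``immediate consequence of Theorem~\ref{collar=fin}'': the paper gives no further proof, and your reduction via the forgetful projection together with the observation that Definition~\ref{B-gluings} packages a collar-gluing of $\cB$-manifolds as an underlying collar-gluing plus a single structure map $\widehat{X}\to\cB$ is precisely what makes the passage immediate.
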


\section{Homology theories}
In this section we define the notion of a \emph{homology theory for $\cB$-manifolds}.
Roughly, a homology theory for $\cB$-manifolds (valued in the symmetric monoidal category $\cC^\otimes$) is a functor $H\colon \mfld(\cB) \to \cC$ for which 
\begin{itemize}
\item $\sqcup \mapsto \otimes$,
\item $\{{\rm isotopies}\}\mapsto \{{\rm equivalences}\}$,
\item $H$ satisfies an excision axiom (see Definition~\ref{homology-theory}).
\item Sequential colimits are preserved.
\end{itemize}
This is in fact a generalization of the usual Eilenberg-Steenrod axioms, as we will see in \textsection\ref{classics}. Recall that in the Eilenberg-Steenrod axioms, the excision property can be phrased as sending pushout diagrams to certain colimit diagrams. While this is a fine definition if the domain is the category of spaces, the category of $\cB$-manifolds admits very few pushouts.  As so, collar-gluing diagrams (Definition~\ref{B-gluings}) will play the role of pushout diagrams.

Our definition of a homology theory for $\cB$-manifolds is intended to accommodate many examples, which typically depend on the specifics of $\cB$.  
While at first consideration a homology theory is a huge amount of data, being an assignment of an object of $\cC$ for each $\cB$-manifold, our main result is a consolidation of this information.
Specifically, Theorem~\ref{characterization} characterizes homology theories for $\cB$-manifolds as $\disk(\cB)$-algebras (Definition~\ref{oplus}).  
This is analogous to the situation in classical algebraic topology where a generalized homology theory is determined by its value on a point.
In practice a $\disk(\cB)$-algebra is a manageable amount of data.  The examples in~\textsection\ref{section:examples} will illustrate this.

\subsection{Symmetric monoidal structures and $\disk(\cB)$-algebras}
In this section we observe that disjoint union defines a symmetric monoidal structure on $\mfld(\cB)$ and we define the notion of a $\disk(\cB)$-algebra.  We make use of Lurie's theory of $\infty$-operads, specifically of symmetric monoidal quasi-categories; see~\cite{dag}, \textsection2.  We will also utilize the coherent nerve construction to pass from a $\Top$-enriched symmetric monoidal category to a symmetric monoidal quasi-category.  

Recall Definition 2.0.0.7 from~\cite{dag} that a symmetric monoidal quasi-category is a quasi-category $\cC^\otimes$ equipped with a map $\cC^\otimes \xra{p} \mathsf{Fin}_\ast$ to the nerve of the category of pointed finite sets, such that 
\begin{itemize}
\item $p$ is a coCartesian fibration 
\item Let $J$ be a finite set and let $J_+ = J \cup \{+\}$ denote the finite pointed set with basepoint $+$. For every $j \in J$, define $\rho_j\colon J_+ \to \{j\}_+$ to be the map of based finite sets for which $j\mapsto j$ and $j\neq j'\mapsto +$. The induced map of fibers
\[
\prod_{j\in J} (\rho_j)_\ast \colon \cC^\otimes_{J_+} \xra{\simeq} \prod_{j\in J} \cC^\otimes_{\{j\}_+}
\]
is an equivalence of quasi-categories.

\end{itemize}
A {\em symmetric monoidal functor} $\cC^\otimes \to \cD^\otimes$, or {\em a map of symmetric monoidal quasi-categories}, is a map of quasi-categories over $\mathsf{Fin}_\ast$ which sends coCartesian edges to coCartesian edges.  
For $\cC^\otimes$ a symmetric monoidal quasi-category, we dente the fiber over $\ast_+\in \mathsf{Fin}_\ast$  by $\cC$ and refer to it as the \emph{underlying quasi-category} of $\cC^\otimes$.  

Via the straightening-unstraightening equivalence, a symmetric monoidal quasi-category is equivalent to a functor $\cC^\otimes\colon \mathsf{Fin}_\ast \to \mathsf{Cat}_\infty$ to the quasi-category of quasi-categories for which $\cC^\otimes(J) \xra{\cong} \prod_{j\in J} \cC^\otimes(\{j\})$.

\begin{example}
Recall the quasi-category $\cS$ of Kan complexes.  Denote by $\cS^\times \to \mathsf{Fin}_\ast$ the $\infty$-operad of \emph{spaces under Cartesian product} which is defined as follows.  Denote by $\mathsf{Fin}_\ast^\times\to \mathsf{Fin}_\ast$ the Grothendieck construction on the functor assigning to $J_+$ the category $\cP(J)^{\op}$ which is the opposite of the poset of subsets of $J$.  
A $p$-simplex of $\cS^\times$ over $\Delta^p\xra{\sigma} \mathsf{Fin}_\ast$ is a map $A\colon \Delta^p \times_{\mathsf{Fin}_\ast} \mathsf{Fin}_\ast^\times \to \cS$ for which for each $0\leq i \leq p$ the restriction 
\[
d_{\{i\}}^\ast A\colon \cP\bigl(\sigma(i)\bigr)^{\op} = \Delta^{\{i\}}\times_{\sigma(i)} \cP\bigl(\sigma(i)\bigr)^{\op} \to \cS
\]
witnesses $d^\ast_{\{i\}}A(J)$ as the product $\prod_{j\in J} d^\ast_{\{i\}} A(\{j\})$.  
Then $\cS^\times \to \mathsf{Fin}_\ast$ is an $\infty$-operad. (For details, see~\cite{dag},~\textsection2.4.1.)  

So a vertex of $\cS^\times$ over $J_+$ is a $J$-indexed collection of Kan complexes.  An edge of $\cS^\times$ over $J_+\to K_+$ is an $J$-indexed collection of Kan complexes $(X_j)_{j\in J}$, a $K$-indexed collection of Kan complexes $(Y_k)_{k\in K}$, and for each $k\in K$ a map $\prod_{f(j)=k} X_j \to Y_k$.  
\end{example}

\begin{example}[coCartesian symmetric monoidal quasi-categories]\label{coCart}
Let $\cC$ be a quasi-category.  Define the $\infty$-operad $\cC^\amalg$ as follows.  
Denote by $\widetilde{\mathsf{Fin}}_\ast \to \mathsf{Fin}_\ast$ the Grothendieck construction on the functor assigning to $J_+$ the set $J$, regarded as a category with only identity morphisms.  
A $p$-simplex of $\cC^\amalg$ over $\Delta^p\xra{\sigma} \mathsf{Fin}_\ast$ is a map $\cU\colon \Delta^p\times_{\mathsf{Fin}_\ast} \widetilde{\mathsf{Fin}}_\ast  \to \cC$.  
It is shown in~\cite{dag}~\textsection2.4.1 that $\cC^\amalg \to \mathsf{Fin}_\ast$ is an $\infty$-operad.

Explicitly a vertex of $\cC^\amalg$ over $J_+$ is an $J$-indexed collection of objects of $\cC$.  An edge of $\cC^\amalg$ over $J_+\xra{f} K_+$ is an $J$-indexed collection of objects $(c_j)_{j\in J}$, a $K$-indexed collection of objects $(d_k)_{k\in K}$, and for each pair $(j,k)$ with $f(j)=k$ a map $c_j\to d_k$.  

The underlying category of $\cC^\amalg$ is $\cC$.  
If $\cC$ admits coproducts there is a map of quasi-categories $\cC^\amalg \to \cC$ given by $\bigl(J,(c_j)\bigr)\mapsto \coprod_{j\in J} c_j$ and $\cC^\amalg$ is a symmetric monoidal quasi-category.  
In this case, we refer to $\cC^\amalg$ as a \emph{coCartesian symmetric monoidal quasi-category}.  

\end{example}

We record the following example expressly for use in~\S\ref{free-examples}.  
\begin{example}\label{free-guy}
Recall that a morphism $J_+\xra{f} K_+\in \mathsf{Fin}_\ast$ is \emph{inert} if $f^{-1}(k)$ is a singleton for each $k\in K$.  Clearly, inert morphisms are closed under composition.  
Denote $\cC^\amalg_{\sf{inert}} = (\mathsf{Fin}_\ast)_{\sf inert}\times_{\mathsf{Fin}_\ast} \cC^\amalg$.  The canonical map $\cC^\amalg_{\sf inert} \to \mathsf{Fin}_\ast$ is the free $\infty$-operad on $\cC$.   
\end{example}

\begin{example}
Let $(\sC,\otimes,1)$ be a symmetric monoidal $\Top$-enriched category.  
Define the $\Top$-enriched category $\sC^\otimes\to \mathsf{Fin}_\ast$ over finite pointed sets, whose objects are pairs $(J,(c_j)_{j\in J})$ consisting of a finite set $J$ and an $J$-indexed sequence of objects $c_j\in \sC$.  
Define the space of morphisms
\[
\sC^\otimes \bigl((J,(c_j)),(K,(d_k))\bigr) = \coprod_{J_+\xra{f} K_+} \prod_{k\in K}  \sC\Bigl(\bigotimes_{\{f(j)=k\}} c_j ~,~ d_k\Bigr)~.
\]
The map $\sC^\otimes \to \mathsf{Fin}_\ast$ is obvious.  
Composition in $\sC^\otimes$ over $J_+\xra{f} K_+\xra{g}L_+$ is given by 
	\[
	\bigl((u_k)_{k\in K},(v_l)_{l\in L}\bigr)\mapsto \bigl(v_l\circ (\otimes_{\{g(k)=l\}} u_k)\bigr)_{l\in L}.
	\]  
Then the coherent nerve of $\sC^\otimes\to \mathsf{Fin}_\ast$ is a symmetric monoidal quasi-category.  
\end{example}

\begin{definition}[\cite{dag} 2.1.3.1]\label{factorization-definition}
Let $\cC^\otimes$ and $\cO^\otimes$ be symmetric monoidal quasi-categories.  
A \textit{$\cO^\otimes$-algebra in $\cC^\otimes$}, or simply an $\cO$-algebra in $\cC$ if the $\otimes$-structures are understood, is a map of coCartesian fibrations over $\mathsf{Fin}_\ast$
\[
A\colon \cO^\otimes \to \cC^\otimes.
\]
The quasi-category of such functors is denoted $\operatorname{Alg}_\cO(\cC^\ot)$, or sometimes $\Fun^\otimes(\cO^\otimes , \cC^\otimes)$; it is evidently functorial in the variables $\cO^\otimes$ and $\cC^\otimes$.  
\end{definition}

Recall that $(\snglr_n, \sqcup, \emptyset)$ is a symmetric monoidal $\Top$-enriched category.  There is an obvious morphism of symmetric monoidal quasi-categories  $\snglr_n^{\sqcup} \to \cP(\bsc_n)^\amalg$ given by $(J,(X_j)_{j\in J})\mapsto (J,(\widehat{X}_j)_{j\in J})$.  
Denote by 
\[
\mathsf{SngDisk}_n \subset (\snglr_n,\sqcup, \emptyset)
\]
the smallest sub-symmetric monoidal $\Top$-enriched category whose underlying $\Top$-enriched category contains $\bsc_n$.  Now fix a category of basics $\cB$.  

\begin{definition}\label{oplus}
Define 
\[
\mfld(\cB)^{\sqcup} = \snglr_n^{\sqcup} \times_{\cP(\bsc_n)^\amalg} (\cP(\bsc_n)_{/\cB})^\amalg
\]
which is a coCartesian fibration over $\mathsf{Fin}_\ast$ and therefore a symmetric monoidal quasi-category.  
Define the sub-symmetric monoidal quasi-category 
\[
\disk(\cB)^{\sqcup} =  \mathsf{SngDisk}_n^{\sqcup}\times_{\cP(\bsc)^\amalg} (\cP(\bsc_n)_{/\cB})^\amalg ~{}~{}~ \subset~{}~  \mfld(\cB)^{\sqcup}~.
\]
Denote the underlying quasi-category of $\disk(\cB)^{\sqcup}$ as $\disk(\cB)$.  
We refer to $\operatorname{Alg}_{\disk(\cB)}(\cC^\ot)$ as the category of $\disk(\cB)$-algebras (in $\cC$).  
\end{definition}

\begin{notation}
Recall the category of basics $\sD_n$ from Definition~\ref{definition:D_n} and the various elaborations given in the examples following. We denote 
	\[
	\disk_n = \disk(\sD_n),
	\qquad
	\disk_n^{\fr} = \disk(\sD_n^{\fr}),
	\qquad
	\disk_n^{\partial} = \disk(\sD_n^{\partial}),
	\qquad
	\disk_{n,k}^{\fr} = \disk(\sD_{n,k}^{\fr})
	\]
and likewise for other elaborations on $\sD$.
\end{notation}

\begin{example}\label{n-disk-algebras}
A $\disk^{\fr}_n$-algebra in $\cC^\ot$ is an algebra over the little $n$-disk operad in $\cC^\ot$.  In particular, a $\disk^{\fr}_1$-algebra in $\cC^\ot$ is an $A_\infty$-algebra in $\cC^\ot$.  

\end{example}

\begin{example}\label{deligne-boundary}
Consider the category of basics $\sD^{\partial,\fr}_n$ which is a framed version of Example~\ref{boundary}.  
A $\disk_n^{\partial,\fr}$-algebra is the data $(A,B,{\sf a})$ of a $\disk^{\fr}_n$-algebra $A$, a $\disk^{\fr}_{n-1}$-algebra $B$, and an action 
	\[
	{\sf a}\colon A\longrightarrow \hh^\ast_{\sD^{\fr}_{n-1}}(B) := {\m_B^{\disk^{\fr}_{n-1}}}(B,B)
	\] 
which is a map of $\disk^{\fr}_n$-algebras; this reformulation is the higher Deligne conjecture, proved in this generality in~\cite{dag} and~\cite{thomas}.
\end{example}

Notice that the underlying quasi-category of $\mfld(\cB)^{\sqcup}$ is $\mfld(\cB)$.  Explicitly, an object of $\mfld(\cB)^{\sqcup}$ is a finite set $J$ together with an $J$-index collection $(X_j,g_j)_{j\in J}$ of $\cB$-manifolds.  A morphism $\bigl(J,(X_j,g_j)\bigr) \to \bigl(K,(Y_k,h_k)\bigr)$ is a map $J_+\xra{f} K_+$ together with morphisms $\bigsqcup_{f(j)=k} (X_j,g_j) \to (Y_k,h_k)$ of $\cB$-manifolds for each $k\in K$.  
Such an object $\bigl(J,(U_j)\bigr)$ is in $\disk(\cB)^{\sqcup}$ if for each $j\in J$ the $\cB$-manifold $U_j$ is a disjoint union of basics.  

The quasi-category $\cP(\bsc_n)_{/\cB}$ admits finite coproducts.  
There is a commutative diagram of quasi-categories
\[
\xymatrix{
\mfld(\cB)^{\sqcup}  \ar[r]  \ar[d]^\sqcup  
&
(\cP(\bsc_n)_{/\cB})^\amalg  \ar[d]^\amalg
\\
\mfld(\cB)   \ar[r]
&
\cP(\bsc_n)_{/\cB}
}
\]
where the left downward arrow is given by disjoint union $(J,(X_j,g_j)_{a \in J})\mapsto \bigsqcup_{j\in J} (X_j,g_j)$ and the right downward arrow is given by coproduct $(A,(E_j,g_j)_{j\in J})\mapsto \coprod_{j\in J} (E_j,g_j)$.

\begin{variation}
In Definition~\ref{oplus} one could replace $\snglr$ by $\psnglr$ to obtain the symmetric monoidal quasi-category $\pmfld(\cB)^{\sqcup}$.  
The equivalence of quasi-categories $\pmfld(\cB)\simeq \mfld(\cB)$ of Theorem~\ref{no-pre-mans} induces an equivalence of symmetric monoidal quasi-categories 
\begin{equation}\label{oplus-equiv}
\pmfld(\cB)^{\sqcup} \simeq \mfld(\cB)^{\sqcup}~.
\end{equation}
Likewise, there is the symmetric monoidal quasi-category $(\mfld(\cB)^{\sf fin})^{\sqcup}$.
\end{variation}

\begin{remark}\label{not-coprod}
We warn the reader that, while the symmetric monoidal structure on $\mfld(\cB)^{\sqcup}$ comes from disjoint union of $\cB$-manifolds, this symmetric monoidal category $\mfld(\cB)^{\sqcup}$ is \emph{not} a coCartesian symmetric monoidal category.  Indeed, disjoint union is not a coproduct in $\mfld(\cB)$, which admits very few colimits since all morphisms are monomorphisms.  

\end{remark}

\subsection{Factorization homology}\label{factorizhom}
Every object $X \in \mfld(\cB)$ represents a functor $\mfld(\cB)^{\op} \to \cS$. Restriction along $\disk(\cB) \subset \mfld(\cB)$ defines a presheaf
	\[
	\EE_X \colon \disk(\cB)^{\op} \to \cS.
	\]
Covariantly, given $A$ a $\disk(\cB)$-algebra in a symmetric monoidal quasi-category $\cC$, we denote again by $A: \disk(\cB) \to \cC$ the restriction to the underlying quasi-category. We will define factorization homology of $X$ with coefficients in $A$ as the coend constructed from these two functors, but we first make an assumption on the target category $\cC$.

\begin{definition}[$\ast$]
A symmetric monoidal quasi-category $\cC^\otimes$ satisfies condition $(\ast)$ if the underlying category $\cC$ admits (small) colimits and for each $c \in \cC$, the functor $\cC \xra{- \otimes c} \cC$ preserves filtered colimits and geometric realizations.
In particular, for each vertex $c\in \cC$ and each space (=Kan complex) $K$ the constant map $K\xra{c} \cC$ admits a colimit.  We denote this colimit as $K\ot c \in \cC$ and refer to the map $-\ot -\colon \cS \times \cC \to \cC$ as the \emph{tensor over spaces}.
\end{definition}

\begin{remark} 
Note that since $\cC$ admits all small colimits, it is tensored over spaces. Also, condition $(\ast)$ implies the stronger property that $-\tensor c$ preserves all {\it sifted} colimits. (See Corollary 5.5.8.17 of~\cite{topos}.)
\end{remark}

\begin{example} The symmetric monoidal quasi-category ${\sf Ch}^{\sqcup}$ of chain complexes with direct sum satisfies condition $(\ast)$. In particular, the functor $V\oplus -$ preserves geometric realizations (although it does not preserve more general colimits, such as coproducts).
\end{example}

\begin{definition}[Factorization homology]\label{defn:factorization-homology}
Let $\cC$ be a symmetric monoidal quasi-category which satisfies $(*)$.  
Let $A\in \Alg_{\disk(\cB)}(\cC^\ot)$ and let $X\in \mfld(\cB)$ regarded as a vertex in the underlying quasi-category of $\mfld(\cB)^{\sqcup}$.   
We define 
\begin{equation}\label{fact-def}
\int_X A~ :=~ \EE_X\underset{\disk(\cB)}\bigotimes A  ~\simeq ~\colim_{\bigsqcup_{k\in K} U_k \to  X} ~{}~\bigotimes_{k\in K} A(U_k)~{}~ \in \cC
\end{equation}
to be the coend of 
	$
	\disk(\cB)^{\op} \times \disk(\cB) 
	\xra{\EE_X \times A} 
	\cS \times \cC  \xra{\otimes} \cC,
	$	
the second map being the tensor over spaces. 
In (\ref{fact-def}) we have equivalently written this coend as a colimit over the quasi-category $\disk(\cB)_{/X}:= \disk(\cB) \times_{\mfld(\cB)} \mfld(\cB)_{/X}$.

We refer to the object $\int_X A\in \cC$ as the \textit{factorization homology of $X$ with coefficients in $A$}.  
\end{definition}

\begin{remark}\label{operadic}
Under the assumption~($\ast$) on $\cC^\otimes$, the expression defining factorization homology of $X$ is equivalent to Lurie's {\em operadic left Kan extension} of $\disk(\cB)^{\sqcup}\xra{A}\cC^\otimes$ along $\disk(\cB)^{\sqcup} \to \mfld(\cB)^{\sqcup}$, evaluated on $X$. 
(We refer readers to \textsection3.1.2 of~\cite{dag} for a general treatment.) We will make use of the universal property of operadic left Kan extensions throughout, so we comment briefly on this equivalence.

Using that $\disk(\cB)^{\sqcup} \to \mathsf{Fin}_\ast$ is a coCartesian fibration, the defining expression for the weak operadic left Kan extension, evaluated at $X$, can be written
\begin{equation}\label{op-colim}
\colim_{\bigl(K,(U_k)\bigr) \xra{\sf act} X} ~{}~\bigotimes_{k\in K} A(U_k)
\end{equation}
where the colimit is taken over $(\disk(\cB)^{\sqcup}_{\sf act})_{/X} := \disk(\cB)^{\sqcup}_{\sf act} \times_{\mfld(\cB)^{\sqcup}_{\sf act}} (\mfld(\cB)^{\sqcup}_{\sf act})_{/X}$. The notation $`\sf act'$ signifies that we only consider {\em active} morphisms.  (Recall that a morphism $f$ in $\mathsf{Fin}_\ast$ is called {\em active} if $f^{-1}(+) = +$, where $+$ is the base point of the source and target of $f$. The active morphisms in a symmetric monoidal quasi-category are those morphisms which are coCartesian lifts of these $f$. See Definition 2.1.2.1 of~\cite{dag}.)

Because $\disk(\cB)^{\sqcup} \to \mathsf{Fin}_\ast$ is coCartesian, then so is the pullback to active morphisms $\disk(\cB)^{\sqcup}_{\sf act} \to (\mathsf{Fin}_\ast)_{\sf act}$.  
Notice that the fiber over $\{1\}_+ \in (\mathsf{Fin}_\ast)_{\sf act}$ of this latter map is the underlying category $\disk(\cB)$.
The object $\{1\}_+ \in (\mathsf{Fin}_\ast)_{\sf act}$ is terminal and therefore the inclusion $\disk(\cB)\subset \disk(\cB)^{\sqcup}_{\sf act}$ is cofinal.  
Because $\disk(\cB)_{/X} \to \disk(\cB)$ is a right fibration, it follows that the morphism $\disk(\cB)_{/X} \to (\disk(\cB)^{\sqcup}_{\sf act})_{/X}$ is cofinal as well.  

This colimit~(\ref{op-colim}) can be rewritten as a geometric realization.  
Assumption~($\ast$) ensures that the operation $-\ot c$ preserves weak operadic colimit diagrams with shape $\Delta^{\op}$. 
It follows that the expression~(\ref{op-colim}) computes the operadic left Kan extension.

\end{remark}

\begin{remark}
By the universal property of operadic left Kan extension, one can also characterize factorization homology as a left adjoint. 
Specifically, note that the inclusion of symmetric monoidal quasi-categories $\disk(\cB)^{\sqcup} \subset \bman^{\sqcup}$ induces a restriction functor
\[
\rho\colon \Fun^\otimes(\bman^{\sqcup}, \cC^\otimes)  \to \operatorname{Alg}_{\disk(\cB)}(\cC^\ot)~.
\]
Then the pair $\int$ and $\rho$ define an adjunction
\begin{equation}\label{adj}
\int \colon \Alg_{\disk(\cB)}(\cC^\ot) \leftrightarrows  \Fun^\otimes(\bman^{\sqcup}, \cC^\otimes) \colon \rho.
\end{equation}
\end{remark}

\begin{example}
It is definitional that $\int_U A \xra{\simeq} A(U)\in \cC$ for any basic open $U\in \cB$.  
\end{example}

\begin{notation}
The symbol $\int_X A$ was only defined for $X$ a $\cB$-manifold. However, if $X\in\cP\bigl(\disk(\cB)\bigr)$ is any right fibration, then the same expression defining $\int_X A$ as a colimit is valid.  In this way, we extend notation and obtain a functor $\int\colon \Alg_{\disk(\cB)}(\cC^\ot) \to  \Fun^\otimes(\pbman^{\sqcup}, \cC^\otimes)$ to $\cB$-premanifold-algebras.
\end{notation}

\begin{lemma}\label{premanifold-factorization}
Let $\dddot{X}$ be a $\cB$-premanifold and let $\dddot{X} \to X$ be a refinement. The induced morphism
\[
\int_{\dddot{X}} \xra{\simeq} \int_X
\]
is an equivalence of functors $\Alg_{\disk(\cB)}(\cC^\ot) \to \cC$.  
\end{lemma}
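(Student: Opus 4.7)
The plan is to show $\int_{\dddot{X}} A \simeq \int_X A$ by establishing a pointwise equivalence of the presheaves from which the two factorization homologies are built. By Definition~\ref{defn:factorization-homology}, $\int_Y A \simeq \EE_Y \underset{\disk(\cB)}\bigotimes A$ is a coend weighted by the presheaf $\EE_Y \colon \disk(\cB)^{\op} \to \cS$ sending $U \mapsto \pmfld(\cB)(U, Y)$. The refinement $\dddot{X} \to X$ induces a natural transformation $\EE_{\dddot{X}} \to \EE_X$, and a pointwise weak equivalence at this level yields the equivalence of coends, naturally in the algebra $A$.

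The pointwise equivalence amounts to showing, for each basic $U \in \cB$, that the inclusion $\pmfld(\cB)(U, \dddot{X}) \hookrightarrow \pmfld(\cB)(U, X)$ is a weak homotopy equivalence. The source consists of stratified embeddings $U \to \iota X$ whose pushforward atlas lies inside $\cA_{\dddot{X}}$; the target drops this condition to the maximal atlas $\cA_X$. I would exhibit this as an equivalence using the shrinking family of Lemma~\ref{local-basis}: given $f \in \pmfld(\cB)(U, X)$ and a basepoint $p$ in the top-depth stratum of $U$, the family $\{\beta_t\}_{t \in \RR_{\geq 0}} \subset \bsc_n(U, U)$ produces a continuous path $t \mapsto f \circ \beta_t$ inside $\pmfld(\cB)(U, X)$. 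The image of $f \circ \beta_t$ shrinks into an arbitrarily small neighborhood of $f(p)$, and once small enough to lie inside a single basic chart of $\dddot{X}$ it automatically respects $\cA_{\dddot{X}}$; hence the path eventually enters the subspace $\pmfld(\cB)(U, \dddot{X})$. The monoid identity $\beta_{s+t} = \beta_t \circ \beta_s$ arranges these shrinking paths coherently.

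The main obstacle is upgrading this pointwise shrinking to a uniform deformation of parametrized families $(f_s)_{s \in K}$ of embeddings, in order to obtain a weak equivalence of mapping spaces rather than merely $\pi_0$-surjectivity. This requires a continuous choice of shrinking time $t \colon K \to \RR_{\geq 0}$, which is accomplished by a partition-of-unity argument exploiting the paracompactness and local compactness of $\iota X$ (Lemma~\ref{about-X}) together with the flow property of $\beta_\bullet$. An essentially equivalent presentation runs via cofinality: the induced functor $\disk(\cB)_{/\dddot{X}} \to \disk(\cB)_{/X}$ is homotopy cofinal in the sense of~\cite{topos}~Theorem~4.1.3.1, with weak contractibility of the requisite comma quasi-categories deduced from the same shrinking-and-refining argument, now organized to exhibit a filtered directed structure on local refinements.
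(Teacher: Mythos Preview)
Your approach is correct, but the paper takes a much shorter route: it simply invokes Theorem~\ref{no-pre-mans}, which asserts that the inclusion $\mfld(\cB)\hookrightarrow\pmfld(\cB)$ is an equivalence of quasi-categories. Since a refinement is therefore an equivalence in $\pmfld(\cB)$, and factorization homology is a functor on $\pmfld(\cB)^{\sqcup}$ (via the equivalence~(\ref{oplus-equiv})), it automatically carries refinements to equivalences in $\cC$.

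The interesting point is that your argument is not a genuinely different proof so much as an inlining of the proof of Theorem~\ref{no-pre-mans}. That theorem is established exactly by the shrinking mechanism you describe: one reduces to the case where the target is a basic $U$, chooses a chart $V\to\dddot{U}$ hitting the center, and uses the one-parameter family $\beta^U_t$ of Lemma~\ref{local-basis} to contract any morphism $W\to U$ until its image lands inside a chart of the refinement. The paper's proof of Theorem~\ref{no-pre-mans} handles the family-coherence issue you flag as the main obstacle by exhibiting an explicit two-sided inverse to $r$ in the quasi-category $\psnglr_n$ (rather than a deformation retraction of mapping spaces), which sidesteps the need for a continuous choice of shrinking time over a parameter space. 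Your cofinality reformulation is also valid and amounts to the same content. What the paper's organization buys is modularity: Theorem~\ref{no-pre-mans} is used elsewhere (e.g., in Lemma~\ref{creation} and in the push-forward formula), so isolating it pays off. What your direct presheaf argument buys is that one sees immediately why only the values $\EE_X(U)$ on basics matter, without appealing to the full categorical equivalence.
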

\begin{proof}
This follows immediately from~(\ref{oplus-equiv}) which was a consequence of Theorem~\ref{no-pre-mans}.  
\end{proof}

\begin{remark}
We discuss the relation of our work to that of Costello and Gwilliam in~\cite{kevinowen}. In Costello and Gwilliam's definition of a factorization algebra, one does not endow the set of opens $U \subset M$ with a topology (let alone fix a fibration over the space of embeddings), and one imposes a cosheaf-like condition to ensure a local-to-global principle. In contrast, we work with categories which see the topology of embeddings $U \to M$, and we have no need to impose a cosheaf condition---in light of 
Theorem~\ref{no-pre-mans}
factorization homology automatically satisfies such a condition. 
(Namely, for every cover of a basic $U\in \cB$, which we think of as a refinement $\dddot{U} \to U$, the edge
$
\int_{\dddot{U}} A \xra{\simeq} A(U)
$
is an equivalence  in $\cC$.)

It is precisely the right fibration condition of Definition~\ref{cat-o-basics} which guarantees this cosheaf property for the structures we consider, and which generally lends the content of this article toward homotopical techniques. This comes at the cost of excluding potentially interesting geometric examples (such as in holomorphic or Riemannian settings), where the natural maps to $\bsc_n$ are not right fibrations.
\end{remark}

\begin{remark}
Factorization homology appears closely related to the blob complex of
Morrison-Walker, both defined by a coend construction, at least for
their values on {\it closed} manifolds. The blob complex, or cobordism
hypothesis construction \`a la Lurie, can take as input an
$n$-category ${\frak B}^n A$, which intuitively has an single
$k$-morphism for $k< n$, and whose $n$-morphisms are equivalent to
$A$. One can define the functor $\int A$ on {\it closed} $n$-manifolds
using $\fB^n A$ rather than $A$.  That is, there exists a commutative
diagram
\[
\xymatrix{
\disk^{\fr}_n\alg(\cC^\ot)\ar[drr]_-{{\frak
B}^n}\ar[rr]^\int
&&
\Fun^\ot(\mfld_n^{\fr},\cC)\ar[rr]^-{\rm
restrict}
&&
\Fun^\ot(\mfld_n^{\fr, {\sf closed}}, \cC)
\\
&&
{\sf Cat}_n(\cC^\ot)\ar[rru]
&&
}\]
where $\mfld^{\fr,{\sf closed}}_n$ is the full groupoid of $\mfld_n$
consisting of closed $n$-manifolds and diffeomorphisms. However, there
does {\it not} exist a functor from ${\sf Cat}_n(\cC^\ot)$ to
$\Fun^\ot(\mfld_n^{\fr},\cC)$ making the above diagram commute. This
is because the functor ${\frak B}^n$ is not fully faithful (the space
$\Map(\fB^nA,\fB^nC)$ is quotient of $\Map(A,C)$ by the $(n-1)$-fold
delooping of the group of units of $C$), whereas the factorization
homology functor $\int$ is fully faithful, in particular by Theorem
\ref{characterization}.
\end{remark}

\subsection{Push-forward}\label{push-forward-section}
We introduce a technique making the expression~(\ref{fact-def}) defining factorization homology far more computable.

Fix two categories of basics $\cB_0\to \bsc_d$ and $\cB\to \bsc_n$. Any map $\cB_0\xra{F} \mfld(\cB)$ determines a map of symmetric monoidal quasi-categories
$
\disk(\cB_0)^{\sqcup}\xra{F^{\sqcup}} \mfld(\cB)^{\sqcup}.
$
Operadic left Kan extension defines the right vertical map of symmetric monoidal quasi-categories 
\[
\xymatrix{
\disk(\cB_0)^{\sqcup}  \ar[r]  \ar[d]^{F^{\sqcup}}  
&
\mfld(\cB_0)^{\sqcup}  \ar[d]^{L^\ot_F}
\\
\mfld(\cB)^{\sqcup}  \ar[r]  
&
\cP(\cB)^\amalg~.
}
\]
The passage to $\cP(\cB)^\amalg$ guarantees there are enough colimits for the Kan extension to exist, but 
we seek conditions under which the Kan extension factors through $\mfld(\cB)^{\sqcup}$. We will prove in Lemma~\ref{creation} that the following definition suffices.
We use the notion of \emph{conical smoothness}, which is a suitable notion of smoothness for continuous maps between underlying singular manifolds (of possibly differing dimensions), visit~\S\ref{conically-smooth} for a definition.  

\begin{definition}
A \emph{$\cB_0$-family (of $\cB$-manifolds)} is a pair $(F,\gamma)$ where $F: \cB_0 \to \mfld(\cB)$ is a map of quasi-categories and $\gamma: \iota \circ F \to \iota $ is a natural transformation \emph{by conically smooth maps}.
We will often denote the data $(F,\gamma)$ of a $\cB_0$-family simply by the letter $F$.
\end{definition}

\begin{example}[Product bundles]\label{product-bundles}
Let $\cB'\to \bsc_{n-d}$ be a category of basics, and let $q\colon \cB_0\times \cB' \to \cB$ be a map over $\bsc_d\times \bsc_{n-d} \to \bsc_n$.  Fix a $\cB'$-manifold $(X,g')$.  The assignment 
$
(V,h) \mapsto \bigl(V\times X , q(h,g')\bigr)
$ 
describes a map of quasi-categories $\mathsf{Pr}_X\colon \cB_0\to \mfld(\cB)$ which is evidently over (the coherent nerve of) $\bsc_d \xra{-\times X} \snglr_n$.  Moreover, there is the natural projection $\iota (V\times X) \to \iota V$.  
A particular example of this is for $\cB_0 = \cB_{n-k}$ and $\cB'=\cB_k$ with $q\colon \cB_{n-k}\times \cB_k \to \cB$ the standard map over $\bsc_{n-k}\times \bsc_k \xra{\times} \bsc_n$ -- recall~(\ref{Bk}) from~\S\ref{section-collar-gluings} for this subscripted notation.  
In this way, each $\cB_{n-k}$-manifold $Y$ determines a map of categories of basics $\mathsf{Pr}_Y\colon \cB_k \to \mfld(\cB)$ which in turn determines a restriction map denoted
\[
\int_Y \colon \Alg_{\disk(\cB)}(-) \xra{\int} \Alg_{\mfld(\cB)}(-) \xra{\mathsf{Pr}_Y^\ast} \Alg_{\disk(\cB_k)}(-)~.  
\]

Now fix a $\cB_0$-manifold $P$. One can elaborate the above example to give examples of bundles over $P$ with fibers $\cB'$-manifolds. Given a map $\cB_0 \times \cB' \to \cB$ as before, this is an example of a $(\cB_0)_{/P}$-family of $\cB$-manifolds.
Because we never use such examples, we will not develop them in proper detail.  

\end{example}

\begin{example}[Collar-gluings]\label{collar-I}
Recall the category of basics $\cI$ from Example~\ref{I}, whose set of objects is $ob~\cI = \{\RR_{\geq -\infty}, \RR_{\leq \infty}, \RR\}$.  Note that the $\cI$-manifold $[-\infty,\infty]$ is not a basic.  

Let $(X,g)=(X'_-,g'_-)\cup_{(RV,g_0)} (X'_+,g'_+)$ be a collar-gluing of $\cB$-manifolds.  Choose a conically smooth map $h\colon \iota X \to [-\infty,\infty]$ so that $h_{| \iota RV} \colon \RR \times \iota V \to [-\infty,\infty]$ is the projection followed by a standard smooth monotonic map $\RR \to [-\infty,\infty]$ which restricts to a diffeomorphism $(-1,1)\cong \RR$.     
(Such a conically smooth map can be constructed using conically smooth partitions of unity (see Lemma~\ref{part-o-1}).)  The choice of a standard diffeomorphism $\RR\cong (-1,1)$ gives a morphism $RV \to R_{(-1,1)}V \subset RV$ which we use to rewrite the collar-gluing $(X,g)=(X_-,g_-)\cup_{(RV,g_0)} (X_+,g_+)$ with $(X_- , g_-) = (X'_- \smallsetminus (\RR_{\geq 1}\times V) , (g'_-)_{| X_-})$ and likewise $(X_+,g_+) = (X'_+\smallsetminus (\RR_{\leq -1}\times V) , (g'_+)_{|X_+})$.  
Then 
	\[
	h^{-1}\RR_{\geq -\infty}= \iota X_-, 
	\qquad
	h^{-1}\RR_{\leq \infty}= \iota X_+, 
	\qquad	{\text{and}}
	\qquad
	h^{-1}\RR = \iota RV
	\]
	so one has a  $\cI$-family of $\cB$-manifolds given by 
\begin{itemize}
\item $G \colon \cI \to \mfld(\cB)$ given by $\RR_{\geq-\infty}\mapsto  \iota X_-$, $\RR_{\leq \infty}\mapsto  \iota X_+$, and $\RR \mapsto \iota RV$; where the assignment on morphisms of $\cI$ is obvious.  
\item $\gamma\colon \iota G\to \iota $ is given on an object $U$ as $\gamma_U = h_{|\iota U}$.  
\end{itemize}
We will use this observation to prove Theorem~\ref{thm:excision}.
\end{example}

\begin{lemma}\label{creation}
Let $F$ be a $\cB_0$-family of $\cB$-manifolds.  Let $P$ be a $\cB_0$-manifold regarded as a vertex of the underlying quasi-category of $\mfld(\cB_0)^{\sqcup}$. There is a 
natural lift of the value of the Kan extension
$
L_F(P)
$
to an object of $\mfld(\cB)$.  
\end{lemma}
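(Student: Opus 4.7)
The plan is to construct $L_F(P)$ directly as a $\cB$-manifold by gluing the $\cB$-manifolds $F(U)$ along an atlas of $P$, and then recognize this construction as the operadic Kan extension. It is most convenient to work in the premanifold setting, since by Theorem~\ref{no-pre-mans} and the equivalence~(\ref{oplus-equiv}) a $\cB$-premanifold uniquely promotes to a $\cB$-manifold via Corollary~\ref{maximal-atlas'}. The universal property of operadic Kan extension (Remark~\ref{operadic}) expresses $L_F(P)$ as a colimit in $\cP(\cB)$ indexed by active morphisms $\bigsqcup_k U_k \to P$ out of disjoint unions of basics in $\cB_0$, so it suffices to exhibit a $\cB$-premanifold whose associated right fibration agrees with this colimit.

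For the construction, choose a countable atlas $\cA_P = \{(U,\phi_U)\}$ of $P$ by basics in $\cB_0$. For each chart, one has a $\cB$-manifold $F(U)$, its underlying space $\iota F(U)$, and the conically smooth map $\gamma_U \colon \iota F(U) \to \iota U$ provided by the family $(F,\gamma)$. Each morphism of charts, i.e.\ a commuting diagram $U \xleftarrow{} W \xrightarrow{} V$ in $\cB_0$ over $P$, is sent by $F$ to a zig-zag $F(U) \leftarrow F(W) \to F(V)$ of morphisms of $\cB$-manifolds, whose underlying spatial maps are open embeddings by Lemma~\ref{about-X}(1). Form the colimit in $\Top$
\[
\iota L_F(P) ~:=~ \colim_{(U,\phi_U)\in \cA_P} \iota F(U),
\]
together with the tautological cocone $\iota L_F(P) \to \iota P$ assembled from the $\gamma_U$. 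The images $\{\iota F(U) \hookrightarrow \iota L_F(P)\}$ form a countable open cover, and the individual atlases of the $F(U)$ assemble, via functoriality of $F$ and the cocycle-like conditions from the maximality axiom on $\cA_P$, into a $\cB$-atlas on $\iota L_F(P)$; the resulting $\cB$-premanifold is then promoted to a $\cB$-manifold by Lemma~\ref{about-X}(7) and Corollary~\ref{maximal-atlas'}.

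To identify this constructed $\cB$-manifold with the Kan extension, one observes that its tangent classifier, viewed as a right fibration over $\cB$, is by construction the right fibration generated by the basic charts $U \to P$ with values $\widehat{F(U)} \to \cB$; this agrees with the description of $L_F(P)$ as a colimit in $\cP(\cB)$ (cf.\ Remark~\ref{operadic}), since $\cA_P$ is cofinal in $\disk(\cB_0)_{/P}$ by the basis property of Lemma~\ref{about-X}(4). Naturality in $P$ follows by functoriality of the construction in the atlas $\cA_P$.

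The main obstacle is verifying that the colimit in $\Top$ is genuinely Hausdorff and carries a well-defined $\cB$-premanifold structure rather than something pathological. This is precisely where the conical smoothness hypothesis on $\gamma$ is used: it ensures that the transition maps between the various $\iota F(U)$, when restricted to overlaps $\gamma_U^{-1}(\phi_U^{-1}(\iota U \cap \iota V)) \to \gamma_V^{-1}(\phi_V^{-1}(\iota U \cap \iota V))$, are themselves morphisms of singular $n$-manifolds, so that the gluing takes place in $\psnglr_n$ and not merely in $\Top$. The collar-gluing of Example~\ref{collar-I} is the prototypical instance: the conically smooth map $h \colon \iota X \to [-\infty,\infty]$ is exactly the datum that exhibits $\iota X$ as $h^{-1}(\iota X_-) \cup_{h^{-1}(\iota RV)} h^{-1}(\iota X_+)$ in $\snglr_n$ rather than only in $\Top$.
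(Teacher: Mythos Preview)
Your approach is essentially the paper's: build the underlying space as a colimit $\iota L_F(P) = \colim_{U\to P}\iota F(U)$, verify it is a singular $n$-premanifold, promote via Corollary~\ref{maximal-atlas'}, and then identify its tangent classifier with the operadic colimit defining $L_F(P)$.

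Two places where your exposition diverges from the paper's and where your reasoning is slightly off:

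\textbf{Role of $\gamma$.} You attribute the Hausdorff verification to conical smoothness ensuring that the transition maps between the $\iota F(U)$ are morphisms of singular manifolds. But those transition maps are morphisms of $\cB$-manifolds automatically, by functoriality of $F$; no property of $\gamma$ is needed for that. The actual use of $\gamma$ is simpler: it assembles to a continuous map $\iota L_F(P)\to\iota P$, and this map separates points. If $p,q$ map to distinct points of $\iota P$, pull back separating charts from the Hausdorff space $\iota P$; if they map to the same point $x$, choose a single chart $V\to P$ through $x$, and then $p,q$ both lie in the Hausdorff open $\iota F(V)$. Conical smoothness of $\gamma$ is not invoked in this lemma at all; it is part of the definition of a $\cB_0$-family for later use (e.g.\ Example~\ref{collar-I}).

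\textbf{Identification with the Kan extension.} Your appeal to Lemma~\ref{about-X}(4) (charts form a basis) does not by itself establish that the atlas indexing category is cofinal in $\disk(\cB_0)_{/P}$ in the quasi-categorical sense; one needs to know that the open cover realizes $\widehat{L_F(P)}$ as the relevant homotopy colimit in $\cP(\bsc_n)$. The paper isolates this as Lemma~\ref{covers=colims}, which is a genuine statement about homotopy colimits of representable presheaves over a good open cover, and invokes it at exactly this point.
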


\begin{proof}

Define the composition $f\colon \cB_0\xra{F}\mfld(\cB) \to \snglr_n$.  
We will describe a $\cB$-manifold $(f(P),g_0)$ lifting $L_F(P)$.  
We first describe the singular $n$-manifold $f(P)$ lifting the left Kan extension of $\disk(\cB_0) \xra{f^\sqcup} \snglr_n^\sqcup$ along $\disk(\cB_0)^\sqcup \subset \mfld(\cB_0)^\sqcup$.  
As discussed in Remark~\ref{operadic}, we compute this operadic left Kan extension as the left Kan extension of the map on underlying quasi-categories $\disk(\cB_0) \to \snglr_n$ along $\disk(\cB_0)\subset \mfld(\cB_0)$.  

We define the singular $n$-premanifold $f(P)$ as follows.  
Consider the topological space
\[
\iota f(P) :=\colim_{V\to P} \iota f(V)
\]
which is the strict colimit over the category whose objects are morphisms from basics $V\to P$ and whose morphisms are factorized maps $V \to W \to P$.  We take this colimit in the category of (not necessarily compactly generated Hausdorff) topological spaces.  
\begin{itemize}
\item[]
\begin{itemize}
\item[Hausdorff:] The natural transformation $\gamma$ determines a continuous map
$
\iota \gamma\colon \iota f(P) \to \iota P~.  
$
Let $p,q\in \iota f(P)$.  If $\iota \gamma(p)\neq \iota \gamma(q)$ then because the collection $\{\iota V \to \iota P\}$ forms a basis for the Hausdorff topological space $\iota P$, then there are separating neighborhoods of $p$ and $q$.  If $\iota \gamma(p) = \iota \gamma(q)$ then there is a morphism $V\to P$ for which the image of $\iota f(V) \to \iota f(P)$ contains $p$ and $q$.  
Because each such $\iota f(V) \to \iota f(P)$ is an open embedding, and because $\iota f(V)$ is Hausdorff, then there are separating neighborhoods of $p$ and $q$.  

\item[Second Countable:] The collection $\{ \iota f(V)\to \iota f(P)\}$ forms an open cover of $\iota F(P)$.  
Because $P$ admits a countable atlas, $\iota f(P)$ is second countable.  

\item[Atlas:] The collection $\cA = \{(U,\phi)\mid U\xra{\phi} f(V)\in \snglr_n\text{ with } V\to P\in \snglr_d\}$ clearly gives an open cover of $f(P)$ and forms an atlas for $f(P)$.  
\end{itemize}
\end{itemize}

Though it is not necessarily the case that the atlas $\cA$ is maximal, this is accounted for through Theorem~\ref{no-pre-mans}.  
From Lemma~\ref{covers=colims} the open cover from $\cA$ realizes the right fibration $\widehat{f(P)}\in \cP(\bsc_n)$ as the necessary colimit computing the value on $P$ of the left relevant Kan extension.  

The canonical map
\[
\Map_{\bsc_n}(\widehat{f(P)},\cB) \simeq \Map_{\bsc_n}(\colim \widehat{f(V)} , \cB) \xra{\simeq} \lim \Map(\widehat{f(V)},\cB)
\]
is an equivalence.  There results a canonical section $g_0\colon \widehat{f(P)} \to \cB$ whose restrictions to each $f(V)$ is the given section for $F(V)$.  

\end{proof}

\begin{definition}
Let $\cC^\otimes$ be a symmetric monoidal quasi-category.  
and let $F\colon \cB_0\to \bman$ be a map of quasi-categories.  
Let $A\colon \disk(\cB)^{\sqcup} \to \cC^\otimes$ be a $\disk(\cB)$-algebra.  
We refer to the composition
\[
F_*A \colon \cB_0^{\sqcup} \xra{F^{\sqcup}} \bman^{\sqcup}  \xra{\int A} \cC^\otimes~.
\]
as the \textit{push-forward of $A$ along $F$}.  
Evident by construction, push-forward is functorial with respect to algebra maps in argument $A$.  
\end{definition}

\begin{theorem}[Pushforward formula]\label{pushforward-formula}
Let $A$ be a $\disk(\cB)$-algebra in $\cC$.  Let $F$ be a $\cB_0$-family of $\cB$-manifolds and let $P$ be a $\cB_0$-manifold.
Then there is a canonical equivalence
\[
\int_P F_* A \xra{\simeq} \int_{L_F(P)} A
\]
in $\cC$.  
\end{theorem}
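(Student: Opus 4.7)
The plan is to recognize both sides as operadic left Kan extensions along $\disk(\cB_0)^{\sqcup}\subset \mfld(\cB_0)^{\sqcup}$ and identify them using the explicit description of $L_F(P)$ from Lemma~\ref{creation}; the statement is essentially an operadic Fubini theorem.

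By Remark~\ref{operadic}, both sides are operadic colimits. Expanding the left side,
\[
\int_P F_*A~\simeq~ \colim_{\bigsqcup_j V_j \to P}~ \bigotimes_j \int_{F(V_j)} A,
\]
with the outer colimit indexed by $(\disk(\cB_0)^{\sqcup}_{\sf act})_{/P}$, and with each factor $\int_{F(V_j)} A$ itself an operadic colimit over $(\disk(\cB)^{\sqcup}_{\sf act})_{/F(V_j)}$. Since $\cC^{\otimes}$ satisfies $(\ast)$, the tensor product preserves the requisite sifted colimits in each variable, and these nested operadic colimits combine into a single operadic colimit over the quasi-category $\cK$ of iterated diagrams $\bigl(\bigsqcup_j V_j \to P,~\{\bigsqcup_k U_{j,k}\to F(V_j)\}_j\bigr)$ of the expression $\bigotimes_{j,k} A(U_{j,k})$.

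The key step is then to identify $\cK$ with $(\disk(\cB)^{\sqcup}_{\sf act})_{/L_F(P)}$, up to cofinality. The proof of Lemma~\ref{creation} exhibits an atlas for $L_F(P)$ generated by the composites $U\to F(V)\to L_F(P)$ for $V\to P$ a basic of $\cB_0$ and $U\in \bsc_n$, and via Lemma~\ref{covers=colims} identifies
\[
\widehat{L_F(P)}~\simeq~\colim_{V\to P,~V\in \cB_0}\widehat{F(V)}
\]
as right fibrations over $\cB$. Consequently every morphism $\bigsqcup_k U_k \to L_F(P)$ from a disjoint union of basics factors, up to a contractible space of choices, through some $\bigsqcup_j F(V_j)\to L_F(P)$ induced by basics $V_j\to P$ in $\cB_0$. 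The standard cofinality criterion, applied componentwise in the $\sqcup$-coordinate, then upgrades this to cofinality of $\cK \to (\disk(\cB)^{\sqcup}_{\sf act})_{/L_F(P)}$; applying $\bigotimes_{j,k} A(-)$ and passing to the colimit recovers $\int_{L_F(P)}A$.

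Equivalently, the argument shows that the symmetric monoidal composite $\mfld(\cB_0)^{\sqcup}\xra{L_F}\mfld(\cB)^{\sqcup}\xra{\int A}\cC^{\otimes}$ restricts on $\disk(\cB_0)^{\sqcup}$ to $F_*A$ and satisfies the universal property of the operadic left Kan extension, hence agrees with $\int_{(-)}F_*A$. The main obstacle is the cofinality claim: one must track the active-morphism condition across both levels of the operadic slice categories and verify that the outer datum $\bigsqcup V_j \to P$ together with the inner refinements $\bigsqcup_k U_{j,k}\to F(V_j)$ accounts (up to contractible choice) for all disjoint-union-of-basics maps into $L_F(P)$. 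This rests squarely on the colimit description of $\widehat{L_F(P)}$ from Lemma~\ref{creation}, which in turn uses the conical smoothness built into the definition of a $\cB_0$-family.
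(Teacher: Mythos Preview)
Your proposal is correct and follows essentially the same approach as the paper: both arguments recognize the statement as an instance of operadic Fubini, i.e., transitivity of operadic left Kan extensions. The paper's proof is shorter because it simply invokes Lurie's general transitivity result (Corollary~3.1.4.2 of~\cite{dag}) for the composite diagram
\[
\disk(\cB_0)^{\sqcup}\xrightarrow{F^{\sqcup}}\mfld(\cB)^{\sqcup}\hookleftarrow\disk(\cB)^{\sqcup}\xrightarrow{A}\cC^{\otimes},
\]
and then, for expository purposes, writes out the same string of nested colimits you describe. Your version differs only in that you attempt to justify the key cofinality step explicitly via the colimit description of $\widehat{L_F(P)}$ from Lemma~\ref{creation}, whereas the paper absorbs that step into the citation of transitivity. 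Both routes are valid; yours is more self-contained, the paper's is more efficient.
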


\begin{proof}
This follows from transitivity of operadic left Kan extensions (see Corollary~3.1.4.2 of~\cite{dag}).  Namely, for a fixed $\disk(\cB)$-algebra $A$, the diagram of symmetric monoidal quasi-categories
\[
\xymatrix{
\cB_0^{\sqcup}  \ar[rr] \ar[d]^{F^{\sqcup}}
&
&
\mfld(\cB_0)^{\sqcup}  \ar[dl]^{L^\ot_F}  \ar[dd]^{\int F_\ast A}
\\
\mfld(\cB)^{\sqcup}  \ar[r]
&
\cP(\cB)^\amalg  \ar[dr]^{\int A}
&
\\
\disk(\cB)^{\sqcup}  \ar[rr]^A  \ar[u]  
&
&
\cC^\otimes
}
\]
commutes,
where all arrows in the triangle are defined using operadic left Kan extension.  
For the reader uneasy with the (immense) machine of operadic left Kan extension, we will observe the following string of equivalences from unwinding definitions.  Fix a $\cB_0$-manifold $P$.
\begin{eqnarray}
\int_{L_F(P)}A 
&\simeq&
\colim_{\bigsqcup_{j\in J} U_j \longrightarrow  L_F(P)} ~{}~\bigotimes_{j\in J} ~A(U_j)  
\nonumber \\
&\simeq& \colim_{\bigsqcup_{k\in K} V_k \longrightarrow P} ~{}~\colim_{\bigsqcup_{j\in J} U_j \longrightarrow \bigsqcup_{k\in K} F(V_k)} ~{}~ \bigotimes_{j\in J} ~A(U_j)
\nonumber \\
&\simeq&  \colim_{\bigsqcup_{k\in K} V_k \longrightarrow P} ~{}~\bigotimes_{k\in K}~{}~ \colim_{\bigsqcup_{j'\in J'_k} U_{j'} \longrightarrow F(V_k)} ~{}~\bigotimes_{j'\in J'_k} A(U_{j'})
\nonumber \\
&\simeq & \colim_{\bigsqcup_{k\in K} V_k \longrightarrow P}~{}~ \bigotimes_{k\in K}~ \int_{F(V_k)} A
\nonumber \\
&\simeq& \colim_{\bigsqcup_{k\in K} V_k \longrightarrow P}~{}~ \bigotimes_{k\in K}~ F_\ast A(V_k)
\nonumber \\
&\simeq& \int_P  F_\ast A ~.
\nonumber
\end{eqnarray}
\end{proof}

\subsection{Factorization homology over a closed interval}\label{section:interval}

Recall the category of basics $\cI$ of Example~\ref{I}.  
Any $\cB$-manifold $X$ with a collar-gluing can be thought of as an $\cI$-family of $\cB$-manifolds. (See Example~\ref{collar-I}.) So the factorization homology of $X$ can be computed by pushing forward to the closed interval $[-\infty,\infty]$, and taking factorization homology of $[-\infty,\infty]$ with coefficients in the resulting $\disk_1^{\partial,\fr}$-algebra. Thankfully, the structure of such an algebra is simple.

\begin{example}\label{example:disk1algebra}
Let $\cC$ be the (nerve of the) category of vector spaces over $k$, with monoidal structure given by $\tensor_k$. An object of $\Alg_{\disk^{\partial , \fr}_1}(\cC^\ot)$ is the following data:
	\begin{enumerate}
	\item {\em A unital associative algebra $A_\RR = A(\RR)$.}  An embedding $\RR\sqcup \RR \to \RR$ determines the multiplication map.  The inclusion of the empty manifold into $\RR$ determines the unit of $A_\RR$.  Associativity follows from factoring an oriented embedding $(\RR \sqcup \RR \sqcup \RR) \to \RR$ through an oriented embedding $\RR \sqcup \RR \to \RR$ in evident ways.
	\item {\em Two vector spaces $M_- = A(\RR_{\geq -\infty})$ and $M_+ = A(\RR_{\leq \infty})$ each receiving a map $*_i: k \to M_i$.} 
	The maps $*_i$ correspond to the inclusion of the empty manifold into $\RR_{\geq -\infty}$ or into $\RR_{\leq \infty}$. The index $i$ is over the values $-$ and $+$.
	\item {\em A right module action $\mu_-: M_- \tensor A_\RR \to M_-$ compatible with the map $*_-$}, in the sense that the following diagram commutes:
		\[
		\xymatrix{
		k \ar[r]^{1_{A_\RR}} \ar[drrr]^{*_-}
		&A \ar[rr] ^{*_- \tensor id_{A_\RR}}
		&&M_- \tensor A_\RR \ar[d]^{\mu_-} \\
		&&& M_-~.
		}
		\]
The action $\mu_-$ is determined by an embedding $\RR_{\geq-\infty}\sqcup \RR \to \RR_{\geq - \infty}$.  		
And likewise for $M_+$, which is a left module over $A_\RR$ rather than a right module.
	\end{enumerate}
\end{example}

In general, the object $A_\RR:=A(\RR)$ is a unital $A_\infty$ algebra and the objects $M_-:=A(\RR_{\geq -\infty})$ and $M_+:=A(\RR_{\leq \infty})$ are right- and left- modules over this algebra, respectively. 
We claim that the factorization homology of $A$ over a closed interval is precisely the (geometric realization) of the two-sided bar construction $\sbar_\bullet(M_-,A_\RR,M_+)$:

\begin{prop}\label{lemma:interval}
Let $A$ be a $\cI$-algebra. There is a natural equivalence
	\begin{equation}\label{bar-homology}
	\int_{[-\infty,\infty]} A
	~{}~\simeq~{}~
	A(\RR_{\geq -\infty}) \underset{A(\RR)}\otimes A(\RR_{\leq \infty})
	\end{equation}
between the factorization homology of the closed interval and the two-sided bar construction of the left $A(\RR)$-module $A(\RR_{\leq \infty})$ and the right $A(\RR)$-module $A(\RR_{\geq -\infty})$.
\end{prop}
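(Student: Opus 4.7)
The plan is to exhibit a cofinal $\Delta^{\op}$-indexed sub-diagram of $\disk(\cI)_{/[-\infty,\infty]}$ whose pullback along $A$ is the standard two-sided bar complex. Once cofinality is in hand, Definition~\ref{defn:factorization-homology} immediately identifies $\int_{[-\infty,\infty]}A$ with the geometric realization of that bar complex, which is the right-hand side of (\ref{bar-homology}).

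First I would construct a functor $F\colon \Delta^{\op} \to \disk(\cI)_{/[-\infty,\infty]}$ sending $[n]$ to the $\cI$-disk
\[
\RR_{\geq -\infty} \sqcup \underbrace{\RR \sqcup \cdots \sqcup \RR}_{n} \sqcup \RR_{\leq \infty}
\]
equipped with the standard embedding into $[-\infty,\infty]$ as an increasing sequence of disjoint open intervals, with the leftmost component containing $-\infty$ and the rightmost containing $+\infty$. The face map $d_0$ is realized by the embedding of $\RR_{\geq -\infty} \sqcup \RR$ (the first two components of $F([n])$) into the single $\RR_{\geq -\infty}$ of $F([n-1])$; the face $d_i$ for $0<i<n$ merges the $i$-th and $(i{+}1)$-st copies of $\RR$ into a single $\RR$; the face $d_n$ is analogous to $d_0$ on the right; and degeneracies insert an extra $\RR$ via $\emptyset \hookrightarrow \RR$. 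From the identification of a $\cI$-algebra with a triple $(M_-,A_\RR,M_+)$ given in Example~\ref{example:disk1algebra}, the composition $A\circ F\colon \Delta^{\op} \to \cC$ is precisely the two-sided bar complex $\sbar_\bullet(M_-,A_\RR,M_+)$.

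The core step is to prove $F$ is cofinal. By the $\infty$-categorical version of Quillen's Theorem~A (\cite{topos}~Theorem~4.1.3.1), this reduces to verifying that for every object $(\phi\colon U\to [-\infty,\infty])$ of $\disk(\cI)_{/[-\infty,\infty]}$, the comma $\infty$-category of factorizations $U \to F([n]) \to [-\infty,\infty]$ is weakly contractible. Write $U = \sqcup_j U_j$; at most one component is of type $\RR_{\geq -\infty}$ (whichever contains the preimage of $-\infty$, if any) and at most one of type $\RR_{\leq \infty}$, and the remaining components, which are copies of $\RR$, can be linearly ordered by their position in $[-\infty,\infty]$. For any $n$ exceeding the number of middle $\RR$-components of $U$, one constructs a factorization by assigning each $\RR$-component of $U$ to a distinct middle slot of $F([n])$ (allowing the boundary components of $U$ to embed into the boundary slots of $F([n])$, possibly via the standard map $\RR \hookrightarrow \RR_{\geq -\infty}$ if $U$ lacks a boundary component). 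Given any two such factorizations, their middle slot-assignments admit a common refinement in $F([k])$ once $k$ is large enough to separate all chosen slots; the same argument extends to higher simplices and shows the comma category is filtered, hence weakly contractible.

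With cofinality established, the colimit defining $\int_{[-\infty,\infty]} A$ can be computed along $F$, yielding the geometric realization of $\sbar_\bullet(M_-,A_\RR,M_+)$, which is $M_- \underset{A_\RR}{\otimes} M_+$ by definition of the two-sided bar construction. The principal technical obstacle I anticipate is the verification of cofinality: while the combinatorial picture of ordering components and inserting refinements is intuitive, making the filteredness of the comma $\infty$-category precise--particularly coherently producing higher common refinements--requires careful bookkeeping of the spaces of embeddings and the contractibility of the parameter spaces of compatible standard arrangements in $[-\infty,\infty]$.
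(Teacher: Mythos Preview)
Your strategy is the same as the paper's: exhibit a cofinal $\Delta^{\op}$-shaped subdiagram of $\diskover$ on which $A$ restricts to the two-sided bar simplicial object, then take the colimit. The paper's execution of cofinality is different and cleaner, however. Rather than proving your $F$ cofinal directly, it factors as
\[
\Delta^{\op} \xrightarrow{\ \simeq\ } \epsilon^{-1}(1,1) \hookrightarrow \diskover,
\]
where $\epsilon^{-1}(1,1)$ is the \emph{full} subcategory of those $(X \to [-\infty,\infty])$ in which $X$ already contains a copy of $\RR_{\geq -\infty}$ and a copy of $\RR_{\leq \infty}$. Cofinality of this full inclusion is proved by exhibiting, for each $Z = (X \to [-\infty,\infty])$, an \emph{initial} object $Z'$ of the relevant comma category, obtained by simply adjoining to $X$ whichever boundary half-lines it is missing. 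The equivalence $\epsilon^{-1}(1,1)\simeq\Delta^{\op}$ is then the map $(X \xrightarrow{f} [-\infty,\infty]) \mapsto \pi_0\bigl([-\infty,\infty]\smallsetminus f(X)\bigr)$.

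Your direct filteredness argument for the comma categories has a genuine gap. Take $U=\emptyset$: the comma category $\Delta^{\op}\times_{\diskover}(\diskover)_{\emptyset/}$ is then $\Delta^{\op}$ itself, which is sifted and weakly contractible but \emph{not} filtered --- the two face maps $d_0,d_1\colon [1]\to[0]$ in $\Delta^{\op}$ admit no coequalizer. More generally your ``common refinement'' picture addresses the existence of cocones over pairs of objects but not the coequalization of parallel morphisms, so the filteredness claim does not go through as stated. The conclusion (weak contractibility) is still correct, but not for the reason you give; the paper's initial-object maneuver sidesteps this difficulty entirely, which is what the two-step factoring buys.
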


Recall from its defining expression that the lefthand side of~(\ref{bar-homology}) is computed as a colimit over the quasi-category $\diskover$.  We study this quasi-category directly.

The underlying space of an arbitrary object $X\in \disk_n^{\partial,\fr}$ is canonically of the form 
	\[
	\iota X = \RR_{\geq -\infty}^{\sqcup \epsilon_-} \sqcup \RR^{\sqcup p} \sqcup \RR_{\leq \infty}^{\sqcup \epsilon_+}~.
	\]
For $X$ to admit a morphism to $[-\infty,\infty]$ we must have $\epsilon_\pm \in \{0,1\}$.  
We thus obtain a map
	\[
	\epsilon : \diskover \to \Delta^1 \times \Delta^1
	\]
described as the assignment $(X\to [-\infty,\infty]) \mapsto (\epsilon_-,\epsilon_+)$.

\begin{lemma}\label{lemma:cofinaldisk}
The inclusion of the fiber
	\[
	\epsilon^{-1}(1,1) \subset \diskover
	\]
is cofinal.
\end{lemma}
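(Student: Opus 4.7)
The plan is to apply the characterization of cofinal maps (Theorem 4.1.3.1 of \cite{topos}), which reduces the cofinality of $\epsilon^{-1}(1,1) \hookrightarrow \diskover$ to showing that for each object $X \in \diskover$, the quasi-category $\epsilon^{-1}(1,1)_{X/} := \epsilon^{-1}(1,1) \times_{\diskover} \diskover_{X/}$ is weakly contractible. Rather than verifying this directly, I will establish the stronger statement that $\epsilon\colon \diskover \to \Delta^1 \times \Delta^1$ is a coCartesian fibration. Granting this, since $(1,1)$ is the terminal object of $\Delta^1 \times \Delta^1$, the coCartesian lift of the unique edge $\epsilon(X) \to (1,1)$ at $X$ produces a morphism $X \to L(X)$ with $L(X) \in \epsilon^{-1}(1,1)$, and the universal property of coCartesian edges identifies $(L(X), X \to L(X))$ as an initial object of $\epsilon^{-1}(1,1)_{X/}$, yielding weak contractibility.

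The work lies in constructing the coCartesian lifts. Writing $X = \RR_{\geq -\infty}^{\sqcup \epsilon_-} \sqcup \RR^{\sqcup p} \sqcup \RR_{\leq \infty}^{\sqcup \epsilon_+} \to [-\infty,\infty]$, I construct $L(X)$ by adjoining missing boundary components on each side. Describing just the left side (the right being symmetric): if $\epsilon_- = 1$, keep the existing $\RR_{\geq -\infty}$; if $\epsilon_- = 0$ and $-\infty$ is not in the closure of the image of $X$, adjoin a new $\RR_{\geq -\infty}$ whose image is a small neighborhood of $-\infty$ disjoint from $X$; and if $\epsilon_- = 0$ but $-\infty$ is in the closure of this image, then since the components of $X$ have pairwise disjoint open images in $[-\infty,\infty]$, at most one $\RR$-component of $X$ has image of the form $(-\infty, b)$, and we replace this component by a $\RR_{\geq -\infty}$ with image $[-\infty,b)$, absorbing the original $\RR$-component into it as the open complement of the boundary point.

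To verify coCartesianity, one shows that for any morphism $X \to Y$ in $\diskover$ with $\epsilon(Y) = (1,1)$, there is an essentially unique factorization $X \to L(X) \to Y$. The key observation is that since $Y = \RR_{\geq -\infty} \sqcup \RR^{\sqcup q} \sqcup \RR_{\leq \infty}$ has components with pairwise disjoint images in $[-\infty,\infty]$, the $\RR_{\geq -\infty}$-component of $Y$ is the unique component whose image reaches $-\infty$; consequently any left-approaching component of $X$ is forced to land in it. This determines the factorization $L(X) \to Y$ on the boundary piece, while the remaining interior $\RR$-components of $L(X)$ distribute among $Y$ as prescribed by $X \to Y$; contractibility of the relevant oriented embedding spaces (e.g., $\Emb^{\rm or}(\RR, \RR) \simeq \ast$) ensures the factorization is unique up to contractible choice. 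The hard part will be promoting this case-by-case geometric construction to a genuine coCartesian lift in the quasi-categorical sense, verifying the universal property coherently across higher simplices.
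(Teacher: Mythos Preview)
Your overall strategy is sound and is essentially the paper's: produce, for each $Z=(X,f)\in\diskover$, an initial object of $\epsilon^{-1}(1,1)_{Z/}$ and invoke Theorem~4.1.3.1 of \cite{topos}. Phrasing this as ``$\epsilon$ is a coCartesian fibration'' is fine but not necessary. Your Cases~1 and~2 coincide with the paper's construction.

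Case~3, however, is wrong. The claim that a ``left-approaching'' $\RR$-component $R_0$ of $X$ is \emph{forced} to land in the $\RR_{\geq -\infty}$-component of $Y$ fails in the quasi-category slice. An edge $(X,f)\to(Y,g)$ in $\diskover$ is a $2$-simplex in $\mfld_1^{\partial,\fr}$, so it only supplies a \emph{homotopy} $gh\simeq f$, and since $\mfld_1^{\partial,\fr}(\RR,[-\infty,\infty])$ is contractible this places no constraint on which component of $Y$ receives $R_0$. Concretely, take $X=\RR$ with $f(\RR)=(-\infty,0)$ and $Y=\RR_{\geq -\infty}\sqcup\RR\sqcup\RR_{\leq\infty}$: there are morphisms $(X,f)\to(Y,g)$ in $\diskover$ sending $X$ into the middle $\RR$-component of $Y$, but your $L(X)=\RR_{\geq -\infty}$ can only map to $Y_{\geq -\infty}$. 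Hence the induced map on mapping spaces in $\diskover$ is not even surjective on $\pi_0$, and $X\to L(X)$ is not coCartesian.

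The paper sidesteps this by never absorbing. It first replaces $f$ by $f\circ g$ for some $g\colon X\to X$ isotopic to the identity with $\overline{g(\iota X)}\subset\iota X$ compact---so the image is bounded away from $\pm\infty$---and then \emph{always} adjoins fresh disjoint copies, setting $X'=\RR_{\geq -\infty}^{\sqcup\,1-\epsilon_-}\sqcup X\sqcup\RR_{\leq\infty}^{\sqcup\,1-\epsilon_+}$. This is your Case~2 applied uniformly. For this summand inclusion, extending an embedding $X\hookrightarrow Y$ to $X'\hookrightarrow Y$ amounts to choosing small embeddings of the new half-lines near the respective boundary points of $Y$, disjoint from the image of $X$; this is a contractible choice, which is what makes $(Z\to Z')$ initial. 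Your proof is repaired simply by dropping Case~3 and handling the ``image reaches $-\infty$'' situation via this preliminary shrinking.
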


\begin{proof}
Let $Z=(X\xra{f}[-\infty,\infty])\in \diskover$ be an arbitrary vertex. 
We must prove that the quasi-category
	\[
	\cD = \epsilon^{-1}(1,1) \times_{\diskover} \bigl(\diskover\bigr)_{Z/}
	\]
has a weakly contractible classifying space. (See for instance Theorem 4.1.3.1 of Lurie's~\cite{topos}.)
Define the object $Z'=(X' \xra{f'} [-\infty,\infty]) \in \epsilon^{-1}(1,1)$ as follows.  
The $\cI$-manifold 
	\[
	X'  = \RR_{\geq -\infty}^{\sqcup 1-\epsilon_-} \sqcup X \sqcup \RR_{\leq \infty}^{\sqcup 1 - \epsilon_+}~.
	\]
Notice the inclusion $X \subset X'$ of $\cI$-manifolds.  
By construction, for $Y \in \epsilon^{-1}(1,1)$, any morphism $X \to Y\in \disk^{\partial,\fr}_1$ canonically factors as $X \subset X' \to Y$.

Choose any morphism $X'\xra{f'} [-\infty,\infty]$ so that a composite $X\subset X'\xra{f'} [-\infty,\infty]$ is $f$ -- such a morphism $f'$ exists as evident by replacing $f$ with $f\circ g$ where $X\xra{g}X$ is isotopic to the identity map and the closure $\ov{g(\iota X)} \subset \iota X$ is compact.  
By construction $Z\to Z'$ is an edge in $\diskover$ and thus is a vertex of $\cD$.  
A similar argument indicated for the existence of $f'$ shows that $(Z\to Z')\in \cD$ is initial.  
It follows that $|\cD|$ is contractible.  
\end{proof}

\begin{lemma}
There is an equivalence of quasi-categories $\epsilon^{-1}(1,1) \xra{\simeq} \Delta^{\op}$.

\end{lemma}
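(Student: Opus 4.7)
The plan is to construct a functor $F\colon\epsilon^{-1}(1,1)\to\Delta^{\op}$ via a ``gap construction'' and verify essential surjectivity and fully faithfulness by hand. On objects, $F$ sends $(X\xra{g}[-\infty,\infty])$ (with $X = \RR_{\geq-\infty}\sqcup\RR^{\sqcup p}\sqcup\RR_{\leq\infty}$) to the object $[p]\in\Delta^{\op}$, identified with the linearly ordered set $\pi_0\bigl([-\infty,\infty]\smallsetminus g(\iota X)\bigr)$ of the $p+1$ connected components of the complement of the image. On morphisms, a morphism $(X,g_X)\to(Y,g_Y)$ in $\epsilon^{-1}(1,1)$ induces a containment $g_X(\iota X)\subset g_Y(\iota Y)$, whence a reverse containment of complements and hence an order-preserving map $F(Y)\to F(X)$ in $\Delta$.

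Essential surjectivity is immediate from the construction. For fully faithfulness, observe that the mapping space in the slice is the homotopy fiber
\[
\Map_{\diskover}(X,Y) ~\simeq~ \mathrm{hofib}_{g_X}\bigl(\Emb^{\fr}(X,Y)\xra{g_Y\circ -}\Emb^{\fr}(X,[-\infty,\infty])\bigr).
\]
Each embedding space is a disjoint union of contractible components: the components of $\Emb^{\fr}(X,Y)$ are indexed by an assignment $\sigma$ of each middle $\RR$-component of $X$ to some component of $Y$, together with a linear ordering of $\sigma^{-1}$ of each target-component, while $\Emb^{\fr}(X,[-\infty,\infty])$ has $p!$ components, one for each linear ordering of the middle $\RR$-components of $X$, with $g_X$ sitting in one ``standard'' component. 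The homotopy fiber over $g_X$ is therefore the discrete set of components of $\Emb^{\fr}(X,Y)$ whose postcomposition with $g_Y$ yields this standard ordering; these correspond bijectively to weakly order-preserving, endpoint-preserving maps $\{0,\dots,p+1\}\to\{0,\dots,q+1\}$ (with internal orderings forced). By the classical duality between non-empty finite totally ordered sets with distinct minimum and maximum (and min/max-preserving morphisms) and the simplex category $\Delta$, this set identifies with $\Delta([q],[p]) = \Map_{\Delta^{\op}}([p],[q])$.

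The main obstacle is the careful enumeration of components in the homotopy fiber: $\Emb^{\fr}(X,Y)$ a priori has many more components than the expected $|\Delta([q],[p])|$, and the homotopy fiber construction must precisely discard those components whose induced ordering of middle $\RR$-components disagrees with that specified by $g_X$. Once this bookkeeping is done, the remaining combinatorial bijection with $\Delta^{\op}$-morphisms follows from standard interval-simplex duality.
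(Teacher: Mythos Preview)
Your proposal is correct and takes the same approach as the paper: both define the functor via the gap construction $(X\xra{f}[-\infty,\infty])\mapsto\pi_0\bigl([-\infty,\infty]\smallsetminus f(\iota X)\bigr)$, with the linear order inherited from $[-\infty,\infty]$. The paper's proof is considerably terser---it notes functoriality is ``straightforward'' and declares the equivalence ``standard'' without the explicit homotopy-fiber analysis of slice mapping spaces you supply---but the underlying argument is the same.
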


\begin{proof}
The map is given by $(X\xra{f} [-\infty,\infty])\mapsto \pi_0\bigl([-\infty,\infty]\smallsetminus f(\iota X)\bigr)$, the path components of the compliment of the image, which is equipped with a linear order from that of $[-\infty,\infty]$; this linearly ordered set is evidently finite and is never empty since $[-\infty,\infty]$ is not an element of $\disk^{\partial,\fr}_1$.
It is straight forward to verify that this indeed describes a functor of quasi-categories.
That this map is an equivalence is standard.  

\end{proof}

\begin{proof}[Proof of Proposition~\ref{lemma:interval}]
We contemplate the composition
	\[
	\Delta^{\op} \xla{\simeq} \epsilon^{-1}(1,1) 
	\to \diskover \xra{A} \cC~.
	\]
The value of this map on $[p]$ is canonically equivalent to $A(\RR_{\geq - \infty} \sqcup \RR^{\sqcup p} \sqcup \RR_{\leq \infty}) \simeq M_-\ot A_\RR^{\ot p}\ot M_+$.  
The value of this map on morphisms gives the simplicial structure maps of the two-sided bar construction $\sbar_\bullet(M_-,A_{\RR},M_+)$. 
We have established the string of canonical equivalence
\[
\int_{[-\infty,\infty]} A = \colim\bigl( \diskover  \xra{A} \cC\bigl) \xla{\simeq}\colim\bigl(\Delta^{\op} \xra{\sbar_\bullet(M_-,A_\RR,M_+)} \cC\bigr) = M_-\ot_{A_\RR} M_+
\]
in which the leftward equivalence is from Lemma~\ref{lemma:cofinaldisk}.

\end{proof}

\subsection{Factorization homology satisfies excision}

 By combining Proposition~\ref{lemma:interval} with the push-forward formula of Theorem~\ref{pushforward-formula}, we see that factorization homology satisfies the {\em excision property}:

\begin{theorem}[Excision]\label{thm:excision}
Let $X=X_-\cup_{RV} X_+$ be a collar-gluing of $\cB$-manifolds.  
Then there is a natural equivalence
\[
\int_{X_-} A~{}~{}~  \bigotimes_{\int_{RV} A}~{}~{}~ \int_{X_+} A  ~{}~{}~  \simeq ~{}~{}~ \int_X A~.
\]
\end{theorem}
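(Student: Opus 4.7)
The plan is to realize the collar-gluing as a pushforward computation over the closed interval, and then invoke Proposition~\ref{lemma:interval} to identify the result as a two-sided bar construction. The construction in Example~\ref{collar-I} already does most of the heavy lifting: the chosen conically smooth map $h\colon \iota X \to [-\infty,\infty]$ (obtained via a conically smooth partition of unity subordinate to the cover $\{\iota X_-,\iota X_+\}$ with $h$ restricting to the projection along the collar $RV$) assembles into an $\cI$-family of $\cB$-manifolds $(G,\gamma)$ with $G(\RR_{\geq -\infty}) = X_-$, $G(\RR_{\leq \infty}) = X_+$, and $G(\RR) = RV$.

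First, I would verify that the value $L_G([-\infty,\infty]) \in \mfld(\cB)$ produced by Lemma~\ref{creation} is canonically equivalent to $X$ itself. By construction, the underlying singular manifold of $L_G([-\infty,\infty])$ is computed as the colimit of $\iota G(V)$ over basics $V \to [-\infty,\infty]\in \cI$, and the collar-gluing hypothesis $\iota X = f_-(\iota X_-) \cup f_+(\iota X_+)$ together with the pullback square $RV = X_- \times_X X_+$ exhibits $\iota X$ (or rather a refinement thereof) as precisely this colimit, compatibly with $\cB$-structures via the canonical section described in the proof of Lemma~\ref{creation}. Invoking Theorem~\ref{no-pre-mans} (via Lemma~\ref{premanifold-factorization}) identifies this premanifold with $X$.

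Next I would apply the pushforward formula (Theorem~\ref{pushforward-formula}) to obtain the equivalence
\[
\int_X A ~\simeq~ \int_{L_G([-\infty,\infty])} A ~\simeq~ \int_{[-\infty,\infty]} G_\ast A
\]
in $\cC$. It remains to identify $G_\ast A$ as an object of $\Alg_{\disk_1^{\partial,\fr}}(\cC^\otimes)$. By definition of pushforward, $G_\ast A(\RR_{\geq -\infty}) = \int_{X_-} A$, $G_\ast A(\RR_{\leq \infty}) = \int_{X_+} A$, and $G_\ast A(\RR) = \int_{RV} A$, with the $A_\infty$-algebra structure on the last of these and its bimodule structure on the first two arising from the functoriality of factorization homology applied to the evident embeddings of $\cI$-basics into one another. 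Finally, Proposition~\ref{lemma:interval} evaluates
\[
\int_{[-\infty,\infty]} G_\ast A ~\simeq~ G_\ast A(\RR_{\geq -\infty}) \underset{G_\ast A(\RR)}\otimes G_\ast A(\RR_{\leq \infty}) ~=~ \int_{X_-} A \underset{\int_{RV} A}\otimes \int_{X_+} A,
\]
which is the desired excision equivalence.

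The main obstacle will be the identification $L_G([-\infty,\infty]) \simeq X$ in step one: one must check that the colimit of the $G(V)$ diagram reconstructs exactly the premanifold structure on $X$ and that the $\cB$-structures glue correctly along $RV$. This boils down to the pullback condition $RV = X_- \times_X X_+$ (which is precisely the hypothesis of a collar-gluing in Definition~\ref{collar-gluing}) together with the fact that the cover $\{\iota X_\pm\}$ generates the topology of $\iota X$, so that the colimit computation in the proof of Lemma~\ref{creation} recovers the correct right fibration $\widehat{X} \to \bsc_n$ with its map to $\cB$. Everything else is a formal consequence of the push-forward formula and the already-proved identification of factorization homology over a closed interval with a bar construction.
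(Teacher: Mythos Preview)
Your proposal is correct and matches the paper's proof essentially line for line: both use the $\cI$-family $G$ from Example~\ref{collar-I}, identify $L_G([-\infty,\infty])$ with $X$ via the refinement/Theorem~\ref{no-pre-mans} argument, apply the pushforward formula (Theorem~\ref{pushforward-formula}), and then invoke Proposition~\ref{lemma:interval} to compute $\int_{[-\infty,\infty]} G_\ast A$ as the two-sided bar construction. The only cosmetic difference is that the paper presents the chain of equivalences starting from the bar construction and ending at $\int_X A$, whereas you run it in the other direction.
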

\begin{proof}
As discussed in Example~\ref{collar-I}, collar-gluing determines an $\cI$-family $G\colon \cI \to \mfld(\cB)$ with $G(\RR_{\geq -\infty}) = X_-$, $G(\RR_{\leq \infty}) = X_+$, and $G(\RR) = RV$.  
We establish the string of equivalences
\begin{eqnarray}
\int_{X_-} A \bigotimes_{\int_{RV} A} \int_{X_+} A 
& \simeq &
\int_{G(\RR_{\geq -\infty})}A \bigotimes_{\int_{G(\RR)} A} \int_{G(\RR_{\leq \infty})} A 
\nonumber \\
&\simeq &
\int_{\RR_{\geq -\infty}}G_\ast A \bigotimes_{\int_{\RR} G_\ast A} \int_{\RR_{\leq \infty}} G_\ast A 
\nonumber \\
& \xra{\simeq} &
\int_{[-\infty,\infty]} G_\ast A
\nonumber \\
&\simeq &
\int_{L_G([-\infty,\infty])} A
\nonumber \\
&\xra{\simeq} &
\int_X A~.  
\nonumber
\end{eqnarray}
The first equivalence defines the left-hand side.  The second and fourth are the pushforward formula of~\ref{pushforward-formula}.  The third is Proposition~\ref{lemma:interval}.  The final equivalence follows because the canonical map $H([-\infty,\infty]) \to X$ is a refinement of $\cB$-manifolds, and therefore an equivalence by Theorem~\ref{no-pre-mans}.   

\end{proof}

\subsection{Homology theories}
In classical algebraic topology, the excision axiom of a homology theory can be phrased as sending pushout squares to certain colimits in chain complexes.  
This axiom cannot be simply parroted  when defining a homology theory for $\cB$-manifolds, a first obstruction being that $\bman$ does not admit pushouts.  We account for this by distinguishing a class of diagrams in $\bman$ which play the role of pushout diagrams -- these are \emph{collar-gluings} (see Definition~\ref{collar-gluing}).  We go on to define a homology theory for $\cB$-manifolds as a symmetric monoidal functor sending collar-gluings to certain colimits.

As usual, fix a category of basics $\cB$ and a symmetric monoidal quasi-category $\cC^\otimes$ which satisfies condition~($\ast$).  
Let $\mfld(\cB)^{\sqcup} \xra{H} \cC^\otimes$ be a map of symmetric monoidal quasi-categories.  
Through Example~\ref{collar-I}, any collar-gluing of $\cB$-manifolds $X=X_-\cup_{RV} X_+$ 
determines a $\cI$-family of $\cB$-manifolds $\cI\xra{G} \mfld(\cB)$, so there results a map of symmetric monoidal quasi-categories
$
\cI^{\sqcup} \xra{G^{\sqcup}}  \mfld(\cB)^{\sqcup}  \xra{H} \cC^\otimes~.  
$
Consider the universal arrow from the 
Kan extension $\int_{[-\infty,\infty]} HG^{\sqcup} \to H(X)$.  
In light of Theorem~\ref{thm:excision}, this universal arrow can be written as
\begin{equation}\label{hmlgy?}
H(X_-) \bigotimes_{H(RV)} H(X_+)  \to H(X)~.
\end{equation}

\begin{definition}[Homology theory]\label{homology-theory}
Let $\cC^\otimes$ be a symmetric monoidal category satisfying $(*)$.  A \textit{$\cC$-valued homology theory for $\cB$-manifolds} is a map of symmetric monoidal quasi-categories
	\[
	H\colon \bman^{\sqcup} \to \cC^\otimes
	\]
that preserves sequential colimits, and such that for every collar-gluing $X = X_- \cup_{RV} X_+$ of $\cB$-manifolds, the canonical morphism~(\ref{hmlgy?})
is an equivalence in $\cC$.  Denote the full sub-quasi-category 
	\[
	\bH(\bman, \cC^\otimes) \subset \Fun^\otimes(\bman^{\sqcup}, \cC^\otimes)
	\]
spanned by the homology theories.  
\end{definition}

Note that since a homology theory preserves sequential colimits, it is determined entirely by its behavior on $\mfld^{\sf fin}(\cB) \subset \mfld(\cB)$.

\begin{example}
Let $\cB$ be a category of basics and $\cC^\otimes$ be a symmetric monoidal quasi-category satisfying $(*)$.
By Theorem~\ref{thm:excision} there is a canonical factorization
\[
\xymatrix{
\Alg_{\disk(\cB)}(\cC^\ot) \ar[rr]^{\int}  \ar[dr]^{\int}
&
&
\Fun^\otimes(\bman^{\sqcup}, \cC^\otimes)
\\
&
\bH(\bman,\cC^\otimes)  \ar[ur]
&
.
}
\]
\end{example}

\begin{theorem}[Characterization of homology theories]\label{characterization}
Let $\cB$ be a category of basics and $\cC^\otimes$ a symmetric monoidal quasi-category satisfying $(*)$. The (restricted) adjunction~(\ref{adj})
\[
\int \colon \Alg_{\disk(\cB)}(\cC^\ot) \leftrightarrows \bH(\bman , \cC^\otimes) \colon \rho
\]
is an equivalence of quasi-categories.  
\end{theorem}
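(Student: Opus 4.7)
The plan is to verify that the unit and counit of the adjunction~(\ref{adj}) are equivalences on the indicated full sub-quasi-categories. First one checks that $\int$ genuinely takes values in $\bH(\mfld(\cB), \cC^\otimes)$: symmetric monoidality of $\int A$ is built into the operadic left Kan extension construction; the excision axiom is precisely Theorem~\ref{thm:excision}; and preservation of sequential colimits of $\cB$-manifolds follows from the colimit formula~(\ref{fact-def}) together with the fact that a finite disjoint union of basics is a compact object of $\mfld(\cB)$, so that the slice $\disk(\cB)_{/-}$ commutes with sequential colimits. The unit $A \to \rho\int A$ is then an equivalence essentially by inspection: by symmetric monoidality it reduces to showing $\int_U A \simeq A(U)$ for $U$ a disjoint union of basics, which holds because $(U,\id_U)$ is terminal in $\disk(\cB)_{/U}$ (any morphism $V \to U$ from such a disjoint union is its own comparison in the slice), so the coend collapses onto $A(U)$.

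The core of the argument is to show that the counit $\int\rho H \to H$ is an equivalence for every homology theory $H$. Set $F_1 := \int\rho H$ and $F_2 := H$; both are symmetric monoidal, satisfy excision, preserve sequential colimits, and agree on $\disk(\cB)$ (by the previous paragraph together with $\rho F_2 = \rho H$). Appealing to Corollary~\ref{B-collar=fin}, every finite $\cB$-manifold $X$ is obtained from basics by a finite sequence of collar-gluings; by induction on the number of gluings, the excision axiom for both $F_1$ and $F_2$ expresses $F_i(X)$ as the same relative tensor product of values on strictly simpler pieces, so an equivalence on the pieces propagates to an equivalence $F_1(X) \simeq F_2(X)$. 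For a general $\cB$-manifold $X$, one uses a countable atlas to construct an exhausting sequence $X_1 \subset X_2 \subset \cdots$ of finite $\cB$-submanifolds with $X \simeq \colim_n X_n$ in $\mfld(\cB)$, and then sequential colimit preservation extends the equivalence to all of $\mfld(\cB)$.

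The main obstacle is the inductive step just described: one must verify that the relative tensor product in $\cC$ preserves pointwise equivalences of the three input objects and two structure maps. This is exactly the point of imposing condition~$(\ast)$ on $\cC^\otimes$---it guarantees that the two-sided bar construction computing the relative tensor product (as in Proposition~\ref{lemma:interval}) is functorial in equivalences. Complementary technical care is required to realize a general $\cB$-manifold as a sequential colimit of finite ones within $\mfld(\cB)$ itself rather than only at the level of underlying spaces; this uses the countable atlas axiom and the equivalence $\pmfld(\cB) \simeq \mfld(\cB)$ of Theorem~\ref{no-pre-mans} to identify $X$ with the canonical singular manifold associated to the premanifold $\bigcup_n X_n$ equipped with its natural cover.
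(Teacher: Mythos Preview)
Your proposal is correct and follows essentially the same strategy as the paper: the unit is an equivalence because $\disk(\cB)_{/X}$ has a terminal object when $X\in\disk(\cB)$, and the counit is an equivalence by induction on collar-gluings using excision for $\int\rho H$ (Theorem~\ref{thm:excision}) together with the homology-theory axiom for $H$. The only cosmetic difference is that the paper phrases the induction as a maximal-subcategory (transfinite) argument rather than an explicit count of collar-gluings, and it leaves the passage from finite to arbitrary $\cB$-manifolds implicit (relying on the remark after Definition~\ref{homology-theory}), whereas you spell out both the sequential-colimit extension and the equivalence-invariance of the bar construction.
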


\begin{proof}
We will make use of the identification $\bman^\cI \cong \bman^{\sf fin}$ of Theorem~\ref{B-collar=fin}.  
Let $X\in \mfld(\cB)^{\sf fin}$ and $A\in \Alg_{\disk(\cB)}(\cC^\ot)$.
Recall the defining expression $\int_X A = \colim \bigotimes_{k\in K} A(U_k)$ where this colimit is over the quasi-category $\disk(\cB)_{/X}$.  If $X = \bigsqcup_{l\in L} V_l \in \disk(\cB)$ is itself a disjoint union of basics, then $\disk(\cB)_{/X}$ has a terminal object $X\xra{=}X$ and so the inclusion $\bigotimes_{l\in L} A(V_l) \xra{\simeq} \int_X A$ is an equivalence.  
It follows that the unit map of the adjunction $1 \xra{\simeq} \rho \int$ is an equivalence.  

For the converse, consider the counit 
\begin{equation}\label{counit}
\int_{(-)} \rho H  \longrightarrow H(-)~.
\end{equation}
If $(-)=U\in \cB$ then~(\ref{counit}) is an equivalence because $\int$ is defined as a (Kan) extension.  We proceed by transfinite induction.  
Consider the collection of intermediary full sub-symmetric monoidal quasi-categories $\disk(\cB)^{\sqcup} \subset \cM\subset (\mfld(\cB)^{\sf fin})^{\sqcup}$ for which the counit map is an equivalence for $(-)$ any object of $\cM$.  
This collection is partially ordered by inclusions.  Clearly $\disk(\cB)^{\sqcup}$ is a member of this collection.  
Let $\cM_0$ be a maximal such member. 
We wish to show $(\mfld(\cB)^{\sf fin})^{\sqcup} \subset \cM_0$.  
Suppose $X = X_-\cup_{RV}X_+$ with $X_\pm~ ,~ RV \in \cM_0$.  
Because $H$ is a homology theory, there is the equivalence
\begin{equation}\label{thing-one}
H(X_-)\bigotimes_{H(RV)} H(X_+) \xra{\simeq} H(X)~.  
\end{equation}
By excision (Theorem~\ref{thm:excision}), there is the equivalence
\begin{equation}\label{thing-two}
\int_{X_-} \rho H \bigotimes_{\displaystyle\int_{RV} \rho H}  \int_{X_+} \rho H \xra{\simeq} \int_X \rho H~.
\end{equation}
The counit map~(\ref{counit}) describes a map from diagram~(\ref{thing-one}) to diagram~(\ref{thing-two}).  
From the invariance of coend under equivalences in the quasi-category $\cC$ and by construction of $\cM_0$, because $X_\pm,  RV\in \cM_0$, the counit map~(\ref{counit}) is an equivalence on the lefthand sides of diagrams~(\ref{thing-one}) and~(\ref{thing-two}).   
It follows that the counit~(\ref{counit}) is an equivalence on the righthand sides of~(\ref{thing-one}) and~(\ref{thing-two}).  We have shown that $\cM_0$ is closed under collar-gluing.  It follows from Theorem~\ref{collar=fin}  that $(\mfld(\cB)^{\sf fin})^{\sqcup} \subset \cM_0$.  
\end{proof}

\subsection{Classical homology theories}\label{classics}
Here we document familiar examples of homology theories.  We will specialize to the stable quasi-category $\cS p$ of spectra, but this is insubstantial, for instance chain complexes would do just as well.  
The quasi-category of spectra $\cS p$ is (the underlying quasi-category of) a symmeric monoidal quasi-category under wedge sum $\vee$, with $\ast$ as the unit, likewise for smash product $\wedge$, with the sphere spectrum $\SS$ as the unit.  
We will exploit the following (related) features of a stable quasi-category $\cM$:
\begin{itemize}
\item
A pullback diagram is a pushout diagram in $\cM$.  In particular, the categorical coproduct and product in $\cM$ are equivalent.
\item
The Yoneda embedding $\cM \to \cP(\cM)$ factors through $\cS p^{\cM^{\op}} \xra{\Omega^\infty} \cP(\cM)$.  In particular, we can regard $\cM(E,E')$ as a spectrum for each pair of objects $E,E'\in \cM$.  
\end{itemize}

Fix a category of basics $\cB$.  
Fix a spectrum $E$.  Consider the map of symmetric monoidal quasi-categories
\[
E_{\sf c} \colon \mfld(\cB)^{\sqcup} \to \cS p^\vee
\]
given on vertices by assigning to $\bigl(J,(X_j)\bigr)$ the data $\bigl(A,(\cS p(\Sigma^\infty(\iota X_j)^\ast , E))\bigr)$;  the value on morphisms (and higher simplices) becomes evident upon observing the canonical equivalence
\[
\bigvee_{j\in J} \cS p\bigl(\Sigma^\infty(\iota X_j)^\ast   , E\bigr)  
\xra{\simeq}
\cS p\Bigl(\Sigma^\infty\bigl(\iota(\bigsqcup_{j\in J} X_j)\bigr)^\ast  , E\Bigr)~.
\]
The homotopy groups $\pi_\ast E_{\sf c}(X) = E^\ast_{\sf c}(\iota X)$ are the compactly supported $E$-cohomology groups of the underlying space $\iota X$.

\begin{lemma}\label{usuals}
For any spectrum $E$, the symmetric monoidal map $E_{\sf c}$ is a homology theory.

\end{lemma}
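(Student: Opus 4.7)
The plan is to verify the three conditions of Definition~\ref{homology-theory}: symmetric monoidality, preservation of sequential colimits, and the excision equivalence for collar-gluings. Symmetric monoidality is essentially built into the construction, using the identification $(\iota X \sqcup \iota Y)^{\ast} \simeq (\iota X)^{\ast} \vee (\iota Y)^{\ast}$ of pointed spaces together with the fact that finite products and coproducts coincide in $\cS p$.

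Preservation of sequential colimits amounts to the classical fact that compactly supported $E$-cohomology commutes with filtered colimits of open embeddings. Given $X = \colim_i X_i$ in $\mfld(\cB)$, the underlying space presents as an increasing open union $\iota X = \bigcup_i \iota X_i$; since any compact subset of $\iota X$ is contained in some $\iota X_i$, the natural map $\colim_i E_{\sf c}(X_i) \xra{\simeq} E_{\sf c}(X)$ is an equivalence.

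The heart of the proof is excision, for which the basic topological input is the following. For any locally compact Hausdorff space $Z$ with open subset $W \subset Z$ and closed complement $F := Z \smallsetminus W$, the pointed inclusion $F^{\ast} \hookrightarrow Z^{\ast}$ is a cofibration whose quotient is canonically $W^{\ast}$. The resulting Puppe cofiber sequence of pointed spaces $F^{\ast} \to Z^{\ast} \to W^{\ast}$ becomes, on applying $\cS p(\Sigma^{\infty}-, E)$, a fiber sequence of spectra $E_{\sf c}(W) \to E_{\sf c}(Z) \to E_{\sf c}(F)$, which is equivalently a cofiber sequence by the stability of $\cS p$.

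For a collar-gluing $X = X_- \cup_{RV} X_+$, set $F := \iota X \smallsetminus f_+(\iota X_+)$, a closed subspace of $\iota X$. The pullback description of the collar-gluing identifies $F$ canonically with the closed complement $\iota X_- \smallsetminus \iota RV$ inside $\iota X_-$. Applying the observation above to the two pairs $(X, X_+)$ and $(X_-, RV)$ produces cofiber sequences $E_{\sf c}(X_+) \to E_{\sf c}(X) \to E_{\sf c}(F)$ and $E_{\sf c}(RV) \to E_{\sf c}(X_-) \to E_{\sf c}(F)$, which fit naturally into a map of cofiber sequences inducing the identity on the common cofiber $E_{\sf c}(F)$. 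This asserts precisely that the square
\[
\xymatrix{
E_{\sf c}(RV)  \ar[r]  \ar[d]
&
E_{\sf c}(X_-)  \ar[d]
\\
E_{\sf c}(X_+)  \ar[r]
&
E_{\sf c}(X)
}
\]
is a pullback in $\cS p$; by stability it is simultaneously a pushout, yielding the desired excision equivalence. The principal verification is the naturality of the identification of the two cofiber terms, which follows directly by inspection from the pullback formulation of a collar-gluing.
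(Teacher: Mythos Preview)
Your argument establishes that the square
\[
\xymatrix{
E_{\sf c}(RV)  \ar[r]  \ar[d]
&
E_{\sf c}(X_-)  \ar[d]
\\
E_{\sf c}(X_+)  \ar[r]
&
E_{\sf c}(X)
}
\]
is a pushout in the stable quasi-category $\cS p$, but this is not quite what Definition~\ref{homology-theory} asks for. The excision condition there requires that the map~(\ref{hmlgy?}),
\[
E_{\sf c}(X_-) \underset{E_{\sf c}(RV)}{\bigvee} E_{\sf c}(X_+) \longrightarrow E_{\sf c}(X),
\]
be an equivalence, where the left side is the two-sided bar construction of Proposition~\ref{lemma:interval} formed using the symmetric monoidal structure $\vee$ and the $\disk_1^{\partial,\fr}$-algebra determined by the collar-gluing. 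A categorical pushout and a monoidal bar construction are in general different operations; they coincide here only because the monoidal structure on $\cS p^{\vee}$ is coCartesian \emph{and} because the algebra and module structures induced on $E_{\sf c}(RV)$ and $E_{\sf c}(X_\pm)$ by the collar-gluing are the canonical ones coming from fold maps. The paper's proof devotes its second half to exactly this point: identifying the multiplication $E_{\sf c}(RV)\vee E_{\sf c}(RV)\to E_{\sf c}(RV)$ as the codiagonal and the module actions similarly, so that the geometric realization of the bar object collapses to the categorical pushout. You need this step, or an equivalent one, to close the argument.

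A smaller issue: your claim that $F^{\ast}\hookrightarrow Z^{\ast}$ is a cofibration for an arbitrary closed subset $F$ of a locally compact Hausdorff space $Z$ is not true in that generality; one needs something like an NDR-pair condition on $F\subset Z$. In the collar-gluing situation the collar supplies precisely such a neighborhood retraction, so your conclusion is correct here, but the assertion as stated is too strong.
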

\begin{proof}
Let $X = X_-\cup_{RV} X_+$ be a collar-gluing of $\cB$-manifolds.  
The resulting map
	\[
	\iota X^\ast \to \iota X_- \times_{\bigl((-1,1)\times \iota V\bigr)^\ast} \iota X_+^\ast
	\] 
induced by collapse maps is a a continuous bijection among compact spaces, and is therefore a homeomorphism.  Moreover, because $[-1,1]\times \iota V \to \iota X_\pm$ is a cofibration, it follows that this pullback is in fact a homotopy pullback.  It follows that $\Sigma^\infty(\iota X)^\ast \simeq \Sigma^\infty (\iota X_-)^\ast \coprod_{\Sigma^\infty (\iota RV)^\ast} \Sigma^\infty (\iota X_+)^\ast$ can be canonically written as a pushout in $\cS p$.  
Applying $\cS p(\Sigma^\infty -,E)$ to this pushout gives a pullback, which is again a pushout in $\cS p$.  That is, the universal arrow
\begin{equation}\label{po}
E_{\sf c}(X_-) \underset{E_{\sf c}(RV)}\amalg E_{\sf c}(X_+) \xra{\simeq} E_{\sf c}(X)
\end{equation}
is an equivalence of spectra.  

We now contemplate the $\disk^{\partial,\fr}_1$-algebra in $\cS p^\vee$ associated to the given collar-gluing $X=X_-\cup_{RV} X_+$.  
Because $(\iota RV)^\ast \cong \Sigma (\iota V)^\ast$ then the value $E_{\sf c}(RV) \cong \Sigma^{-1} E_{\sf c}(V)$ is the desuspension
and the relevant $\disk^{\fr}_1$-algebra is given by the associative multiplication rule
\[
\Sigma^{-1}E_{\sf c}(V) \vee  \Sigma^{-1}E_{\sf c}(V) \to \Sigma^{-1}E_{\sf c}(V)
\]
given by the universal arrow from the coproduct.  The left action is 
\[
E_{\sf c}(X_+) \vee \Sigma^{-1} E_{\sf c}(V)  \to E_{\sf c}(X_+)
\]
which, on the first summand is the identity and on the second is the restriction along the collapse map $\iota X_-^\ast \to \iota RV$.  Because the symmetric monoidal structure of $\cS p^\vee$ is coCartesian, it follows that the universal arrow to the bar construction
\[
E_{\sf c}(X_-)\underset{E_{\sf c}(RV)}\amalg E_{\sf c}(X_+) \xra{\simeq} E_{\sf c}(X_-)\underset{E_{\sf c}(RV)}\vee E_{\sf c}(X_+)
\]
is an equivalence.  Combined with the conclusion~(\ref{po}), this finishes the proof.

\end{proof}

\section{Nonabelian Poincar\'e duality}\label{section:poincare}
In this section we give a substantial generalization of Poincar\'e duality from the classical situation of smooth manifolds.  It is a generalization still of the nonabelian Poincar\'e duality of Lurie~\cite{dag} and Salvator\'e~\cite{salvatore} to the setting of structured singular manifolds displayed in this article.  
One can regard the statement of Poincar\'e duality below as evidence that any homology theory for $\cB$-manifolds which detects more than the stratified homotopy type of the manifolds cannot arise from a ``group-like'' $\disk(\cB)$-algebra
(see Remark~\ref{remark:grouplike} which explains these quotes).

Our discussion of duality is two-fold. We first discuss what one might deem `abelian' Poincar\'e duality.  For this we specialize to factorization homology with coefficients in spectra and construct a dualizing (co)sheaf, but other stable settings such as chain complexes would do.  We then take the coefficients $\cC^\ot = \cS^\times$ to be spaces and discuss a `nonabelian' version of Poincar\'e duality.  
In subsequent work we will develop the theory for more general $\cC^\otimes$ and see Poincar\'e duality as an instance of Koszul duality.  
There it will be shown that Poincar\'e duality characterizes $\cB$-manifolds in an appropriate sense.  

\subsection{Dualizing data}
For simplicity, we specialize our discussion to $\cB = \bsc_n$ and work strictly with \emph{finite} singular $n$-manifolds.  As so, we omit the superscript $\snglr_n^{\sf fin}$ from the notation.

\subsubsection{Duals}

Let us recall the notion of duals for a symmetric monoidal quasi-category from~\textsection4.2.5 of~\cite{dag}.  We specialize our discussion to spectra in as much as it is the initial stable quasi-category receiving a map from spaces, see~\cite{dag},~\S1.4.4, but other such stable quasi-categories, for instance chain complexes, would do just as well.  
For $E$ a spectrum, say it has a (Spanier-Whitehead) dual if there exists
\begin{itemize}
\item a spectrum $E^\vee$ ,
\item a map of spectra ${\sf coev}\colon \SS  \to E\otimes E^\vee$ from the sphere spectrum,
\item a map of spectra ${\sf ev}\colon E^\vee \ot E \to \SS$ to the sphere spectrum,
\item a triangle 
\[
\xymatrix{
E \ar[dr]^-{{\sf coev}\ot 1} \ar[rr]^=
&&
E
\\
&
E\ot E^\vee \ot E  \ar[ur]^-{1\otimes {\sf ev}} 
&
}
\]
which commutes up to a specified homotopy,

\item a triangle
\[
\xymatrix{
E^\vee  \ar[dr]^-{1 \ot {\sf coev}} \ar[rr]^=
&&
E^\vee
\\
&
E^\vee \ot E  \ot E^\vee \ar[ur]^-{{\sf ev}\ot 1} 
&
}
\]
which commutes up to a specified homotopy.
\end{itemize}
We often refer to this data simply as $E^\vee$.  
For a general discussion of duals in monoidal quasi-categories, visit~\S4.2.5~\cite{dag}.

Let $E$ be a spectrum which has a dual.  A choice of the above data canonically determines the map of spectra
\begin{equation}\label{dual-equiv}
\cS p(E , -) \xra{-\ot E^\vee} \cS p(E\ot E^\vee , -\ot E^\vee) \xra{{\sf coev}^\ast} \cS p(\SS, - \ot E^\vee) \simeq (-\ot E^\vee)
\end{equation}
which is an equivalence (here we're using that the Yoneda functor $\cM \to \cP(\cM)$ factors through $\cS p^{\cM^{\op}} \xra{\Omega^\infty} \cP(\cM)$ for any stable quasi-category $\cM$).  
The simplicial set of choices of this data is a contractible Kan complex. 
Clearly, if $E$ and $E'$ both have duals then so does $E\ot E'$ and its dual is $E^\vee \ot (E')^\vee$.  
Denote by $\cS p^\wedge_D\subset \cS p^\wedge$ the full sub-symmetric monoidal quasi-category whose vertices are those spectra which have duals.  
The assignment $E\mapsto E^\vee$ can be made into a functor $(-)^\vee \colon (\cS p^\wedge_D )^{\op} \to \cS p^\wedge_D$ where the opposite is taking place over $\mathsf{Fin}_\ast$.

The lemma below is Atiyah duality~\cite{atiyah} for singular manifolds.  
\begin{lemma}\label{atiyah}
The map $\Sigma^\infty(\iota -)^\ast\colon  (\mfld(\cB)^{\sqcup})^{\op} \to \cS p^\vee$ factors through $\cS p^\vee_{D}$ the dualizable spectra.  
\end{lemma}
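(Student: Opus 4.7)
The plan is to verify that for every object $X\in \mfld(\cB)^{\sf fin}$ the spectrum $\Sigma^\infty(\iota X)^\ast$ is dualizable, since $\cS p^\vee_D\subset \cS p^\vee$ is a full sub-symmetric monoidal quasi-category and the functor in question only needs to land there object-wise. We will proceed by induction on a collar-gluing presentation, invoking Theorem~\ref{collar=fin} to write any finite $\cB$-manifold as an iterated collar-gluing of basics. The other key ingredient is the standard fact that the dualizable spectra coincide with the compact (equivalently, finite) spectra, and thus form a stable sub-quasi-category of $\cS p$; in particular they are closed under finite colimits, and most importantly under pushouts.

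For the base case, let $U\in \cB$ with $\iota U\cong \RR^{n-k}\times C(\iota Y)$ for some compact $Y\in \snglr^c_{k-1}$. The one-point compactification is canonically a smash product
\[
(\iota U)^\ast ~\cong~ S^{n-k}\wedge \bigl(C(\iota Y)\bigr)^\ast~\cong~ S^{n-k}\wedge S(\iota Y),
\]
where $S(\iota Y)$ denotes the unreduced suspension, with basepoint the added point at $\infty$ in the cone direction. By Proposition~\ref{cw}, the underlying space $\iota Y$ of the finite compact singular manifold $Y$ is homotopy equivalent to a finite CW complex, so both $S(\iota Y)$ and hence $(\iota U)^\ast$ have the homotopy type of finite pointed CW complexes. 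Consequently $\Sigma^\infty(\iota U)^\ast$ is a finite spectrum, and therefore dualizable. (The degenerate case $Y=\emptyset^{k-1}$, giving $\iota U=\RR^n$ and $(\iota U)^\ast = S^n$, is immediate.)

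For the inductive step, let $X=X_-\cup_{RV} X_+$ be a collar-gluing of finite $\cB$-manifolds. The argument in the proof of Lemma~\ref{usuals} shows that the collapse maps produce a pushout square
\[
\xymatrix{
\Sigma^\infty(\iota RV)^\ast \ar[r]\ar[d] & \Sigma^\infty(\iota X_+)^\ast \ar[d]\\
\Sigma^\infty(\iota X_-)^\ast \ar[r] & \Sigma^\infty(\iota X)^\ast
}
\]
in $\cS p$. By induction, the three spectra $\Sigma^\infty(\iota X_\pm)^\ast$ and $\Sigma^\infty(\iota RV)^\ast$ are dualizable, and since dualizable spectra are closed under finite colimits in $\cS p$, so is the pushout $\Sigma^\infty(\iota X)^\ast$. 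Combined with Theorem~\ref{collar=fin}, which guarantees that any finite $\cB$-manifold is generated from basics under iterated collar-gluings, this completes the argument. The only genuinely substantive step is the base case---verifying that $(\iota U)^\ast$ has finite CW homotopy type---and it is there that we rely essentially on Proposition~\ref{cw}; the inductive step is then a formal consequence of excision together with closure of dualizables under pushouts.
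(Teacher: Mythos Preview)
Your argument is correct and takes a genuinely different route from the paper's.  The paper proves this lemma by a direct geometric construction in the style of classical Atiyah duality: it invokes Lemma~\ref{embedding} and Lemma~\ref{regular-neighborhood} to conically smoothly properly embed $\iota X\subset \RR^N$ with a regular neighborhood $\nu$ and deformation retraction $p\colon \nu\to \iota X$, then writes down explicit Pontryagin--Thom style maps ${\sf coev}\colon S^N\to \nu^\ast\wedge(\iota X)^\ast$ and ${\sf ev}\colon \nu^\ast\wedge(\iota X)^\ast\to S^N$ exhibiting $\Sigma^\infty\nu^\ast$ as the dual.  Your approach instead runs the induction on collar-gluings (Theorem~\ref{collar=fin}), handling the base case by identifying $(\iota U)^\ast\simeq S^{n-k}\wedge S(\iota Y)$ as a finite CW spectrum via Proposition~\ref{cw}, and closing up under pushouts using the excision square from the proof of Lemma~\ref{usuals} together with the standard identification of dualizable spectra with finite spectra.

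Each approach has its virtues.  Yours is more internal to the paper's architecture---it reuses the collar-gluing machinery and Proposition~\ref{cw} rather than importing the separate differential-topological input of \S\ref{embeddings}---and is arguably cleaner for the bare statement of the lemma.  The paper's approach, on the other hand, yields strictly more: it produces an \emph{explicit} dual $\Sigma^\infty\nu^\ast$ (morally the Thom spectrum of the stable normal bundle), which is in keeping with the subsequent identification of the stalks $\omega(U)\simeq\bigl(\Sigma^\infty(\Sigma^{n-k}S(\iota Y))\bigr)^\vee$ and the geometric interpretation of the dualizing cosheaf.  Your proof establishes dualizability abstractly but does not by itself name the dual.
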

\begin{proof}
We need only verify that $\Sigma^\infty(\iota X)^\ast$ has a Spanier-Whitehead dual for each (finite) $\cB$-manifold $X$.  Because the $\cB$-structure is irrelevant here, we can assume $X$ is a (finite) singular $n$-manifold.  
From Lemma~\ref{regular-neighborhood} there is a (conically) smooth proper embedding $\iota X \subset \RR^N$ for some $N$, in addition to a $\delta>0$ for which a $\delta$-neighborhood $\nu$ of $\iota X$ is a regular neighborhood.  
Denote the deformation retraction $p\colon \nu \to \iota X$.  Consider the continuous maps
\[
{\sf coev}\colon S^N \cong (\RR^N)^\ast \to \nu^\ast \wedge \iota X^\ast~,~{}~{}~\Bigl(v\mapsto \bigl(v,p(x)\bigl),\text{ or } v\mapsto \ast\text{ if }v\notin \nu\Bigr)~;
\]
and
\[
{\sf ev}\colon \nu^\ast \wedge \iota X^\ast \to (B_\delta^N)^\ast \cong S^N
\]
given by $(v,x)\mapsto v-x$ if $\lVert v-x \rVert<\delta$ and $(v,x)\mapsto \ast$ otherwise.   
These maps (together with the obvious triangles) exhibit $\Sigma^\infty \nu^\ast$ as a dual of $\Sigma^\infty (\iota X)^\ast$.  
\end{proof}

\subsubsection{Dualizing (co)sheaf}

Define the composite map of symmetric monoidal quasi-categories
\[
\DD^\ast \colon \mfld(\cB)^{\sqcup} \xra{\Sigma^\infty(\iota -)^\ast} (\cS p^\vee_D)^{\op} \xra{(-)^\vee} \cS p^\vee_D
\]
which we refer to as the \emph{dualizing cosheaf (for $\cB$-manifolds)}.  The usage of the term cosheaf is justified by the following lemma.  
\begin{lemma}\label{D-colim}
The map $\DD^\ast$ is a homology theory.

\end{lemma}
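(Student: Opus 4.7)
The strategy is to adapt the proof of Lemma~\ref{usuals}, replacing the functor $\cS p(-, E)$ used there by the Spanier-Whitehead duality $(-)^\vee$. Symmetric monoidality of $\DD^\ast$ is immediate from its construction as the composite of two symmetric monoidal functors, and since we work throughout with finite singular $n$-manifolds, the sequential-colimit condition is essentially vacuous (any sequential colimit in $\mfld^{\sf fin}(\cB)$ along open embeddings must stabilize). So the only real content lies in verifying the collar-gluing excision axiom.

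For this axiom, given a collar-gluing $X = X_-\cup_{RV} X_+$, the first step will be to note, exactly as in the proof of Lemma~\ref{usuals}, that the underlying one-point compactifications form a homotopy pushout
\[
\Sigma^\infty(\iota X)^\ast \simeq \Sigma^\infty(\iota X_-)^\ast \underset{\Sigma^\infty(\iota RV)^\ast}{\amalg} \Sigma^\infty(\iota X_+)^\ast
\]
in $\cS p$. By Lemma~\ref{atiyah}, every spectrum appearing here is dualizable. The next step will be to apply the functor $(-)^\vee\colon (\cS p^\vee_D)^{\op}\to \cS p^\vee_D$, which, being a self-inverse symmetric monoidal equivalence, carries colimits to limits. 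This sends the above pushout to a pullback of duals; by stability of $\cS p$, a Cartesian square is also coCartesian, so we obtain
\[
\DD^\ast(X) \simeq \DD^\ast(X_-) \underset{\DD^\ast(RV)}{\amalg} \DD^\ast(X_+)
\]
as a pushout in $\cS p^\vee_D$.

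The final step will mirror the last paragraph of the proof of Lemma~\ref{usuals}: because $\cS p^\vee$ is coCartesian symmetric monoidal (wedge is the coproduct in $\cS p$), the canonical map from the two-sided bar construction, which by Proposition~\ref{lemma:interval} computes the tensor product appearing in the excision axiom, to the pushout is an equivalence. Composing, this produces the required equivalence
\[
\DD^\ast(X_-) \underset{\DD^\ast(RV)}{\otimes} \DD^\ast(X_+) \xra{\simeq} \DD^\ast(X).
\]
The main conceptual hurdle is the second step: converting a pushout of spectra into a pushout of their Spanier-Whitehead duals is not formal, but relies crucially both on the stability of $\cS p$ (so that the pullback produced by duality is equally a pushout) and on the dualizability provided by Atiyah duality for finite singular manifolds.
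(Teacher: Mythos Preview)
Your proposal is correct, but the paper takes a much shorter route. Rather than reproving the excision argument from scratch with $(-)^\vee$ in place of $\cS p(-,E)$, the paper simply observes that the equivalence~(\ref{dual-equiv}), specialized to the target $\SS$, gives a natural identification $\DD^\ast(X) = \bigl(\Sigma^\infty(\iota X)^\ast\bigr)^\vee \simeq \cS p\bigl(\Sigma^\infty(\iota X)^\ast,\SS\bigr) = \SS_{\sf c}(X)$, and then cites Lemma~\ref{usuals} with $E=\SS$. Your argument is essentially a re-derivation of that lemma in this special case, substituting the duality functor for the mapping-spectrum functor; the core steps (the pushout of one-point compactifications, passage through stability, and the coCartesian identification of the bar construction with the pushout) are the same. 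The paper's approach buys brevity by reusing work already done; yours is self-contained and makes the role of Atiyah duality and stability more explicit, but duplicates effort.
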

\begin{proof}
From~(\ref{dual-equiv}) there is a canonical equivalence $\DD^\ast \simeq \SS_c$ (see~\S\ref{classics} for the notation).  The lemma follows from Theorem~\ref{usuals}.  

\end{proof}

Denote the restriction $\omega = \DD^\ast_|\colon \disk(\cB)^{\sqcup} \to \cS p^\vee$.  Explicitly, for $U\in \cB$ a basic over $U^n_Y\in \bsc_n$ with $Y$ a compact $(k-1)$-manifold, the value
\[
\omega(U) = \Bigl(\Sigma^\infty\bigl(\Sigma^{n-k}S( \iota Y)\bigr)\Bigr)^\vee
\]
where $S$ here denotes the unreduced suspension, regarded as a based space with base point the north pole.  With this formula we can (non-canonically) identify the stalk of $\DD^\ast$ at $x\in \iota X$ as $\omega_x(X):=\lim_{x\in U'\to X} \omega(U') \simeq \omega(U) =  (\Sigma^\infty (\Sigma^{n-k} S (\iota Y)))^\vee$ where $U$ is the unique (up to non-canonical equivalence) basic $U\to X$ whose image contains $x$.

There is the immediate corollary of Theorem~\ref{characterization}.  
\begin{cor}
The universal map of spectra
\[
\int \omega \xra{\simeq} \DD^\ast
\]
is an equivalence.  In particular, for $X$ a finite $\cB$-manifold
there is an equivalence of spectra 
\[
\DD^\ast(X) \simeq  \colim_{U\to X}~ (\Sigma^\infty (\iota U)^\ast)^\vee.
\]  

\end{cor}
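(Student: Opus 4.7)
The plan is to apply Theorem~\ref{characterization} directly to $\DD^\ast$. First, observe that $\cS p^\vee$ (spectra under wedge sum) satisfies condition~$(\ast)$: the functor $-\vee c$ is itself a coproduct, hence preserves all small colimits. By Lemma~\ref{D-colim} the symmetric monoidal functor $\DD^\ast\colon \mfld(\cB)^{\sqcup}\to \cS p^\vee$ is a homology theory for $\cB$-manifolds, and by construction $\omega$ is precisely its restriction, $\omega = \rho\,\DD^\ast = \DD^\ast|_{\disk(\cB)^{\sqcup}}$.

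Theorem~\ref{characterization} supplies the adjoint equivalence
\[
\int\colon \Alg_{\disk(\cB)}(\cS p^\vee) \leftrightarrows \bH(\mfld(\cB),\cS p^\vee) \colon \rho .
\]
The counit of this equivalence, evaluated at the homology theory $\DD^\ast$, is the canonical comparison map $\int \omega = \int \rho\,\DD^\ast \xra{\simeq} \DD^\ast$. Since $\rho$ and $\int$ are mutually inverse equivalences, this counit is an equivalence of $\cS p^\vee$-valued homology theories, which is the first assertion.

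For the ``in particular'' clause, fix a finite $\cB$-manifold $X$ and unfold Definition~\ref{defn:factorization-homology}:
\[
\DD^\ast(X) \;\simeq\; \bigl(\textstyle\int \omega\bigr)(X) \;\simeq\; \colim_{\bigsqcup_{k\in K} U_k \to X} \;\bigvee_{k\in K} \omega(U_k) .
\]
The Spanier-Whitehead dual of a coproduct is the coproduct of the duals (and $(-)^\vee$ is a symmetric monoidal functor $(\cS p^\vee_D)^{\op}\to \cS p^\vee_D$, as used to define $\DD^\ast$). Hence the inner wedge is canonically equivalent to $\bigl(\Sigma^\infty\iota(\bigsqcup_k U_k)^\ast\bigr)^\vee$. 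Reindexing the colimit along the identification $\disk(\cB)_{/X} \ni (\bigsqcup_k U_k \to X)$, one obtains the stated formula in which $U$ ranges over $\disk(\cB)_{/X}$.

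There is no serious obstacle: the corollary is a formal consequence of the main characterization theorem together with Lemma~\ref{D-colim} (which identifies $\DD^\ast$ as a homology theory, relying in turn on Atiyah duality, Lemma~\ref{atiyah}). The only thing worth flagging is the verification that $\cS p^\vee$ satisfies $(\ast)$, which is immediate, and the bookkeeping identification of the colimit indexing category with $\disk(\cB)_{/X}$.
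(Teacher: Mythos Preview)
Your proposal is correct and follows exactly the approach the paper intends: the paper states this as ``the immediate corollary of Theorem~\ref{characterization}'' with no further argument, and you have simply spelled out the routine verification that $\DD^\ast$ lies in $\bH(\mfld(\cB),\cS p^\vee)$ (via Lemma~\ref{D-colim}), identified $\omega=\rho\,\DD^\ast$, and invoked the counit equivalence. The unpacking of the colimit formula for the ``in particular'' clause is also the expected one.
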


\begin{theorem}[Poincar\'e duality]\label{abelian}
Let $X$ be a finite $\cB$-manifold and let $E$ be a spectrum. There is a canonical equivalence
\[
\int_X E\wedge \omega \simeq E_{\sf c}(X)~.  
\]

\end{theorem}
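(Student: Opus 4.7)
The plan is to invoke the characterization of homology theories, Theorem~\ref{characterization}, applied to the symmetric monoidal quasi-category $\cS p^\vee$, which visibly satisfies condition $(\ast)$ since wedge sum of spectra commutes with all colimits. It will suffice to present both $X\mapsto \int_X E\wedge \omega$ and $X\mapsto E_{\sf c}(X)$ as $\cS p^\vee$-valued homology theories for $\cB$-manifolds whose underlying $\disk(\cB)$-algebras are canonically equivalent.

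First I will verify that $E\wedge \omega$ is a well-defined $\disk(\cB)$-algebra in $\cS p^\vee$: the restriction $\omega\colon \disk(\cB)^{\sqcup}\to \cS p^\vee$ is a symmetric monoidal functor by construction, and post-composition with the exact functor $-\wedge E\colon \cS p \to \cS p$ preserves wedge sums. By the excision theorem, Theorem~\ref{thm:excision}, the factorization homology $X\mapsto \int_X E\wedge \omega$ is a $\cS p^\vee$-valued homology theory. The right-hand side $E_{\sf c}$ is a homology theory by Lemma~\ref{usuals}.

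The crux will be to identify the restriction of $E_{\sf c}$ to $\disk(\cB)^{\sqcup}$ with $E\wedge \omega$ as $\disk(\cB)$-algebras. By Lemma~\ref{atiyah}, for each basic $U\in \cB$ the spectrum $\Sigma^\infty(\iota U)^\ast$ admits a dual, namely $\omega(U)=(\Sigma^\infty(\iota U)^\ast)^\vee$. Applying the universal equivalence~(\ref{dual-equiv}) to this dualizable object produces a natural equivalence
\[
E_{\sf c}(U) ~=~ \cS p\bigl(\Sigma^\infty (\iota U)^\ast,\, E\bigr) ~\xra{\simeq}~ E\wedge \omega(U)~.
\]
The main obstacle will be to assemble these pointwise duality equivalences into a coherent symmetric monoidal natural equivalence of functors $\disk(\cB)^{\sqcup}\to \cS p^\vee$. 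This coherence is facilitated by the symmetric monoidal functors $\Sigma^\infty(\iota-)^\ast\colon \mfld(\cB)^\sqcup \to (\cS p^\vee_D)^{\op}$ and $(-)^\vee\colon (\cS p^\vee_D)^{\op}\to \cS p^\vee_D$ already at our disposal, so that the equivalence~(\ref{dual-equiv}) may be upgraded to one of symmetric monoidal functors out of $\disk(\cB)^{\sqcup}$.

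Having produced an equivalence $\rho E_{\sf c}\simeq E\wedge \omega$ in $\Alg_{\disk(\cB)}(\cS p^\vee)$, Theorem~\ref{characterization} will yield the chain of equivalences
\[
\int_X E\wedge \omega ~\simeq~ \int_X \rho E_{\sf c} ~\simeq~ E_{\sf c}(X)
\]
for every finite $\cB$-manifold $X$, where the second arrow is the counit of the adjunction $\int \dashv \rho$, which is an equivalence on the homology theory $E_{\sf c}$ by Theorem~\ref{characterization}. The general case (non-finite $X$) follows since both sides preserve sequential colimits by hypothesis.
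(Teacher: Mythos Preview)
Your argument is correct and is precisely the approach the paper takes: identify the restriction of the homology theory $E_{\sf c}$ to $\disk(\cB)$ with $E\wedge\omega$ via the duality equivalence~(\ref{dual-equiv}), then invoke Theorem~\ref{characterization}. Your only superfluous step is the final sentence about non-finite $X$: the theorem is stated only for finite $\cB$-manifolds (the entire subsection works under that standing hypothesis), so no extension is needed.
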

\begin{proof}
This is the observation $E\wedge \omega (U) = E\wedge \DD^\ast (U) \simeq E_{\sf c}(U)$ combined with Lemma~\ref{usuals} that $E_{\sf c}$ is a homology theory so that Theorem~\ref{characterization} can be employed.

\end{proof}

\begin{example}
Denote by $H\ZZ$ the Eilenberg-MacLane spectrum for $\ZZ$.  
Let $X$ be a $\sD_n^{\sf or}$-manifold, that is, an oriented $n$-manifold.  
For each morphism $U\to X$ from a basic the orientation gives a canonical equivalence $H\ZZ\wedge \omega(U) \simeq H\ZZ\wedge S^{-n}$.  
It follows that 
\[
\int_X H\ZZ\wedge \omega \simeq \Sigma^{-n} H\ZZ \wedge \int_X \SS \simeq  \Sigma^{-n} H\ZZ \wedge \Sigma^\infty (\iota X)_+  = \Sigma^{-n} H\ZZ\wedge (\iota X)_+
\]
Theorem~\ref{abelian} then implies $H\ZZ\wedge (\iota X)_+ \simeq \Sigma^n \cS p\bigl((\iota X)^\ast, H\ZZ \bigr)$.  Upon applying homotopy groups we arrive at $H_\ast( \iota X;\ZZ) \cong H_c^{n-\ast}(\iota X;\ZZ)$.

\end{example}

\subsection{Coefficient systems}
We give an abundant source of $\disk(\cB)$-algebras in spaces from the data of a based right fibration over $\cB$.

\begin{definition}
A \emph{coefficient system} on $\cB$ is a pair $(E,z)$ consisting of a right fibration $E\to \cB$
and a section $z\colon \cB \to E$.  We often simply refer to a coefficient system $(E,z)$ by its right fibration $E$.  
\end{definition}

\begin{example}
Consider a $[n]$-stratified space $Z\to [n]$ equipped with a section $z_0\colon[n]\to Z$.  The assignment $\Theta\colon U\mapsto \Top_{[n]}([\iota] U , Z)$, with base point given by $z_0$, gives a coefficient system on $\bsc_n$.  
For clever choices of $(Z,z_0)$, restricting to various categories of basics $\cB \to \bsc_n$ gives interesting examples of coefficient systems.  

\end{example}

\begin{example}\label{ordinary-coefs}
Because the quasi-category $\sD^{\fr}_n\simeq \ast$, a coefficient system is equivalent to the datum of a based Kan complex $Z$.  
Such a coefficient system is connective exactly if $Z$ is $n$-connective (see Definition~\ref{connective-coefs}).  

\end{example}

\begin{example}\label{coefs-corners}
A coefficient system on $\sD^\partial_n$ is map of fibrations 
\[
\xymatrix{
E_{n-1}  \ar[r]  \ar[d]
&
E_n   \ar[d]
\\
{\BO}(n-1) \ar[r]  
&
{\BO}(n)
}
\]
together with a pair of (compatible) sections of each.

Consider the more elaborate example $\sD_{\langle n \rangle}$ of Example~\ref{corners}.  A coefficient system on $\sD_{\langle n \rangle}$ is the data of a fibration $E_S \to\BO(\RR^S)$ for each subset $S\subset \{1,\dots,n\}$, and for each inclusion $S\subset T$ a map $E_S \to E_T$ over the inclusion $\BO(\RR^S) \xra{-\oplus \RR^{T\smallsetminus S}} \BO(\RR^T)$, which respect composition; together with a compatible section of each of these fibrations.

\end{example}

Fix a coefficient system $E$.  By right Kan extension along $\cB\hookrightarrow \mfld(\cB) \subset \cP(\cB)$, the map $E$ determines a map of quasi-categories
$
\Gamma^E\colon \cP(\cB)^{\op} \to \cS_\ast
$
from the coherent nerve.  
We use the same notation for the restriction
\[
\Gamma^E\colon \mfld(\cB)^{\op} \to \cS_\ast~.
\]
This is given explicitly by assigning to $(X,\widehat{X}\xra{g} \cB)$ the Kan complex of maps of right fibrations $\Map_\cB(\widehat{X},E)$ with base point $zg$.

Let $X$ be a $\cB$-manifold and $K\subset \iota X$ a compact subset.  
Denote by $X\smallsetminus K$ the canonically associated sub-$\cB$-manifold of $X$ associated to the open subset $\iota X\smallsetminus K$.  
Denote the fiber
\[
\xymatrix{
\Gamma^E_K(X) \ar[r] \ar[d]
&
\Gamma^E(X)  \ar[d]
\\
\ast  \ar[r]^-{z}
&
\Gamma^E(X\smallsetminus K)~.  
}
\]
Observe that $K\subset K'$ implies $\Gamma^E_K(X) \to \Gamma^E_{K'}(X)$.  Define 
\[
\Gamma_{\!\sf c}^E(X) = \colim_{K\subset \iota X} \Gamma^E_K(X)~.
\]

Let $X\xra{f} Y$ be a morphism of $\cB$-manifolds and let $K\subset \iota X$ be a compact subset.  
There results the open cover $\iota Y = f(\iota X) \cup_{f(\iota X\smallsetminus K)} \iota Y\smallsetminus f(K)$. 
From Lemma~\ref{covers=colims}, combined with Theorem~\ref{no-pre-mans}, the diagram
\[
\xymatrix{
X\smallsetminus K  \ar[r]  \ar[d]^{f_|}
&
X  \ar[d]^f
\\
Y\smallsetminus f(K)  \ar[r]
&
Y
}
\]
is a colimit diagram in $\cP(\cB)$.  
By construction, $\Gamma^E$ on $\cP(\cB)^{\op}$ preserves limits. 
It follows that
\begin{equation}\label{gamma-collars}
\Gamma^E(Y) \xra{\simeq} \Gamma^E(X)\times_{\Gamma^E\bigl(f(X\smallsetminus K)\bigr)} \Gamma^E\bigl(Y\smallsetminus f(K)\bigr)
\end{equation}
is an equivalence in $\cS_\ast$.  
In particular there is a canonical equivalence $\Gamma^E_{f(K)}(Y) \simeq \Gamma^E_K(X)$.  
There results a map of colimits
\[
\Gamma_{\!\sf c}^E(X) \to \Gamma_{\!\sf c}^E(Y)~.  
\]
We have constructed to each edge $f$ of $\mfld(\cB)$ an edge $\Gamma_{\!\sf c}^E(f)$ in $\cS_\ast$. 
Likewise, there is an association of a simplex $\Gamma_{\!\sf c}^E(\sigma)$ of $\cS_\ast$ to each simplex $\sigma$ of $\mfld(\cB)$, which assembles into a map of quasi-categories
\[
\Gamma_{\!\sf c}^E\colon \mfld(\cB) \to \cS_\ast~.
\]
Clearly, for $J$ a finite set and $(X_j)_{j\in J}$ an $J$-indexed sequence of $\cB$-manifolds then $\Gamma_{\!\sf c}^E\Bigl(\bigsqcup_{j\in J} X_j\Bigr) \xra{\simeq} \prod_{j\in J} \Gamma_{\!\sf c}^E(X_j)$.

\subsection{Poincar\'e duality}
Fix a coefficient system $E\to \cB$.  Define the map of symmetric monoidal quasi-categories
\[
\Gamma_{\!\sf c}^E\colon \mfld(\cB)^{\sqcup} \to \cS_\ast^\times
\] 
as follows.  
Assign to the vertex $\bigl(J,(X_j)\bigr)$ the vertex $\bigl(J,\cP(J)^{\op} \to \cS_\ast\bigr)$ given by $(S\subset J)\mapsto \Gamma_{\!\sf c}^E\Bigl(\bigsqcup_{j\in S} X_j\Bigr)$, the value being canonically equivalent to $\prod_{j\in S} \Gamma_{\!\sf c}^E(X_j)$.  
Assign to the edge 
\\
$\bigl(f,(f_k)_{k\in K}\bigr)\colon \bigl(J,(X_j)\bigr) \to \bigl(K,(Y_k)\bigr)$ the edge
\[
(J,\cP(J)^{\op}\to \cS_\ast)\to (K,\cP(K)^{\op} \to \cS_\ast)
\]
over $f$ determined by the coordinates $\Gamma_{\!\sf c}^E(f_k)\colon \Gamma_{\!\sf c}^E\Bigl(\bigsqcup_{\{f(j)=k\}}X_j\Bigr) \to \Gamma_{\!\sf c}^E(Y_k)$.  
The assignment for higher simplicies is nearly identical but notationally intensive.

Denote the $\disk(\cB)$-algebra
\[
A_E\colon \disk(\cB)^{\sqcup} \to \mfld(\cB)^{\sqcup} \xra{\Gamma_{\!\sf c}^E} \cS^\times_\ast
\]
which is the restriction.  Recall from~(\ref{Bk}) and~(\ref{Mk}) the categories of basics $\cB_{<r}$ and the associated quasi-categories $\mfld(\cB_{<r})$ for $0\leq r \leq n+1$, each equipped with a map to $\cB$ and $\mfld(\cB)$ respectively.  
Via pullback, we use the same notation for 
\begin{itemize}
\item $\Gamma^E$ as a functor $\mfld(\cB_{<r})^{\op} \to \cS_\ast$,
\item $\Gamma_{\!\sf c}^E$ as a functor $\mfld(\cB_{<r}) \to \cS_\ast$.  
\end{itemize}
\begin{definition}\label{connective-coefs}
Say the coefficient system $E$ is \emph{connective} if $\Gamma_{\!\sf c}^E(V)$ is connected for every $V\in \cB_{<n}$.  
\end{definition}

\begin{example}
Let us return to Example~\ref{coefs-corners} for the case of $\sD_n^\partial$.   
For simplicity, let us assume the two fibrations are trivialized with (based) fibers $Z_n$ and $Z_{n-1}$ respectively.  
This coefficient system is connective exactly if $Z_n$ is $n$-connective and the map $Z_{n-1} \to Z_n$ is $n$-connective; the last condition being equivalent to saying the homotopy fiber $F$ of the map $Z_{n-1}\to Z_n$ is $(n-1)$-connective.  
Recall from Example~\ref{deligne-boundary} a consolidation of the data of a $\disk_n^\partial$-algebra.
The associated $\disk^\partial_n$-algebra $A_E$ is the data $(\Omega^n Z_n, \Omega^{n-1}F , {\sf a})$ where $\sf a$ is the action of $\Omega^n Z_n$ on $\Omega^{n-1} F$ from the $\Omega$-Puppe sequence of the fibration $F \to Z_{n-1} \to Z_n$.

Let us examine the case of $\sD_{\langle n\rangle}$.  
For simplicity, let us assume each of the said fibrations is trivial with respective fibers $Z_S$.  Denote by $F_T = \mathsf{hofib}(Z_T \to \holim_{S\subsetneq T} Z_S)$ the total homotopy fiber of the $T$-subcube.  
This coefficient system is connective exactly if each $F_T$ is $(n-|T|)$-connective.  
The associated $\disk_{\langle n \rangle}$-algebra $A_E$ is the data $\bigl((\Omega^{n\smallsetminus S} F_S\bigr)_{S\subset \{1,\dots,n\}}; \bigl(a_{S\subset T})\bigr)$ where $a_{S\subset T}$ is the action of $\Omega^{n\smallsetminus S}F_S$ on $\Omega^{n\smallsetminus T} F_T$ from an elaboration of the $\Omega$-Puppe sequence.  
\end{example}

\begin{theorem}[Non-abelian Poincar\'e duality]\label{PD}
Let $\cB$ be a category of basics and let $(E\to \cB~,~z)$ be a coefficient system.  
Suppose $E$ is connective.
Let $X$ be a finite $\cB$-manifold.
Then there is a natural homotopy equivalence of spaces
\[
\int_X A_E~{}~  \simeq ~{}~ \Gamma_{\!\sf c}^E(X)~.
\]
\end{theorem}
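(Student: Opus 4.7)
The plan is to apply Theorem~\ref{characterization}: I will promote $X \mapsto \Gamma_{\!\sf c}^E(X)$ to a symmetric monoidal functor $\Gamma_{\!\sf c}^E : \mfld(\cB)^{\sqcup} \to \cS_\ast^\times$ and verify it is a homology theory. By construction its restriction to $\disk(\cB)^{\sqcup}$ is the algebra $A_E$, so the counit of the adjunction~(\ref{adj}) will produce the claimed equivalence $\int_X A_E \xra{\simeq} \Gamma_{\!\sf c}^E(X)$.

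Two of the three axioms for a homology theory are relatively straightforward. The symmetric monoidal structure is immediate: compact subsets of a disjoint union decompose uniquely, giving $\Gamma_{\!\sf c}^E(X \sqcup Y) \simeq \Gamma_{\!\sf c}^E(X) \times \Gamma_{\!\sf c}^E(Y)$. Preservation of sequential colimits follows from the observation that every compact subset of $\iota(\colim X_i)$ is contained in some $\iota X_i$, so the defining filtered colimit $\Gamma_{\!\sf c}^E(X) = \colim_K \Gamma^E_K(X)$ commutes with sequential colimits in the variable $X$.

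The heart of the argument is excision: for any collar-gluing $X = X_- \cup_{RV} X_+$, I must produce an equivalence
\[
\Gamma_{\!\sf c}^E(X_-) \underset{\Gamma_{\!\sf c}^E(RV)}{\otimes} \Gamma_{\!\sf c}^E(X_+) \xra{\simeq} \Gamma_{\!\sf c}^E(X)
\]
in $\cS_\ast$, with the left side the two-sided bar construction and $\Gamma_{\!\sf c}^E(RV)$ regarded as an $E_1$-algebra via the projection $RV = \RR \times V \to \RR$. I will proceed by scanning, in the tradition of Segal, May, McDuff, and B\"odigheimer. Choose a conically smooth collar coordinate $h : \iota X \to [-\infty,\infty]$ as in Example~\ref{collar-I}. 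Iterating the Mayer--Vietoris pullback~(\ref{gamma-collars}) along decompositions of $\iota X$ by finitely many disjoint open slabs $h^{-1}(a_i, b_i)$ assembles the relative section spaces into a simplicial object in $\cS_\ast$ whose $p$-simplices are canonically $\Gamma_{\!\sf c}^E(X_-) \times \Gamma_{\!\sf c}^E(RV)^p \times \Gamma_{\!\sf c}^E(X_+)$ and whose geometric realization is the two-sided bar construction on the left above. Passing to the colimit over an exhaustion $[-N,N] \subset (-\infty,\infty)$ simultaneously produces a canonical comparison map to $\Gamma_{\!\sf c}^E(X)$.

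The main obstacle will be showing this comparison is an equivalence, and this is precisely where the connectivity hypothesis on $E$ is essential. In $\cS_\ast^\times$ the two-sided bar construction realizes the homotopically correct gluing only when the underlying $E_1$-algebra is group-like; otherwise the comparison fails at $\pi_0$ by a group-completion discrepancy. The assumption that $\Gamma_{\!\sf c}^E(V')$ is connected for every $V' \in \cB_{<n}$ forces $\Gamma_{\!\sf c}^E(RV)$ to be group-like as an $E_1$-algebra and supplies the Segal conditions needed to identify the simplicial realization with the scanning limit $\Gamma_{\!\sf c}^E(X)$. I expect to settle this by induction on the depth of $X$, with base case the smooth nonabelian Poincar\'e duality of Salvatore and Lurie applied to the top open stratum; the inductive step then handles the deeper strata by the same scanning mechanism, using that connectivity cuts down to each stratum.
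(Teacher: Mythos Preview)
Your overall strategy matches the paper: invoke Theorem~\ref{characterization} and verify that $\Gamma_{\!\sf c}^E$ is a homology theory, with excision the only substantive check. But your plan for excision is more elaborate than needed and has a gap in the induction.

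The paper does not scan. It observes directly that for a collar-gluing $X = X_-\cup_{RV} X_+$ there is a fiber sequence
\[
\Gamma_{\!\sf c}^E(X_-)\times \Gamma_{\!\sf c}^E(X_+) \longrightarrow \Gamma_{\!\sf c}^E(X) \longrightarrow \Gamma_{\!\sf c}^E(V),
\]
and since $\Gamma_{\!\sf c}^E(RV)\simeq\Omega\,\Gamma_{\!\sf c}^E(V)$, the two-sided bar construction sits in the analogous sequence over the basepoint component of $\Gamma_{\!\sf c}^E(V)$; so the excision comparison is an equivalence exactly when $\Gamma_{\!\sf c}^E(V)$ is connected. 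That reduction is where your plan has a gap. The connectivity hypothesis only gives $\Gamma_{\!\sf c}^E(V')$ connected for \emph{basics} $V'\in\cB_{<n}$; your sentence that this ``forces $\Gamma_{\!\sf c}^E(RV)$ to be group-like'' silently passes to an arbitrary finite $(n-1)$-dimensional $V$, and that passage is the entire content. The paper handles it by a double induction on the \emph{dimension} of $V$ and then on its handle count (Theorem~\ref{collar=fin}): writing $V = V_-\cup_{RW} V_+$, the inner induction gives $\Gamma_{\!\sf c}^E(V_\pm)$ connected, the outer induction on dimension gives $\Gamma_{\!\sf c}^E(W)$ connected, and the fiber sequence for $V$ finishes. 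Your induction on the depth of $X$ does not close, since the collar $V$ can have the same depth as $X$; and the Salvatore--Lurie smooth case is never invoked---the paper's argument is uniform in depth and self-contained.
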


\begin{proof}
From Theorem~\ref{characterization}, we must show $\Gamma_{\!\sf c}^E\colon \bman^{\sqcup} \to \cS^\times_\ast$ satisfies excision.  We will show that the canonical map
\[
\Gamma_{\!\sf c}^E(X_-)\times_{\Gamma_{\!\sf c}^E(R V)} \Gamma_{\!\sf c}^E(X_+) \xra{\simeq} \Gamma_{\!\sf c}^E(X)
\]
is an equivalence for every collar-gluing $X= X_-\cup_{R V} X_+$ of $\cB_{<k}$-manifolds for $0\leq k \leq n+1$, the desired case being $k=n+1$.  
It is sufficient to show that in the fiber sequence of spaces
\[
\Gamma_{\!\sf c}^E(X_-)\times \Gamma_{\!\sf c}^E(X_+) \to \Gamma_{\!\sf c}^E(X) \to \Gamma_{\!\sf c}^E(V)
\]
the base is connected.  
We will do this by induction on $0\leq k \leq  n+1$.  

For the $k=0$, then $V$ is a $(-1)$-manifold and $\Gamma_{\!\sf c}^E(V) = \ast$ is a point.  In particular it is connected.

Now suppose $\Gamma_{\!\sf c}^E(W)$ is connected for each $W\in \mfld(\cB_{<k})$.  
Let $V$ be a finite $\cB_k$-manifold.  From Theorem~\ref{collar=fin} $V$ can be written as a finite iteration of collar-gluings of basics.  We prove $\Gamma_{\!\sf c}^E(V)$ is connected by induction on the number $r$ of iterated collar-gluings to obtain $V$.  
If $r=0$ the statement is vacuously true.   
If $V$ is a basic then by the connectivity assumption $\Gamma_{\!\sf c}^E(V)$ is connected.  
If $r\geq 2$, write $V= V_-\cup_{R W} V_+$ for some $W\in \mfld(\cB_{k-1})$ with both $V_\pm$ given by strictly fewer than $r$ collar-gluings.  By induction on $r$ the two spaces $\Gamma_{\!\sf c}^E(V_\pm)$ are connected.  
There is the fiber sequence
\[
\Gamma_{\!\sf c}^E(V_-)\times \Gamma_{\!\sf c}^E(V_+) \to \Gamma_{\!\sf c}^E(V) \to \Gamma_{\!\sf c}^E(W)~.
\]
By induction on $k$ the base is connected.  It follows that $\Gamma_{\!\sf c}^E(V)$ is connected.  
\end{proof}

\begin{remark}\label{remark:grouplike}
In the case of smooth framed $n$-manifolds, where $\cB=\sD_n^{\fr}$, we have $\sD_n^{\fr}\simeq \ast$. So a connective coefficient system is the datum of an $n$-connective based space $Z$, and the associated $\disk^{\fr}_n$-algebra is $\Omega^n Z$. $\Omega^n Z$ is in fact a group-like $\disk^{\fr}_n$-algebra.
By May's recognition principle, $n$-fold loop spaces on $n$-connective based spaces are all the examples of group-like $\disk_n^{\fr}$-algebras. 
Accordingly, one can think of the data of a connective coefficient system as a generalization of the notion of a group-like algebra, but to the singular and structured setting.

Now, the space of stratified continuous maps is a homotopy invariant of the underlying stratified space $[\iota]X$. 
In this sense, Theorem~\ref{PD} tells us that connective coefficient systems (i.e., `group-like' $\disk(\cB)$-algebras) cannot detect more than the stratified proper homotopy type of singular manifolds. 
\end{remark}

\section{Examples of factorization homology theories}\label{section:examples}

In this section we give examples of factorization homology over singular manifolds. To illustrate the relevance to low-dimensional topology, we show that the free $\disk_{3,1}^{\fr}$-algebra can distinguish the homotopy type of link complements, and in particular defines a non-trivial link invariant.

In this section we will not distinguish in notation between a singular manifold $X$ and its underlying space $\iota X$.  

\subsection{Factorization homology of singular 1-manifolds}

When the target symmetric monoidal quasi-category $\cC^\ot$ is $\m_k^\ot$, the category of $k$-modules for some commutative algebra $k$, then factorization homology of closed 1-manifolds gives variants of Hochschild homology.

The simplest and most fundamental example is factorization homology for framed 1-manifolds, $\mfld_1^{\fr}$. In this case, there is an equivalence between framed 1-disk algebras and associative algebras in $\cC$, $\Alg_{\disk_1^{\fr}}(\cC^\ot)\simeq \Alg(\cC^\ot)$, and we have the following immediate consequence of the excision property of factorization homology (Theorem~\ref{thm:excision}). 

\begin{prop}\label{hochschild} For an associative algebra $A$ in $\m_k$, there is an equivalence \[\int_{S^1}A \simeq \hh_*(A)\] between the factorization homology of the circle with coefficients in $A$ and the Hochschild homology of $A$ relative $k$.
\end{prop}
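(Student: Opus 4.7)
My plan is to derive the Hochschild identification as a single application of the excision property (Theorem~\ref{thm:excision}), realizing $S^1$ as a framed collar-gluing of two arcs along two disjoint collar strips.

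First I would exhibit $S^1$ as a collar-gluing in $\mfld_1^{\fr}$. Choose two distinct framed points $p_-, p_+ \in S^1$, and let $V = \{p_-\} \sqcup \{p_+\}$, viewed as a framed $0$-manifold. Cutting $S^1$ along a small neighborhood of $V$ produces two framed open arcs $X_-, X_+$, each canonically isomorphic to $\RR$ as a framed $1$-manifold, and a tubular neighborhood of $V$ in $S^1$ isomorphic to $RV = \RR \sqcup \RR$. The two embeddings $RV \hookrightarrow X_\pm$ realize the two copies of $\RR$ as the two ends (``left'' and ``right'') of each arc $\RR$, so that one obtains a pullback-pushout
\[
\xymatrix{
\RR \sqcup \RR \ar[r] \ar[d] & \RR \ar[d] \\
\RR \ar[r] & S^1
}
\]
in $\mfld_1^{\fr}$, i.e.\ a collar-gluing $S^1 = \RR \cup_{\RR \sqcup \RR} \RR$.

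Next I would apply Theorem~\ref{thm:excision} to this collar-gluing, giving
\[
\int_{S^1} A ~\simeq~ \int_\RR A \underset{\int_{\RR \sqcup \RR} A}{\bigotimes} \int_\RR A.
\]
Since $\RR \in \disk_1^{\fr}$ is itself a basic, the definition of factorization homology yields $\int_\RR A \simeq A$. Because factorization homology is a symmetric monoidal functor and $\int_{(-)}$ sends $\sqcup$ to $\otimes$, one has $\int_{\RR \sqcup \RR} A \simeq A \otimes A$.

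The remaining (and main) step is to identify the two $A \otimes A$-module structures on $A$ coming from the two inclusions $\RR \sqcup \RR \hookrightarrow \RR$. The inclusion used for $X_-$ sends the first copy of $\RR$ to the left end and the second copy to the right end of the arc; by Example~\ref{example:disk1algebra} applied to this embedding, the induced $A \otimes A$-action on $\int_\RR A = A$ is precisely the outer bimodule structure $(a_1 \otimes a_2) \cdot a = a_1 a a_2$, i.e.\ the standard left module structure over $A \otimes A^{\op}$. The inclusion for $X_+$ attaches the same two collars but to the opposite ends of the second arc, which (after the canonical reversal identifying the two collars) gives $A$ its right $A \otimes A^{\op}$-module structure. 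Thus the pushout above computes
\[
\int_{S^1} A ~\simeq~ A \underset{A \otimes A^{\op}}{\bigotimes} A ~\simeq~ \hh_*(A),
\]
the last equivalence being the standard (derived) two-sided bar description of Hochschild homology, which is what Proposition~\ref{lemma:interval} already delivers for the closed-interval piece in each collar. The only nontrivial point is the careful bookkeeping of framings identifying the two collar embeddings with the left- and right-module structures; this is the step I expect to require the most care.
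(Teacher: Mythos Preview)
Your proposal is correct and follows essentially the same route as the paper: decompose $S^1$ as a collar-gluing of two arcs $\RR$ along $S^0\times\RR\cong\RR\sqcup\RR$, apply excision (Theorem~\ref{thm:excision}), and identify the resulting bar construction $A\otimes_{A\otimes A^{\op}}A$ with $\hh_*(A)$. The paper's proof is a single displayed line doing exactly this; your extra care with the framings and the left/right module structures is the content the paper leaves implicit.
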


\begin{proof} We have the equivalences \[\int_{S^1}A\simeq \int_{\RR^1}A\underset{{\underset{{S^0\times\RR^1}}\int\!\! A}}\ot\int_{\RR^1}A\simeq A\underset{A\ot A^{\op}}\ot A\simeq \hh_*(A)\]using excision and a decomposition of the circle by two slightly overlapping hemispheres.\end{proof}

\begin{remark} Lurie in~\cite{dag} shows further that the obvious circle action by rotations on $\int_{S^1}A$ agrees with the usual simplicial circle action on the cyclic bar construction.
\end{remark}

It is interesting to probe this example slightly further and see the algebraic structure that results when one introduces marked points and singularities into the 1-manifolds. Recall the quasi-category $\mfld^{\fr}_{1,0}$ of framed 1-manifolds with marked points, and the sub-quasi-category $\sD^{\fr}_{1,0}$ of framed 1-disks with at most one marked point -- its set of objects is the two-element set $\{U^1_{\emptyset^{-1}} , U^1_{S^0}\}$ whose elements we justifiably denote as $\RR^1:= U^1_{\emptyset^{-1}}$ and $(\RR^1,\{0\}) := U^1_{S^0}$.   So $\Alg_{\disk^{\fr}_{1,0}}(\cC^\ot)$ is equivalent to the quasi-category whose objects are pairs $(A_1, A_{\sf b})$ consisting of an algebra $A_1$ and a unital $A_1$-bimodule $A_{\sf b}$, i.e., a bimodule with an invariant map from the unit. Specifically, $A_1 \simeq A(\RR^1)$ and $A_{\sf b}\simeq A(\RR^1, \{0\})$. The proof of the Proposition~\ref{hochschild} extends mutatis mutandis to the following.

\begin{prop} There is an equivalence \[\int_{(S^1,\ast)} A\simeq \hh_*(A_1,A_{\sf b})\] between the factorization homology of the pointed circle $(S^1,\ast)$ with coefficients in $A =(A_1,A_{\sf b})$ and the Hochschild homology of $A_1$ with coefficients in the bimodule $A_{\sf b}$.
\end{prop}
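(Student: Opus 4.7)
The plan is to mimic the proof of Proposition~\ref{hochschild}, replacing the circle by the pointed circle and working in the category of basics $\sD^{\fr}_{1,0}$ for framed 1-manifolds with marked points. First, I would construct a collar-gluing decomposition of $(S^1,\ast)$: let $U \subset (S^1,\ast)$ be a small framed neighborhood of the marked point, isomorphic to the basic $(\RR^1,\{0\}) = U^1_{S^0}$, and let $V = S^1 \setminus \{\ast\}$, which is isomorphic to the basic $\RR^1 = U^1_{\emptyset^{-1}}$. Then $\{U,V\}$ is an open cover of $(S^1,\ast)$, and the intersection $U \cap V$ is a disjoint union of two copies of $\RR$, i.e., the singular $1$-manifold $R(S^0)$. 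This exhibits a collar-gluing
\[
(S^1, \ast) \;=\; (\RR^1,\{0\}) \cup_{R(S^0)} \RR^1
\]
of $\sD^{\fr}_{1,0}$-manifolds.

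Applying the excision theorem (Theorem~\ref{thm:excision}) to this collar-gluing yields
\[
\int_{(S^1,\ast)} A \;\simeq\; \int_{(\RR^1,\{0\})} A \underset{\int_{R(S^0)} A}{\otimes} \int_{\RR^1} A.
\]
By the defining property of factorization homology on basics, the outer two factors are $A_{\sf b}$ and $A_1$, while $\int_{R(S^0)} A \simeq A_1 \otimes A_1$ as an object of $\cC$. Just as in the proof of Proposition~\ref{hochschild}, the collar-gluing determines the $\disk_1^{\partial,\fr}$-algebra structure on the overlap so that this overlap factor is identified with the enveloping algebra $A_1 \otimes A_1^{\op}$, acting on $A_{\sf b}$ through its given bimodule structure and on $A_1$ through the standard bimodule structure of $A_1$ over itself.

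Putting these identifications together gives
\[
\int_{(S^1,\ast)} A \;\simeq\; A_{\sf b} \underset{A_1 \otimes A_1^{\op}}{\otimes} A_1 \;\simeq\; \hh_\ast(A_1, A_{\sf b}),
\]
the final equivalence being the standard presentation of Hochschild homology with coefficients in a bimodule as a two-sided bar construction. The only real subtlety — the same one that appears, unremarked, in Proposition~\ref{hochschild} — is the appearance of $A_1^{\op}$ in the overlap: the two components of $U \cap V \cong R(S^0)$ sit on opposite sides of $\ast$ inside $V \cong \RR^1$, so their induced framings point in opposite directions, and this sign swap produces the opposite algebra. Granting that identification, which is exactly the $1$-dimensional piece of the framing bookkeeping handled in Proposition~\ref{hochschild}, the remainder of the argument is formal.
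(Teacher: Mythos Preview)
Your proof is correct and is precisely the ``mutatis mutandis'' extension the paper alludes to: decompose $(S^1,\ast)$ as a collar-gluing of the basic $(\RR^1,\{0\})$ and the basic $\RR^1$ along $R(S^0)$, apply excision (Theorem~\ref{thm:excision}), and identify the resulting bar construction with $\hh_\ast(A_1,A_{\sf b})$. The paper gives no separate argument beyond pointing back to Proposition~\ref{hochschild}, so your write-up is in fact more explicit than the original.
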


Finally, we mention the example of factorization homology for $\snglr_1^{\fr}$, singular framed 1-manifolds. In this case, the quasi-category of basic opens $\bsc_1^{\fr}$ has as its set of objects $\{\RR\}\coprod \{(C(J), \sigma)\}$ where the latter set is indexed by finite sets $J$ together with an orientation $\sigma$ of the ordinary $1$-manifold $\bigsqcup_J \RR_{>0} = C(J)\smallsetminus \ast$.

An object $A$ in $\Alg_{\disk(\bsc^{\fr}_1)}(\cC^\ot)$ is then equivalent, by evaluating on directed graphs with a single vertex, to the data of an associative algebra $A(\RR)$ in $\cC^\ot$ and for each pair $i,j\geq 0$ an object $A(i,j) \in \cC$ equipped with $i$ intercommuting left $A(\RR)$-module structures and $j$ intercommuting right compatible $A(\RR)$. One can see, for instance, that the factorization homology of a wedge of two circles with a marked point on each circle, ${(S^1\cup_{\{0\}}S^1, \{1, -1\})}$, can be calculated as \[\int_{(S^1\cup_{\{0\}}S^1, \{1, -1\})}A\simeq A(1,1)\underset{A_1\ot A_1^{\op}}\ot A(2,2)\underset{A_1\ot A_1^{\op}}\ot A(1,1)~.\]

\subsection{Intersection homology}

Recall from section~\S\ref{stratifications} that the underlying space of a singular $n$-manifold $X\in \snglr_n$ has a canonical filtration by its strata $X_0 \subset X_1 \subset \ldots \subset X_n= X$ where each $X_i\smallsetminus X_{i-1}$ is a nonsingular $i$-dimensional manifold. As such, the definition of Goresky and MacPherson's intersection homology~\cite{goreskymacpherson} applies verbatim. That is, we restrict to manifolds $X$ have no codimension-one singularities, $X_{n-1}=X_{n-2}$ and for which the $n$-dimensional open stratum $X_n\smallsetminus X_{n-1}$ is nonempty.

For the definition below we use $j^{\rm th}$-stratum functor $(-)_j\colon \snglr_n \to \snglr_{\leq j}$ of section~$\S$\ref{stratifications}.
\begin{definition} 
Denote the left ideal $\bsc_n^{\sf ps} \to \bsc_n$ spanned by those basics $U$ for which $U_{n-1} = U_{n-2}$.  Define the category of \emph{pseudomanifolds} as $\snglr_n^{\sf ps} = \mfld(\bsc_n^{\sf ps})$ -- its objects are those singular $n$-manifolds for which $X_{n-1} = X_{n-2}$.  
\end{definition}

Continuing, choose a perversity function $p$, i.e., a mapping $p:\{2, 3, \ldots, n\} \ra \ZZ_{\geq 0}$ such that $p(2)=0$ and for each $i>2$ either $p(i)=p(i-1)$ or $p(i) = p(i-1)+1$. Recall the following definition.

\begin{definition}[\cite{goreskymacpherson}] A $j$-simplex $g: \Delta^j \ra X$ is $p$-allowable if, for every $i$ the following bounds on the dimensions of intersections hold:
\begin{itemize}
\item ${\sf dim} \bigl(g(\Delta^j)\cap X_i\bigr) \leq i + j -n +p(n-i)$
\item ${\sf dim} \bigl(g(\partial\Delta^j)\cap X_i\bigr) \leq i + j -n +p(n-i)-1$
\end{itemize} 
\end{definition}

The conditions are clearly stable on under the differential $d$ on singular chains, so this gives the following definition of intersection homology with perversity $p$.

\begin{definition}[\cite{goreskymacpherson}] The intersection homology $\sI_p\sC_\ast(X)$ of $X\in \snglr_n^{\sf ps}$ is the complex of all $p$-allowable singular chains.
\end{definition}

The condition of a simplex being $p$-allowable is clearly preserved by embeddings of singular manifolds: if $f:X \ra Y$ is a morphism in $\snglr_n^{\sf ps}$ and $g:\Delta^j \ra X$ is $p$-allowable, then $f\circ g: \Delta^j\ra Y$ is $p$-allowable. Further, being $p$-allowable varies continuously in families of embeddings. That is, there is a natural commutative diagram:
\[\xymatrix{
\snglr_n(X,Y) \ar[r]\ar[d]&\Map(X,Y)\ar[d]\\
\Map\bigl(\sI_p\sC_\ast(X), \sI_p\sC_\ast(Y)\bigr) \ar[r]&\Map\bigl(\sC_*(X), \sC_*(Y)\bigr)\\}\]
Consequently, intersection homology is defined on the quasi-category $\snglr_n^{\sf ps}$ of $n$-dimensional pseudomanifolds. 
Obviously $\sI_p\sC_\ast (X\sqcup Y) \cong \sI_p\sC_\ast (X) \oplus \sI_p\sC_\ast (Y)$.  
We have the following:
\begin{prop} The intersection homology functor
\[\sI_p\sC_\ast: \snglr_n^{\sf ps} \longrightarrow {\sf Ch}\] defines a homology theory in $\bH( \snglr_n^{\sf ps} , {\sf Ch}^\oplus)$.
\end{prop}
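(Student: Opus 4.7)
The plan is to verify the three defining conditions of Definition~\ref{homology-theory}: symmetric monoidality, preservation of sequential colimits, and the excision/collar-gluing condition. Since the target ${\sf Ch}^{\oplus}$ is a coCartesian symmetric monoidal quasi-category (Example~\ref{coCart}), the ``tensor product over $\int_{RV}$'' appearing in the collar-gluing axiom becomes a homotopy pushout of chain complexes, so excision reduces to a Mayer--Vietoris statement for intersection chains.

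First I would promote the set-level identification $\sI_p\sC_\ast(X\sqcup Y)\cong \sI_p\sC_\ast(X)\oplus\sI_p\sC_\ast(Y)$ together with the commuting square displayed in the excerpt (continuity of the allowability condition in families of stratum-preserving embeddings) to a map of symmetric monoidal quasi-categories $\sI_p\sC_\ast\colon (\snglr_n^{\sf ps})^{\sqcup}\to {\sf Ch}^{\oplus}$. Because the monoidal structure on the target is coCartesian, the coherence data is essentially forced; one just needs to check that the structure maps send disjoint unions to coproducts, which is immediate from the fact that a singular simplex into $X\sqcup Y$ factors uniquely through exactly one summand.

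Next I would dispatch the sequential colimit condition: for any countable filtration $X_0\subset X_1\subset\cdots$ of pseudomanifolds by open sub-pseudomanifolds, a $p$-allowable chain in $\bigcup_i X_i$ has compact image and so lies in some $X_i$, whence $\sI_p\sC_\ast(\bigcup X_i)\cong\colim_i \sI_p\sC_\ast(X_i)$ strictly. This also reduces the excision axiom to the case of finite pseudomanifolds via Theorem~\ref{collar=fin}.

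The main work is excision. Given a collar-gluing $X = X_-\cup_{RV} X_+$, the induced open cover $\{X_-,X_+\}$ of $X$ with intersection $RV$ has the property that every $p$-allowable singular simplex $g\colon\Delta^j\to X$ is homologous, via the barycentric subdivision operator, to a sum of $p$-allowable simplices each factoring through $X_-$ or $X_+$. The crucial point is that barycentric subdivision preserves $p$-allowability, since subdividing only shrinks images and subsimplices, and dimensions of intersections $g(\sigma)\cap X_i$ cannot increase under restriction to subsimplices. This classical argument, due essentially to King~\cite{king} in the context of intersection homology, produces a small-chains quasi-isomorphism $\sI_p\sC_\ast^{\{X_-,X_+\}}(X)\xra{\simeq}\sI_p\sC_\ast(X)$, and the standard Mayer--Vietoris short exact sequence
\[
0 \to \sI_p\sC_\ast(RV) \to \sI_p\sC_\ast(X_-)\oplus \sI_p\sC_\ast(X_+) \to \sI_p\sC_\ast^{\{X_-,X_+\}}(X) \to 0
\]
exhibits $\sI_p\sC_\ast(X)$ as the homotopy pushout of $\sI_p\sC_\ast(X_-)\leftarrow\sI_p\sC_\ast(RV)\to \sI_p\sC_\ast(X_+)$ in ${\sf Ch}$. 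This is precisely the excision equivalence required, since in the coCartesian structure $\oplus$ the universal arrow of Definition~\ref{homology-theory} is the map from the pushout.

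The principal obstacle is the subdivision step: one must check that the $p$-allowability constraints, which are phrased in terms of dimensions of intersections with each stratum $X_i$, behave well under iterated barycentric subdivision of a stratum-preserving cover and that the operator square-zero homotopy ${\sf sd}\simeq \id$ can be chosen through $p$-allowable chains. This is a routine but careful verification along the lines of the corresponding argument for ordinary singular chains, using that the strata $X_i$ are preserved by the embeddings witnessing the collar-gluing so that intersection dimensions are unchanged when simplices are replaced by their subdivisions.
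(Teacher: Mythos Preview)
Your proposal is correct and follows the same route as the paper: reduce the excision axiom in ${\sf Ch}^\oplus$ to the Mayer--Vietoris property for intersection chains, using that the coCartesian monoidal structure makes the relevant bar construction a homotopy pushout. The paper's proof is terser---it simply asserts that the square is a pushout in chain complexes---while you supply the underlying small-chains argument via barycentric subdivision and also spell out the monoidality and sequential-colimit checks; note that your citation \texttt{[king]} is not in the paper's bibliography, so you would need to add it or omit the reference.
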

The proof is exactly that intersection homology satisfies excision, or has a version of the Mayer-Vietoris sequence for certain gluings.
\begin{proof} Let $X\cong  X_- \cup_{\RR\times V}X_+$ be a collar-gluing. Then 
\[\xymatrix{
\sI_p\sC_\ast(\RR\times V)\ar[d]\ar[r]&\sI_p\sC_\ast(X_+)\ar[d]\\
\sI_p\sC_\ast(X_-)\ar[r]&\sI_p\sC_\ast(X)\\}\]
is a pushout diagram in the quasi-category of chain complexes. I.e., the natural map 
\\
$\sI_p\sC_\ast(X_-)\oplus_{\sI_p\sC_\ast(\RR\times V)}\sI_p\sC_\ast(X_+) \ra \sI_p\sC_\ast(X)$ is a quasi-isomorphism.
\end{proof}

\subsection{Link homology theories and $\disk^{\fr}_{n,k}$-algebras}

We now consider one of the simplest, but more interesting, classes of singular $n$-manifolds -- that of $n$-manifolds together with a distinguished properly embedded $k$-dimensional submanifold. 
While we specialize to this class of singular manifolds, the techniques for their analysis are typical of techniques that can be used for far more general classes.

Recall from Example~\ref{Ekn-framed} the quasi-category $\mfld_{n,k}^{\fr}$ whose objects are framed $n$-manifolds $M$ with a properly embedded $k$-dimensional submanifold $L\subset M$ together with a splitting of the framing along this submanifold, and the full sub-quasi-category $\disk^{\fr}_{n,k}\subset \mfld_{n,k}^{\fr}$ generated under disjoint union by the two objects $\RR^n := U^n_{\emptyset^{-1}}$ and $(\RR^k\subset \RR^n):= U^n_{S^{n-k-1}}$ with their standard framings.

\subsubsection{Explicating $\disk^{\fr}_{n,k}$-algebras}
Fix a symmetric monoidal quasi-category $\cC^\ot$ satisfying~($\ast$).  
Recall from Example~\ref{product-bundles} the map of quasi-categories $\int_Y\colon \Alg_{\disk(\cB)}(\cC^\ot) \to \Alg_{\disk(\cB_{k+1})}(\cC^\ot)$
defined for any quasi-category of basics $\cB$ of dimension $n$ and any $\cB_{n-k-1}$-manifold $Y$.

\begin{prop}\label{nk} There is a pullback diagram:

\[\xymatrix{
\Alg_{\disk^{\fr}_{n,k}}(\cC^\ot)\ar[d]&\ar[l]{\Alg_{\disk^{\fr}_{k+1}}}\Bigl(\displaystyle\int_{S^{n-k-1}}A~,~ \hh^\ast_{\sD^{\fr}_k}(B)\Bigr)\ar[d]\\
\Alg_{\disk^{\fr}_{n}}(\cC^\ot)\times\Alg_{\disk^{\fr}_{k}}(\cC^\ot)&\ar[l]\{(A,B)\}\\}\]
\end{prop}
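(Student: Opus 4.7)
The plan is to decompose a $\disk^{\fr}_{n,k}$-algebra $F$ according to its values on the two basics in $\disk^{\fr}_{n,k}$ --- the smooth basic $\RR^n$ and the kinked basic $U^n_{S^{n-k-1}}$, whose underlying space is $\RR^k\times C(S^{n-k-1})$. Restriction to the sub-symmetric-monoidal-quasi-category on smooth basics yields the $\disk^{\fr}_n$-algebra $A := F(\RR^n)$; this defines the left vertical map of the square and produces the factor $A$ on the bottom. All remaining structure is carried by $F(U^n_{S^{n-k-1}})$ together with its interaction with $A$ via mixed embeddings.

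To repackage this interaction, I would consider the conically smooth projection $\pi\colon \RR^k\times C(S^{n-k-1}) \to \RR^k\times [0,\infty)$ collapsing the sphere cross-section to a point. This $\pi$ exhibits the kinked basic as a $\sD^{\partial,\fr}_{k+1}$-family of $\disk^{\fr}_{n,k}$-manifolds in the sense of \textsection\ref{push-forward-section}: the preimage of the boundary basic $\RR^k$ is the submanifold stratum, while the preimage of the interior basic $\RR^{k+1}=\RR^k\times(0,\infty)$ is the framed $n$-manifold $\RR^{k+1}\times S^{n-k-1}$, a trivial $S^{n-k-1}$-bundle over $\RR^{k+1}$. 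Applying Theorem~\ref{pushforward-formula}, the pushforward $\pi_*F$ is a $\disk^{\partial,\fr}_{k+1}$-algebra whose value on the boundary basic $\RR^k$ is $B := F(U^n_{S^{n-k-1}})$, naturally a $\disk^{\fr}_k$-algebra via translations in the $\RR^k$-direction, and whose value on the interior basic $\RR^{k+1}$ is canonically identified with $\int_{S^{n-k-1}} A$ as a $\disk^{\fr}_{k+1}$-algebra via the construction of Example~\ref{product-bundles}.

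I would then invoke the higher Deligne conjecture (Example~\ref{deligne-boundary}), which identifies the quasi-category of $\disk^{\partial,\fr}_{k+1}$-algebras with that of triples $(A',B',\mathsf{a})$ consisting of a $\disk^{\fr}_{k+1}$-algebra $A'$, a $\disk^{\fr}_k$-algebra $B'$, and a $\disk^{\fr}_{k+1}$-algebra map $\mathsf{a}\colon A'\to \hh^\ast_{\sD^{\fr}_k}(B')$. Applied to $\pi_*F$, this produces precisely the upper-right corner of the claimed pullback, with $A' = \int_{S^{n-k-1}}A$, $B' = B$, and a map $\mathsf{a}\colon \int_{S^{n-k-1}}A \to \hh^\ast_{\sD^{\fr}_k}(B)$. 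The coherence that $A'$ arises from the underlying $A$ is exactly the compatibility encoded by the bottom horizontal map of the square, and assembling over all $F$ produces a functor from $\Alg_{\disk^{\fr}_{n,k}}(\cC^\ot)$ into the asserted pullback.

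The hard part will be verifying that this functor is an \emph{equivalence}, i.e., that no further coherence data has been forgotten. This reduces to the generation claim that $\disk^{\fr}_{n,k}$ is generated, as a structure relating $A$ and $B$, by three classes of embeddings --- smooth-into-smooth, kinked-into-kinked preserving the submanifold, and mixed-into-kinked --- and that every mixed embedding factors cofinally through the $\pi$-fibered decomposition above. The cofinality is essentially the content of the proof of Theorem~\ref{pushforward-formula}: the space of mixed embeddings into $U^n_{S^{n-k-1}}$ is equivalent, after radial rescaling in the normal cone, to a space of $\disk^{\partial,\fr}_{k+1}$-embeddings into $\RR^k\times[0,\infty)$ together with fiberwise $\disk^{\fr}_n$-embeddings into $\RR^{k+1}\times S^{n-k-1}$, which together recover precisely the Deligne-style action data $\mathsf{a}$. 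Combining this generation statement with the Deligne identification gives the pullback diagram as stated.
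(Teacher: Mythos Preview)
Your approach and the paper's diverge at the key reduction step, though both terminate in the higher Deligne conjecture.

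The paper does not use pushforward. Instead it filters $\disk^{\fr}_{n,k}$ by the number of kinked components and analyzes the first nontrivial stage $(\disk^{\fr}_{n,k})_{\leq 1}$ directly: a symmetric monoidal functor on this stage is precisely a $\disk^{\fr}_n$-algebra $A$ together with a $\disk^{\fr}_{n-k}$-$A$-module structure on $B := F(\RR^k\subset\RR^n)$, the module structure coming from embeddings of smooth $n$-disks into the complement of the core in the kinked basic. Extending to all of $\disk^{\fr}_{n,k}$ then amounts to endowing $B$ with a compatible $\disk^{\fr}_k$-algebra structure, i.e.\ making $B$ a $\disk^{\fr}_k$-algebra in $\m_A^{\disk^{\fr}_{n-k}}(\cC)$. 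The paper then invokes the equivalence $\m_A^{\disk^{\fr}_{n-k}}(\cC)\simeq \m_{\int_{S^{n-k-1}}A}(\cC)$ from~\cite{cotangent} and applies Deligne inside that module category. The pullback square thus arises as a stack of two elementary pullback squares, with no inversion of any geometric construction required.

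Your pushforward route is geometrically natural and the forward map into the pullback is correctly constructed, but the equivalence step has a real gap. Theorem~\ref{pushforward-formula} computes factorization homology of a total space in terms of a pushed-forward algebra; it says nothing about $\pi_*$ inducing an equivalence of algebra categories onto any fiber. Your ``generation and cofinality'' sketch asserts that mixed embeddings into the kinked basic factor through the $\pi$-fibered picture, but this is precisely the content that needs proof --- it amounts to the claim that the symmetric monoidal functor $\disk^{\partial,\fr,\sqcup}_{k+1}\to \mfld(\sD^{\fr}_{n,k})^{\sqcup}$ induced by the family is operadically fully faithful on the data not already captured by $A$, and nothing you cite establishes that. (There is also a minor slip: the boundary basic of $\sD^{\partial,\fr}_{k+1}$ is the half-space $\RR^k\times\RR_{\geq 0}$, not $\RR^k$; its preimage under $\pi$ is the entire kinked basic, not just the submanifold stratum. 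Your later identification of $B$ is nonetheless correct.) The paper's filtration argument sidesteps all of this by never needing to invert a pushforward: it decomposes the operadic structure directly into module-theoretic pieces for which the relevant equivalences are already available in the literature.
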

That is, the space of compatible $\disk^{\fr}_{n,k}$-algebra structures on the pair $(A,B)$ is equivalent to the space of $\disk^{\fr}_{k+1}$-algebra maps from $\int_{S^{n-k-1}}A$ to the Hochschild cohomology $\hh^\ast_{\sD^{\fr}_k}(B)$; the datum of a $\disk^{\fr}_{n,k}$-algebra is equivalent to that of a triple $(A,B, \mathsf{a})$, where $A$ is a $\disk^{\fr}_n$-algebra, $B$ is a $\disk^{\fr}_k$-algebra, and $\mathsf{a}$ is a map of $\disk^{\fr}_{k+1}$-algebras
	\[
	\mathsf{a}: \int_{S^{n-k-1}} A \longrightarrow \hh^*_{\sD^{\fr}_k} (B)
	\]
-- this is an $S^{n-k-1}$ parametrized family of central $\disk^{\fr}_k$-algebra actions of $A$ on $B$. In essence, Proposition~\ref{nk} is a parametrized version of the higher Deligne conjecture, and in the proof we will rely on the original version of the higher Deligne conjecture.

\begin{proof}

The quasi-category $\disk_{n,k}^{\fr}$ has a natural filtration by the number of components which are isomorphic to the singular manifold $(\RR^k\subset \RR^n)$:
\[
\disk_n^{\fr}=(\disk_{n,k}^{\fr})_{\leq 0} \ra (\disk_{n,k}^{\fr})_{\leq 1} \ra \ldots \colim_i ~(\disk_{n,k}^{\fr})_{\leq i} \simeq \disk^{\fr}_{n,k}
\] 
Consider the second step in this filtration, the full subcategory $(\disk_{n,k}^{\fr})_{\leq 1}$ of $\disk_{n,k}^{\fr}$ whose objects contain at most one connected component equivalent to $(\RR^k\subset \RR^n)$.

Disjoint union endows $(\disk_{n,k}^{\fr})_{\leq 1}$ with a \emph{partially defined} symmetric monoidal structure. 
This partially defined symmetric monoidal structure can be articulated as follows.  Consider the pullback $(\disk^{\fr,\sqcup}_{n,k})_{\leq 1} := (\disk^{\fr}_{n,k})_{\leq 1} \times_{\disk^{\fr}_{n,k}} \disk^{\fr,\sqcup}_{n,k}$ where here we are using the map from the right factor $\sqcup\colon \disk^{\fr,\sqcup}_{n,k} \to \disk^{\fr}_{n,k}$.  
The coCartesian fibration $\disk^{\fr, \sqcup}_{n,k} \to \mathsf{Fin}_\ast$ restricts to a map $(\disk^{\fr,\sqcup}_{n,k})_{\leq 1} \to \mathsf{Fin}_\ast$ which is an inner fibration and for each edge $f$ in $\mathsf{Fin}_\ast$ with a lift $\w{J}_+$ of its source in $(\disk^{\fr,\sqcup}_{n,k})_{\leq 1}$ there is either a coCartesian edge over $f$ with source $\w{J}_+$ or the simplicial set of morphisms over $f$ with source $\w{J}_+$ is empty.  
In this way, by a symmetric monoidal functor from $(\disk^{\fr,\sqcup}_{n,k})_{\leq 1}$ over $\mathsf{Fin}_\ast$ it is meant a map over $\mathsf{Fin}_\ast$ which sends coCartesian edges to coCartesian edges.  

It is immediate that such a symmetric monoidal functor $F$ is equivalent to the data of a $\disk^{\fr}_n$-algebra $F(\RR^n)$ and a $\disk^{\fr}_{n-k}$-$F(\RR^n)$-module given by $F(\RR^k\subset \RR^n)$. Extending such a symmetric monoidal functor $F$ to $\disk_{n,k}^{\fr}$ is thus equivalent to giving a $\disk^{\fr}_k$-algebra structure on $F(\RR^k\subset \RR^n)$ compatible with the $\disk^{\fr}_{n-k}$-$F(\RR^n)$-module. That is, the following is a triple of pullback squares of quasi-categories
\[\xymatrix{
\Alg_{\disk_{n,k}^{\fr}}(\cC^\ot)\ar[d]&\ar[l]\Alg_{\disk^{\fr}_{k}}\bigl(\m_A^{\disk^{\fr}_{n-k}}(\cC)\bigr)\ar[d]\\
\Fun^\ot\bigl((\disk_{n,k}^{\fr})_{\leq 1},\cC\bigr)\ar[d]&\ar[l]\m_A^{\disk^{\fr}_{n-k}}(\cC^\ot)\ar[d]\\
\Alg_{\disk^{\fr}_{n}}(\cC^\ot)&\ar[l]\{A\}~.
\\}\]

Using the equivalence $\m_A^{\disk^{\fr}_{n-k}}(\cC)\simeq \m_{\int_{S^{n-k-1}}A}(\cC)$ of~\cite{cotangent}, we can then apply the higher Deligne conjecture to describe an object of the quasi-category \[\Alg_{\disk^{\fr}_{k}}\Bigl(\m_A^{\disk^{\fr}_{n-k}}(\cC)\Bigr) \simeq \Alg_{\disk^{\fr}_{k}}\Bigl(\m_{\int_{S^{n-k-1}}A}(\cC)\Bigr)~.\] That is, to upgrade a $\disk^{\fr}_k$-algebra $B$ to the structure of a $\disk^{\fr}_k$-algebra in $\int_{S^{n-k-1}}A$-modules is equivalent to giving a $\disk^{\fr}_{k+1}$-algebra map $\mathsf{a}: \int_{S^{n-k-1}}A\ra \hh^*_{\sD^{\fr}_k}(B)$ to the $\sD^{\fr}_k$-Hochschild cohomology of $B$.
\end{proof}

\subsubsection{Hochschild cohomology in spaces}

We now specialize our discussion of $\disk^{\fr}_{n,k}$-algebras to the case where $\cC=\cS^\times$ is the quasi-category of spaces with Cartesian product, but any $\infty$-topos would do just as well. In this case, the $\sD^{\fr}_n$-Hochschild cohomology of an $n$-fold loop space has a very clear alternate description which is given below.

\begin{prop}\label{HH-Aut}
Let $Z=(Z,\ast)$ be a based space which is $n$-connective.  
In a standard way, the $n$-fold based loop space $\Omega^n Z$ is a $\disk^{\fr}_n$-algebra.  
There is a canonical equivalence of $\disk^{\fr}_{n+1}$-algebras in spaces \[\hh^*_{\sD^{\fr}_n}(\Omega^n Z) \simeq \Omega^n \Aut(Z)\] between the $\sD^{\fr}_n$-Hochschild cohomology space of $\Omega^nZ$ and the $n$-fold loops, based at the identity map, of the space of homotopy automorphisms of $Z$.
\end{prop}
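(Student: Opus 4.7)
The plan is to identify both sides as self-equivalences of a common object in equivalent $E_n$-monoidal $\infty$-categories. By Example~\ref{deligne-boundary} and the higher Deligne conjecture, $\hh^*_{\sD^{\fr}_n}(\Omega^n Z) \simeq \m^{\disk^{\fr}_n}_{\Omega^n Z}(\Omega^n Z,\Omega^n Z)$, the $\disk^{\fr}_{n+1}$-algebra of self-maps of $\Omega^n Z$ as an $E_n$-module over itself; this is the object I need to identify with $\Omega^n\Aut(Z)$.

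First, I would invoke the equivalence $\m^{\disk^{\fr}_n}_A(\cS^\times)\simeq \m_{\int_{S^{n-1}}A}(\cS^\times)$ of~\cite{cotangent}, which reduces the problem to computing self-maps of the regular module in the $\infty$-category of ordinary $E_1$-modules over the sphere integration $\int_{S^{n-1}}\Omega^n Z$. Because $Z$ is $n$-connective, non-abelian Poincar\'e duality (Theorem~\ref{PD}) applies to the smooth $n$-manifold $S^{n-1}\times\RR$ and yields
\[
\int_{S^{n-1}}\Omega^n Z ~=~ \int_{S^{n-1}\times\RR}\Omega^n Z ~\simeq~ \Map_{\sf c}(S^{n-1}\times\RR,\,Z)~\simeq~ \Map_*(S^1\vee S^n,\,Z)~\simeq~ \Omega Z\times \Omega^n Z,
\]
where I have used $(S^{n-1}\times\RR)^+\simeq \Sigma(S^{n-1})_+\simeq S^1\vee S^n$. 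By the $\infty$-categorical May-Lurie recognition principle, modules over the group-like $E_1$-algebra $\Omega Z\times \Omega^n Z$ are identified with spaces over the product of classifying spaces $Z\times \Omega^{n-1}Z$, under which the regular $E_n$-module $\Omega^n Z$ corresponds to a distinguished ``diagonal-type'' object. A direct fiber-product computation then identifies the space of self-maps of this object with $\Omega^n\Aut(Z)$.

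For $n=1$, this step reduces to a familiar calculation: $\m^{E_1}_{\Omega Z}(\cS)\simeq \cS_{/Z\times Z}$, under which the regular $\Omega Z$-bimodule corresponds to the diagonal $\Delta\colon Z\to Z\times Z$, and $\Map_{\cS_{/Z\times Z}}(\Delta,\Delta)\simeq \Omega_{\operatorname{id}}\Map(Z,Z)\simeq \Omega\Aut(Z)$, since the identity component of $\Map(Z,Z)$ consists of self-equivalences. The $\disk^{\fr}_{n+1}$-algebra structures on both sides then arise by Dunn additivity: on the left from composition of self-maps (contributing $E_1$) combined with the $E_n$-monoidal structure on the module $\infty$-category, and on the right from composition in $\Aut(Z)$ (contributing $E_1$) combined with $n$-fold looping (contributing $E_n$).

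The main obstacle will be verifying that the chain of equivalences is $E_n$-monoidal and correctly tracks the regular module for general $n$, not merely an equivalence of underlying $\infty$-categories; this matching of $E_n$-monoidal structures is essentially the content of Francis's work~\cite{cotangent} on the tangent complex for $E_n$-algebras. An attractive alternate route for $n\geq 2$ that avoids tracking the diagonal object explicitly would proceed by induction on $n$, reducing to the base case $n=1$ above via the identification of $E_n$-modules over $\Omega^n Z=\Omega(\Omega^{n-1}Z)$ with $E_1$-modules over $\Omega^{n-1}Z$ internal to the $E_{n-1}$-monoidal $\infty$-category of spaces, and then invoking the identity $\Aut_{E_{n-1}}(\Omega^{n-1}Z)\simeq\Aut(Z)$ provided by May-Lurie recognition.
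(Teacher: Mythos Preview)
Your high-level strategy matches the paper's exactly: both identify $\hh^*_{\sD^{\fr}_n}(\Omega^n Z)$ with endomorphisms of the regular module in $\m^{\disk^{\fr}_n}_{\Omega^n Z}(\cS)$, invoke $\m^{\disk^{\fr}_n}_{\Omega^n Z}(\cS)\simeq \m_{\int_{S^{n-1}}\Omega^n Z}(\cS)$, and then pass to an overcategory of spaces via recognition for group-like $E_1$-algebras. The difference is in how the last two steps are executed.

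The paper does not split $\int_{S^{n-1}}\Omega^n Z$ as a product. It identifies it directly as $\Omega(Z^{S^{n-1}})$, so that $\m_{\int_{S^{n-1}}\Omega^n Z}(\cS)\simeq \cS_{/Z^{S^{n-1}}}$, and observes that the regular module corresponds to the inclusion of constant maps $Z\hookrightarrow Z^{S^{n-1}}$. The computation of $\Map_{/Z^{S^{n-1}}}(Z,Z)$ is then a two-step pullback: first the defining pullback over $\Map(Z,Z)\to\Map(Z,Z^{S^{n-1}})$, then a factorization through $\Map(Z,Z)^{S^n}$ obtained from the pushout description $S^n\simeq\ast\amalg_{S^{n-1}}\ast$. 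This yields the based mapping space $\Map_\ast\bigl(S^n,\Map(Z,Z)\bigr)=\Omega^n_{\mathrm{id}}\Map(Z,Z)\simeq\Omega^n\Aut(Z)$ in one stroke.

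Your splitting $\int_{S^{n-1}}\Omega^n Z\simeq \Omega Z\times\Omega^n Z$ is correct as an equivalence of \emph{spaces} (via $\Sigma(S^{n-1})_+\simeq S^1\vee S^n$), but you use it as an equivalence of $E_1$-algebras when you deloop to obtain $\cS_{/Z\times\Omega^{n-1}Z}$. That step is not innocent: delooping a mere space-level equivalence would force $Z^{S^{n-1}}\simeq Z\times\Omega^{n-1}Z$, which need not hold (free loop spaces of simply connected spaces generally do not split). Even granting an $E_1$-equivalence, your description of the regular module as a ``diagonal-type'' object is not right for $n\geq 2$: under any reasonable splitting the constant-maps inclusion becomes $(\mathrm{id},\ast)\colon Z\to Z\times\Omega^{n-1}Z$, and naively computing its self-maps over $Z\times\Omega^{n-1}Z$ yields $\Omega^n$ based at the \emph{constant} map rather than at the identity. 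The paper's unsplit computation avoids all of this by keeping the base $Z^{S^{n-1}}$ intact and letting the suspension $S^n=\Sigma S^{n-1}$ do the work; I would recommend following that route rather than the product decomposition or the proposed induction.
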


\begin{proof} 
In what follows, all mapping spaces will be regarded as based spaces, based at either the identity map or at the constant map at the base point of the target argument -- the context will make it clear which of these choices is the appropriate one.

There are equivalences 
\[
\m_{\Omega^n Z}^{\disk^{\fr}_n}(\cS) \simeq \m_{\int_{S^{n-1}}\Omega^n Z}(\cS) \simeq \m_{\Omega Z^{S^{n-1}}}(\cS)\simeq \cS_{/Z^{S^{n-1}}}
\] 
sending the object $\Omega^n Z$ with its natural $\disk^{\fr}_n$-$\Omega^nZ$-module self-action to the space $Z$ with the natural inclusion of constant maps $Z\ra Z^{S^{n-1}}$. 
Thus, to describe the mapping space \[\hh^*_{\sD^{\fr}_n}(\Omega^nZ) \simeq {\m_{\Omega^nZ}^{\disk^{\fr}_n}}(\Omega^nZ,\Omega^nZ)\] it suffices to calculate the equivalent mapping space $\Map_{/Z^{S^{n-1}}}(Z,Z)$. By definition, there is the (homotopy) pullback square of spaces
\[\xymatrix{
\Map_{/Z^{S^{n-1}}}(Z,Z)\ar[r]\ar[d]& \ast\ar[d]\\
\Map(Z,Z)\ar[r]&\Map(Z,Z^{S^{n-1}})~.
\\}\]

Choose a base point $p\in S^{n-1}$.  The restriction of the evaluation map $ev_p^\ast\colon\Map(Z,Z) \ra \Map(Z,Z^{S^{n-1}})$ is a map of based spaces.   
Thus, the pullback diagram above factorizes as the (homotopy) pullback diagrams
\[\xymatrix{
\Map_{/Z^{S^{n-1}}}(Z,Z)\ar[r]\ar[d]& \ast\ar[d]\\
\Map(Z,Z)^{S^n}\ar[r]\ar[d]&\Map(Z,Z)\ar[d]\\
\Map(Z,Z)\ar[r]&\Map(Z,Z^{S^{n-1})}\\
}\]
where the space of maps from the suspension $S^n= \Sigma S^{n-1}$ to $\Map(Z,Z)$ is realized as the homotopy pullback of the two diagonal maps $\Map(Z,Z) \ra \Map(Z,Z)^{S^{n-1}}$; this is a consequence of the fact that the functor $\Map(-,Z)$ sends homotopy colimits to homotopy limits, applied to the homotopy colimit $\colim(\ast \la S^{n-1}\ra \ast)\simeq S^n$.

Applying the adjunction between products and mapping spaces, we obtain that the Hochschild cohomology space $\Map_{/Z^{S^{n-1}}}(Z,Z)$ is the homotopy fiber of the map $\Map\bigl(S^n, \Map(Z, Z)\bigr) \ra \Map(Z,Z)$ over the identity map of $Z$, which recovers exactly the definition of the based mapping space $\Map_*\bigl(S^n,\Map(Z,Z)\bigr) \simeq \Map_*\bigl(S^n,\Aut(Z)\bigr)$, where the last equivalence follows by virtue of $S^n$ being connected.

\end{proof}

\begin{cor}
Let $Z$ and $W$ be pointed spaces.  Suppose $Z$ is $n$-connective and $W$ is $k$-connective. A $\disk^{\fr}_{n,k}$-algebra structure on the pair $(\Omega^n Z,\Omega^k W)$ is equivalent to the data of a pointed map of spaces
	\[
	Z^{S^{n-k-1}} \longrightarrow {\sf BAut}(W)~.
	\]

\end{cor}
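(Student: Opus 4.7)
The plan is to chain together three results: Proposition~\ref{nk}, non-abelian Poincar\'e duality (Theorem~\ref{PD}), and Proposition~\ref{HH-Aut}; then to convert the resulting space of $\disk^{\fr}_{k+1}$-algebra maps into a space of pointed maps via May's recognition / the $B^{k+1} \dashv \Omega^{k+1}$ adjunction. First, by Proposition~\ref{nk}, the space of $\disk^{\fr}_{n,k}$-algebra structures on the pair $(\Omega^n Z, \Omega^k W)$ is equivalent to the space of $\disk^{\fr}_{k+1}$-algebra maps
\[
\int_{S^{n-k-1}} \Omega^n Z \;\longrightarrow\; \hh^*_{\sD^{\fr}_k}(\Omega^k W).
\]
The rest of the argument identifies each side and rewrites the resulting mapping space.

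To identify the source, since $Z$ is $n$-connective it determines a connective coefficient system on $\sD^{\fr}_n$ in the sense of Example~\ref{ordinary-coefs}. Restricting the symmetric monoidal equivalence $\int \Omega^n Z \simeq \Map_{\sf c}(-,Z)$ of Theorem~\ref{PD} along the $\sD^{\fr}_{k+1}$-family of $\sD^{\fr}_n$-manifolds $V \mapsto V \times S^{n-k-1}$ (Example~\ref{product-bundles}), and using that $(V \times S^{n-k-1})^+ \simeq V^+ \wedge S^{n-k-1}_+$ with $(\RR^{k+1})^+ = S^{k+1}$, produces a natural equivalence of $\disk^{\fr}_{k+1}$-algebras
\[
\int_{S^{n-k-1}} \Omega^n Z \;\simeq\; \Omega^{k+1}\bigl(Z^{S^{n-k-1}}\bigr).
\]
Because $Z$ is $n$-connective and $S^{n-k-1}$ is of dimension $n-k-1$, obstruction theory gives that $Z^{S^{n-k-1}}$ is $(k+1)$-connective.

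To identify the target, since $W$ is $k$-connective I can apply Proposition~\ref{HH-Aut} (with its parameter $n$ equal to our $k$) to obtain an equivalence of $\disk^{\fr}_{k+1}$-algebras $\hh^*_{\sD^{\fr}_k}(\Omega^k W) \simeq \Omega^k \Aut(W) \simeq \Omega^{k+1}{\sf BAut}(W)$. Combining, the space of structures in question becomes
\[
\Map_{\Alg_{\disk^{\fr}_{k+1}}}\bigl(\Omega^{k+1}(Z^{S^{n-k-1}}),\; \Omega^{k+1}{\sf BAut}(W)\bigr).
\]
Both are grouplike $\disk^{\fr}_{k+1}$-algebras. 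By May's recognition principle the $(k+1)$-fold bar construction $B^{k+1}$ is left adjoint to $\Omega^{k+1}\colon \Top_* \to \Alg^{gp}_{\disk^{\fr}_{k+1}}(\Top)$, and the unit $X \to B^{k+1}\Omega^{k+1}X$ is an equivalence whenever $X$ is $(k+1)$-connective. Applying the adjunction and then the unit equivalence (valid since $Z^{S^{n-k-1}}$ is $(k+1)$-connective) gives the desired identification with $\Map_*\bigl(Z^{S^{n-k-1}},\; {\sf BAut}(W)\bigr)$.

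The main obstacle is tracking multiplicativity at each step: we must verify that non-abelian Poincar\'e duality descends to an equivalence of $\disk^{\fr}_{k+1}$-algebras after pushforward (which follows by combining the pushforward formula of Theorem~\ref{pushforward-formula} with the symmetric monoidal naturality of Theorem~\ref{PD}), and that the equivalence of Proposition~\ref{HH-Aut} is itself one of $\disk^{\fr}_{k+1}$-algebras (rather than merely of underlying spaces), for which the $E_{k+1}$-structure on $\hh^*_{\sD^{\fr}_k}(-)$ coming from the higher Deligne conjecture must be compared with the standard $E_{k+1}$-structure on $\Omega^{k+1}{\sf BAut}(W) = \Omega^k \Aut(W)$.
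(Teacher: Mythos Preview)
Your proposal is correct and follows essentially the same approach as the paper: invoke Proposition~\ref{nk}, identify the source via nonabelian Poincar\'e duality (Theorem~\ref{PD}) and the target via Proposition~\ref{HH-Aut}, then deloop. The only cosmetic difference is in the final step: the paper deloops both sides (obtaining $\tau_{\geq k+1}{\sf BAut}(W)$ on the right and then arguing that maps from the $(k+1)$-connective source factor through the connective cover), whereas you apply the $B^{k+1}\dashv\Omega^{k+1}$ adjunction directly with target ${\sf BAut}(W)$ and invoke the unit equivalence only on the source---this is the same content packaged slightly more efficiently.
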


\begin{proof} 
Proposition~\ref{nk} informs us that giving the structure of a $\disk^{\fr}_{n,k}$-algebra on $(\Omega^nZ,\Omega^kW)$ is equivalent to defining a $\disk^{\fr}_{k+1}$-algebra map \[\int_{S^{n-k-1}}\Omega^nZ \longrightarrow \hh^*_{\sD^{\fr}_k}(\Omega^kW)~.\] 
By way of nonabelian Poincar\'e duality (Theorem~\ref{PD}), the factorization homology $\int_{S^{n-k-1}}\Omega^nZ$ is equivalent as $\disk^{\fr}_{k+1}$-algebras to the mapping space $\Omega^{k+1}Z^{S^{n-k-1}}$.  Proposition~\ref{HH-Aut} gives that the Hochschild cohomology $\hh^*_{\sD^{\fr}_k}(\Omega^kW)$ is equivalent to the space of maps to $\Omega^k \Aut(W)$.

Finally, a $(k+1)$-fold loop map $\Omega^{k+1}Z^{S^{n-k-1}} \ra \Omega^k \Aut(W)$ is equivalent to a pointed map between their $(k+1)$-fold deloopings. The $(k+1)$-fold delooping of $\Omega^{k+1}Z^{S^{n-k-1}}$ is $Z^{S^{n-k-1}}$, since $Z$ is $n$-connective; the $(k+1)$-fold delooping of $\Omega^k \Aut(W)$ is $\tau_{\geq k+1}{\sf BAut}(W)$, the $k$-connective cover of ${\sf BAut}(W)$. However, since $Z^{S^{n-k-1}}$ is already $(k+1)$-connective, the space of maps from it into $\tau_{\geq k+1}{\sf BAut}(W)$ is homotopy equivalent to the space of maps into ${\sf BAut}(W)$.
\end{proof}

\subsubsection{Free $\disk^{\fr}_{n,k}$-algebras}\label{free-examples}

Fix a symmetric monoidal quasi-category $\cC^\ot$ satisfying assumption ($\ast$) whose underlying quasi-category is presentable.  
We analyze the factorization homology theory resulting from one of the simplest classes of $\disk^{\fr}_{n,k}$-algebras, that of freely generated $\disk^{\fr}_{n,k}$-algebras. 
That is, there is a forgetful functor
	\begin{equation}\label{forget-two}
	\Alg_{\disk^{\fr}_{n,k}}(\cC^\ot) \to \cC \times \cC~,
	\end{equation}
given by evaluating on the objects $\RR^n$ and $(\RR^k\subset \RR^n)$, and this functor admits a left adjoint $\free_{n,k}$. 
To accommodate more examples, we modify~(\ref{forget-two}).  
Consider the maximal sub-Kan complex $\cE\subset \sD_{n,k}$.  
In light of Lemma~\ref{only-equivalences}, $\cE$ is a coproduct $\End_{\sD_{n,k}}(\RR^n)\coprod \End_{\sD_{n,k}}(\RR^k\subset \RR^n) \simeq \sO(n) \coprod \sO(n,k)$, here $\sO(n,k):= \sO(n-k)\times \sO(k)$.  
There results a map of quasi-categories $\cE \to \sD_{n,k} \to \disk_{n,k} \to \disk_{n,k}^\sqcup$, restriction along which gives the map of quasi-categories
\[
\Alg_{\disk_{n,k}}(\cC^\ot) \to \Map(\cE,\cC) \simeq \cC^{\sO(n)}\times \cC^{\sO(n,k)}
\]
to the quasi-category of pairs $(P,Q)$ consisting of an $\sO(n)$-object in $\cC$ and an $\sO(n,k)$-object in $\cC$.  
We will denote the left adjoint to this map as $\free_\cE$.  
Denote the inclusion as $\delta \colon \cC\times \cC \to \Map(\cE,\cC)$ as the pairs $(P,Q)$ whose respective actions are trivial.

For $X$ a $\sD_{n,k}$-manifold (not necessarily framed) define $\int_X \free^{(P,Q)}_{n,k}:= \int_X \free^{\delta(P,Q)}_\cE$.  
When $X$ is framed (i.e., is a $\sD^{\fr}_{n,k}$-manifold) the lefthand side of this expression has already been furnished with meaning as the factorization homology of $X$ with coefficients in the free $\disk^{\fr}_{n,k}$-algebra generated by $(P,Q)$.  
The following lemma ensures that the two meanings agree.
Recall the forgetful map $\disk^{\fr, \sqcup}_{n,k} \to \disk_{n,k}^\sqcup$.  

\begin{lemma}\label{frame-or-not}
Let $(P,Q)$ be a pair of objects of $\cC$.  
Then the universal arrow
\[
\free_{n,k}^{(P,Q)} \xra{\simeq} \bigl(\free_\cE^{\delta(P,Q)}\bigr)_{|\disk^{\fr, \sqcup}_{n,k}}
\]
is an equivalence.  

\end{lemma}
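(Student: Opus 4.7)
The plan is to verify this equivalence pointwise by computing both sides explicitly. First, the universal arrow arises from the adjunction: restricting the $\disk^{\fr}_{n,k}$-algebra $\bigl(\free^{\delta(P,Q)}_\cE\bigr)_{|\disk^{\fr,\sqcup}_{n,k}}$ to the atomic objects $\{\RR^n,\,(\RR^k\subset \RR^n)\}$ recovers the pair $(P,Q)$ on the nose. This is because $\cE^{\fr}\simeq \ast\sqcup\ast$ sits inside $\cE\simeq \sO(n)\sqcup \sO(n,k)$ as the base-point inclusion, and $\delta(P,Q)$ has trivial $\sO$-actions by construction; so restricting a $\disk_{n,k}$-algebra first to $\cE$ gives $\delta(P,Q)$ and further restricting to base-points gives back $(P,Q)$.

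Next, I would apply the pointwise formula for operadic left Kan extensions at each framed basic $V$. Both sides decompose as coproducts over arities $(m,\ell)\in \NN^2$. For the framed free algebra,
\[
\free_{n,k}^{(P,Q)}(V)\simeq\coprod_{m,\ell\geq 0}\disk^{\fr}_{n,k}\bigl((\RR^n)^{\sqcup m}\sqcup (\RR^k\subset \RR^n)^{\sqcup \ell},V\bigr)_{h(\Sigma_m\times\Sigma_\ell)}\otimes P^{\otimes m}\otimes Q^{\otimes \ell}.
\]
For the unframed free algebra with trivial action, restricted to $V$,
\[
\bigl(\free^{\delta(P,Q)}_\cE\bigr)(V)\simeq\coprod_{m,\ell\geq 0}\disk_{n,k}\bigl((\RR^n)^{\sqcup m}\sqcup (\RR^k\subset \RR^n)^{\sqcup \ell},V\bigr)_{hG_{m,\ell}}\otimes P^{\otimes m}\otimes Q^{\otimes \ell},
\]
with $G_{m,\ell}=(\Sigma_m\wr \sO(n))\times (\Sigma_\ell\wr \sO(n,k))$. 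Here the passage of the $\sO(n)^m\times \sO(n,k)^\ell$-coinvariants solely onto the embedding-space factor uses that the $\sO$-actions on $P$ and $Q$ are trivial by construction of $\delta$.

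The comparison is thus reduced to the purely geometric statement that, for each arity $(m,\ell)$, the natural map
\[
\disk^{\fr}_{n,k}\bigl((\RR^n)^{\sqcup m}\sqcup (\RR^k\subset \RR^n)^{\sqcup \ell},V^{\fr}\bigr) \xra{\simeq} \disk_{n,k}\bigl((\RR^n)^{\sqcup m}\sqcup (\RR^k\subset \RR^n)^{\sqcup \ell},V\bigr)_{h(\sO(n)^m\times \sO(n,k)^\ell)}
\]
is an equivalence of spaces. This is the content of $\sD^{\fr}_{n,k}\to \sD_{n,k}$ being the right fibration classifying source framings compatible with a fixed target framing: the unframed mapping space is the total space of a principal $(\sO(n)^m\times \sO(n,k)^\ell)$-bundle over the product of source-framing spaces, whose fiber over the standard framing is the framed mapping space. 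Homotopy coinvariants by this free action on source framings thus recover the framed mapping space.

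The main technical obstacle is verifying the principal-torsor assertion at the $\infty$-categorical level of mapping spaces. Using that framings of a disjoint union decompose as a product of framings of each component, this reduces to the single-component statement: framed embeddings of $\RR^n$ (resp.\ $(\RR^k\subset \RR^n)$) into the framed basic $V^{\fr}$ form an $\sO(n)$-torsor (resp.\ $\sO(n,k)$-torsor) bundle over the corresponding unframed embeddings, compatibly with pre-composition by rotations of the source. Once this is established, the comparison of homotopy coinvariants in the two formulas matches term-by-term, yielding the equivalence.
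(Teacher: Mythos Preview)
Your approach is correct and genuinely different from the paper's. Where you compute both free algebras explicitly as coproducts over arities and match them term-by-term via the torsor property of framings, the paper argues purely formally: it sets up a commuting square of four adjunctions relating $\Alg_{\disk_{n,k}}(\cC)$, $\Map(\cE,\cC)$, $\Alg_{\disk^{\fr}_{n,k}}(\cC)$, and $\Map(\cE^{\fr},\cC)$ (horizontal pairs $\free\dashv\rho$, vertical pairs $\rho\dashv\mathsf{Ran}$), observes that the square of right adjoints commutes because $\cE^{\fr}$ is the pullback $\cE\times_{\sD_{n,k}}\sD^{\fr}_{n,k}$, deduces that the square of left adjoints commutes, and then uses that $\delta$ is a section of the right-hand vertical restriction to obtain $\free_{n,k}\simeq\free_{n,k}\circ(\rho\circ\delta)\simeq(\rho\circ\free_\cE)\circ\delta$ in one line. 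The paper never writes a free-algebra formula and never touches embedding spaces; your route is longer but makes the underlying geometry explicit.

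Two small corrections to your write-up. First, as an $\infty$-groupoid $\cE\simeq B\sO(n)\sqcup B\sO(n,k)$, not $\sO(n)\sqcup\sO(n,k)$; this is what makes $\Map(\cE,\cC)\simeq\cC^{\sO(n)}\times\cC^{\sO(n,k)}$. Second, your description of the key geometric input is tangled: calling the unframed embedding space a \emph{principal} bundle over source-framings with \emph{fiber} the framed embeddings conflates two structures. The clean statement is simply that $\sO(n)^m\times\sO(n,k)^\ell$ acts freely on $\disk_{n,k}\bigl((\RR^n)^{\sqcup m}\sqcup(\RR^k\subset\RR^n)^{\sqcup\ell},V\bigr)$ by precomposition, and the homotopy quotient is the framed embedding space; equivalently, evaluation of derivatives at the origins exhibits the unframed space as a product of the framed space with the group.
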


\begin{proof}

Denote the pullback quasi-category $\cE^{\fr} = \cE\times_{\sD_{n,k}} \sD^{\fr}_{n,k}$.  
Like $\cE$, $\cE^{\fr}\subset \sD^{\fr}_{n,k}$ is the maximal sub-Kan complex (=$\infty$-groupoid).  
The projection $\cE^{\fr} \to \cE$ is a Kan fibration with fibers $\sO(n)$ or $\sO(n,k)$, depending on the component of the base.  
As so, the inclusion of the two objects with their standard framings $\{\RR^n\}\coprod \{\RR^k\subset \RR^n\} \xra{\simeq} \cE^{\fr}$ is an equivalence of Kan complexes.  
Therefore $\Map(\cE^{\fr},\cC) \xra{\simeq} \cC\times \cC$.  

Let us explain the following diagram of quasi-categories
\[
\xymatrix{
\Alg_{\disk_{n,k}}(\cC^\ot)   \ar[rr]^-{\rho}  \ar@(l,l)[dd]^-{\rho}
&&
\Map(\cE,\cC)   \ar@(r,r)[dd]^-{\rho}    \ar@(u,u)[ll]^-{\free_\cE} 
\\
&&
\\
\Alg_{\disk^{\fr}_{n,k}}(\cC^\ot)  \ar[rr]^-{\rho}  \ar[uu]^-{\mathsf{Ran}}  
&&
\Map(\cE^{\fr},\cC)~.   \ar[uu]^-{\mathsf{Ran}}  \ar@(d,d)[ll]^-{\free_{n,k}}
}
\]
Each leg of the square is an adjunction.
All maps labeled by $\rho$ are the evident restrictions.  
The maps denoted as $\mathsf{Ran}$ are computed as point-wise right Kan extension.  
(That is, $\mathsf{Ran}(A)\colon U\mapsto \lim_{U\to U'} A(U')$ where this limit is taking place in $\cC^\ot$ and is indexed by the appropriate over category.  
We emphasize that, unlike the case for left extensions, this point-wise right Kan extension agrees with operadic right Kan extension.)
As so, the straight square of right adjoints commutes.
It follows that the outer square of left adjoints also commutes.

The right downward map is equivalent to that which assigns to a pair of objects $(P,Q)$ with respective actions of $\sO(n)$ and $\sO(n,k)$, the pair $(P,Q)$.  
The map $\delta\colon \cC\times \cC \to \cC^{\sO(n)}\times \cC^{\sO(n,k)}$ is a section to this right downward map $\rho$.    
We have established the string of canonical equivalences
\[
\free_{n,k} \simeq \free_{n,k}\circ\bigl(\rho \circ \delta\bigr) = \bigl(\free_{n,k}\circ \rho\bigr)\circ \delta \simeq \bigl(\rho\circ \free_\cE\bigr)\circ \delta~.
\]
This completes the proof.  

\end{proof}

In order to formulate our main result, we first give the following definition.

\begin{definition} For $M$ a topological space and $P$ an object of $\cC$, the \emph{configuration object of points in $M$ labeled by $P$} is 
\[
\conf^P(M) = \coprod_{j\geq 0} \conf_j(M)\underset{\Sigma_j}\ot P^{\ot j} \in \cC
\] 
where $\conf_j (M)\subset M^{\times j}$ is the configuration space of $j$ ordered and distinct points in $M$.
\end{definition}

For the remainder of the section, assume that the monoidal structure of $\cC^\ot$ distributes over small colimits.

\begin{prop}\label{prop:conf} Let $(P,Q)$ be a pair of objects of $\cC$.  
Let $(L\subset M)$ be a $\sD_{n,k}$-manifold, which is to say, a smooth $n$-manifold and a properly embedded $k$-submanifold.  
There is a natural equivalence
	\[
	\int_{(L\subset M)} \free_{n,k}^{(P,Q)}
	\simeq
	\conf^{P}(M\smallsetminus L) \ot \conf^{Q}(L)\]
between the factorization homology of $(L\subset M)$ with coefficients in the $\disk_{n,k}$-algebra freely generated by $(P,Q)$ and the tensor product of the configurations objects of the link complement $M\smallsetminus L$ and the link $L$ labeled by $P$ and $Q$, respectively.
\end{prop}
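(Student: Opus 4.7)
The plan is to evaluate the factorization homology by combining an explicit coproduct formula for the free algebra with an equivariant identification of the relevant $\sD_{n,k}$-mapping spaces as products of labeled configuration spaces.

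By Lemma~\ref{frame-or-not}, it suffices to treat $\free_\cE^{\delta(P,Q)}$ in place of $\free_{n,k}^{(P,Q)}$, so that $(P,Q)$ may be taken to carry trivial $\sO(n)$- and $\sO(n,k)$-actions. The universal property of the free algebra over the $\infty$-operad generated by $\cE$ (operadic left adjoint to restriction along $\cE \to \disk_{n,k}^{\sqcup}$, cf.~\cite{dag}~\textsection3.1.3) produces, for each $V\in \disk_{n,k}$, an equivalence
\[
\free_\cE^{\delta(P,Q)}(V) \simeq \coprod_{a,b\geq 0} \sD_{n,k}\bigl((\RR^n)^{\sqcup a}\sqcup (\RR^k\subset\RR^n)^{\sqcup b},~V\bigr)\underset{(\sO(n)^a\times\sO(n,k)^b)\rtimes(\Sigma_a\times\Sigma_b)}{\ot} P^{\ot a}\ot Q^{\ot b}~.
\]
Substituting this into the defining colimit~(\ref{fact-def}) for $\int_{(L\subset M)}$ and invoking condition~$(\ast)$ (so that $\ot$ commutes with the relevant colimits), the same expression computes $\int_{(L\subset M)} \free_{n,k}^{(P,Q)}$ with $V$ replaced by $(L\subset M)$.

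The main step is then an equivariant scanning identification
\[
\sD_{n,k}\bigl((\RR^n)^{\sqcup a}\sqcup (\RR^k\subset\RR^n)^{\sqcup b},~(L\subset M)\bigr) \simeq \conf_a(M\smallsetminus L)\times\sO(n)^a \times \conf_b(L)\times \sO(n,k)^b~.
\]
By the morphism structure of $\sD_{n,k}$ described in Example~\ref{example:Edn'}, any morphism $\RR^n \to (L\subset M)$ factors through $M\smallsetminus L$, producing the first two factors via the standard equivalence $\Emb(\RR^n,N)\simeq N\times\sO(n)$; any morphism $(\RR^k\subset\RR^n) \to (L\subset M)$ is determined by its restriction $\RR^k\to L$ together with compatible framed tubular neighborhood data, and the contractibility of the space of tubular neighborhoods of $L$ in $M$ (parametrized over $\conf_b(L)$) supplies the last two factors. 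After the indicated quotient, the free $\sO$-factors cancel against the trivial $\sO$-actions on the labels, leaving
\[
\coprod_{a,b\geq 0}\Bigl(\conf_a(M\smallsetminus L)\underset{\Sigma_a}{\ot} P^{\ot a}\Bigr)\ot \Bigl(\conf_b(L)\underset{\Sigma_b}{\ot} Q^{\ot b}\Bigr) = \conf^P(M\smallsetminus L)\ot \conf^Q(L)~.
\]

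The principal obstacle is the scanning step: making precise the equivariant homotopy equivalence above requires a parametrized version of tubular-neighborhood existence and uniqueness for $L\subset M$, applied uniformly across configurations, so that the $\sD_{n,k}$-mapping space really splits as a product of the framed configuration spaces of $M\smallsetminus L$ and $L$. Granting this, the remaining ingredients---the free-algebra coproduct formula and distributivity of $\ot$ over colimits---are formal.
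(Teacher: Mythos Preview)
Your approach is essentially the same as the paper's: both compute the operadic left Kan extension from the free $\infty$-operad on $\cE$ all the way to $\mfld_{n,k}$, identify the resulting colimit as a coproduct indexed by pairs $(a,b)$, and then use evaluation at centers to pass to configuration spaces after quotienting by the automorphism groups. The paper organizes this slightly differently---it works directly with the indexing category $\cE^\amalg_X$ of active morphisms to $X=(L\subset M)$, uses cofinality of the isomorphism subcategory $\cE^\amalg_{\sf iso}\subset\cE^\amalg_{\sf inert}$ to break the colimit into a coproduct, and only at the very end invokes transitivity of operadic left Kan extensions to identify this with $\int_{(L\subset M)}\free_\cE^{\delta(P,Q)}$---whereas you first write the free algebra on a general $V\in\disk_{n,k}$ and then substitute into the factorization homology colimit. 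Your substitution step is exactly that same transitivity statement, so the content is identical.

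One small inaccuracy: your claimed product decomposition $\Emb(\RR^n,N)\simeq N\times\sO(n)$ is only correct when $N$ is parallelizable; in general evaluation at the origin exhibits $\Emb(\RR^n,N)$ as the frame bundle of $N$, a principal $\sO(n)$-bundle that need not be trivial. Since $(L\subset M)$ is a $\sD_{n,k}$-manifold (no framing assumed), the intermediate product splitting you wrote does not literally hold. This does not damage the argument, because you immediately quotient by $\sO(n)^a\times\sO(n,k)^b$ and the quotient of the frame bundle by $\sO(n)$ is the base regardless; but you should phrase the scanning step as ``evaluation at centers induces a $\Sigma_a\times\Sigma_b$-equivariant equivalence after quotienting by $\sO(n)^a\times\sO(n,k)^b$'' rather than as a product decomposition. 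The paper states it in exactly this form.
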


We make several remarks before proceeding with the proof of this result.

\begin{remark} We see from this result with $(n,k)=(3,1)$ that factorization homology can distinguish knots. For instance, the unknot, whose knot group is $\ZZ$, and the trefoil knot, whose knot group is presented by $\langle x,y | x^2 = y^3 \rangle$, give rise to different factorization homologies.
\end{remark}

\begin{remark} Specializing to the case where the link $L$ is empty, we obtain the equivalence $\int_M \free_n^P \simeq \conf^P(M)$. Consequently, factorization homology is not a homotopy invariant of $M$, in as much as the homotopy types of the configuration spaces $\conf_j(M)$ are sensitive to the homeomorphism (or, at least, the simple homotopy) type of $M$, see~\cite{simple}. This is in contrast to the case in which the $\disk^{\fr}_n$-algebra $A$ comes from an $n$-fold loop space on an $n$-connective space, in which case nonabelian Poincar\'e duality~(Theorem~\ref{PD}) implies that factorization homology with such coefficients is a proper homotopy invariant. However, note that the factorization homology $\int_M \free_n^P$ is independent of the framing on $M$; this is a consequence of the fact that the $\disk_n^{\fr}$-algebra structure on $\free_n^P$ can be enhanced to a $\disk_n$-algebra.
\end{remark}

Recall the maps of symmetric monoidal quasi-categories $\disk^{\fr, \sqcup}_n \to \disk^{\fr,\sqcup}_{n,k}$ and $\disk^{\fr,\sqcup}_k \to \disk^{\fr,\sqcup}_{n,k}$ indicated by the assignments $\RR^n\mapsto \RR^n$ and $\RR^k \mapsto (\RR^k\subset \RR^n)$, respectively.
The following lemma describes the free $\disk_{n,k}^{\fr}$-algebras in terms of free $\disk^{\fr}_n$-algebras and free $\disk^{\fr}_k$-algebras. 

\begin{lemma}\label{free-free}
Let $(P,Q)$ be a pair of objects of $\cC$. Then the universal arrows to the restrictions
	\[
	 \free_n^P\xra{\simeq} \bigl(\free_{n,k}^{(P,Q)}\bigr)_{|\disk^{\fr,\sqcup}_n} 
	\qquad \&
	\qquad
	 \free_k^Q \ot\int_{S^{n-k-1}\times \RR^{k+1}}\free_n^P~{}~\xra{\simeq} ~{}~\bigl(\free_{n,k}^{(P,Q)}\bigr)_{|\disk^{\fr,\sqcup}_k}
	\]
are equivalences.  
\end{lemma}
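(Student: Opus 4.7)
The plan is to compute the free algebra via the operadic left Kan extension formula of Remark~\ref{operadic} and analyze the indexing active slice categories according to the stratified structure of the target.

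\textbf{First equivalence.} The decisive fact is that, viewed as a $\sD^{\fr}_{n,k}$-manifold, $\RR^n$ has empty $k$-stratum. Since morphisms in $\sD^{\fr}_{n,k}$ preserve strata, there is no morphism $(\RR^k\subset\RR^n)\to \bigsqcup^r\RR^n$ for any $r\geq 0$. Consequently, the active slice $(\disk^{\fr,\sqcup}_{n,k,\text{act}})_{/\bigsqcup^r\RR^n}$ contains no objects with a $(\RR^k\subset \RR^n)$-component, and so coincides canonically with $(\disk^{\fr,\sqcup}_{n,\text{act}})_{/\bigsqcup^r\RR^n}$. The colimit formula computing $\free_{n,k}^{(P,Q)}\bigl(\bigsqcup^r\RR^n\bigr)$ reduces literally to that computing $\free_n^P\bigl(\bigsqcup^r\RR^n\bigr)$, yielding the first equivalence.

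\textbf{Second equivalence.} For $X = (\RR^k\subset \RR^n)$, an object of $(\disk^{\fr,\sqcup}_{n,k,\text{act}})_{/X}$ is a framed embedding $\bigsqcup^a \RR^n \sqcup \bigsqcup^b (\RR^k\subset\RR^n) \hookrightarrow (\RR^k\subset \RR^n)$. Stratum-preservation forces the $\RR^n$-components into the open complement $\RR^n\smallsetminus \RR^k$, and forces the $(\RR^k\subset \RR^n)$-components to send their $k$-strata into $\RR^k$. By the trivialization of the normal bundle along $\RR^k$ provided by the ambient framing, framed embeddings $\bigsqcup^b (\RR^k\subset \RR^n) \hookrightarrow (\RR^k\subset \RR^n)$ are, up to contractible fibers, classified by framed embeddings $\bigsqcup^b \RR^k\hookrightarrow \RR^k$. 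Furthermore, the $\RR^n$-components in the complement and the $(\RR^k\subset \RR^n)$-components about $\RR^k$ may be perturbed independently, as they occupy disjoint tubular regions. These two points together produce a canonical equivalence of simplicial sets
\[
(\disk^{\fr,\sqcup}_{n,k,\text{act}})_{/(\RR^k\subset \RR^n)} \simeq (\disk^{\fr,\sqcup}_{n,\text{act}})_{/(\RR^n\smallsetminus \RR^k)} \times (\disk^{\fr,\sqcup}_{k,\text{act}})_{/\RR^k}.
\]
Because $-\otimes -$ in $\cC^\otimes$ distributes over colimits, the colimit over this product decomposes as a tensor product of colimits:
\[
\free_{n,k}^{(P,Q)}(\RR^k\subset \RR^n) \simeq \Bigl(\int_{\RR^n\smallsetminus \RR^k}\free_n^P\Bigr) \otimes \free_k^Q(\RR^k).
\]
Polar coordinates in the normal direction give $\RR^n\smallsetminus \RR^k\cong S^{n-k-1}\times \RR^{k+1}$, producing the stated equivalence on $\RR^k$. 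Naturality of the entire analysis in the variable $\RR^k\in \disk^{\fr}_k$ (equivalently, repeating the argument with $\RR^k$ replaced by an arbitrary disjoint union $\bigsqcup^s \RR^k$) upgrades the objectwise identification to an equivalence of $\disk^{\fr}_k$-algebras.

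\textbf{Main obstacle.} The crucial technical step is the product decomposition of the active slice category. This rests on: (i) a tubular-neighborhood-type identification of framed embeddings of disjoint unions of $(\RR^k\subset \RR^n)$ into $(\RR^k\subset \RR^n)$ with framed embeddings of $\RR^k$'s into $\RR^k$, using contractibility of compatible normal-framing data; and (ii) the decoupling of the two classes of components, reflecting their disjoint supports in the stratified target. Once this decomposition is established, the remaining identifications follow formally from the defining colimit formula for factorization homology, together with the immediate identification $\int_M \free_n^P \simeq \conf^P(M)$ for a framed $n$-manifold $M$ obtained by specializing the same colimit analysis to the nonsingular setting.
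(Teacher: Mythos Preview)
Your argument for the first equivalence is correct.

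For the second, the asserted product decomposition of slice quasi-categories is false. The two classes of components do \emph{not} occupy disjoint regions: an embedded copy of $(\RR^k\subset\RR^n)$ fills an open tubular neighborhood meeting the complement $\RR^n\smallsetminus\RR^k$, precisely where the pure $\RR^n$-components also live. More decisively, there are active morphisms $\RR^n\to(\RR^k\subset\RR^n)$ in $\disk^{\fr}_{n,k}$ (embedding into that complement); in the slice over $(\RR^k\subset\RR^n)$ these give edges from an object of type $(a,b)=(1,0)$ to one of type $(0,1)$, whereas on the product side no such edge can exist, since there is no active morphism $\{1\}_+\to\emptyset_+$ in either factor. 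Hence the two simplicial sets are not equivalent, and the step ``$\otimes$ distributes over the colimit indexed by the product'' has no footing.

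The paper proceeds quite differently and more structurally. Using Proposition~\ref{nk}, it factors the forgetful functor as
\[
\Alg_{\disk^{\fr}_{n,k}}(\cC^\ot)\longrightarrow\Alg_{\disk^{\fr}_n}(\cC^\ot)\times\Alg_{\disk^{\fr}_k}(\cC^\ot)\longrightarrow\cC\times\cC
\]
and computes the composite of the left adjoints. The intermediate left adjoint sends $(\free_n^P,\free_k^Q)$ to the free $\disk^{\fr}_{n-k}$-$\free_n^P$-module on $\free_k^Q$, which by Proposition~3.16 of~\cite{cotangent} is $\free_k^Q\otimes\int_{S^{n-k-1}\times\RR^{k+1}}\free_n^P$. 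Your geometric approach can be salvaged, but it requires the configuration-space analysis carried out in the proof of Proposition~\ref{prop:conf}: restrict to the cofinal sub-groupoid of isomorphisms and identify each embedding space, modulo its automorphism group, with $\conf_a(\RR^n\smallsetminus\RR^k)\times\conf_b(\RR^k)$ via evaluation at centers. That is a product decomposition of \emph{spaces}, one isomorphism class at a time, not of the slice category itself.
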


\begin{proof} 
Recall from the proof of Proposition~\ref{nk} that a $\disk^{\fr}_{n,k}$-algebra structure $A$ on $(A_n, A_k)$, where $A_n$ is a $\disk^{\fr}_n$-algebra and $A_k$ is a $\disk^{\fr}_k$-algebra, is equivalent to the structure of a $\disk_{n-k}^{\fr}$-$A_n$-module structure on $A_k$ in the quasi-category $\Alg_{\disk^{\fr}_k}(\cC^\ot)$. 
The forgetful functor factors as the forgetful functors
\[
\Alg_{\disk^{\fr}_{n,k}}(\cC^\ot)\longrightarrow\Alg_{\disk^{\fr}_n}(\cC^\ot)\times\Alg_{\disk^{\fr}_k}(\cC^\ot)\longrightarrow\cC\times\cC
\] 
and thus, passing to the left adjoints, we can write the free algebra $A$ on a pair $(P,Q)$ as the composite of the two left adjoints, which gives the free $\disk^{\fr}_n$-algebra on $P$ and the free $\disk^{\fr}_{n-k}$-module on the free $\disk^{\fr}_k$-algebra on $Q$; the latter is calculated by tensoring with the factorization homology $\int_{S^{n-k-1} \times \RR^{k+1}}\free_n^P$, which is a special case of the equivalence between ${\disk^{\fr}_j}$-$R$-modules and left modules for $\int_{S^{j-1}}R$, see Proposition 3.16 of~\cite{cotangent}, applied to $R =\free_n^P$ and $j=n-k$.

\end{proof}

\begin{proof}[Proof of Proposition ~\ref{prop:conf}]

Recall the construction of the $\infty$-operad $\cE^{\amalg}_{\sf inert}$ of Example~\ref{free-guy} -- it is the free $\infty$-operad on $\cE$.  That is, the map 
\[
\mathsf{Fun}^\ot(\cE^{\amalg}_{\sf inert},\cC^\ot) \xra{\simeq} \mathsf{Fun}(\cE,\cC)\simeq \cC^{\sO(n)}\times \cC^{\sO(n,k)}~, 
\]
induced by restriction along the inclusion of the underlying quasi-category $\cE \to \cE^\amalg_{\sf inert}$, is an equivalence of Kan complexes -- here we are using exponential notation for simplicial sets of maps.
Explicitly, a vertex of $\cE^\amalg_{\sf inert}$ is a pair of finite sets $(J_n,J_k)$ while an edge is the data of a pair of based maps $(J_n)_+ \xra{a} (J_n')_+$ and $(J_k)_+ \xra{b} (J_k')_+$ which are \emph{inert}, which is to say, the fibers over non-base points $a^{-1}(j')$ and $b^{-1}(j'')$ are each singletons, together with a pair of elements $\alpha\in \sO(n)^{J_n'}$ and $\beta\in \sO(n,k)^{J_k'}$.  
We will denote a typical object of $\cE^\amalg_{\sf inert}$ as $E=(J_n,J_k)$.

The the standard inclusion $\cE \to \mfld_{n,k}$ then induces the map of $\infty$-operads $\cE^\amalg_{\sf inert} \xra{i} \mfld^{\sqcup}_{n,k}$ whose value on vertices is
\[
i\colon (J_n,J_k)\mapsto \bigl(\bigsqcup_{J_n} \RR^n\bigr)\sqcup \bigl(\bigsqcup_{J_k} (\RR^k\subset \RR^k)\bigr)~.
\] 
Likewise, let $(\sO(n)\xra{\w{P}}\cC~,~\sO(n,k)\xra{\w{Q}} \cC)$ be a pair $(P,Q)$ of objects in $\cC$ each equipped with actions of $\sO(n)$ and $\sO(n,k)$, respectively.  
These data then determine the solid diagram of $\infty$-operads
\[
\xymatrix{
\cE^{\amalg}_{\sf inert}  \ar[rrrrrd]^-{(P^\bullet,Q^\bullet)}   \ar[d]^i
&&&&&
\\
\mfld^{\sqcup}_{n,k}  \ar@{.>}[rrrrr]^-{\mathsf{Free}_\cE^{(\w{P},\w{Q})}}
&&&&&
\cC^\ot
}
\]
in where the value of $(P^\bullet,Q^\bullet)$ on $(J_n,J_k)$ is canonically equivalent to $ P^{\ot J_n} \ot Q^{\ot J_k}$ as a $\bigl(\sO(n)^{J_n} \times \sO(n,k)^{J_k}\bigr)$-objects.  
The filler $\mathsf{Free}_\epsilon^{(\w{P},\w{Q})}$ is the desired free construction, and is computed as operadic left Kan extension.  Explicitly, for $X\in \mfld_{n,k}$, the value
\begin{equation}\label{free-colimit}
\mathsf{Free}_\cE^{(\w{P},\w{Q})}(X) = \colim_{E\xra{\mathsf{act}} X}  (P^\bullet,Q^\bullet)(E) = \colim_{(J_n,J_k)\xra{\sf act} X} P^{\ot J_n} \ot Q^{\ot J_k}
\end{equation}
where the colimit is over the quasi-category $\cE^\amalg_X  := \cE^\amalg_{\sf inert} \times_{\mfld^{\sqcup}_{n,k}} \bigl((\mfld^{\sqcup}_{n,k})_{\sf act}\bigr)_{/X}$ of active morphisms in $\mfld^{\sqcup}_{n,k}$ from the image under $i$ of $\cE^\amalg_{\sf inert}$ to the object $X$.

We will now compute the colimit in~(\ref{free-colimit}).  
By construction, the projection $\cE^\amalg_X \to \cE^\amalg_{\sf inert}$ is a right fibration whose fiber over $E$ is the Kan complex $\mfld_{n,k}(i(E),X)$. 
Consider the subcategory of isomorphisms $\cE^\amalg_{\sf iso}\subset \cE^\amalg_{\sf inert}$ -- it is isomorphic to the category of pairs of finite sets and pairs of bijections among them. 
Denote $\cG = \cE^\amalg_{\sf iso}\times_{\cE^\amalg_{\sf inert}} \cE^\amalg_X$.  
Because the inclusion $\cE^\amalg_{\sf iso}\subset \cE^\amalg_{\sf inert}$ is cofinal, so is the inclusion $\cG \subset \cE^\amalg_X$.  So the colimit~(\ref{free-colimit}) is canonically equivalent to the colimit of the composite $\cG \subset \cE^\amalg_X \xra{(P^\bullet,Q^\bullet)} \cC^\ot$.  
Because $\cE^\amalg_{\sf iso}$ is a coproduct of $\infty$-groupoids (Kan complexes) indexed by isomorphism classes of its objects, then $\cG$ is a coproduct of $\infty$-groupoids indexed by isomorphism classes of objects of $\cE^\amalg_{\sf iso}$.  As so, the colimit~(\ref{free-colimit}) breaks up as a coproduct over isomorphism classes of objects of $\cE^\amalg_{\sf iso}$.

We will now understand the $[E]^{th}$ summand of this colimit.  
Choose a representative $E=(J_n,J_k)\in \cE^\amalg_{\sf iso}$ of this isomorphism class. 
We point out that the Kan complex of $\Aut(E)$ fits into a Kan fibration sequence $\sO(n)^{ J_n}\times \cO(n,k)^{ J_k} \to \Aut(E) \to \Sigma_{J_n}\times \Sigma_{J_k}$.  
Consider the right fibration $(\cE^\amalg_{\sf iso})_{/E} \to \cE^\amalg_{\sf iso}$ whose fiber over $E'$ is the Kan complex $\Iso(E',E)$ which is a torsor for the Kan complex $\Aut(E)$ is $E'$ if isomorphic to $E$ and is empty otherwise.  
Denote the resulting right fibration $\cG_E = (\cE^\amalg_{\sf iso})_{/E} \times_{\cE^\amalg_{\sf iso}} \cG~\longrightarrow~ \cG$ whose fibers are either a torsor for $\Aut(E)$ or empty.  
The composite $\cG_E \to \cG \subset \cE^\amalg_X \xra{(P^\bullet,Q^\bullet)} \cC^\ot$ is canonically equivalent to the constant map at $P^{\ot J_n}\ot Q^{\ot J_k}$.  
It follows from the definition of the tensor over spaces structure, that the colimit of this composite  is 
\[
\Bigl(\mfld_{n,k}\bigl(i(J_n, J_k),X \bigr)\Bigr) \ot \bigl(P^{\ot J_n} \ot Q^{\ot J_k}\bigr)~.
\]  
We conclude from this discussion that the colimit of the composite $\cG \subset \cE^\amalg_X \xra{(P^\bullet,Q^\bullet)} \cC^\ot$ is 
\begin{equation}\label{tensor-time}
\coprod_{[(J_n,J_k)]} \Bigl(\mfld_{n,k}\bigl(i(J_n,J_k),X \bigr)\Bigr) \ot_{\Aut(J_n,J_k)} \bigl(P^{\ot J_n} \ot Q^{\ot J_k}\bigr)~.
\end{equation}

We make expression~(\ref{tensor-time}) more explicit for the case that $(\w{P},\w{Q}) = \delta(P,Q)$ is a pair of objects with trivial group actions.  
As so, the map $\cG \subset \cE^\amalg_X \xra{(P^\bullet,Q^\bullet)} \cC^\ot$ factors through the projection $\cG \to \cE^\amalg_{\sf iso} \to (\mathsf{Fin}_\ast)_{iso}$ the groupoid of finite sets and bijections -- we denote this groupoid as $\Sigma$.

Recall that the $\sD_{n,k}$-manifold $X=(L\subset M)$ is the data of a framed $n$-manifold $M$, a properly embedded smooth submanifold $L$, and a splitting of the framing along $L$.  Evaluation at the origins of $i(J_n,J_k) = \bigl(\bigsqcup_{J_n} \RR^n\bigr)\sqcup \bigl(\bigsqcup_{J_k} (\RR^k\subset \RR^n)\bigr)$ gives a map
\[
\mfld_{n,k}\bigl((\bigsqcup_{J_n} \RR^n)\sqcup (\bigsqcup_{J_k} \bigl(\RR^k\subset \RR^n)\bigr)~,~(L\subset M)\bigr) ~\longrightarrow~ \mathsf{Conf}_{J_n}(M\smallsetminus L)\times \mathsf{Conf}_{J_k}(L)~.  
\]
This map is evidently natural among morphisms among the variable $(J_n,J_k)\in \cG$ where the action of $\cG$ on the righthand side factors through the projection $\cG \to \Sigma$.   
There results a $\Sigma$-equivariant map
\[
\Bigl(\mfld_{n,k}\bigl(i(J_n,J_k),(L\subset M)\bigr)\Bigr)_{/\sO(n)^{ J_n}\times \sO(n,k)^{ J_k}} ~\xra{\simeq}~ \mathsf{Conf}_{J_n}(M\smallsetminus L)\times \mathsf{Conf}_{J_k}(L)~;
\]
and for standard reasons it is an equivalence of Kan complexes.  
We conclude that
\[
\free_\cE^{\delta(P,Q)}(L\subset M) \xra{\simeq} \conf^P(M\smallsetminus L)\ot \conf^Q(L)~.
\]
Finally, the formula 
\[
\free_\cE^{\delta(P,Q)}(L\subset M) = \int_{(L\subset M)} \free_\cE^{\delta(P,Q)}
\]
is a formal consequence of commuting left Kan extensions (here we are using the same notation for $\free_\cE^{\delta(P,Q)}$ and its restriction to $\disk^{\sqcup}_{n,k}$).  With Lemma~\ref{frame-or-not}, this completes the proof of the proposition.

\end{proof}

\begin{remark}
The methods employed here in~\S\ref{free-examples} have been intentionally presented to accommodate much greater generality.  
For instance, with appropriate modifications of the statements, the role of $\sD_{n,k}$ (or its framed version) could be replaced by any category of basics $\cB$.  Likewise, the maximal sub-Kan complex $\cE\subset \sD_{n,k}$ could be replaced by any map $\cE\to \cB$ of quasi-categories.  

\end{remark}

\section{Differential topology of singular manifolds}\label{snglr-facts}

The proofs of Proposition~\ref{tubular-neighborhood}, Lemma~\ref{creation}, and Theorem~\ref{pushforward-formula} require some foundational theory of singular manifolds akin to the classical theory of differential topology.  
We give a brief tour of such theory and analyze the singular generalizations of orthogonal transformations, tangent bundles, vector fields and flows, and embedding theorems.  
It is here that we fully exploit the deliberate cone-structure of singular manifolds.   
 
Through the course of this section we will not distinguish between a singular manifold $X$ and its underlying space $\iota X$.  
In the case $X = U^n_Y$ is a basic of depth $k$, as usual we identify the subspace $\RR^{n-k}\times \ast \subset \RR^{n-k}\times C(Y)$ with $\RR^{n-k}$.  

\begin{notation}
For $U,V \in \bsc_n$, we denote by ${\sf Iso}(U,V) \subset \bsc_n(U,V)$ the space of invertible morphisms. For $X \in \snglr_{n}$ we let $\Aut(X) \subset \snglr_n(X,X)$ denote the automorphisms of $X$.
\end{notation}

\subsection{Endomorphisms of basics}\label{orthogonal}
In this section we prove basic facts about the space of endomorphisms of basics -- these facts are useful for understanding specific examples of $\cB$-manifolds and also serve a larger purpose in the present work. For a basic open $U = U^n_X$ of depth $k$, we prove that the endomorphism monoid of $U$ is homotopy equivalent to $\sO(\RR^{n-k}) \times \Aut_{\snglr_{k-1}}(X)$, and in particular that the endormorphisms of a fixed object $U_X \in \ob \bsc_n(X)$ form an $\infty$-groupoid.

\begin{notation}
Fix a basic $U= U^n_X \in \bsc_n$ with $X$ of dimension $k-1$. In what follows, we write $\RR^{n-k} \subset U_X$ to mean $\RR^{n-k} \times \{\ast\} \subset \RR^{n-k} \times C(X) = \iota U_X$. In particular, we will abbreviate the element $(0,\ast) \in \iota U_X$ by simply writing 0.
\end{notation}
We are about to analyze the topological monoid $\bsc_n(U,U)$ of endomorphisms of $U$.  
From the definition of $\bsc_n(U,U)$, restriction to the subspace $\RR^{n-k} \subset U$ gives a map of topological monoids
\begin{equation}\label{rest-to-locus}
(-)_{|\RR^{n-k}}\colon \bsc_n(U,U) \to \Emb(\RR^{n-k},\RR^{n-k})~.
\end{equation}
If $k=0$ then $U=\RR^n$, note that the map~(\ref{rest-to-locus}) is an isomorphism.

\begin{remark} 
There is a sequence of topological submonoids
\[
\Emb(\RR^n,\RR^n)\supset \Emb^0(\RR^n,\RR^n)\supset \Aut^0(\RR^n) \supset {\sf GL}(\RR^n) \supset \sO(\RR^n)  
\]
defined as follows:
\begin{itemize}
\item
$\Emb^0(\RR^n,\RR^n)$ consists of smooth self-embeddings which preserve the origin.  
\item
$\Aut^0(\RR^n)$  consists of origin preserving, smooth self-embeddings which are isomorphisms.
\item
${\sf GL}(\RR^n)$ consists of those automorphisms which are linear.
\item
$\sO(\RR^n)$ consists of those linear automorphisms which are orthogonal -- it is a finite dimensional compact group.
\end{itemize}
Each of these inclusions is a homotopy equivalence of topological spaces.  This can be seen through the following deformation retractions:
\begin{itemize}
\item
There is the deformation retraction of $\Emb(\RR^n,\RR^n)$ onto $\Emb^0(\RR^n,\RR^n)$ given by $(t,f)\mapsto \bigl((f(-)-tf(0)\bigr)$.
\item
There is a deformation retraction of $\Emb^0(\RR^n,\RR^n)$ onto ${\sf GL}(\RR^n)$ given by $(t,f)\mapsto \bigl(\frac{1}{t}f(t-)\bigr)$. For $t=0$ this is understood to be $D_0f$, the derivative of $f$ at the origin.
That the image lies in ${\sf GL}(\RR^n)$ is obvious. The continuity of this assignment is a consequence of the definition of the weak Whitney $C^\infty$ topology.  
\item
There is a deformation retraction of ${\sf GL}(\RR^n)$ onto $\sO(\RR^n)$ given by the Graham-Schmidt algorithm $\mathsf{GrSm}_t\colon {\sf GL}(\RR^n) \to \sO(\RR^n)$.  
\end{itemize}
\end{remark}

The methods above are classical.  We now imitate these methods for the case $k>0$.  
So assume $X$ is a non-empty singular manifold of dimension $k-1>-1$. 
Consider the continuous map
\begin{equation}\label{gamma}
\gamma \colon \RR_{>0} \times \RR^{n-k} \to \bsc_n(U,U)~,~{}~{}~{}~{}~(t,v)\mapsto \bigl([u,s,x]\mapsto [tu-v,ts,x]\bigr)~.
\end{equation}
Here $[u,s,x]$ is an element of $\iota U(X) = \RR^{n-k} \times \RR_{\geq 0} \times \iota X_{/\sim}$. 
Write the value of $\gamma$ on $(t,v)$ as the endomorphism $U\xra{\gamma_{t,v}} U$.  
Notice $\gamma_{t,v}\circ \gamma_{t',v'} = \gamma_{tt',v+tv'}$ and $\gamma_{t,v}^{-1} = \gamma_{\frac{1}{t},-\frac{1}{t}v}$. An important case of these identities is $v=0$.

\begin{remark}
The map $\gamma$ embodies the concept of scaling and translating -- we see it as capturing what remains of a vector space structure on a basic.  
\end{remark}

\begin{definition}
We now define the topological submonoids 
\begin{equation}\label{basic-equivalences}
\bsc_n(U,U) \supset \bsc_n^0(U,U) \supset \Aut^0(U)\supset {\sf GL}(U) \supset \sO(U)~.
\end{equation}
\begin{itemize}
	\item
	$\bsc_n^0(U,U)$ consists of those endomorphisms $f$ of $U$ which preserve the origin $0\in \RR^{n-k}\subset U$, which is to say $f_{|\RR^{n-k}}(0)=0$.  
	\item
	$\Aut^0(U)$ consists of those origin preserving self-maps which admit a strict inverse (in the category $\bsc_n$). I.e., these are the origin-preserving automorphisms.
	\item
	${\sf GL}(U)$ consists those endomorphisms $T$ of $U$ which satisfy the identity
	\[
	T\circ \gamma_{t,v} = \gamma_{t,Tv}\circ  T
	\]
	for all $(t,v)\in \RR_{>0}\times \RR^{n-k}$. (The map $Tv$ is defined by noting that any $T \in \bsc_n{(U_X,U_X)}$ defines an embedding $\RR^{n-k} \to \RR^{n-k}$.)
	That such $T$ are examples of origin preserving automorphisms is Lemma~\ref{linear-is-iso}.
	\item
	The last is defined as the product of topological groups $\sO(U) := \sO(\RR^{n-k}) \times \Aut(X)$. This is regarded as a submonoid of $\bsc_n^0(U,U)$ through the homomorphism given by $(T,f)\mapsto \bigl( [u,s,x]\mapsto [T(u),s,f(x)] \bigr)$. That its image lies in ${\sf GL}(U)$ is an easy exercise.
	\end{itemize}
\end{definition}

\begin{lemma}\label{linear-is-iso}
There is an inclusion of subspaces ${\sf GL}(U) \subset \Aut^0(U)$.  

\end{lemma}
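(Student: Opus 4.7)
The plan is to extract enough information from the relation $T \circ \gamma_{t,v} = \gamma_{t,Tv} \circ T$ to exhibit $T$ in a normal form that admits an evident two-sided inverse, proceeding by induction on the depth $k$ of $U$. The base case $k=0$ is immediate: then $U = \RR^n$, $\gamma_{t,v}(u) = tu - v$, and the defining relation forces $h := T$ to satisfy $h(tu - v) = th(u) - h(v)$, so $h$ is $\RR$-linear; a linear smooth self-embedding of $\RR^n$ is a linear isomorphism, hence lies in $\Aut^0(\RR^n)$.

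For $k \geq 1$, write points of $\iota U = \RR^{n-k} \times C(\iota X)$ as $[u,s,x]$ with $s \geq 0$. Specializing the defining relation of ${\sf GL}(U)$ to $v=0$ shows $T$ commutes with all scalings $\gamma_{t,0}$, while $t=1$ shows that the restriction $h := T_{|\RR^{n-k}}$ satisfies $h(u-v) = h(u) - h(v)$. Together these force $h$ to be $\RR$-linear, hence a linear automorphism of $\RR^{n-k}$. A direct calculation, applying $\gamma_{1/s,\,u/s}$ to $[0,1,x]$ and using the ${\sf GL}$-equivariance, then pins down the normal form
\[
T([u,s,x]) \;=\; \bigl[\, h(u) + s\beta(x),\; s\alpha(x),\; \phi(x) \,\bigr] \qquad (s>0),
\]
where $\alpha\colon \iota X \to \RR_{>0}$, $\beta \colon \iota X \to \RR^{n-k}$, and $\phi\colon \iota X \to \iota X$ are obtained by evaluating $T$ on $\{(0,[1,x])\}_{x\in \iota X}$. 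Positivity of $\alpha$ and injectivity of $\phi$ follow from $T$ being an open embedding, while the representative $(\widetilde T, h)$ of $T$ (in the sense of Definition~\ref{singular-manifolds}) realizes $\phi$ as an endomorphism of $X$ in $\snglr^c_{\leq k-1,k-1}$ satisfying the analog of the ${\sf GL}$-equivariance with respect to the evident scaling on $RX$. By induction, $\phi \in \Aut(X)$.

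With $h^{-1}$, $1/\alpha$, $\beta$, and $\phi^{-1}$ in hand, one writes down the candidate inverse
\[
S\bigl([u',s',x']\bigr) \;=\; \Bigl[\, h^{-1}\!\bigl(u' - (s'/\alpha(\phi^{-1}(x')))\,\beta(\phi^{-1}(x'))\bigr),\; s'/\alpha(\phi^{-1}(x')),\; \phi^{-1}(x') \,\Bigr]
\]
and verifies directly that $S$ and $T$ are mutually inverse, that $S$ admits a representative $(\widetilde S, h^{-1})$ in the sense of Definition~\ref{singular-manifolds}, and that $S(0)=0$; this places $T$ in $\Aut^0(U)$ and closes the induction.

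The main obstacle is bookkeeping near the cone locus $s=0$: the formula for $T$ above is only valid for $s>0$, and one must verify that the assembled inverse $S$ extends continuously (in fact conically smoothly) across the singular stratum $\RR^{n-k}\subset \iota U$, using the closed equivalence relation in the definition of $\bsc_n(U_X,U_Y)$. It is precisely here that the inductive hypothesis, providing an honest automorphism $\phi$ of the lower-dimensional compact singular manifold $X$, is indispensable.
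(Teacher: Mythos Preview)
Your approach is genuinely different from the paper's, and considerably more laborious. The paper argues only surjectivity: the equivariance forces $T(0)=0$, so the open image of $T$ contains $0$; any point $[u,s,x]$ is dragged into that image by $\gamma_{t_0,0}$ for some small $t_0>0$, say $T[u',s',x']=\gamma_{t_0,0}[u,s,x]$, and then $T\gamma_{1/t_0,0}[u',s',x']=\gamma_{1/t_0,0}T[u',s',x']=[u,s,x]$. A surjective morphism in $\bsc_n$ is an isomorphism (its inverse is a homeomorphism that pushes the atlas into itself by maximality), so $T\in\Aut^0(U)$. No induction, no normal form.

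Your normal form $T([u,s,x])=[h(u)+s\beta(x),\,s\alpha(x),\,\phi(x)]$ is correct and genuinely informative, but the step ``By induction, $\phi\in\Aut(X)$'' has a gap. Your inductive hypothesis concerns ${\sf GL}(V)\subset\Aut^0(V)$ for \emph{basics} $V$ of depth $<k$, whereas $X$ is a compact singular $(k{-}1)$-manifold that is typically not a basic; there is no ${\sf GL}(X)$ to invoke, and the scaling on $RX$ does not reduce the question to one about lower-depth basics. Injectivity of $\phi$, which you do obtain from injectivity of $T$, is not enough to produce $\phi^{-1}$ as a morphism.

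The fix is not induction but the very definition of a depth-$k$ morphism in $\bsc_n$: any representative $(\widetilde T,h)$ comes with an \emph{isomorphism} $g_0\in\snglr^c_{k-1}(X,X)$ satisfying $\widetilde T(0,0,x)=(h(0),0,g_0(x))=(0,0,g_0(x))$. From your normal form and positivity of $\alpha$ one has $\widetilde T(0,s,x)=(s\beta(x),\,s\alpha(x),\,\phi(x))$ for every $s>0$ (the quotient map $\RR^{n-k}\times\RR_{>0}\times X\to\iota U$ is injective), and continuity of $\widetilde T$ at $s=0$ gives $\phi(x)=g_0(x)$. Thus $\phi=g_0$ is already an automorphism, furnished by the data of $T$ itself. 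With this in hand your explicit inverse $S$ goes through and your argument closes --- but at that point the paper's two-line surjectivity argument is cleaner and yields the same conclusion.
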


\begin{proof}
We need to show that each morphism $U\xra{T} U$ in ${\sf GL}(U)$ is a surjective map of spaces.
Let $[u,s,x]$ be a point in $\iota U=\RR^{n-k}\times C(X)$.  

We know the image of $T$ is an open subset of $\iota U$ and contains the origin $0\in \RR^{n-k}\subset \RR^{n-k}\times C(X)$.  
The sequence $t\mapsto \gamma_{t,0}([u,s,x])$ converges to $0$ as $t\to 0$.  
So there is a $t_0>0$ for which $\gamma_{t_0,0}([u,s,x])$ is in the image of $T$.
That is, there is a point $[u',s',x']\in \iota U$ with $T[u',s',x'] = \gamma_{t_0,0}[u,s,x]$.  
Then $T\gamma_{\frac{1}{t_0},0}[u',s',x'] = [u,s,x]$.  

\end{proof}

\begin{lemma}\label{derivative}
Each of the inclusions in~(\ref{basic-equivalences}) is a homotopy equivalence of spaces.  
\end{lemma}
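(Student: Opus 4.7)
The plan is to follow the template of the classical case recalled in the preceding remark, using the singular scaling-and-translation family $\gamma_{t,v}$ of~(\ref{gamma}) as the replacement for the vector space operations on $\RR^n$, and proving the four inclusions are homotopy equivalences one at a time from left to right. The base case $k=0$ is classical, so I assume $k>0$ throughout.

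First, for $\bsc_n(U,U)\supset \bsc_n^0(U,U)$: since morphisms in $\bsc_n$ preserve depth strata, $f(0)\in \RR^{n-k}$ for every $f\in \bsc_n(U,U)$, and the homotopy $(t,f)\mapsto \gamma_{1,tf(0)}\circ f$ deforms $\bsc_n(U,U)$ onto $\bsc_n^0(U,U)$, using that $\gamma_{1,f(0)}(f(0))=0$ at $t=1$ and $\gamma_{1,0}=\id$ whenever $f(0)=0$. Next, for the two middle inclusions $\bsc_n^0(U,U)\supset \Aut^0(U)\supset {\sf GL}(U)$, I use the scaling conjugation $(t,f)\mapsto \gamma_{t^{-1},0}\circ f\circ \gamma_{t,0}$ for $t>0$, extended at $t=0$ by the ``derivative at the origin'' $D_0 f$. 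The relation $D_0 f\circ \gamma_{t,v}=\gamma_{t,(D_0f)(v)}\circ D_0 f$ holds by construction after passing to the limit, so $D_0 f\in {\sf GL}(U)\subset \Aut^0(U)$ by Lemma~\ref{linear-is-iso}; and conjugation by $\gamma_{t,0}$ preserves the subspace of automorphisms, so this single homotopy simultaneously deforms $\bsc_n^0(U,U)$ and $\Aut^0(U)$ onto ${\sf GL}(U)$. Finally, for ${\sf GL}(U)\supset \sO(U)$, scaling and translation equivariance force every $T\in {\sf GL}(U)$ to take the explicit form $T[u,s,x]=[Au+s\alpha(x),\,s\beta(x),\,\xi(x)]$ for a unique $(A,\alpha,\beta,\xi)$ with $A\in {\sf GL}(\RR^{n-k})$, $\xi\in \Aut(X)$, and $(\alpha,\beta)\colon X\to \RR^{n-k}\times \RR_{>0}$ some pair of conically smooth maps; in these coordinates, $\sO(U)=\sO(\RR^{n-k})\times \Aut(X)$ is precisely the subset with $A$ orthogonal, $\alpha=0$, and $\beta\equiv 1$. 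Straight-line homotopies contracting $(\alpha,\beta)$ to $(0,1)$, combined with classical Graham--Schmidt on $A$, then deform ${\sf GL}(U)$ onto $\sO(U)$, with every intermediate value remaining in ${\sf GL}(U)$ as seen by an explicit inverse formula.

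The hard part will be the derivative step: justifying that the limit $D_0 f=\lim_{t\to 0}\gamma_{t^{-1},0}\circ f\circ \gamma_{t,0}$ exists, varies continuously in $f$ in the weak Whitney $C^\infty$ topology, and lands in ${\sf GL}(U)$. I plan to tackle this by unpacking a morphism $f\in \bsc_n(U,U)$ via a representative $(\widetilde{f},h)$ with $\widetilde{f}\in \snglr_{\leq n,k-1}(R^{n-k}RX,R^{n-k}RX)$: the scaling $\gamma_{t,0}$ lifts to the smooth scaling $(u,s,x)\mapsto (tu,ts,x)$ on the lower-depth manifold $\RR^{n-k}\times \RR\times X$, so the conjugation above is represented by the analogous conjugation of $\widetilde{f}$, where an induction on $k$ together with the classical derivative of a smooth self-embedding of $\RR^{n-k}\times \RR$ (parametrized by $x\in X$) furnishes both the limit and its continuous dependence on $f$. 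Once this derivative input is in place, the remaining assertions and the behavior of each retraction on the subspaces of automorphisms and linear endomorphisms are routine verifications.
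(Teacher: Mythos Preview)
Your approach is essentially identical to the paper's: the same three deformation retractions via translation $\gamma_{1,tf(0)}\circ f$, scaling conjugation $\gamma_{t^{-1},0}\circ f\circ\gamma_{t,0}$ extended by a derivative at the origin, and Graham--Schmidt combined with straight-line interpolation on the remaining coordinates. One minor remark on your third step: the cross-term $s\alpha(x)$ in your structural form of ${\sf GL}(U)$ is in fact forced to vanish, because any representative $\widetilde f\in\snglr_{\leq n,k-1}(R^{n-k}RX,R^{n-k}RX)$ has $\RR^{n-k}$-component equal to $h(u)$ for $s\le 0$ (hence zero $s$-derivative there, hence zero two-sided $s$-derivative at $s=0$ on the dense smooth locus of $X$); this explains why the paper's explicit formula for $D_0 f$ has first coordinate simply $D_0 f_{|\RR^{n-k}}(u)$, but your retraction is correct regardless.
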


\begin{proof}
It is enough to witness the following deformation retractions:
\begin{itemize}
\item
There is a deformation retraction of $\bsc_n(U,U)$ onto $\bsc_n^0(U,U)$ given by $(t,f)\mapsto \gamma_{1,tf(0)} \circ f$.

\item
There is a deformation retraction of $\bsc_n^0(U,U)$ onto ${\sf GL}(U)$ given by $(t,f)\mapsto \bigl(\gamma_{\frac{1}{t},0} \circ f\circ \gamma_{t,0}\bigr)$ -- we continuously extend this expression to $t=0$ as 
\begin{equation}\label{proof:derivative}
(0,f)~{}~\mapsto ~{}~D_0f := \Bigl( [u,s,x]~\mapsto~ \Bigl[D_0f_{|\RR^{n-k}}(u)  ,  D_0f_\RR[s,x]  ,  f_X[0,0,x]\Bigr]  \Bigr)
\end{equation}
which we now explain.

The first coordinate is the derivative of $\RR^{n-k}\xra{f_{|\RR^{n-k}}}\RR^{n-k}$ at the origin.
Choose a lift $\bigl(\RR^{n-k}\times \RR \times X ~ \xra{\w{f}} ~ \RR^{n-k}\times \RR\times X\bigr)\in \w{\bsc}_n$ of $f$.  
Denote the projections of $\w{f}$ onto the $\RR$- and $X$-factors as $f_\RR$ and $f_X$, respectively.  
The third coordinate is the value of $f_X$ at $[0,0,x]$.
By adjointness, regard $f_\RR$ as a map $\RR^{n-k}\times X \to \Map^0(\RR,\RR)$ to the space of smooth origin-preserving maps of the real line.  
Notate the composition $D_0f_\RR \colon X \xra{\{0\}\times 1_X} \RR^{n-k}\times X \xra{f_\RR} \Map^0(\RR,\RR) \xra{D_0} \Map^0(\RR,\RR)$ -- by adjointness we can write it as a map $D_0f_\RR\colon \RR\times X \to \RR$.  
This defines the second coordinate.

That expression~(\ref{proof:derivative}) is independent of $\w{f}$ is immediate from the definition of morphisms in $\bsc_n$.  
Given existence, the expression for $D_0f$ is forced upon us by continuity:
\[
D_0f = \lim_{s\to 0} \gamma_{\frac{1}{s},0} f \gamma_{s,0}~.
\]
That this expression describes an element of ${\sf GL}(U)$ is the string of equalities
\begin{eqnarray}
D_0f \circ \gamma_{t,v}
& = &
\bigl(\lim_{s\to 0} \gamma_{\frac{1}{s},0} f \gamma_{s,0}\bigr)\circ \gamma_{t,v} 
\nonumber
\\
& = & 
\lim_{s\to 0} \gamma_{\frac{1}{s},0} f \gamma_{ts,sv}  
\nonumber
\\
& = & 
\lim_{s\to 0} \Bigl(\bigl( \gamma_{1,-\frac{1}{s}f(-sv)} \gamma_{1,\frac{1}{s}f(-sv} \bigr) \circ \bigl( \gamma_{\frac{1}{s},0} f \gamma_{st,sv}\bigr)\Bigr)
\nonumber
\\
& = & 
\lim_{s\to 0} \Bigl( \bigl(\gamma_{1,-\frac{1}{s} f(-sv)}\bigr)  \circ \bigl( \gamma_{\frac{1}{s},0} \gamma_{1,\frac{1}{s}f(-sv)} f \gamma_{st,sv} \bigr)\Bigr)
\nonumber
\\
& = & 
\Bigl(\lim_{r\to 0} \gamma_{1,\frac{1}{r}f(rv)} \Bigr) \circ \Bigl( \lim_{s\to 0} \bigl( \gamma_{\frac{1}{s},0} \circ ( \lim_{p\to 0} \gamma_{1,f(-pv)} f \gamma_{st,pv}) \bigl) \Bigr)
\nonumber
\\
& = & 
\gamma_{1,D_0f(v)} \circ \bigl( \lim_{s\to 0} \gamma_{\frac{1}{s},0} f \gamma_{st}\bigr)
\nonumber
\\
& = & 
\gamma_{1,D_0f(v)} \circ \bigl(\lim_{s'\to 0} \gamma_{\frac{t}{s},0} f \gamma_{s,0} \bigr)
\nonumber
\\
& = & 
\gamma_{1,D_0f(v)}\gamma_{t,0} \circ \bigl( \lim_{s'\to 0} \gamma_{\frac{1}{s},0} f \gamma_{s,0}\bigr)
\nonumber
\\
& = & 
\gamma_{t,D_0f(v)}\circ  D_0f.  
\nonumber
\end{eqnarray}
Moreover, $T\in {\sf GL}(U)$ implies $\gamma_{\frac{1}{t},0} T \gamma_{t,0} = T$ for all time $t>0$, and so expression~(\ref{proof:derivative}) is indeed a deformation retract.

\item
There is a deformation retraction of ${\sf GL}(U)$ onto $\sO(U)$ given by
\[
(t,T)\mapsto \Bigl( [u,s,t]\mapsto \Bigl[ \mathsf{GrSm}_t(D_0f_{|\RR^{n-k}})(u) ,(1-t) + t D_0f_{\RR}[s,x], f_X[0,0,x]\Bigr] \Bigr)~.
\]

\end{itemize}

\end{proof}

In the proof of Lemma~\ref{derivative} we discovered the 
\begin{definition}[Derivative]\label{derivative-map}
There is the continuous map
\[
D\colon \RR^{n-k}\times \bsc_n(U,U) \to {\sf GL}(U)~,~{}~{}~(v,f)~\mapsto ~\lim_{t \to 0} \gamma_{\frac{1}{t},\frac{1}{t}f(v)} f \gamma_{t,-v}~.
\]
We write the value of $D$ on $(v,f)$ as $D_vf$ and refer to it as the \emph{derivative of $f$ at $v$}.  

\end{definition}

\begin{cor}\label{deriv-homo}
The derivative map $D_0\colon \bsc_n^0(U,U) \to {\sf GL}(U)$ is a homomorphism.
\end{cor}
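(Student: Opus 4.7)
The plan is to exploit the explicit limit formula for the derivative at the origin established in the proof of Lemma~\ref{derivative}. For any $f\in\bsc_n^0(U,U)$ the condition $f(0)=0$ makes the general expression of Definition~\ref{derivative-map} collapse to
\[
D_0 f ~=~ \lim_{t\to 0}\; \gamma_{\frac{1}{t},0}\circ f\circ \gamma_{t,0}~,
\]
since $\frac{1}{t}f(0)=0$. This is the only ingredient I will use beyond the monoid identities $\gamma_{t,0}\circ\gamma_{s,0}=\gamma_{ts,0}$ and $\gamma_{1,0}=1_U$ already recorded around equation~(\ref{gamma}).

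First I would insert an identity in the middle. Given $f,g\in\bsc_n^0(U,U)$, for each $t>0$ the composite $\gamma_{t,0}\circ\gamma_{\frac{1}{t},0}=\gamma_{1,0}=1_U$, so
\[
\gamma_{\frac{1}{t},0}\circ (f\circ g)\circ \gamma_{t,0} ~=~\bigl(\gamma_{\frac{1}{t},0}\circ f\circ \gamma_{t,0}\bigr)\circ\bigl(\gamma_{\frac{1}{t},0}\circ g\circ \gamma_{t,0}\bigr)~.
\]
Taking the limit as $t\to 0$ and using continuity of composition $\bsc_n(U,U)\times\bsc_n(U,U)\to\bsc_n(U,U)$ together with the already-established existence of the individual limits $D_0f$ and $D_0g$ in $\bsc_n^0(U,U)$, the right-hand side tends to $D_0f\circ D_0 g$. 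The left-hand side is by definition $D_0(f\circ g)$. This gives the homomorphism identity $D_0(f\circ g)=D_0f\circ D_0 g$. That $D_0$ sends the identity of $U$ to the identity is immediate from the formula, and the target of $D_0$ was shown to land in ${\sf GL}(U)$ in the proof of Lemma~\ref{derivative}.

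The only subtle point is the passage of the limit through composition, but this is standard given that $\bsc_n(U,U)$ is a topological monoid (composition is jointly continuous), and convergence of each factor was already established as part of the deformation retraction of $\bsc_n^0(U,U)$ onto ${\sf GL}(U)$ in the proof of Lemma~\ref{derivative}. So I do not anticipate any real obstacle; the corollary is essentially a formal consequence of the fact that $D_0$ is realized as conjugation by the one-parameter family $\gamma_{t,0}$ and then taking a limit.
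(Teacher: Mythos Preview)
Your argument is correct. The paper's one-line proof appeals instead to ``the classical chain rule for maps between Euclidean spaces,'' which presumably means verifying the homomorphism identity componentwise via the explicit coordinate formula~(\ref{proof:derivative}) for $D_0f$ established in the proof of Lemma~\ref{derivative}. Your route is genuinely different and arguably cleaner: by recognizing $D_0$ as the $t\to 0$ limit of conjugation by $\gamma_{t,0}$, the homomorphism property becomes a formal consequence of $\gamma_{t,0}\gamma_{1/t,0}=1_U$ together with joint continuity of composition in the topological monoid $\bsc_n(U,U)$; no coordinates need be unpacked. The paper's approach, by contrast, trades this abstract step for a direct check against the classical chain rule, which is more concrete but requires one to track how each of the three coordinates in~(\ref{proof:derivative}) behaves under composition.
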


\begin{proof}
This follows easily from the classical chain rule for maps between Euclidean spaces.  
\end{proof}

\begin{lemma}\label{only-equivalences}
Let $U,V\in \bsc_n$ be basics of depths $k$ and $l$ respectively.  Suppose the space of morphisms $\bsc_n(U,V)\neq \emptyset$ is non-empty.  The following are equivalent.
\begin{enumerate}
\item The depths $k=l$ are equal.
\item There is a morphism $f\colon U \to V$ for which the intersection $\emptyset \neq f(\iota U)\cap \RR^{n-l}\subset V$ is non-empty.
\item The inclusion of the isomorphisms
\[
{\sf Iso}(U,V) \xra{\simeq} \bsc_n(U,V)
\]
is a homotopy equivalence.
\item There is an isomorphism $U\cong V$.  
\end{enumerate}
\end{lemma}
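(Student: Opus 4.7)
The plan is to prove the equivalences by establishing the cycle $(1)\Leftrightarrow(2)$, $(1)\Leftrightarrow(4)$, and $(4)\Leftrightarrow(3)$, leaning on the inductive structure of Definition~\ref{singular-manifolds} and the deformation retractions of Lemma~\ref{derivative}.

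First, $(1)\Rightarrow(2)$ is immediate: writing $U=U^n_X$ and $V=U^n_Y$ with both of depth $k=l$, and picking any representative $(\widetilde{f},h)$ of a morphism $f\in \bsc_n(U,V)$, the map $f$ restricts on $\RR^{n-k}\subset\iota U$ to $h\colon \RR^{n-k}\to\RR^{n-k}=\RR^{n-l}$, so $f(\iota U)\cap\RR^{n-l}\supset h(\RR^{n-k})\neq\emptyset$. For $(2)\Rightarrow(1)$, combine the following two inputs from the defining recursion: first, whenever $\mathrm{depth}(U)>\mathrm{depth}(V)$ the morphism space $\bsc_n(U,V)$ is empty (from the clause $\bsc_{\leq n,k}(U_X,V)=\emptyset$ when $V\in\bsc_{\leq n,k-1}$), so the nonemptiness hypothesis already forces $k\leq l$; second, when $k<l$ one has $\bsc_{\leq n,l}(U,U_Y)=\snglr_{\leq n,l-1}(U,R^{n-l}R_{>0}Y)$, whose image lies in $\RR^{n-l}\times \RR_{>0}\times \iota Y$ and therefore misses $\RR^{n-l}\subset \iota V$, contradicting~(2). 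Hence $k=l$.

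For $(1)\Rightarrow(4)$ the key observation is that any representative $(\widetilde{f},h)$ of a morphism $f\in \bsc_n(U^n_X,U^n_Y)$ comes by definition with a morphism $g_0\in\snglr^c_{\leq k-1,k-1}(X,Y)$, and by the standing remark that every morphism in $\snglr^c$ is an isomorphism, $g_0$ is an isomorphism. One then builds an explicit isomorphism $\phi\colon U^n_X\xra{\cong} U^n_Y$ represented by $(1_{\RR^{n-k}}\times 1_\RR\times g_0,\, 1_{\RR^{n-k}})$, with inverse built from $g_0^{-1}$. The reverse $(4)\Rightarrow(1)$ is trivial since depth is an isomorphism invariant of basics.

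Finally, for $(4)\Rightarrow(3)$, fix an isomorphism $\phi\colon U\xra{\cong} V$; then post-composition with $\phi$ gives a homeomorphism $\bsc_n(U,U)\xra{\cong}\bsc_n(U,V)$ restricting to $\mathsf{Iso}(U,U)\xra{\cong}\mathsf{Iso}(U,V)$, so it suffices to show that $\mathsf{Iso}(U,U)=\Aut(U)\hookrightarrow\bsc_n(U,U)$ is a homotopy equivalence. This is where I would be careful: I would verify that each of the three deformation retractions in the proof of Lemma~\ref{derivative}---the translation $f\mapsto\gamma_{1,tf(0)}\circ f$, the scaling conjugation $f\mapsto \gamma_{1/t,0}\circ f\circ \gamma_{t,0}$ (with limit $D_0f\in{\sf GL}(U)\subset\Aut^0(U)$ via Lemma~\ref{linear-is-iso}), and the Gram-Schmidt homotopy---sends invertible morphisms to invertible morphisms, so that the chain $\bsc_n(U,U)\simeq\bsc_n^0(U,U)\simeq{\sf GL}(U)\simeq\sO(U)$ restricts compatibly to $\Aut(U)\simeq\Aut^0(U)\simeq{\sf GL}(U)\simeq\sO(U)$. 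Then the two-out-of-three property in the triangle gives the equivalence $\Aut(U)\xra{\simeq}\bsc_n(U,U)$. The implication $(3)\Rightarrow(4)$ is immediate from the hypothesis $\bsc_n(U,V)\neq\emptyset$, which under~(3) forces $\mathsf{Iso}(U,V)\neq\emptyset$. The main technical step is the verification that the deformation retractions of Lemma~\ref{derivative} preserve invertibility at every stage, including under the limit $t\to 0$; the other implications are essentially bookkeeping with the inductive definition.
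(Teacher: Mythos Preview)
Your proposal is correct and follows essentially the same route as the paper's proof: both reduce $(1)\Leftrightarrow(2)$ to the clause $\bsc_{\leq n,l}(U,U_Y)=\snglr_{\leq n,l-1}(U,R^{n-l}R_{>0}Y)$ in the inductive definition, both extract the isomorphism $g_0\colon X\cong Y$ from the definition of equal-depth morphisms to get $(1)\Rightarrow(4)$, and both reduce $(4)\Rightarrow(3)$ to the case $U=V$ and invoke the deformation retractions of Lemma~\ref{derivative} onto $\mathsf{GL}(U)$. Your added remark that one must check those retractions preserve invertibility at each stage (so that they restrict to $\Aut(U)$) is a point the paper leaves implicit; otherwise the arguments are the same up to the order of implications.
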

\begin{proof}
We show $(3)\implies (4) \implies (1) \implies (3)$ and $(1)\iff(2)$.
The first implication is obvious.

Write $U=U_X$ and $V=V_Y$ for $X$ and $Y$ compact singular manifolds of dimensions $(k-1)$ and $(l-1)$ respectively.  Inspecting the definition of a morphism $U\xra{g} V$ in $\bsc_n$ we conclude $U\cong V \implies X\cong Y$.  Because $X\cong Y$ implies $\iota X \cong \iota Y$ are homeomorphic, by invariance of domain the dimensions must be equal $k=l$.  So $(4) \implies (1)$.
Moreover, knowing $\bsc_n(U,V)$ is nonempty, we conclude the following.
\begin{itemize}
\item If $k=l$ then $X \cong Y$, hence $U\cong V$.  We can thus assume $U=V$.  From Lemma~\ref{derivative} there is a deformation retraction of both $\bsc_n(U,U)$ and ${\sf Iso}(U,U)$ onto ${\sf GL}(U)$.   Thus $(1)\implies(3)$.  
\item If $k\neq l$ then the image $ g(\iota U)\subset \RR_{>0}\times C(\iota V)$.  In particular this image is disjoint from $\RR^{n-l}$.  Thus $\neg (1) \implies \neg (2)$. 
\item If the image of $g$ is disjoint from $\RR^{n-l}\subset \iota V$, then $k\neq l$.  Thus $\neg(2)\implies\neg(1)$.  
\end{itemize}
\end{proof}

\begin{remark}
We conclude from Lemma~\ref{only-equivalences} that for each point $x$ in a singular manifold $X$ there is a unique isomorphism class of coordinate charts $(U,0) \to (X,x)$ about $x$.
\end{remark}

There are a number of immediate corollaries of Lemma~\ref{only-equivalences}.
\begin{cor}
\begin{enumerate}
\item[~]
\item
Let $U$ be an objet of $\bsc_n$.  Then the coherent nerve of the subcategory of endomorphisms
$N^c \bigl(\End_{\bsc_n}(U)\bigr)$
is a Kan complex (i.e., $\infty$-groupoid).

\item
Let $U\xra{f} V$ be a morphism in $\bsc_n$.
Then exactly one of the following is true:
\begin{enumerate}
\item The edge $f$ is an equivalence in the coherent nerve $N^c\bsc_n$~.  
\item The depth of $U$ is strictly less than the depth of $V$.
\end{enumerate}

\item

Denote the full subcategory $\bsc_{n,=j}\subset \bsc_n$ spanned by the basics of depth exactly $j$.   
Then $\bsc_{n,=(n-k)}$ has a skeleton which is the coproduct (as categories) of topological monoids 
\[
\coprod_{[Y^{k-1}]} \bsc_n(U^n_Y,U^n_Y)~, 
\]
with one summand for each isomorphism class of non-empty compact singular $(k-1)$-manifolds.  
Moreover, each such monoid group-like.

\item

The quasi-category $N^c \bsc_{n,=(n-k)}$ is a Kan complex.

\item
The assignment $U\mapsto \mathsf{depth}(U)$ describes a functor
\[
\bsc_n \to \NN
\]
to the poset natural numbers.  
Upon applying the coherent nerve, this functor is conservative.  
\\
(It follows that $N^c\bigl( \bsc_n \bigr) \to \NN$ is a fibration of quasi-categories, though this will not be used.)

\end{enumerate}

\end{cor}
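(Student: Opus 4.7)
The plan is to deduce each of the five clauses directly from the two immediately preceding lemmas, namely Lemma~\ref{derivative} (producing a deformation retraction $\bsc_n(U,U) \simeq \sO(U)$) and Lemma~\ref{only-equivalences} (characterizing when mapping spaces between basics are nonempty in terms of depth), together with the standard fact that the coherent nerve of a one-object $\Top$-enriched category is a Kan complex if and only if $\pi_0$ of the endomorphism monoid is a group.

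For clause (1), I would invoke Lemma~\ref{derivative} to obtain a homotopy equivalence of topological monoids $\sO(U) \simeq \bsc_n(U,U)$, where $\sO(U) = \sO(\RR^{n-k}) \times \Aut(X)$ for $U = U^n_X$. Since $\Aut(X)$ is a topological group (its elements are strictly invertible automorphisms of the compact singular manifold $X$), $\sO(U)$ is a topological group, so $\pi_0 \bsc_n(U,U)$ inherits a group structure and the Kan condition follows. For clause (2), the depth dichotomy is rooted in Definition~\ref{singular-manifolds}, which imposes $\bsc_n(U_X, V) = \emptyset$ whenever $\mathsf{depth}(V) < \mathsf{depth}(U_X)$. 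Thus for any $f \colon U \to V$ either $\mathsf{depth}(U) = \mathsf{depth}(V)$, in which case Lemma~\ref{only-equivalences} says $\Iso(U,V) \hookrightarrow \bsc_n(U,V)$ is an equivalence, so $f$ is homotopic to an isomorphism and hence an equivalence in $N^c\bsc_n$; or $\mathsf{depth}(U) < \mathsf{depth}(V)$, in which case any candidate homotopy inverse would live in the empty set $\bsc_n(V,U)$, obstructing invertibility.

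For clauses (3) and (4), Lemma~\ref{only-equivalences} shows that in $\bsc_{n,=(n-k)}$ any morphism $U^n_Y \to U^n_{Y'}$ forces an isomorphism $U^n_Y \cong U^n_{Y'}$, and hence $Y \cong Y'$. Choosing one representative per isomorphism class of compact singular $(k-1)$-manifolds $Y$ therefore yields a skeleton that splits as a disjoint union of one-object topological categories, one for each class $[Y]$, with endomorphisms $\bsc_n(U^n_Y, U^n_Y)$; each such monoid is group-like by clause (1). Since coherent nerves commute with coproducts and each one-object summand is a Kan complex by the criterion above, $N^c \bsc_{n,=(n-k)}$ is a coproduct of Kan complexes, hence itself a Kan complex.

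Finally, for clause (5), functoriality of $U \mapsto \mathsf{depth}(U)$ to the poset $\NN$ is the same emptiness observation used in (2), and conservativity of the induced functor $N^c\bsc_n \to \NN$ is exactly the dichotomy of (2): an edge is sent to an equivalence in $\NN$ (necessarily an identity, since $\NN$ is a poset) iff the two depths agree, iff the edge is an equivalence in $N^c\bsc_n$. Since all the real work is absorbed into the two preceding lemmas, I do not anticipate a significant obstacle here; the main care is only to match each dichotomy and group-likeness claim to the correct preceding result and to invoke the well-known characterization of Kan complexes arising from one-object $\Top$-enriched categories.
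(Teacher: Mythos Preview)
Your proposal is correct and matches the paper's approach: the paper presents this corollary as immediate from Lemma~\ref{only-equivalences} (and implicitly Lemma~\ref{derivative}), giving no separate proof, and your write-up simply spells out why each clause follows from those two lemmas together with the standard fact that the coherent nerve of a topological monoid is a Kan complex precisely when the monoid is group-like.
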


Recall the topological monoid $\w{\bsc}_n(U,U)$ which appearred in Definition~\ref{singular-manifolds}.  
\begin{lemma}\label{extensions}
The map of topological monoids
\[
\w{\bsc}_n(U,U) \to \bsc_n(U,U)
\]
is a homotopy equivalence of spaces.  
\end{lemma}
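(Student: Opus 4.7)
The plan is to show that the evident quotient map $q \colon \widetilde{\bsc}_n(U,U) \to \bsc_n(U,U)$ is a Serre fibration with contractible fibers, hence a weak homotopy equivalence; since both spaces are nice (for instance metrizable of the homotopy type of CW complexes, inherited from the weak $C^\infty$ Whitney topology on appropriate embedding spaces), this upgrades to a homotopy equivalence. Write $U = U_X^n$ with $X$ a compact singular $(k-1)$-manifold.

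First I would analyze the fiber. By construction, the equivalence relation on $\widetilde{\bsc}_n(U,U)$ identifies $(\widetilde{f},h) \sim (\widetilde{f}',h')$ precisely when $h = h'$ and the restrictions of $\widetilde{f}$ and $\widetilde{f}'$ to $\RR^{n-k}\times \RR_{\geq 0} \times \iota X$ agree. Thus, fixing a representative $(f,h)$ with associated $g$ and $g_0$, the fiber $q^{-1}(f,h)$ is exactly the space of extensions of the germ $\widetilde{f}|_{\RR^{n-k}\times \RR_{\geq 0}\times \iota X}$ to a morphism $R^{n-k}RX \to R^{n-k}RX \in \snglr_{\leq n,k-1}$ of the prescribed form, i.e., subject to factoring through $g$ on the $\RR^{n-k}\times \iota X$ projection and agreeing with $g_0$ at the slice $\{(h(0),0)\}\times \iota X$. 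Crucially, the positive-side data already determine both $g_0$ and the germ of $g$ along $\{0\}\times \iota X$; what remains free is the extension of $\widetilde{f}$ over the region $\RR^{n-k}\times \RR_{<0}\times \iota X$.

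Next I would exhibit a canonical section $\sigma \colon \bsc_n(U,U) \to \widetilde{\bsc}_n(U,U)$. Given $(f,h)$, choose a smooth bump $\beta \colon \RR \to \RR_{\geq 0}$ that equals $1$ on $\RR_{\geq 0}$ and equals $0$ on $\RR_{\leq -1}$, and define $\sigma(f,h) = (\widehat{f},h)$ where $\widehat{f}(u,s,x)$ interpolates between the given representative of $\widetilde{f}$ on $s \geq 0$ and the product extension $(u,s,x)\mapsto (h(u),s,g_0(x))$ on $s$ very negative, patched via $\beta$. Continuity of this assignment in $(f,h)$ follows from the continuity of the restriction map that recovers $g_0$ from the positive germ (via Lemma~\ref{derivative} applied along the cone direction), so $\sigma$ is well-defined up to the declared equivalence relation.

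The key step is constructing, for any $(\widetilde{f},h)$, a continuous path in its fiber from $\widetilde{f}$ to $\widehat{f} := \sigma q(\widetilde{f},h)$. I would use the scaling maps $\gamma_{t,0}$ of~\S\ref{orthogonal} acting in the cone coordinate to progressively push any discrepancy into a smaller and smaller neighborhood of $\{s = -\infty\}$. Concretely, choose a smooth isotopy $\phi_t \colon \RR \to \RR$, $t \in [0,1]$, of smooth embeddings with $\phi_0 = \id_\RR$, $\phi_t|_{\RR_{\geq 0}} = \id$ for all $t$, and $\phi_1(\RR_{\leq -1}) \subset \RR_{\leq -N}$ for $N$ arbitrarily large; then the homotopy
\[
\widetilde{f}_t \;=\; \beta\text{-patch}\bigl(\widetilde{f}\circ (\id_{\RR^{n-k}}\times \phi_t \times \id_{\iota X}),\; (h,g_0)\text{-product}\bigr)
\]
remains in $\widetilde{\bsc}_n(U,U)$, projects constantly to $(f,h)$ in $\bsc_n(U,U)$, starts at $\widetilde{f}$, and ends at $\widehat{f}$. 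This shows each fiber is contractible, naturally in $(f,h)$, so the entire map $q$ admits a deformation retraction onto the image of $\sigma$, which is a homeomorphic copy of $\bsc_n(U,U)$.

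The main obstacle I anticipate is verifying that all intermediate $\widetilde{f}_t$ genuinely define morphisms in $\snglr_{\leq n,k-1}$ -- that is, that the patch is smooth in the conical sense required by Definition~\ref{singular-manifolds} and that the factorization through an appropriate $g_t$ and the matching at $s = 0$ with $g_0$ persist throughout the deformation. These are both essentially cone-coordinate calculations, handled by choosing $\beta$ and $\phi_t$ depending only on $s$ (so commutation with the projections is automatic) and exploiting that $g_0$ is an isomorphism (so the product extension is a bona fide morphism of basics by Lemma~\ref{only-equivalences}). With these verifications in place, the weak equivalence $q$ is a homotopy equivalence of topological monoids, as claimed.
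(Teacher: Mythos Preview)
Your approach is genuinely different from the paper's, and the part you flag as an ``obstacle'' is a real gap, not a routine verification.

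The paper's proof is nearly immediate given Lemma~\ref{derivative}. One observes that the inclusion $\sO(U)=\sO(\RR^{n-k})\times\Aut(X)\hookrightarrow\bsc_n(U,U)$ lifts in the obvious way to $\widetilde{\bsc}_n(U,U)$, and that the scaling--translation maps $\gamma_{t,v}$ of~(\ref{gamma}) are already maps $R^{n-k}RX\to R^{n-k}RX$, hence also lift. Every deformation retraction in the proof of Lemma~\ref{derivative} is built from pre- and post-composition with $\gamma$'s (followed by Gram--Schmidt, which is again an explicit formula on representatives), so those retractions run verbatim in $\widetilde{\bsc}_n(U,U)$ and exhibit $\sO(U)\hookrightarrow\widetilde{\bsc}_n(U,U)$ as a homotopy equivalence. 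Two-out-of-three finishes.

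Your fiber-contraction strategy stumbles at two places. First, a point of $\widetilde{\bsc}_n(U,U)$ is in particular an \emph{open embedding} $R^{n-k}RX\to R^{n-k}RX$, and a bump-function interpolation between two such embeddings is generally neither injective nor immersive in the transition region; the remark that $\beta$ and $\phi_t$ depend only on $s$ forces commutation with the projections but says nothing about preserving the embedding property. Second, your ``product extension'' $(u,s,x)\mapsto(h(u),s,g_0(x))$ does not match $\widetilde f$ at $s=0$: the defining diagram forces $\widetilde f(u,0,x)=(h(u),0,g'(u,x))$ where $g=(h,g')$ is the induced morphism $R^{n-k}X\to R^{n-k}X$, and $g'(u,x)$ need not equal $g_0(x)$ away from $u=0$. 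Even replacing $g_0$ by $g'$, you still have to smooth the seam at $s=0$, which brings you back to the first problem. One can salvage the direction, but the cleanest salvage is precisely to conjugate by the $\gamma$'s and pass to the limit --- at which point you have reproduced the paper's argument with extra scaffolding.
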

\begin{proof}
First notice the apparent lift  $\sO(U) \to \w{\bsc}_n(U,U)$ of the inclusion $\sO(U) \subset \bsc_n(U,U)$.  
The defining expression~(\ref{gamma}) of $\gamma$ reveals that it factors through $\w{\bsc}_n(U,U) \to \bsc_n(U,U)$.
The methods just established for $\bsc_n(U,U)$ carry through verbatum and give that the inclusion of topological monoids
\[
\sO(U) \xra{\simeq} \w{\bsc}_n(U,U)
\]
is a homotopy equivalence of spaces.  

\end{proof}

\subsection{Dismemberment}\label{section:dismemberment}
We give a useful construction which functorially assigns to each singular $n$-manifold $X$ of depth $k$ a singular $n$-manifold $\w{X}$ of depth less than $k$, equipped with a conically smooth map $\w{X} \to X$ which is an isomorphism of singular $n$-manifolds under $X\smallsetminus X_{n-k}$.  We refer to $\w{X}$ as the \emph{dismemberment} of $X$ (along $X_{n-k}$) and observe useful relations between properties of $X$ and its dismemberment.  
The intuition is that $\w{X}$ is obtained from $X$ by tearing out its deepest stratum then extending by a small collar along the torn locus. The map $\w{X}\to X$ is given by collapsing this collar extension appropriately.  

\begin{lemma}\label{dism}
There are topological functors
\[
\w{(-)}_{\leq 0} \colon \snglr_{n,\leq k} \to \snglr_{n,\leq k}~{}~{}~{}~{}~{}~{}~{}~\w{(-)}\colon \snglr_{n,\leq k} \to \snglr_{n,<k}
\]
equipped with 
\begin{itemize}
\item a natural transformation
\[
i \colon \iota \w{(-)}_{\leq 0} \leftrightarrows \iota \w{(-)}\colon \sigma
\]
with $\sigma i = 1$ the identity transformation,

\item 
a natural transformation
\[
\iota \w{(-)}_{\leq 0} \to \iota
\]
which, for $\partial_k(-):=\bigl(\w{(-)}_{\leq 0}\bigr)_{|(-)_{n-k}}$, restricts as a fiber bundle
\[
\iota \partial_k(-) \to \iota (-)_{n-k}
\]
whose fibers are in $\snglr_{n,k-1}^{\sf cmpt}$ (in particular the fibers are not empty).  

\end{itemize}

\end{lemma}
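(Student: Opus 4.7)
The plan is to proceed by induction on $k$, with base case $k=0$ trivial (set both $\w{(-)}_{\leq 0}$ and $\w{(-)}$ equal to the identity on $\snglr_{n,\leq 0}$, and all transformations the identity). For the inductive step, I would first define the constructions on basics and then extend via atlases.

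On basics, for $U \in \bsc_{n,<k}$ set $\w{U}_{\leq 0} = \w{U} = U$. For $U = U^n_Y \in \bsc_{n,=k}$ with $Y$ a compact singular $(k-1)$-manifold, set
\[
\w{U}_{\leq 0} := \RR^{n-k}\times \RR_{\geq 0}\times Y
\qquad\text{and}\qquad
\w{U} := \RR^{n-k}\times \RR\times Y \cong R^{n-k} R Y,
\]
regarded as singular $n$-manifolds via their evident atlases. Then $\w{U}$ has depth equal to the depth of $Y$, hence strictly less than $k$, while $\w{U}_{\leq 0}$ inherits a depth-$k$ structure concentrated along the boundary $\RR^{n-k}\times\{0\}\times Y$ (locally modelled on the basic $U^n_{*}$ of Example~\ref{boundary} combined with lower-depth basics for $Y$). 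The inclusion $i$ is the evident open embedding $\RR^{n-k}\times \RR_{\geq 0}\times Y\hookrightarrow \RR^{n-k}\times \RR\times Y$; the retraction $\sigma$ is the continuous map $(u,t,y)\mapsto (u,\max(t,0),y)$; and the map $\w{U}_{\leq 0}\to U$ is the composite $\RR^{n-k}\times\RR_{\geq 0}\times Y \to \RR^{n-k}\times C(Y) = \iota U$ that collapses $\{0\}\times Y$ to the cone point. Over the deepest stratum $U_{n-k} = \RR^{n-k}\times\{*\}$, this restricts to the trivial $Y$-bundle $\RR^{n-k}\times\{0\}\times Y\to \RR^{n-k}$, verifying the fiber bundle claim since $Y$ is compact of dimension $k-1$.

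For functoriality on morphisms, recall that a morphism $U^n_X \to U^n_Y$ in $\bsc_n$ between depth-$k$ basics is an equivalence class $[(\w{f}, h)]$ of pairs in $\w{\bsc}_{n,k}(U^n_X,U^n_Y)$, where $(\w{f},h)\sim(\w{f}',h')$ exactly when $h=h'$ and the restrictions to $\RR^{n-k}\times \RR_{\geq 0}\times X$ agree. Thus the assignment on morphisms for $\w{(-)}_{\leq 0}$ given by $[(\w{f},h)]\mapsto \w{f}_{|\RR^{n-k}\times\RR_{\geq 0}\times X}$ is manifestly well-defined and continuous in the morphism. For $\w{(-)}$, one chooses a canonical continuous extension procedure of $\w{f}_{|\RR_{\geq 0}}$ across $\RR_{<0}$; for instance, by fixing once and for all a smooth collar parametrization and setting $\w{f}(u,t,x) := (h(u),t,\w{f}_X(u,0,x))$ for $t<0$, smoothed near $t=0$ using a standard conically smooth bump. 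That different conventions yield naturally isomorphic functors follows from the contractibility of the space of such extensions. Compatibility of these assignments with composition in $\bsc_n$ is a direct check from the definition of composition given after Definition~\ref{singular-manifolds}. Assembly on a general object $X = (\iota X,\cA)\in \snglr_{n,\leq k}$ proceeds chart by chart: glue the basic-level constructions $\w{U}_{\leq 0}$ (respectively $\w{U}$) along the morphisms attached to overlaps in $\cA$, yielding canonically an object of $\snglr_{n,\leq k}$ (respectively $\snglr_{n,<k}$) by Theorem~\ref{no-pre-mans}. The natural transformations $i$, $\sigma$, and $\w{X}_{\leq 0}\to X$, as well as the fiber bundle structure on $\partial_k X\to X_{n-k}$, assemble from their local counterparts.

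The main obstacle is the functoriality of $\w{(-)}$: while $\w{(-)}_{\leq 0}$ depends only on the $\RR_{\geq 0}$-data of a representative (precisely the data preserved by the equivalence relation defining $\bsc_n$), the construction $\w{(-)}$ a priori requires the full $\w{f}$, whose $\RR_{<0}$-data is discarded by the quotient. The resolution above is to fix a canonical extension functor $\w{\bsc}_n(U,V)\to \snglr_{n,<k}(\w{U},\w{V})$ that factors through the quotient $\bsc_n(U,V)$, exploiting that the space of extensions is contractible so any canonical choice yields a strict functor. Alternatively, one may observe that the homotopical content of $\w{(-)}$ is all that is needed for later applications such as Proposition~\ref{tubular-neighborhood}, and invoke Lemma~\ref{extensions} (which identifies $\w{\bsc}_n(U,V)\simeq\bsc_n(U,V)$ up to homotopy) to transport a functor defined on $\w{\bsc}_n$ to one on $\bsc_n$ up to coherent equivalence.
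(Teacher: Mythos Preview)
Your overall architecture matches the paper's proof: induct on $k$, set both functors to the identity in depth $<k$, define them on depth-$k$ basics by $\w{(U_Y)}_{\geq 0}=\RR^{n-k}\times\RR_{\geq 0}\times Y$ and $\w{(U_Y)}=\RR^{n-k}\times\RR\times Y$, then globalize via atlases (the paper takes the colimit $\iota\w{X}:=\colim_{U\to X}\iota\w{U}$ and checks Hausdorff/second countable/atlas conditions). The natural transformations and the fiber-bundle claim over $X_{n-k}$ are handled exactly as you describe.

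The divergence is precisely where you flag the main obstacle: extending a morphism $f\colon U_X\to U_Y$ to $\w{f}\colon\w{U_X}\to\w{U_Y}$. Your proposal---set $\w{f}(u,t,x)=(h(u),t,\w{f}_X(u,0,x))$ for $t<0$ and then smooth near $t=0$---is not shown to respect composition, and invoking contractibility of the space of extensions does not by itself manufacture a \emph{strict} $\Top$-enriched functor. Your homotopical fallback via Lemma~\ref{extensions} gives only a functor up to coherent equivalence, whereas the lemma asserts an honest topological functor.

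The paper closes this gap with an explicit reflection formula. Writing $f[u,s,y]=[f^{s,y}(u),f^{u,y}(s),f^{u,s}(y)]$ for $s\geq 0$, one defines $\w{f}$ on $s\leq 0$ by
\[
\w{f}(u,s,y)=\bigl(f^{-s,y}(u),\ -f^{u,y}(-s),\ f^{u,-s}(y)\bigr),
\]
that is, odd reflection in the $\RR$-coordinate and even reflection in the others. This depends only on the $\RR_{\geq 0}$-data, hence factors through the equivalence relation defining $\bsc_n(U_X,U_Y)$; it is manifestly compatible with composition; it is an open embedding because the $s\geq 0$ half is; and smoothness across $s=0$ reduces to checking that the one-sided limits $\lim_{s\to 0^\pm}f^{u,y}(s)/s$ exist and agree. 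Correspondingly the paper's retraction is $\sigma(u,s,y)=(u,|s|,y)$ rather than your $(u,\max(s,0),y)$, and naturality of $\sigma$ with respect to $\w{f}$ is then immediate from the reflection symmetry. This reflection trick is the missing concrete ingredient in your argument.
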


\begin{remark}

The intuition is that $\w{X}_{\leq 0}$, and $\w{X}$, are obtained from $X$ by deleting an open, respectively closed, tubular neighborhood of the depth $k$-stratum of $X$.  
If $X$ happens to have empty depth $k$-stratum, then $\w{X}_{\leq 0} = \w{X} = X$.  

\end{remark}

\begin{remark}\label{boundary-smooth}

The natural transformations of Lemma~\ref{dism} are defined on underlying spaces.  
This is because we have not yet discussed maps among singular manifolds of unequal dimensions.  
We will do this in~\S\ref{conically-smooth}, afterwhich it is manifestly the case that these natural transformations canonically lift to natural transformations by \emph{conically smooth} maps.  

\end{remark}

\begin{remark}
The intuition is that $\w{X}_{\leq 0}$ is obtained from $X$ by deleting tubular neighborhoods of the singular strata, and that $\w{X}$ is obtained by deleting closed tubular neighborhoods of these strata.  
The exact construction is an imporvement on this intuition so that no information is lost.
Slightly more specifically, where no `deletion' actually takes place, but rather something more like `real blow-up' and `real blow-up with collar extension', respectively.

\end{remark}

\begin{proof}[Proof of Lemma~\ref{dism}]
We construct the functors $\w{(-)}_{\geq 0}$ and $\w{(-)}$, as well as the said natural transformations, by induction on the depth parameter.
For the base case of depth zero, declare $\w{(-)}_{\geq 0} = \w{(-)}$ to be the identity functors -- here we are using the canonical embedding $\mfld_n \subset \snglr_n$.  
The natural transformations are tautological.

Suppose both $\w{(-)}_{\geq 0}$ and $\w{(-)}$ have been defined on $\snglr_{n,<k}$, as well as the natural transformations.
We first define $\w{(-)}_{\geq 0}$ and $\w{(-)}$, as well as the natural transformations, on $\bsc_{n,\leq k}$.
Let $U\in \bsc_{n,\leq k}$ be a basic of depth at most $k$.  
If the depth of $U$ is strictly less than $k$, then $\w{U}_{\geq 0}$ and $\w{U}$, as well as the natural transformations, have already been defined.  
Let $U=U_Y$ be a basic of depth $k$.  
Declare 
\[
\w{(U_Y)}_{\geq 0} := \RR^{n-k} \times \RR_{\geq 0} \times Y~{}~{}~{}~{}~{}~{}~{}~{}~{}~ \w{(U_Y)}:= \RR^{n-k}\times \RR \times Y~.
\]
By induction, $\w{(U_Y)}_{\geq 0}$ is a sngular $n$-manifold of depth $k$, and $\w{(U_Y)}$ is a singular $n$-manifold of depth greater than $k$.
From the inductively defined natural transformations there are the canonical maps
\begin{equation}\label{unzip-trans}
i\colon \iota \w{(U_Y)}_{\geq 0}  =  \RR^{n-k} \times \RR_{\geq 0} \times \iota Y \hookrightarrow \RR^{n-k}\times \RR \times \iota Y = \iota \w{(U_Y)}~,
\end{equation}
\begin{equation}\label{unzip-retract}
\sigma \colon \iota \w{(U_Y)} = \RR^{n-k}\times \RR \times \iota Y \xra{1\times |-| \times \sigma} \RR^{n-k}\times \RR_{\geq 0} \times \iota Y  = \iota \w{(U_Y)}_{\geq 0}~,
\end{equation}
and 
\begin{equation}\label{dism-trans}
\iota \w{(U_Y)}_{\geq 0} =  \RR^{n-k} \times \RR_{\geq 0} \times \iota Y \to \RR^{n-k}\times \RR_{\geq 0} \times \iota Y \to \RR^{n-k} \times C(\iota Y) = \iota U_Y~.
\end{equation}
This latter map restricts to the projection $\partial_k (U_Y) = \RR^{n-k}\times \{0\}\times Y \to \RR^{n-k}$ which is a fiber bundle with fibers in $\snglr_{k-1}^{\sf cmpt}$.  

We now define $\w{(-)}_{\geq 0}$ and $\w{(-)}$, as well as the natural transformations, on morphisms of $\bsc_{n,\leq k}$. 
Let $U \xra{f} V$ be such a morphism.  
If the depth of $U$ or $V$ is less than $k$, then these data have already been defined.  
Assume $U=U_Y$ and $V=V_Z$ have depth $k$.  
Write $f\colon \RR^{n-k}\times C(\iota Y) \to \RR^{n-k}\times C(\iota Z)$ in coordinates as $f[u,s,y] = [f^{s,y}(u),f^{u,y}(s),f^{u,s}(y)]$.  
Exactly from the definition of morphisms between basics, the expression $\w{f}_{\geq 0}(u,s,y) = \bigl(f^{s,y}(u),f^{u,y}(s),\w{f}^{u,s}(y)\bigr)$ is well-defined for $s\geq 0$ (in particular, independent of the choice of representatives for $f$ and for $[u,s,y]$), and the resulting map of sets
\[
\w{(-)}_{\geq 0} \colon \bsc_n(U_Y,V_Z) \to \snglr_n
\]
is continuous.  
Likewise, define the continuous map
\[
\w{(-)}\colon \bsc_n(U_Y,V_Z) \to \snglr_n
\]
in coordinates as
\[
\w{f}^{s,y}(u) = 
\begin{cases}
f^{s, y}(u)
&
s\geq 0
\\
f^{-s, y}(u)
&
s\leq 0
\end{cases}
\] 
\[
\w{f}^{u,y}(s) = 
\begin{cases}
f^{u, y}(s)
&
s\geq 0
\\
-f^{u,y}(-s)
&
s\leq 0
\end{cases}
\]
\[
\w{f}^{u,s}(y) = 
\begin{cases}
\w{f}^{u,s}(y)
&
s\geq 0
\\
\w{f}^{u,s}(y)
&
s\leq 0~.
\end{cases}
\]
This expression is well-defined and it is simple to check that $\w{f}$ is `smooth', which is to say it is an element of $\snglr_n\bigl(\w{(U_Y)},\w{V_Z}\bigr)$ -- the salient point to notice is that the two limits
\[
\lim_{s\to 0^+} \frac{f^{u,y}(s)}{s} ~=~ \lim_{s \to 0^-} \frac{f^{u,y}(s)}{s}
\]
exist and agree for all $u$ and $y$.
It is immediate that these continuous maps of morphisms spaces respect composition and are compatible with the canonical maps~(\ref{unzip-trans}),~(\ref{unzip-retract}), and~(\ref{dism-trans}).  

We now define $\w{(-)}_{\geq 0}$ and $\w{(-)}$, as well as the natural transformations, on $\snglr_{n,\leq k}$.  
For $X$ a singular manifold of depth at most $k$, define the underlying spaces
\[
\iota \w{X}_{\geq 0} := \colim_{ U\to X} \iota \w{U}_{\geq 0} ~{}~{}~{}~{}~{}~{}~{}~{}~ \iota \w{X} := \colim_{U\to X} \iota \w{U}~.
\]
It is routine to verify that these spaces are second countable and Hausdorff, both of these statements follow because morphisms from basics to $X$ form a basis for its topology.  
By construction, $\iota \w{X}_{\geq 0}$ has the open cover $\{\iota \w{U}_{\geq 0}\mid U\xra{\phi} X\}$, and each inclusion $\w{(\psi^{-1} \phi)}_{\geq 0}\colon \iota \w{U}_{\geq 0} \subset \iota \w{V}_{\geq 0}$ is a morphism of singular $n$-manifolds.  
This determines an atlas for $\w{X}_{\geq 0}$, which determines a maximal atlas.  
That $\w{(-)}_{\geq 0}$ is a topological functor is formal from its construction (as a left Kan extension).  
Likewise for $\w{(-)}$.  

It is formal to check that these definitions of $\w{(-)}_{\geq 0}$ and $\w{(-)}$ agree with the inductively defined definitions on $\snglr_{n,<k}$.  
The natural transformations are equally formal, being defined on $\bsc_{n,\leq k}$.  
The conditions on these natural transformations are local conditions and their statements thus reduce to the statements already shown on basics.

\end{proof}

We make some immediate observations about these functors.
\begin{observation}
\begin{itemize}
\item[~]
\item
If $X$ is a smooth $n$-manifold then $\w{X}_{\geq 0} = \w{X} = X$.  
In particular, $\w{(-)}$ is idempotent.  

\item
For $X$ an arbitrary singular $n$-manifold, there is a canonical morphism $X_n \to \w{X}_{\geq 0}$ and $X_n \to \w{X}$ from the top $n$-dimensional stratum.

\item
If $X$ is a singular $n$-manifold, then there is a canonical isomorphism $X \cong \w{X}_{\geq 0}$, this is given by explicitly constructing such on basics by induction.  
In particular, $\w{(-)}_{\geq 0}$ is idempotent.  

\item
There is a canonical isomorphism $\w{(\w{(-)}_{\geq 0}}) \cong \w{(-)}$, this is given by explicitly constructing such on basics by induction.  

\item 
For $X$ an ordinary smooth manifold
\[
\w{X\times Y} \xra{\cong} \w{X}\times \w{Y}~.
\]

\end{itemize}

\end{observation}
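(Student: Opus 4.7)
Since $X$ is a smooth $m$-manifold, hence of depth $0$, the third bulleted observation gives a canonical isomorphism $X \cong \w{X}_{\geq 0}$ and, because the base case of the induction defining $\w{(-)}$ on depth-zero manifolds was the identity functor, we also have $\w{X} = X$. Thus it suffices to construct a canonical isomorphism $\w{X \times Y} \cong X \times \w{Y}$ in $\snglr_{m+n,<k}$, where $k$ is the depth of $Y$.

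The plan is to verify the identification on basics and then extend by the left-Kan-extension presentation of $\w{(-)}$ used in the proof of Lemma~\ref{dism}. For a basic $U = U^n_Z \in \bsc_n$ of depth $k$ with $Z \in \snglr^c_{k-1}$, the product $\RR^m \times U$ has underlying space $\RR^m \times \RR^{n-k} \times C(\iota Z) \cong \RR^{m+n-k} \times C(\iota Z)$, which is canonically the basic $U^{m+n}_Z \in \bsc_{m+n}$ of depth $k$. The explicit formulas on basics from the proof of Lemma~\ref{dism} then give
\[
\w{U^{m+n}_Z} \;=\; \RR^{m+n-k} \times \RR \times Z \;=\; \RR^m \times (\RR^{n-k} \times \RR \times Z) \;=\; \RR^m \times \w{U^n_Z},
\]
which produces the desired identification on objects. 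Naturality in morphisms $U^n_Z \xra{f} U^n_{Z'}$ is immediate from inspection of the coordinate formula defining $\w{f}$ in the proof of Lemma~\ref{dism}: the formula splits independently across the $\RR^{n-k}$-, $\RR$-, and $Z$-coordinates, so the additional $\RR^m$-factor passes through unchanged.

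For the global statement, the atlas of $X \times Y$ is generated (in the sense of Lemma~\ref{facts}(\ref{cross'})) by the product charts $\psi \times \phi \colon \RR^m \times U \to X \times Y$ where $\psi$ ranges over charts of $X$ and $\phi$ over charts of $Y$. These product charts are cofinal among all charts of $X \times Y$ into $\bsc_{m+n}$, so from the colimit definition $\iota \w{(-)} = \colim_{U \to (-)} \iota \w{U}$ given in the proof of Lemma~\ref{dism}, we may restrict to product charts and compute
\[
\iota \w{X\times Y} \;\cong\; \colim_{\substack{V \to X\\ U \to Y}} \iota \w{V \times U} \;\cong\; \colim_{V \to X} V \times \bigl(\colim_{U \to Y} \iota \w{U}\bigr) \;=\; X \times \iota \w{Y},
\]
where the middle step uses the basic identification $\w{V \times U} \cong V \times \w{U}$ and the interchange of $V \times -$ with the colimit over $U \to Y$ (valid because each $V \cong \RR^m$ and the colimit is along open embeddings). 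This identification is compatible with the generated atlases and therefore lifts to an isomorphism of singular $(m+n)$-manifolds.

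The only subtle point---and thus the main obstacle---is verifying the naturality of the basic-case identification under morphisms in $\bsc_{m+n}$ between product basics, and more importantly confirming that the colimit interchange above respects the atlas data rather than merely the underlying spaces. Both rely on Lemma~\ref{only-equivalences} to ensure that morphisms in $\bsc_{m+n}$ between basics of the form $\RR^m \times U^n_Z$ and $\RR^m \times U^n_{Z'}$ factor compatibly with the product decomposition (since these basics have depth equal to the depth of $U^n_Z$, and the smooth $\RR^m$-factor contributes trivially to the cone direction).
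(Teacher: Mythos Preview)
The paper records this entire block as ``immediate observations'' and supplies no proof beyond the parenthetical hints in the third and fourth items that one ``explicitly construct[s] such on basics by induction.'' Your proposal treats only the final item, and your method---verify the identity on basics using the explicit coordinate formulas from the proof of Lemma~\ref{dism}, then globalize via the colimit presentation $\iota\w{(-)}=\colim_{U\to(-)}\iota\w{U}$---is exactly what those hints indicate and is correct in outline.

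The one step you flag as subtle, the cofinality of product charts, does go through: since $X$ has depth zero, the depth of any point of $X\times Y$ equals the depth of its $Y$-coordinate, so every basic chart into $X\times Y$ is refined by product charts (Lemma~\ref{about-X}(4) applied in each factor shows product charts form a basis for the topology of $X\times Y$). Because the colimit defining $\iota\w{(-)}$ is along open embeddings, this basis property suffices to compute it on the subdiagram of product charts; the atlas compatibility then follows from the functoriality of $\w{(-)}$ on morphisms of basics, which you have already checked in coordinates. You have not addressed the other four items, but the paper does not either, and they yield to the same pattern: direct inspection of the base case of the inductive construction (first item), of the inclusion $R^{n-k}R_{>0}Z\hookrightarrow\w{U_Z}$ on basics (second item), and of the basic formulas (third and fourth).
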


\subsection{Tangent bundle}
We define the tangent bundle $TX$ associated to a singular manifold $X$.  
To do so, we must enlarge our notion of a singular manifold in the following way.
Recall that a basic $U$ can be canonically written as $U^n_Y = \RR^{n-k}\times C(Y)$ with $Y$ compact.    
We now allow the possibility that $Y$ admits a finite atlas, yet need not be compact. 
A second countable Hausdorff topological space equipped with a maximal atlas by such basics will be referred to as a \emph{protracted} singular $n$-manifold.  
We will not develop this larger class of protracted singular manifolds, but many of the basic results and definitions for singular manifolds are valid in this protracted setting.

As we will see, the tangent bundle $TX \to X$ of a singular manifold will not be a fiber bundle of topological spaces.
Indeed, the fiber over $x\in X$ is homeomorphic to a small neighborhood of $x$, any of which is homeomorphic to a cone $C(S^{n-k-1}\star Y)$.  
(See Remark~\ref{remark:TXisNotABundle}.) 
However, for each open stratum $X_{j-1,j}$ the restriction $TX_{|{X_{j-1,j}}} \to X_{j-1,j}$ does have the structure of a fiber bundle equipped with a fiberwise $\RR_{\geq 0}$-action in line with the cone structure of these cone-shaped neighborhoods.

\begin{definition}
Let $X$ be a topological space and let $\RR_{\geq 0} = (\RR_{\geq 0},\times)$ be the topological semi-monoid.  A \emph{$\RR_{\geq0}$-space over $X$} is a topological space $E\to X$ over $X$ together with an action of the semi-monoid over $X$
\[
(\RR_{\geq 0} \times X) \times_X E \to E 
\]
whose restriction to $\{1\}\times E$ is the identity on $E$.
We say a $\RR_{\geq 0}$-space over $X$ is \emph{free} if the restriction of the action to $\RR_{>0}$ is free and the map from the quotient $E_{/\RR_{\geq 0}} \to X$ is a homeomorphism.  
We call the resulting section $z\colon X \cong 0\cdot E \to E$ the \emph{zero-section}.  

\end{definition}

\begin{definition}
We simultaneously define: 
\begin{itemize}
\item The \emph{tangent bundle} functor
\[
X\mapsto (TX \to X)
\]
which assigns to each protracted singular manifold $X$ a free $\RR_{\geq 0}$-space $TX$ over $X$. This assignment is natural with respect to conically smooth maps.
\item An isomorphism of free $\RR_{\geq 0}$-spaces $T(X\times Y) \cong TX \times TY$ whenever $Y$ has depth zero. This isomorphism is natural with respect to conically smooth maps in $X$ and $Y$.  
\end{itemize}
We proceed by induction on the depth $k$ of $X$.  For the base case $k=0$ -- when $X$ is an ordinary smooth manifold -- we define $TX \to X$ as the familiar tangent bundle equipped with the free $\RR_{\geq 0}$-action by scaling.  
Record the standard canonical isomorphism $T(X\times Y) \cong TX \times TY$.  

Consider the singular $1$-manifold $\RR_{\geq 0} = U^1_{\ast}$.  
Declare its tangent space $T(\RR_{\geq 0}) = (T\RR)_{|\RR_{\geq 0}} \smallsetminus \{(0,t)\mid t>0\}$.

Suppose the functor $X'\mapsto TX'$ has been defined for $X'$ of depth less than $k$.  
Let $U= U^n_Y$ be a basic of depth $k$.   
Denote the singular $n$-manifold $\w{U} = R^{n-k}R Y$ -- it has depth less than $k$ and there is the standard quotient map of topological spaces $\w{U} \to U$.  
Denote by $\w{U}_{\leq 0}$ the product singular $n$-manifold $R^{n-k} Y \times \RR_{\leq 0}$.  There is the standard inclusion of underlying spaces $\w{U}_{\leq 0} \subset \w{U}$. 
By induction we have defined $T\w{U}$ and a canonical isomorphism $T\w{U} \cong T\RR^{n-k} \times T\RR \times TY$.  
Via this isomorphism, define the subspace $T\w{U}_{\leq 0} \subset T\w{U}$ as 
\[
T\w{U}_{\leq 0} \cong  T\RR^{n-k} \times T\RR_{\leq 0} \times TY  
\]
which is a free $\RR_{\geq 0}$-subspace over $\w{U}_{\leq 0}\subset \w{U}$.

There is the obvious $\RR_{\geq 0}$-equivariant projection $T\w{U}_{\leq 0} \to TU_{n-k}$ over the quotient map $\w{U}_{\leq 0} = \RR^{n-k}\times \RR_{\leq 0} \times Y \to \RR^{n-k} = U_{n-k}$.
Define the topological space $TU$ as the pushout
\begin{equation}\label{solids}
\xymatrix{
T\w{U}_{\leq 0} \ar[r]  \ar[d]
&
T\w{U} \ar@{.>}[d]
\\
TU_{n-k} \ar@{.>}[r]
&
TU
}
\end{equation}
The solid diagram is evidently $\RR_{\geq 0}$-equivariant over $U$, thus giving the $\RR_{\geq 0}$-map $TU\to U$, this $\RR_{\geq 0}$-action is free as desired.  
Provided $X$ has depth zero, the isomorphism $T(X\times U) \cong TX \times TU$ is immediate from the description of the product of singular manifolds (see Lemma~\ref{facts}).

The standard inclusion $R^{n-k}R_{>0} Z \to U^n_Z$ induces an $\RR_{\geq 0}$-equivariant map $T(R^{n-k}R_{>0}Z) \to TU$ over this inclusion.  
Notice also that for each $j$ the inclusion of the $j^{\rm th}$ stratum $U_j \to U$ induces a continuous $\RR_{\geq 0}$-equivariant map $TU_j \to TU$ over the standard inclusion $U_j\subset U$.  
We point out that a fiber of $TU\to U$ over $x\in U_{n-k}$ is canonically isomorphic to $\RR^{n-k}\times (\RR_{\geq 0} \times Y)_{/\sim} = \RR^{n-k} \times C(Y) =  U$ as a $\RR_{\geq 0}$-space, whereas a fiber over a point $x\in U\smallsetminus U_{n-k}$ is a fiber of $T\w{U} \to \w{U}$.

Let $V^n_Z \xra{f} U^n_Y$ be a morphism between basics of depths $l$ and $k$ respectively.  
If $l<k$ then $f$ factors through $R^{n-k}R_{>0} Y$ and we define the map $Df\colon TV\to T(R^{n-k}R_{>0} Y \subset TU$ where the first map is defined by induction.  
Suppose $l=k$.  From the definition of a morphism between equal depth basics,
there is diagram
\[
\xymatrix{
\w{V}  \ar@{.>}[r]^{\w{f}}  \ar[d]
&
\w{U} \ar[d]
\\
V \ar[r]^f 
&
U
}
\]
where the vertical maps are the standard quotient maps and with $\w{f}$ a morphism of protracted singular $n$-manifolds which restricts to a map $\w{V}_{\leq 0} \to \w{U}_{\leq 0}$ over $V_{n-k} \to U_{n-k}$.  
By induction, there is a map $D\w{f}\colon T\w{V} \to T\w{U}$, and $D\w{f}$ gives a map of the solid diagrams depicted in~(\ref{solids}).  There results a map of pushouts $TV\to TU$.  This map is $\RR_{\geq 0}$-equivariant because everything in sight is.  

Define $TX = \colim TU$ where the colimit is over the category $\{U\to X\}$ of basics over $X$.  
That $TX$ is a free $\RR_{\geq 0}$ space over $X$ follows because each term in colimit was so, and each morphism in the colimit respected this structure.  
Clearly the assignment $X\mapsto TX$ is functorial.  Moreover, for $Y$ of depth zero the canonical isomorphism $T(X\times Y) \cong TX\times TY$ is apparent. This completes the definition of the free $\RR_{\geq 0}$-space $TX$ over $X$.  
\end{definition}

The space $TX$ associated to a protracted singular $n$-manifold $X$ has a canonical structure of a protracted singular $2n$-manifold -- the atlas is given from the open cover $TU$, which is evidently a protracted singular $2n$-manifold, as $U\to X$ ranges through coordinate charts.  
Moreover, for $X\xra{f} Y$ a morphism of protracted singular $n$-manifolds, then $Df\colon TX \to TY$ is a morphism of protracted singular $2n$-manifolds for which $TX \xra{\cong} f^\ast TY$ is an isomorphsim over $X$.  

\begin{remark}\label{remark:TXisNotABundle}
We warn the reader that while we call $TX \to X$ the tangent \emph{bundle}, it is only a fiber bundle in the case that $X$ has depth zero.  In the general case, the fibers over different points are not isomorphic.  For instance, in the case that $X$ is a nodal surface, the fiber over a node is isomorphic to $C(S^1\sqcup S^1)$ (as an $\RR_{\geq 0}$-space) whereas the fiber over a smooth point is isomorphic to $\RR^2$ (as an $\RR_{\geq 0}$-space).  
In general, the fiber of $TX \to X$ over $x\in X$ is a singular $n$-manifold which is isomorphic to  a chart containing $x$ in its deepest stratum - the isomorphism class of this chart is well-defined in light of Lemma~\ref{only-equivalences} (this isomorphism is not as $\RR_{\geq 0}$-spaces).  
(This suggests the existence of an exponential map -- while the theory developed here is likely amenable to such methods, we do not develop them here.)  
In any event, while $TX\to X$ is not a fiber bundle of topological spaces, for each open stratum $X_{j-1,j}\subset X$ the restriction $TX_{|X_{j-1,j}}\to X_{j-1,j}$ is a fiber bundle.  
\end{remark}

\subsection{Conically smooth maps}\label{conically-smooth}
Until this moment, we have only considered ``smooth'' open embeddings $X\to Y$.  
We now define the space of ``smooth'' maps $X\to Y$ which are not necessarily open embeddings.  
This larger class of maps is necessary for discussing vector fields, flows, and (positive codimension) embeddings.  
We refer to this notion of smoothness as \emph{conical smoothness} and say that such a map is \emph{conically smooth}.

Let $X$ and $Y$ be protracted singular manifolds.  
We define the set $C^\infty(X,Y)$ of conically smooth maps from $X$ to $Y$ as follows.  
We use induction on the depth $l$ of $Y$ with the base case $l=0$ so that $Y$ is an ordinary smooth manifold.  
If $X$ has depth zero define $C^\infty(X,Y)$ as the familiar set of smooth maps.  
Suppose we have defined $C^\infty(X,Y)$ for $X$ of depth less than $k$.  
Let $U=U^n_Z$ be a basic of depth $k$.  
Define $C^\infty(U,Y) \subset C^\infty(R^{n-k}RZ, Y)$ as the subset of those $f$ which factor through the quotient map $\RR^{n-k}\times (\RR \times Z) \to \RR^{n-k}\times C(Z)$.  
For $X$ of depth $k$, define $C^\infty(X,Y)$ as the subset of those continuous maps $f$ for which for each chart $U\xra{\phi} X$ the composition $f \phi\in C^\infty(U,Y)$ is conically smooth.
Assume then that we have defined $C^\infty(X,Y')$ whenever $Y'$ has depth less than $l$.  
Suppose $Y=V^m_Q$ is a basic of depth $l$ and $X = U^n_Z$ is a basic.  
Define $C^\infty_0(U,V)$ as the set of those continuous maps $f$ for which the diagram can be filled
\[
\xymatrix{
\RR^{n-k}\times \RR \times Z  \ar@{.>}[r]^{\w{f}}  \ar[d]
&
\RR^{m-l}\times \RR \times Q  \ar[d]
\\
\RR^{n-k}\times CZ \ar[r]^f 
&
\RR^{m-l}\times CQ 
}
\]
with $\w{f}\in C^\infty(R^{n-k}RZ,R^{m-l}RQ)$ where the vertical maps are the standard quotient maps.  
Define $C^\infty(X,Y)$ as the set of those continuous maps $f$ for which for each $x\in X$ there is a pair of charts $(U,0)\xra{\phi} (X,x)$ and $(V,0)\xra{\psi} \bigl(Y,f(x)\bigr)$ for which $f\bigl(\phi(U)\bigr) \subset \psi(V)$ and the composition $\psi^{-1} f \phi \in C^\infty_0(U,V)$.  
These sets $C^\infty(-,-)$ are closed under composition.  

\begin{example}
The inclusion of a stratum $X_j\to X$ is a conically smooth map.  

\end{example}

\begin{remark}
The following example serves to warn the reader two-fold.
Consider the open `Y'-graph $Y =U^1_{\bf{3}}= C(\{a,b,c\}) = (\RR\times \{a,b,c\})_{/\sim}$.  
Consider the map $\RR\xra{f} Y$ given by $t\mapsto (\exp(\frac{-1}{1+t}),a)$ for $t<-1$, as $t\mapsto (\exp(\frac{-1}{t-1}),b)$ for $t>1$, and as $t\mapsto [(0,a)] = [(0,b)]$ otherwise.  
Then $f$ is conically smooth in addition to being a proper map with $0$ mapping to the $Y$-point.  
This illustrates that a conically smooth map need not preserve stratifications in any sense.  
It also illustrates that the obvious inclusion $C^\infty_0(U,V)\subset C^\infty(U,V)$ is not an equality -- indeed, the depicted conically smooth map $f$ is not an element of $C^\infty_0$.  
\end{remark}

It is routine that $TX \to X$ and the action $\RR_{\geq 0}\times TX \to TX$ are conically smooth maps.  
It is likewise routine that a conically smooth map $X\xra{f} Y$ induces a conically smooth map $Df\colon TX \to TY$ of $\RR_{\geq 0}$-spaces over $X\xra{f} Y$.
Endow $C^\infty(X,Y)$ with the weakest topology so that the iterated derivative map $D^r\colon C^\infty(X,Y) \to \Top(T^r X, T^r Y)$ is continuous for each $r\geq 0$.  
With this topology, the composition maps $C^\infty(X,Y)\times C^\infty(Y,Z) \to C^\infty(X,Z)$ are continuous.  
It is an exercise to verify that, for $X$ and $Y$ singular $n$-manifolds, the apparent set map $\snglr_n(X,Y)\to C^\infty(X,Y)$ is the inclusion of a subset, and that for $X$ and $Y$ smooth, this space of maps agrees with the weak Whitney $C^\infty$ topology.

\begin{lemma}\label{part-o-1}
There are conically smooth partitions of unity.  
\end{lemma}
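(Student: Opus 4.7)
The plan is to imitate the classical construction of smooth partitions of unity on ordinary manifolds, subordinate to covers by basic coordinate charts, after first exhibiting enough conically smooth bump functions on each basic. The main obstruction is producing such bumps near points in positive-depth strata, where the naive notion of smoothness fails; this is overcome by exploiting the cone structure and the explicit form of conical smoothness from \textsection\ref{conically-smooth}.

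First I would construct conically smooth bump functions on basics. Given $U = U^n_Y = \RR^{n-k}\times C(\iota Y)$, a point $p\in \iota U$, and any open neighborhood $W\ni p$, I would produce a conically smooth function $\phi\colon \iota U \to [0,1]$ which is identically $1$ on a smaller neighborhood of $p$ and identically $0$ outside a compact subset of $W$. When $p\in \RR^{n-k}\subset \iota U$ lies in the deepest stratum, one may simply take
\[
\phi\bigl(u,[s,y]\bigr) = \psi(u)\,\chi(s)
\]
where $\psi\colon \RR^{n-k}\to [0,1]$ is a classical smooth bump centered at the projection of $p$ and $\chi\colon \RR\to [0,1]$ is a smooth bump supported in a small interval about $0$. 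Such $\phi$ is conically smooth because its canonical lift $\widetilde{\phi}(u,s,y)=\psi(u)\chi(s)$ to $\RR^{n-k}\times\RR\times \iota Y$ is smooth and independent of $y$, hence factors through the cone quotient. When $p\notin \RR^{n-k}$, a small neighborhood of $p$ is disjoint from the deepest stratum, and one reduces by induction on depth, working in a basic chart about $p$ of the form $\RR^{n-k}\times \RR_{>0}\times V$ for $V\in \bsc_{k-1}$ a chart near $p$ in $Y$.

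Next I would globalize using paracompactness. By Lemma~\ref{about-X}, the underlying space $\iota X$ is paracompact and locally compact, and the images of basics under atlas elements form a basis for its topology. Given any open cover $\{O_\alpha\}$ of $\iota X$, one selects a locally finite refinement by coordinate charts $\{(U_i,\phi_i)\}$ with each $\phi_i(\iota U_i)$ contained in some $O_{\alpha(i)}$ and with closure $\overline{\phi_i(\iota U_i)}$ compact, arranged so that the interiors of the ``$\phi=1$'' regions of the bumps constructed above cover $\iota X$. Each resulting bump function $\psi_i$ on $\iota U_i$ has compact support and thus extends by zero to a function $\widetilde{\psi}_i\colon \iota X\to [0,1]$, which is conically smooth because conical smoothness is local and $\psi_i$ vanishes to infinite order near the boundary of its support.

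The main obstacle is verifying conical smoothness at points of deep strata and confirming closure properties. The definition of conical smoothness in \textsection\ref{conically-smooth} reduces verification to smoothness of an explicit lift to the pre-cone, which for the $y$-independent bump above is immediate. Closure of conically smooth functions under finite sums, products, and quotients by everywhere-positive conically smooth functions likewise follows directly from the definition, since pre-cone lifts can be added, multiplied, and divided. With these in hand, local finiteness ensures that $\Sigma := \sum_i \widetilde{\psi}_i$ is a well-defined, conically smooth, everywhere-positive function on $X$, and the normalized functions $\widetilde{\psi}_i/\Sigma$ form the desired conically smooth partition of unity subordinate to $\{O_\alpha\}$.
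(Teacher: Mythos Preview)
Your approach is essentially the paper's: construct conically smooth bumps on basics and then invoke paracompactness (Lemma~\ref{about-X}). One correction is needed in your bump construction at deep points. Independence of $y$ alone does \emph{not} ensure that $\widetilde{\phi}(u,s,y)=\psi(u)\chi(s)$ descends through the quotient $\RR^{n-k}\times\RR\times\iota Y\to\RR^{n-k}\times C(\iota Y)$: that quotient collapses all of $\RR_{\leq 0}\times\iota Y$ to the cone point, so you also need $\chi$ constant on $\RR_{\leq 0}$. A symmetric bump ``supported in a small interval about $0$'' fails this. Replacing $\chi$ by a one-sided cutoff --- identically $1$ on $(-\infty,\epsilon/2]$ and vanishing on $[\epsilon,\infty)$ --- fixes it immediately; the paper does exactly this, packaged via the identification $\iota U\approx C(S^{n-k-1}\star\iota Y)$ and a single radial cutoff $\RR_{\geq 0}\to[0,1]$.
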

\begin{proof}
This follows the classical arguments founded on the existence of smooth bump functions -- here the essential observation is that $\{f(\iota V)\subset \iota U \mid V\xra{f} U\}$ forms a basis for $\iota U$, and for $\phi'\colon \RR_{\geq 0} \to [0,1]$ a smooth map which is $1$ on $[0,\epsilon/2]$ and $0$ outside $[0,\epsilon)$ for some $\epsilon>0$, then the composition $\phi\colon \iota U\approx C(S^{k-1}\star \iota Y) \xra{pr} \RR_{\geq 0} \xra{\phi'} [0,1]$ is a conically smooth map whose support is in an (arbitrarily) small neighborhood of $0\in \RR^{n-k}\subset \iota U$.  The remaining points are typical given that $\iota X$ is paracompact.  
\end{proof}

\subsection{Vector fields and flows}

Consider the dense $\RR_{\geq 0}$-subspace over $X$
\[
PX= \bigcup_{0\leq j \leq n} TX_{j-1,j} \subset TX
\]
which is the union of the tangent spaces of the open strata (which are ordinary smooth manifolds).  Refer to $PX$ as the \emph{parallel tangent space of $X$}.  

\begin{definition}
A \emph{parallel vector field} on $X$ is a conically smooth section $V\colon X \to TX$ which factors through $PX \subset TX$.
\end{definition}

Vector sum gives the commutative diagram of $\RR_{\geq 0}$-spaces over $X$
\[
PX\times_X TX 
\xra{+}
TX 
\]
whose restriction to $PX\times_X PX$ factors through $PX$.  
This operation $+$ is defined through local coordinates $(U,0)\mapsto (X,x)$ at a point of depth $j$ by the expression 
\[
(TU_j)\times_{U_j} (U_j\times U) \xra{+} (U_j\times U)~{},~{}~  \bigl((u,v),(u,[w,s,y])\bigr)\mapsto \bigl(u,[v+w,s,y]\bigr)~.
\]
By inspection $+$ is associative and commutative, thereby making $PX$ into a vector space over $X$ and $TX$ a module of $PX$ over $X$.  
As so, the space of parallel vector fields on $X$ is a vector space.

\begin{lemma}\label{flowing}
Let $V$ be a parallel vector field on $X$.   
Then $V$ can be integrated to a conically smooth flow $\eta\colon \RR\times X \dashrightarrow X$ defined on a neighborhood of $\{0\}\times X$.  
\end{lemma}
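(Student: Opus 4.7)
The plan is to establish existence of local integral curves in basic coordinate charts, verify uniqueness, and then patch. Structurally this parallels the classical construction on smooth manifolds but with a double induction: an outer induction on the depth $k$ of $X$, and within a basic chart $U^n_Y = \RR^{n-k}\times C(Y)$ an appeal to the inductive hypothesis applied to $Y$, combined with classical ODE theory on the smooth factor $\RR^{n-k}$. The base case $k=0$ is the standard existence and uniqueness theorem for smooth ODEs; we may therefore assume the lemma established in depths $<k$.

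First I would show a \emph{stratum-preservation} property: because $V$ is parallel (factors through $PX$), the restriction $V_{|X_{j-1,j}}$ is an ordinary smooth vector field on the smooth manifold $X_{j-1,j}$ for each $j$, and hence any conically smooth integral curve $\gamma$ of $V$ which meets a stratum $X_{j-1,j}$ at some time must lie entirely in $X_{j-1,j}$. This follows from the fact that a conically smooth map $\RR\to X$ has a well-defined depth on each point, the derivative $D\gamma$ takes values in $PX$ if $\gamma'=V\circ\gamma$, and transitions between strata would force the derivative to leave $PX$. Stratum-preservation is what makes uniqueness automatic: on each open smooth stratum we are in the classical setting, and an integral curve entering the deep stratum $X_{n-k}$ is forced to remain there.

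Next I would construct a local flow in a basic $U = \RR^{n-k}\times C(Y)$ centered at a depth-$k$ point. Using the dismemberment $\w{U}=\RR^{n-k}\times\RR\times Y$ from \S\ref{section:dismemberment} and the fact that $V$ is parallel, the pullback $\w{V}$ of $V_{|U\setminus U_{n-k}}$ to $\RR^{n-k}\times\RR_{>0}\times Y$ decomposes as $(W(u,s,y),\, a(u,s,y)\partial_s,\, Z(u,s,y))$ where $W$ is a smooth $\RR^{n-k}$-valued function, $a$ is a smooth function vanishing along $\{s=0\}$ (by the parallel condition applied to the radial direction), and $Z$ is a conically smooth section of $pr_Y^* PY$. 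The inductive hypothesis applied fiberwise in $\RR^{n-k}$ to $Y$ produces a flow on $R^{n-k}RY$; combined with the classical flow of $(W,a\partial_s)$ on $\RR^{n-k}\times\RR_{\geq 0}$, descending along the cone quotient $\RR^{n-k}\times\RR_{\geq 0}\times Y\to \RR^{n-k}\times C(Y)$ gives a candidate local flow on $U\setminus U_{n-k}$. On $U_{n-k}=\RR^{n-k}$ the restriction $V(u,0)=(W_0(u),0)$ is the classical smooth vector field whose flow I take as the definition on the deep stratum.

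The hard step, and the main obstacle, is verifying conical smoothness of the combined local flow $\eta$ at points of the deep stratum $U_{n-k}$. One needs to exhibit, for each time $t$ near $0$, a lift of $\eta_t\colon U\dashrightarrow U$ to a conically smooth map $\widetilde{\eta}_t\colon \w{U}_{\geq 0}\dashrightarrow \w{U}_{\geq 0}$ in the sense of the definition of morphisms in $\bsc_n$; in particular the $s$-coordinate of $\widetilde{\eta}_t$ must be smooth across $s=0$ and the limits $\lim_{s\to 0^+}\widetilde{\eta}_t^{u,y}(s)/s$ must exist and match the classical derivative of $W_t$. This reduces to the smooth dependence of the ODE for $(u,s)$ on parameters $y$ and times $t$, together with the vanishing $a(u,0,y)=0$; a Taylor expansion of $a$ in $s$ about $s=0$ gives the required cone-scaling behavior. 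Once conical smoothness at the deep stratum is verified, naturality of the derivative and uniqueness from the first step guarantee that the local flows in different basics agree on overlaps, so a single conically smooth $\eta\colon \mathcal{O}\to X$ defined on an open neighborhood $\mathcal{O}\subset\RR\times X$ of $\{0\}\times X$ assembles using a conically smooth partition of unity subordinate to a cover of $X$ by basics (Lemma~\ref{part-o-1}) only to the extent of choosing time intervals; the actual gluing is by uniqueness.
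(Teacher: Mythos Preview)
Your overall architecture---induction on depth, lift to the dismemberment, integrate there, descend---matches the paper's. But your execution of the inductive step is muddled, and the paper's organization sidesteps the very obstacle you flag as ``the hard step.''

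First, the muddled step. You write that ``the inductive hypothesis applied fiberwise in $\RR^{n-k}$ to $Y$ produces a flow on $R^{n-k}RY$; combined with the classical flow of $(W,a\partial_s)$ on $\RR^{n-k}\times\RR_{\geq 0}$.'' This is not how the induction applies. The system $\dot u=W(u,s,y)$, $\dot s=a(u,s,y)$, $\dot y=Z(u,s,y)$ is fully coupled; you cannot integrate $(W,a\partial_s)$ on $\RR^{n-k}\times\RR_{\geq 0}$ separately from $Z$ on $Y$ and then ``combine.'' What the induction actually gives is this: $\widetilde U=R^{n-k}RY$ is a singular $n$-manifold of depth strictly less than $k$, and the lift $\widetilde V$ is a parallel vector field on it (this is the content of the paper's observation that $PU\times_U T\widetilde U\to T\widetilde U$ factors through $P\widetilde U$). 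So the inductive hypothesis applies \emph{directly} to $(\widetilde U,\widetilde V)$ and produces a single conically smooth flow $\widetilde\eta$ on $\widetilde U$. No decomposition or recombination is needed.

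Second, the hard step evaporates once you adopt the paper's global organization. Rather than lift and flow chart-by-chart, the paper uses a conically smooth partition of unity to assemble the local lifts $\widetilde V_\alpha$ into one parallel vector field $\widetilde V$ on the global dismemberment $\widetilde X$ (depth $<k$). Induction yields a conically smooth flow $\widetilde\eta$ on $\widetilde X$. Because $\widetilde V$ lands in $PX\times_{TX}T\widetilde X$, its restriction to $\partial\widetilde X_{n-k}$ is tangent to $\partial\widetilde X_{n-k}$; hence $\widetilde\eta$ preserves $\partial\widetilde X_{n-k}$ and therefore $\widetilde X_{\leq 0}$. Passing to the quotient $\widetilde X_{\leq 0}\to X_{n-k}$ then \emph{automatically} gives a conically smooth map $\eta$ on $X$---no Taylor expansion in $s$, no verification of matching one-sided limits. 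Your stratum-preservation and uniqueness discussion is correct but becomes unnecessary for the gluing, since there is only one flow to descend.
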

\begin{proof}
Let $k$ be the depth of $X$.  For $k=0$ the result is classical -- from the existence, uniqueness, and smooth dependence on initial conditions for ODE's.  Assume $k>0$.

By definition of $V$ being conically smooth there is a collection of charts $\{U_\alpha \xra{\phi_\alpha} X\}$ for which $\{\phi_\alpha\bigl((U_\alpha)_k\bigr)\}$ covers $X_{n-k}$, and for each $\alpha$ there is a filling 
\[
\xymatrix{
\w{U}_\alpha  \ar@{.>}[r]^-{\w{V}_\alpha}  \ar[d]
&
T\w{U}_\alpha  \ar[d]
\\
U_\alpha  \ar[r]^-{V_{|U_\alpha}}
&
TU_\alpha.
}
\]
From the construction of $\partial \w{X}_{n-k}$, the images of the collection $\{\partial(\w{U}_k)_\alpha\to \partial \w{X}_{n-k} \}$ is a cover.   
Upon possibly shrinking $\w{X}$ over $X$, and using the canonical isomorphism $\w{X}\smallsetminus \w{X}_{\leq 0} \xra{\cong} X\smallsetminus X_{n-k}$, the collection $\{\w{\phi}_\alpha(\w{U}_\alpha)\}\amalg  \{\phi(U)\mid U\xra{\phi} X\text{ of depth $>k$}\}$ is an open cover of $\w{X}$.
Upon choosing a partition of unity $\{\psi_{\alpha'}\}$ subbordinate to this open cover, the expression $\w{V} = \Sigma_{\alpha'} \psi_{\alpha'} \cdot \w{V}_{\alpha'}$ gives a filling
\[
\xymatrix{
\w{X}  \ar@{.>}[r]^{\w{V}}  \ar[d]
&
T\w{X}  \ar[d]
\\
X  \ar[r]^V
&
TX
}
\]
after possibly shrinking $\w{X}$ over $X$ -- here $\w{V}_{\alpha'} = V_{|U_{\alpha'}}$ if $\alpha'\notin\{\alpha\}$ which makes sense through $\w{X}\smallsetminus \w{X}_{\leq 0} \xra{\cong} X\smallsetminus X_{n-k}$.

Evidently, $\w{V}$ factors through $PX\times_{TX} T\w{X}$.  
Straight from the definition of $\w{U}$ and $P(-)$, the canonical morphism $PU\times_U T\w{U}\to T\w{U}$ factors through $P\w{U}$, so likewise with $U$ replaced by $X$.  
By induction on $k$, there is a flow $\w{\eta}\colon \RR\times \w{X} \dashrightarrow \w{X}$ defined on a neighborhood of $\{0\}\times \w{X}$.  

It is immediate to see (using local coordinates for instance) that this morphism is an isomorphism onto its image, which is characterized as the subobject $I\subset P\w{X}$ for which $I_{|X_{n-k}} = ker\bigl(T(\partial \w{X}_{n-k}) \to I_{|\partial X_{n-k}}\bigr)$ where the arrow $\partial \w{X}_{n-k} \to \w{X}$ is a conically smooth embedding.  
As so, the restriction of the flow $\w{\eta}\colon \RR\times \partial\w{X}_{n-k} \dashrightarrow \w{X}$ factors through $\partial\w{X}_{n-k}$.  
Because $\partial \w{X}_{n-k}$ separates $\w{X}$, it follows likewise that the restriction factors $\w{\eta}\colon \RR\times \w{X}_{\leq 0} \dashrightarrow \w{X}_{\leq 0}$.  
There results the map on quotients $\eta\colon \RR\times X \dashrightarrow X$ as desired.

\end{proof}

\subsection{Proof of Proposition~\ref{tubular-neighborhood}}

The statement of Proposition~\ref{tubular-neighborhood} immediately appeals to using Morse theoretic arguments.  However, such a discussion would require the existence and utility of Morse functions our context of singular manifolds, which would require some careful development of transversality in this singular setting.  
While the presentation of singular manifolds featured in this article is likely amenable to a theory of transversality, we find a proof of Proposition~\ref{tubular-neighborhood} which involves far less development.

Let $X$ be a singular $n$-manifold. 
Choose a conically smooth partition of unity $\{\psi_\alpha\}$ subbordinate to the open cover $\{\w{U}_\alpha\mid U_\alpha\xra{\phi_\alpha} X\}$ of $\w{X}$.

For each $U_\alpha\xra{\phi_\alpha} X$ of depth $k$, written $U_\alpha = U^n_Y$, consider the flow $\eta^\alpha\colon \RR\times \w{U}_\alpha \to \w{U}_\alpha$ given by $(t,(v,s,y)) = (v,s+t,y)$ -- it is a conically smooth map having no stationary points and $\eta^\alpha_0$ is the identity.  
For each $x\in \w{U}_\alpha$, denote the restriction  $\eta^\alpha(x)\colon \RR\times \{x\} \to \w{U}_\alpha$ which is a conically smooth path.  This determines the vector field $V'_\alpha\colon \w{U}_\alpha \to P\w{U}_\alpha$ given by $V_\alpha'(x)= D_0\eta^\alpha(x)(1)$.  This vector field is parallel and its flow is $\eta^\alpha$.  
For $U_\alpha$ of depth greater than $k$, define $V_\alpha'\colon \w{U}_\alpha \to P\w{U}_\alpha$ as the zero vector field.

For each $\alpha$ consider the parallel vector field $V_\alpha\colon \widetilde{X} \to P\w{X}$ given by $V_\alpha(x) = \psi_\alpha(x)\cdot D_{\w{\phi}_\alpha^{-1}x} V_\alpha'(\w{\phi}_\alpha^{-1}x)$ if $x\in \w{\phi}_\alpha(\w{U}_\alpha)$ and $V_\alpha(x) = 0$ otherwise -- this indeed defines a \emph{conically smooth} vector field and it is parallel because its support is a scale of a parallel vector field.  
Define the parallel vector field $V\colon \widetilde{X} \to P\widetilde{X}$ by
\[
V(x) = \sum_\alpha V_\alpha~.
\]
Because the sum is locally finite and each summand is parallel, this indeed describes a conically smooth parallel vector field.

Note that the restriction of $V_\alpha$ to $\w{\phi}_\alpha\bigl((\w{U}_\alpha)_{\leq 0}\bigr)$ does not vanish and projects to the zero vector field under the projection $\w{U}_{\leq 0} \to \RR^{n-k} \times Y$ whose fibers are the oriented singular $1$-manifold $\RR_{\geq 0}$.    
It follows from its defining expression that the restriction of $V$ to $\w{X}_{\leq 0}$ is non-vanishing and projects as the zero vector field under $\w{X}_{\leq 0}\to \partial \w{X}_{n-k}$ whose fibers are singular $1$-manifolds.

Consider the flow $\eta\colon \RR\times \w{X} \dashrightarrow \w{X}$ of the parallel vector field $V$ -- it exists by way of Lemma~\ref{flowing}.  
From the above considerations, the restriction of the flow $\eta\colon \RR_{\leq 0} \times \w{X}_{\leq 0} \to \w{X}_{\leq 0}\subset \w{X}$ is defined for all non-positive time; and the restriction of this flow gives a conically smooth map $\eta\colon \RR_{\leq 0} \times \partial \w{X}_{n-k} \to \w{X}_{\leq 0}$ which is an isomorphism of singular $n$-manifolds.  
From the openness of the domain of $\eta$, there is a conically smooth map $\epsilon \colon \partial \w{X}_{n-k} \to \RR_{>0}$ for which the flow $\eta\colon \RR_{< \epsilon} \times \partial \w{X}_{n-k} \to \w{X}$ gives a morphism of singular $n$-manifolds with $\eta_0$ the standard inclusion $\partial \w{X}_{n-k} \hookrightarrow \w{X}$.  
Because the fibers of the projection $\partial \w{X}_{n-k}\to X_{n-k}$ are compact, by shrinking $\epsilon$ as necessary we can assume $\epsilon$ is constant along fibers; that is, $\epsilon \colon X_{n-k} \to \RR_{>0}$.  

Denote the projection $p\colon \w{X}_{\leq 0} \to \partial \w{X}_{n-k}$.  
Consider the equivalence relation $\sim$ on $\RR_{<\epsilon}\times \partial \w{X}_{n-k}$ which is the subset of the product $\{(t,x),(t',x')\mid t,t'\leq 0~,~ p(x) = p(x')\}$.  
Define $\w{X}_{n-k} :=(\RR_{<\epsilon} \times \partial\w{X}_{n-k})_{/\sim}$.  
The atlas of $\RR_{<\epsilon}\times \partial \w{X}_{n-k}$ manifestly gives an atlas of $\w{X}_{n-k}$ making it a singular $n$-manifold.  
Note the standard inclusion $X_{n-k} \to \w{X}_{n-k}$ as $x\mapsto [0,p^{-1}x]$ and the canonical isomorphism of singular $n$-manifolds $\w{X}_{n-k}\smallsetminus X_{n-k} \cong (0,\epsilon)\times \partial \w{X}_{n-k}$.  
This equivalence relation is such that the restriction of the map $\w{X} \to X$ 
to $\RR_{<\epsilon} \times \partial \w{X}_{n-k}$ factors through the morphism of singular $n$-manifolds $\w{X}_{n-k} \to X$ under $X_{n-k}$.  This finishes the proof of the proposition.

\subsection{Regular neighborhoods}\label{embeddings}

We establish a version of Whitney's embedding theorem for singular manifolds, in addition to the existence of tubular neighborhoods.  We imitate classical arguments (for instance, as found in~\cite{spivak}) so we will only indicate the arguments.

\begin{lemma}\label{embedding}
Let $X$ be a finite singular $n$-manifold. There is a proper conically smooth embedding 
\[
X\hookrightarrow \RR^N
\]
for some $N$. 
\end{lemma}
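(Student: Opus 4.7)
The plan is to induct on the depth $k$ of $X$, with base case $k=0$ handled by the classical Whitney embedding theorem: by Theorem~\ref{collar=fin} a finite smooth manifold admits a finite atlas and is thus the interior of a compact manifold with boundary, which embeds properly and smoothly in some $\RR^N$.

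For the inductive step, I would first establish the lemma for basics: every $U = U^n_Y \in \bsc_n$ of depth $k$ admits a proper conically smooth embedding into some $\RR^N$. Indeed, by induction the compact singular $(k-1)$-manifold $Y$ embeds properly in $\RR^M$; composing with a suitable conically smooth diffeomorphism onto a bounded region and then radially projecting, I may assume the image lies in $S^M \subset \RR^{M+1}$. Then the open cone $C(Y)$ embeds conically smoothly into $\RR^{M+1}$ via $[r,y] \mapsto r \cdot y$, whence $U = \RR^{n-k}\times C(Y)$ embeds properly into $\RR^{n-k}\times \RR^{M+1}$.

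Next, I would fix a finite atlas $\{\phi_\alpha \colon U_\alpha \hookrightarrow X\}_{\alpha \in A}$ by basics (existence by Theorem~\ref{collar=fin}), pick a subordinate conically smooth partition of unity $\{\psi_\alpha\}$ (Lemma~\ref{part-o-1}), and choose proper conically smooth embeddings $i_\alpha \colon U_\alpha \hookrightarrow \RR^{N_\alpha}$ by the basics case. The candidate embedding is
\[
F \colon X \longrightarrow \bigoplus_{\alpha \in A} \RR^{N_\alpha} \,\oplus\, \RR^{A}, \qquad F(x) = \Bigl(\psi_\alpha(x) \cdot i_\alpha\bigl(\phi_\alpha^{-1}(x)\bigr)\,,\, \psi_\alpha(x)\Bigr)_{\alpha \in A},
\]
where $\psi_\alpha(x) \cdot i_\alpha(\phi_\alpha^{-1}(x))$ is interpreted as $0$ outside $\phi_\alpha(U_\alpha)$. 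Since $\psi_\alpha$ is supported in $\phi_\alpha(U_\alpha)$ and is conically smooth, this extension by zero is conically smooth on all of $X$.

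The main obstacle will be verifying that $F$ is a proper topological embedding. Injectivity is straightforward: $F(x) = F(y)$ forces $\psi_\alpha(x) = \psi_\alpha(y)$ for all $\alpha$, and for any $\alpha$ with $\psi_\alpha(x) > 0$ the equality of the other coordinates yields $i_\alpha(\phi_\alpha^{-1}(x)) = i_\alpha(\phi_\alpha^{-1}(y))$, whence $x = y$ by injectivity of $i_\alpha$. For properness, suppose $F(x_n)$ lies in a compact set while $\{x_n\}$ has no convergent subsequence in $X$. Passing to subsequences, $\psi_\alpha(x_n) \to c_\alpha$ with $\sum_\alpha c_\alpha = 1$, so some $c_\beta > 0$; then $\psi_\beta(x_n)$ is eventually bounded below by $c_\beta/2$, while $\psi_\beta(x_n) \cdot i_\beta(\phi_\beta^{-1}(x_n))$ stays bounded, forcing $i_\beta(\phi_\beta^{-1}(x_n))$ bounded in $\RR^{N_\beta}$. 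Properness of $i_\beta$ then produces a convergent subsequence of $\phi_\beta^{-1}(x_n)$ in $U_\beta$, contradicting the assumption on $\{x_n\}$. That $F$ is a homeomorphism onto its image (and in particular an embedding in the conically smooth sense) follows from combining injectivity and properness with the fact that on each chart $\phi_\alpha(U_\alpha)$ the restriction of $F$ factors through the proper embedding $i_\alpha$ via a conically smooth diffeomorphism onto its image wherever $\psi_\alpha > 0$.
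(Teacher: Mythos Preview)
Your outline follows essentially the same route as the paper: embed each basic by induction on the compact link $Y$, then glue the local embeddings via a partition of unity using the standard formula $\bigl(\psi_\alpha \cdot i_\alpha \circ \phi_\alpha^{-1},\,\psi_\alpha\bigr)_\alpha$. Two points of comparison are worth noting.

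First, your cone-embedding formula $[r,y]\mapsto r\cdot y$ is not conically smooth as written. By definition (see~\S\ref{conically-smooth}), a conically smooth map out of $C(Y)$ must lift to a conically smooth map $\RR\times Y \to \RR^{M+1}$ that is \emph{constant} on $\RR_{\leq 0}\times Y$; the na\"ive extension $(r,y)\mapsto \max(r,0)\cdot y$ is not smooth in $r$ at $r=0$. The paper handles this by using the radial reparametrization $s\mapsto \exp(-1/s)$, which vanishes to infinite order at $0$. Your argument is easily repaired: replace $r$ by any smooth $\rho\colon\RR\to\RR_{\geq 0}$ with $\rho|_{\RR_{\leq 0}}=0$ and $\rho|_{\RR_{>0}}$ a diffeomorphism onto $\RR_{>0}$ (e.g., compose $\exp(-1/s)$ with $t\mapsto t/(1-t)$). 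This keeps $i_\alpha$ proper.

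Second, the paper does \emph{not} argue properness the way you do. It takes the basic embeddings $e_i$ to land in bounded regions (precisely because of the $\exp(-1/s)$ choice), so the glued map $e''$ need not be proper; it then appends an extra coordinate $B\colon X\to\RR$, built from bump functions on a compact exhaustion, to force properness. Your route---making each $i_\alpha$ proper from the start and deducing properness of $F$ via the pigeonhole on $\sum_\alpha c_\alpha=1$---is a genuine alternative and arguably cleaner, but it only works once the cone-embedding is fixed as above so that the $i_\alpha$ really are proper conically smooth embeddings.
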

\begin{proof}

Let $\cA' = \{(U'_i\xra{f'_i} X)\}_{0\leq i \leq r}$ be a finite atlas for $X$.  Replace this atlas by another $\cA=\{(U_i\xra{f_i} X)\}$ for which $f_i(U_i)\subset \ov{f'_i(U'_i)}$ is contained in the closure.  Choose a conically smooth partition of unity $\{\psi_i\}$ subordinate to $\cA$.  
Denote $U_i = U^n_{Y_i}$ for $Y_i$ a compact singular $(k_i-1)$-manifold of strictly less depth than $X$.  Because each $Y_i$ is compact and therefore finite, inductively, choose an conically smooth (proper) embedding $e'_i\colon Y_i \to \RR^{N_i-1}\subset S^{N_i-1}$.  There results a conically smooth embedding $e_i\colon U_i = \RR^{k_i}\times C(Y_i) \hookrightarrow \RR^{k_i}\times C(S^{N_i-1}) \approx \RR^{k_i+N_i}$ where this last map is given via polar coordinates by the radial map $s \mapsto  \mathsf{exp}(-\frac{1}{s})$ for $s\in \RR_{>0}$ and $s\mapsto 0$ else.  
Then the expression $(\psi_1\cdot e_1 , \dots , \psi_r \cdot e_r , \psi_1,\dots , \psi_r)$ described a conically smooth embedding $e''\colon X \hookrightarrow \RR^{N-1}$ where $N-1= r+ \sum_{1\leq i \leq r} k_i+ N_i$.  

We now modify $e''$ to a proper map.  
Choose a sequence of compact subsets $K_1\subset K_2\subset \dots\subset \iota X$ such that the union $\bigcup_i K_i = \iota X$ and $K_i$ is contained in the interior of $K_{i+1}$.  Using conically smooth bump functions, for each $i$ choose a conically smooth map $b_i\colon \iota X \to [0,1]$ which is $0$ on $K_i$ and $1$ on $K_{i+1}$.  Define $B\colon \iota X \to \RR $ as $\sum_i b_i$; this sum makes sense because for each $x\in \iota K$ there is a neighborhood on which $b_i(x)\neq 0$ for only finitely many $i$.  The map $e=B\times e''\colon X \to \RR^N$ is a conically smooth proper embedding.  
\end{proof}

Fix a properly embedded singular $n$-manifold of depth $k$.  
The $(n-k)^{\rm th}$ stratum $X_{n-k}\subset \RR^N$ is then a properly embedded smooth $(n-k)$-submanifold, with unit sphere bundle denoted $SX_{n-k} \to X_{n-k}$.  Denote the intersection with the normal bundle as $N_k X =  \{(x,v)\in TX_{|X_{n-k}}\mid v\perp TX_{n-k}\}$.  
For $\epsilon\colon X_{n-k} \to \RR_{>0}$ a smooth map  
denote the $\RR_{\geq 0}$-subspace $N^\epsilon_k  X = \{(x,v)\in T\RR^N_{|X_{n-k}}\mid \rVert v- N_k X\rVert < \epsilon(x) \lVert v \rVert\}$ over $X$.

\begin{lemma}\label{normal-space}
Let $X\subset \RR^N$ be a conically smoothly properly embedded finite singular $n$-manifold.  Let $\epsilon\colon X_{n-k} \to \RR_{>0}$ be an arbitrary conically smooth map.  
Then the intersection with the exponential spray
\[
\exp(N^\epsilon_k X)\cap X= \{x+v\mid (x,v)\in N^\epsilon_k X\}\cap X 
\]
is an open neighborhood of $X_{n-k}\subset X$, and therefore canonically inherits the structure of a singular $n$-manifold.

\end{lemma}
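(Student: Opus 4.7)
The content to establish is twofold: first that $\exp(N^\epsilon_k X)\cap X$ is open in $X$, and second that it contains a neighborhood of each point $p\in X_{n-k}$. The final clause about inheriting a singular structure is automatic from Lemma~\ref{about-X}(6) once openness is established. Openness is almost immediate: the exponential spray $T\RR^N|_{X_{n-k}}\to \RR^N,\ (x,v)\mapsto x+v$, is a submersion (its restriction to each fiber $T_x\RR^N$ is the affine isomorphism $v\mapsto x+v$), so it is an open map of topological spaces; since $N^\epsilon_k X$ is cut out of $T\RR^N|_{X_{n-k}}$ by the strict inequality $\lVert v - N_k X\rVert <\epsilon(x)\lVert v\rVert$, it is open, hence its image $\exp(N^\epsilon_k X)\subset \RR^N$ is open, and intersecting with the properly embedded $X$ yields an open subset of $X$.

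The neighborhood statement is local on $X_{n-k}$, so I would fix $p\in X_{n-k}$ and choose a basic chart $f\colon U=\RR^{n-k}\times C(Y)\hookrightarrow X$ with $f(0,*)=p$. Conical smoothness of the composite $g\colon U\to\RR^N$ gives a smooth lift $\widetilde g\colon \RR^{n-k}\times\RR\times Y\to\RR^N$ factoring through the quotient, so $\widetilde g(u,0,y)$ is independent of $y$ -- call it $g_0(u)$ -- and $g_0$ is a smooth embedding of the stratum. Taylor expansion gives
\[
g(u,s,y) \;=\; g_0(u) + s\cdot v_0(u,y) + O(s^2),\qquad v_0(u,y):=\partial_s\widetilde g(u,0,y).
\]
Using the classical smooth tubular neighborhood of $g_0(\RR^{n-k})\subset\RR^N$, for $s$ small enough the nearest-point projection $\pi$ assigns to $q=g(u,s,y)$ a point $x=g_0(u')\in X_{n-k}$ with $u'=u+s\alpha(u,y)+O(s^2)$, where $\alpha$ is uniquely determined by the requirement that $q-x\perp T_xX_{n-k}$. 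A direct computation then yields
\[
q-x \;=\; s\cdot N(u,y) + O(s^2),
\]
where $N(u,y)$ is the component of $v_0(u,y)$ orthogonal to $T_{g_0(u)}X_{n-k}$.

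The vector $s\cdot N(u,y)$ lies in $dg(T_{g_0(u)}X)\cap (T_{g_0(u)}X_{n-k})^\perp = N_k X|_{g_0(u)}$, since it is the difference of the tangent vector $s v_0(u,y)\in dg(T_{g_0(u)}X)$ and the tangential correction $s\, dg_0(\alpha(u,y))\in T_{g_0(u)}X_{n-k}$. Transporting basepoints from $g_0(u)$ to $x=g_0(u')$ introduces only an $O(s)$ perturbation of the relevant fiber of the continuous ``normal slice'' field $\{N_k X|_x\}_{x\in X_{n-k}}$, so $s\cdot N(u,y)$ remains within distance $O(s^2)$ of $N_k X|_x$. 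Since $\lVert q-x\rVert$ is of order $s$, the angular deviation
\[
\frac{\operatorname{dist}(q-x,N_k X|_x)}{\lVert q-x\rVert} \;=\; O(s)
\]
drops below $\epsilon(x)$ once $s$ is taken sufficiently small (depending continuously on $u$, hence locally uniformly on the chart). Thus a neighborhood of $p$ in $X$ is contained in $\exp(N^\epsilon_k X)\cap X$, as required. The main obstacle is the bookkeeping in this last step -- correctly identifying $s\,N(u,y)$ as a vector in $N_k X|_{g_0(u)}$ and estimating how $N_k X$ varies continuously along the stratum -- but all of the necessary structure is supplied by the conical smoothness of the chart together with the construction of $TX$ of the previous section, which guarantees that the $\RR_{\geq 0}$-invariant ``cone-in-$T_x X$'' varies continuously in $x\in X_{n-k}$.
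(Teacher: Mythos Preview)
Your argument is correct and reaches the same conclusion, but the paper takes a noticeably shorter route. Rather than invoking the nearest-point projection, the paper first applies an ambient diffeomorphism of $\RR^N$ so that the stratum $X_{n-k}\subset U$ sits as the standard $\RR^{n-k}\subset\RR^N$. With that normalization in place, for a point $[u,s,y]$ of the basic one simply takes $x=u$ as the base point (no projection needed) and looks at the one-parameter family $t\mapsto i[u,ts,y]$. Because this family keeps the $\RR^{n-k}$-coordinate fixed and the stratum is now standard, the conical derivative $w=\lim_{t\to 0}\tfrac{1}{t}\bigl(i[u,ts,y]-u\bigr)$ has vanishing $\RR^{n-k}$-component and hence lies in $N_kX|_u$ automatically; the $\epsilon$-cone inequality then drops out of the very definition of this limit in one line.

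What your route buys: the separate openness-by-submersion argument in your first paragraph is a clean observation the paper leaves implicit, and your Taylor-expansion viewpoint makes the first-order geometry of the embedding explicit. What it costs: the nearest-point projection forces you into the basepoint transport from $g_0(u)$ to $g_0(u')$, which in turn leans on continuity of the normal-cone field $x\mapsto N_kX|_x$ along the stratum --- exactly the step you flag as the main bookkeeping. The paper's straightening maneuver kills both the projection and the transport simultaneously, and its appeal to the conical derivative machinery of \S\ref{orthogonal} replaces your hand-rolled Taylor expansion with the framework already in place.
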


\begin{proof}
It is sufficient to prove the statement for a conically smoothly embedded basic $i\colon U\subset \RR^N$.  Write $U=U^n_Y$, with $Y$ a compact singular $(k-1)$-manifold, so that $\iota U = \RR^{n-k} \times C(\iota Y)$.  Upon shrinking $U$ if necessary, by performing a diffeomorphism of $\RR^N$ we can assume $\RR^{n-k}\subset U \subset \RR^N$ is the standard embedding.  
%By translations, we can assume $\RR^N\ni 0=x=0\in \RR^{k}\subset \iota U$.  
It is enough to show that for any $[u,s,y]\in U$ there is a $t_0\geq 0$ for which for all $0\leq t\leq t_0$ the point $i[u,ts,y]$ is an element of $\mathsf{exp}(N^\epsilon_k X)$; that is, for each $0<t\leq t_0$ there is a vector $w_t\in (N_kX)_u$ for which $\lVert (i[u,ts,y]-u) - w_t\rVert < \epsilon(u)\lVert w_t \rVert$.  

Let $[u,s,y]\in U$ be arbitrary.  
Denote the vector $w=\ov{\alpha}_{0,u}(i)[u,s,y]\in T_u X\subset \RR^N$.  If $w=0$ then $s=0$ and there is nothing to prove.  Assume $w\neq 0$.
From the definition of $\ov{\alpha}_{0,u}(i) = \lim_{t\to 0} \gamma_{\frac{1}{t},u} \circ i\circ  \gamma_{t,u}$, there is a $t_0\geq 0$ for which for all $0<t\leq t_0$ there is the inequality
\[
\lVert  (\frac{i[u,ts,y]-u}{t}+u) - (w+u) \rVert~ <~\epsilon(u)\lVert w \rVert 
\]
which is to say $\lVert (i[u,ts,y]-u)   - tw \rVert <t\epsilon(u)\lVert w \rVert$ and we are finished.

\end{proof}

\begin{lemma}\label{regular-neighborhood}
Let $X\subset \RR^N$ be a conically smoothly properly embedded singular $n$-manifold.  
Then there is a conically smooth function $\delta\colon X\to \RR_{>0}$ for which the $\delta$-neighborhood $\nu$ of $X\subset \RR^N$ is a regular neighborhood -- denote the deformation retraction $r_t\colon \nu \to \nu$.  
Moreover, if $X$ is finite then there is an isotopy of $X\hookrightarrow \RR^N$ through proper conically smooth embeddings to one for which such a $\delta$ is bounded below by $1$.  
\end{lemma}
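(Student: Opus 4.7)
The plan is to induct on the depth $k$ of $X$. The base case $k=0$ is the classical tubular neighborhood theorem for a properly embedded smooth $n$-submanifold of $\RR^N$: take $\delta$ to be any smooth positive function bounded above by the fiberwise injectivity radius of the normal exponential map, and the usual normal retraction supplies $r_t$.

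For the inductive step with $X$ of depth $k$, I would split $X$ along its deepest stratum and glue two regular neighborhoods via a partition of unity. The deepest stratum $X_{n-k}\subset\RR^N$ is a properly embedded smooth $(n-k)$-manifold, hence admits a classical tubular neighborhood in $\RR^N$. Combining Lemma~\ref{normal-space} with Proposition~\ref{tubular-neighborhood}, for a suitably small conically smooth $\epsilon\colon X_{n-k}\to\RR_{>0}$ the set $\exp(N^{\epsilon}_k X)\cap X$ coincides with a fiberwise cone neighborhood $\w X_{n-k}\to X_{n-k}$ of $X_{n-k}$ inside $X$, and $\exp(N^{\epsilon}_k X)$ itself is a regular neighborhood of $\w X_{n-k}$ inside $\RR^N$, with deformation retraction given by radial contraction toward $X_{n-k}$ followed by projection onto $X$. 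Meanwhile $X':= X\smallsetminus X_{n-k}$ is properly embedded in $\RR^N$ and has depth strictly less than $k$, so by the inductive hypothesis it admits a conically smooth $\delta'\colon X'\to\RR_{>0}$ whose $\delta'$-neighborhood $\nu'$ is a regular neighborhood with retraction $r'_t$. The main obstacle is gluing $\exp(N^\epsilon_k X)$ and $\nu'$ over their overlap, which by Corollary~\ref{pieces} corresponds to the collar $R(\partial\w X_{n-k})$; using a conically smooth partition of unity (Lemma~\ref{part-o-1}) subordinate to the open cover $\{\exp(N^{\epsilon/2}_k X),\, X\smallsetminus X_{n-k}\}$ of $X$, I would take $\delta$ to be the minimum of the locally available sizes suitably smoothed, and construct $r_t$ by concatenating the radial retraction onto $\exp(N^{\epsilon/2}_k X)$ with $r'_t$ outside, interpolating on the overlap via conically smooth bump functions. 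The delicate technical check is that the glued $r_t$ remains conically smooth near $X_{n-k}$; this reduces to a local calculation in cone coordinates verifying that the normal spray of Lemma~\ref{normal-space} meshes with the ambient smooth tubular structure of $X_{n-k}\subset\RR^N$.

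For the moreover statement, assume $X$ is finite and let $e\colon X\hookrightarrow\RR^N$ with associated $\delta\colon X\to\RR_{>0}$ be the embedding and size just constructed. I would extend $2/\delta$ to a smooth positive function $g\colon\RR^N\to\RR_{>0}$ (using properness of $e$ together with a smooth partition of unity on $\RR^N$), and then construct a conically smooth diffeomorphism $\Phi\colon\RR^N\to\RR^N$ isotopic to the identity whose Jacobian norm along $e(X)$ is bounded below by $g$; one concrete recipe is to integrate a suitable conically smooth vector field on $\RR^N$ of magnitude proportional to $g$, built via partition of unity from local radial dilations. Then $\Phi\circ e$ has tubular size $\geq 1$ by the chain rule applied to the construction of $\delta$, and the straight-line isotopy $\Phi_s\circ e$ with $\Phi_0=\mathrm{id}$ and $\Phi_1=\Phi$ supplies the desired isotopy through proper conically smooth embeddings. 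Finiteness of $X$ is used to ensure the stretching factor $g$ can be controlled globally and that the resulting $\Phi$ remains a diffeomorphism.
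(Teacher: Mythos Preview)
Your inductive scheme is genuinely different from the paper's argument, and it has a concrete gap at the point where you invoke the inductive hypothesis. You assert that $X' := X \smallsetminus X_{n-k}$ is properly embedded in $\RR^N$, but this is false: the deepest stratum $X_{n-k}$ lies in the closure of the higher strata (locally $U\smallsetminus U_{n-k}=\RR^{n-k}\times\RR_{>0}\times Y$ accumulates on $\RR^{n-k}\times\{*\}$), so $X'$ is not closed in $\RR^N$ and the lemma as stated does not apply to it. You could try to regard $X'$ as properly embedded in the open set $\RR^N\smallsetminus X_{n-k}$, but then you need a version of the lemma for open subsets of Euclidean space, and the resulting $\delta'$ is forced to tend to $0$ along $X_{n-k}$, which is precisely the region where your gluing must happen. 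Relatedly, your description of the retraction near the deep stratum as ``radial contraction toward $X_{n-k}$ followed by projection onto $X$'' is circular: a well-defined projection from an ambient neighborhood onto the singular set $X$ is exactly the object you are trying to construct, and radial contraction alone only retracts onto $X_{n-k}$, not onto $\w X_{n-k}$.

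The paper sidesteps induction on depth entirely for the first assertion. It writes down a single smooth vector field on all of $\RR^N$ by a center-of-mass formula,
\[
\VV(v)=\frac{\int_X c_{d(v)}\,v'}{\int_X c_{d(v)}}-v,
\]
where $c_\epsilon$ is a bump function supported on a thin annulus at radius $\epsilon$ and $d$ is a smooth approximation to $\mathsf{dist}(-,X)$. The retraction is the flow of $\VV$; convergence of flow lines into $X$ is analyzed pointwise by comparing $\VV$ near $x\in X$ with the analogous field $\VV'$ for the tangent cone $T_xX\subset\RR^N$, which is $\RR_{>0}$-invariant and hence has an explicit inward radial component, and then invoking Lemma~\ref{normal-space} to control the discrepancy. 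This global construction eliminates all gluing of locally defined retractions. For the ``moreover'' clause the paper also argues differently from your single stretching diffeomorphism: it takes a compact exhaustion $X_1\subset X_2\subset\cdots$ of $X$ built from a finite atlas, and iteratively rescales $\RR^N$ outside the ball $B_{R_{j-1}}(0)$ (rel that ball) so that $\delta$ becomes $\geq 1$ on each $X_j$ in turn.
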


\begin{proof}
Again, we imitate classical methods so only indicate the proof.  
Once and for all choose a smooth family, as $\epsilon\in\RR_{\geq 0}$, of smooth bump functions (distributions) $c_\epsilon\colon \RR_{\geq 0} \to [0,1]$ for $\epsilon> 0$ which takes the value $1$ on the interval $[\frac{5\epsilon}{4}, \frac{7\epsilon}{4}]$ and takes the value $0$ outside the interval $[\epsilon , 2\epsilon]$.  
Choose a smooth function $d\colon \RR^N \to \RR_{\geq 0}$ for which $d$ is bounded by the continuous functions $\frac{2}{3} \mathsf{dist}(-,X) \leq d \leq \mathsf{dist}(-,X)$.  
Consider the vector field $\VV\colon \RR^N \to \RR^N$ given by 
\[
v~\mapsto~ \frac{ \int_X  c_{d(v)} v' }{\int_X c_{d(v)}} ~-~ v~.
\]
Because the support of $c_\epsilon$ is compact and $X$ is closed, these integrals converge.  
Moreover, being comprised of smooth functions, the vector field $\VV$ is smooth.  
The intuition for $\VV$ is that $\VV(v)$ points towards the center of mass of the locus of points of $X$ which are nearest to $v$.  
For instance, if the set $\{x'\in X \mid d(v,x') = d(v,X)\}$ is a singleton, then $\VV(v)$ is approximately $x'-v$, this approximation smoothly improving the closer $v$ is to $X$.  
In particular, if $x\in X$ then $d(x)=0$ and thus $\VV(x) = 0$.  

By existence and uniqueness of solutions to first order ODE, in addition to smooth dependence on initial conditions, this vector field can be integrated to a smooth function $\chi\colon \RR\times \RR^N \to \RR^N$.  
If $\{x'\in X\mid d(v,x') = d(v,X)\}$ is a singleton, then $\chi_t(v)$ approximates the straight-line flow from $v$ to $x'$.  
In particular, from the tubular neighborhood theorem for ordinary submanifolds, for $v$ close enough to a point in the open $n$-stratum $X_{n-1,n}\subset X$, the flow $\chi_t(v)$ for $t\geq 0$ approximates the straight-line path from $v$ to its nearest point in $X$.  

Let $x\in X$.  From Lemma~\ref{only-equivalences} there is a unique (up to isomorphism) compact singular manifold $Y$ for which a neighborhood of $x\in X$ is of the form $U^n_Y$.  
Consider replacing $X$ by $\exp(T_xX)\cong T_x X\subset T_x \RR^N \cong \RR^N$ to obtain a new vector field $\VV'$.  
Because $T_xX\subset \RR^N$ is $\RR_{>0}$-invariant, in other words \emph{conical}, then  $v\notin T_x X$ implies $v\in \RR^N\smallsetminus \{0\} \cong \RR_{>0}\times S^{N-1}$ and the projection of $v$ onto $\RR_{>0}$ is strictly negative.  
Moreover, from the compactness of $Y$, for $v\in T_xX$ with $\lVert v\rVert = 1$ then the magnitude of the image of $v$ under this projection is bounded below by a positive number.  
In particular, the set $\{v\in \RR^N \mid \lim_{t\to \infty} \chi'_t(v) \in T_xX \} = \RR^N$ is everything and the assignment $v\mapsto \lim_{t\to \infty} \chi'_t(v)$ describes a continuous map $\RR^N \to T_xX$ -- likewise, for $B_{\delta_x}(x)$ in place of $\RR^N$.   
From Lemma~\ref{normal-space} applied to the singular $n$-manifold of depth $n$ which is $(X,x)$ -- the marked point $x$ regarded as a singularity of depth $n$ -- we can choose a $\delta_x$ for which a $\delta_x$-neighborhood in $X$ of $x$ is within an arbitrarily small neighborhood of $\exp(T_xX)\subset \RR^N$.  
As so, the original vector field $\VV$ is arbitrarily close to $\VV'$ in a small enough neighborhood of $x$.  
In particular, for a small enough $\delta_x>0$, the subset $\{v\in B_{\delta_x}(x)\mid \lim_{t\to \infty} \chi(v) \in X\} = B_{\delta_x}(x)$ is everything and furthermore, the assignment $v\mapsto \lim_{t\to \infty} \chi_t(v)$ describes a continuous map $B_{\delta_x}(x) \to X$.  
In this way, because $X\subset \RR^N$ is properly embedded, we can choose a continuous map $\delta\colon X\to \RR_{\geq 0}$ such that for every $v$ in the $\delta$-neighborhood $\nu$ of $X$ in $\RR^N$, the assignment $v\mapsto \lim_{t\to \infty} \chi_t(v)$ is a continuous function $\nu \to X$.  
Upon reparametrizing the flow we obtain a map $r' \colon [0,1)\times \nu \to \nu$ which extends continuously to $r\colon [0,1]\times \nu$ witnessing a deformation retraction of $\nu$ onto $X$.  

Suppose $X$ is finite.  
Choose a finite atlas $\cA=\{(U_i,\phi_i)\}$ for $X$.  For each $U_i$ in this collection, choose a sequence of endomorphisms $f_{ij}\colon U_i\to U_i$ for which the closure of the image $\ov{f_{ij}(\iota U_i)} \subset \iota U_i$ is compact and the union $\bigcup_j f_{ij}(\iota U_i)  = \iota U_i$.  
Consider the singular submanifold $X_j = \bigcup_i \phi_i\bigl(f_{ij}(U_i)\bigr)\subset X\subset \RR^N$.  
Then the closure of $X_j$ in $\RR^N$ is compact and contained in $X$.  
So the restriction $\delta_{|X_j}$ is bounded below by a positive number.  
Let $R_j = \mathsf{Sup}\{\lVert x\rVert \mid x\in X_j\}<\infty$.  
By iteratively (and smoothly) scaling outside $R_{j-1}$ through an isotopy rel $B_{R_{j-1}}(0)$, we can assume $\delta(x)$ is bounded below by $1$ for $x\in X_j$.  
This completes the proof.

\end{proof}

\subsection{Homotopical aspects of singular manifolds}
We record the following technical results which are referenced in the course of the proof of Theorem~\ref{pushforward-formula}.

\begin{lemma}\label{smush}
Let $X$ be a finite singular $n$-manifold. There is a conically smooth map $\beta\colon [0,1] \times X \to X$ with $\beta_0 = 1_X$ the identity map and such that the collection of closures $\{\ov{\beta_t(X)}\mid t\in (0,1]\}$ is a compact exhaustion of $X$. 

Furthermore, for each $t\in (0,1]$, the inclusion $\ov{\beta_t(X)} \to X$ is a trivial cofibration of topological spaces.  

\end{lemma}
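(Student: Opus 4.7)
The plan is to construct $\beta$ by first handling the case of a single basic explicitly and then patching globally using the finite collar-gluing decomposition of Theorem~\ref{collar=fin}. The main technical inputs are conically smooth partitions of unity (Lemma~\ref{part-o-1}) and the integration of parallel vector fields (Lemma~\ref{flowing}).

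For the base case, suppose $X = U^n_Y = \RR^{n-k}\times C(Y)$ is a basic with $Y$ compact. I would set
\[
\beta^U_t\bigl([u, s, y]\bigr) := \bigl[\lambda_t(u,s)\cdot u,~\lambda_t(u,s)\cdot s,~y\bigr],\qquad \lambda_t(u,s) := \bigl(1 + 2t(|u|^2 + s^2)\bigr)^{-1/2}.
\]
Since $\lambda_t$ is smooth jointly in $(t,u,s)$ with $\lambda_0\equiv 1$, the map $\beta^U$ is conically smooth and $\beta^U_0 = \id_U$. For $t>0$, the estimate $\lambda_t(u,s)^2(|u|^2+s^2)\leq 1/(2t)$ shows that $\beta^U_t(U)$ lies in the set $\{[u,s,y] : |u|^2+s^2\leq 1/(2t)\}$, whose closure in $U$ is compact because $Y$ is compact.

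For a general finite $X$, I would proceed by induction on the number of collar-gluings in a decomposition $X = X_-\cup_{RV} X_+$ afforded by Theorem~\ref{collar=fin}. Given inductively chosen $\beta^\pm$ on $X_\pm$ and a compatible $\beta^V$ on $V$, the difficulty is that gluing maps between singular manifolds via a partition of unity is not well-defined. To circumvent this, I would pass to the associated time-dependent parallel vector fields $W^\pm_t := \partial_t\beta^\pm_t$ and $W^V_t$ on the respective pieces, arrange compatibility on the collar $RV$ by reparametrizing along the $\RR$-factor so that the restrictions of $W^\pm$ to the collar match a standard extension of $W^V$, and then patch via a conically smooth partition of unity on $X$ subordinate to the cover $\{X_-, X_+\}$ to produce a global time-dependent parallel vector field $W_t$. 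Integrating $W_t$ by Lemma~\ref{flowing} yields $\beta\colon [0,1]\times X\to X$; the flow exists on all of $[0,1]\times X$ and has $\ov{\beta_t(X)}$ compact for $t>0$ because these properties hold inductively on each piece and are preserved under the partition-of-unity patching once the $W^\pm$ are chosen to agree on the collar.

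Finally, for the trivial cofibration property, the family $\beta_{st}$ for $s\in[0,1]$ is a conically smooth homotopy from $\id_X$ to $\beta_t$, and $\beta_t$ takes values in $\ov{\beta_t(X)}$, so the closed inclusion $\iota_t\colon \ov{\beta_t(X)}\hookrightarrow X$ is a homotopy equivalence of topological spaces. That $\iota_t$ is additionally a cofibration follows by using $\beta$ at a slightly smaller time $t'\in(0,t)$ to exhibit an open neighborhood of $\ov{\beta_t(X)}$ in $X$ that deformation retracts onto it via the flow of $W$. The main obstacle will be the inductive gluing step, namely verifying that the patched vector field $W_t$ produces a flow whose image has compact closure for $t>0$ and checking conical smoothness at the interface with the collar; the vector-field reformulation is what makes the partition-of-unity patching well-defined, since parallel vector fields form a vector space under addition while maps of singular manifolds do not.
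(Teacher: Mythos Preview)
Your overall strategy matches the paper's: build a parallel vector field inductively over a collar-gluing decomposition, patch with a conically smooth partition of unity, and integrate via Lemma~\ref{flowing}. The cofibration argument is also in the same spirit. Two points deserve attention, however.

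First, the object $W^{\pm}_t := \partial_t\beta^{\pm}_t$ is not a vector field on $X_\pm$: at a point $x$ it is a tangent vector at $\beta^{\pm}_t(x)$, so what you have is a section of $(\beta^{\pm}_t)^*TX_\pm$, or equivalently a vector field defined only on the image $\beta^{\pm}_t(X_\pm)$, which for $t>0$ is a proper open subset. A partition-of-unity combination $\psi_-W^-_t + \psi_+W^+_t$ is therefore not well-defined on $X$. The paper sidesteps this by strengthening the inductive hypothesis: it carries through the induction a \emph{time-independent} parallel vector field $V$ on $X$ whose flow (after reparametrization) gives $\beta$. Your own base-case formula is in fact the flow of such a time-independent field (the radial field of magnitude $-\rho^3$ where $\rho^2=|u|^2+s^2$), so the fix costs nothing there; you just need to phrase the induction in terms of $V$ rather than $\beta$.

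Second, you invoke a ``compatible $\beta^V$'' on the $(n-1)$-manifold $V$ without saying where it comes from; this requires a separate induction on the dimension $n$, which the paper makes explicit. The paper also organizes the gluing more cleanly than trying to force $W^\pm$ to agree on the overlap: it reparametrizes the $\RR$-factor so that the images $E_\pm(X_\pm)\subset X$ are \emph{disjoint}, covers $X$ by the three pieces $\{E_-(X_-),\,(-2,2)\times P,\,E_+(X_+)\}$, and on the middle piece uses the inductively supplied vector field $V_0$ on $P$ extended trivially in the $\RR$-direction. This removes any need to ``arrange compatibility on the collar.''
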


\begin{proof}

We give a parallel vector field $V$ on $X$ whose flow reparametrizes to the desired $\beta\colon [0,1]\times X\to X$.  
For this statement we use induction on the dimension $n$.  If $n<0$ the statement is vacuously true.  
Suppose the statement is true for $n'<n$.  
If $X = \emptyset$ the statement is vacuously true.  Assume otherwise.
From Theorem~\ref{collar=fin}
we can write $X$ as a finite iteration of collar-gluings from basics.  Let this number of iterations be $r$.  We proceed by induction on $r$.

If $r=1$ then $X=U^n_Y$ is a basic.  
Consider the path of endomorphisms $\gamma\colon \RR_{>0}\times U \to U$ given by $(t,[u,s,y])\mapsto [\frac{u}{t}, \frac{s}{t}, y]$.
Consider the vector field $W = \frac{d}{dt}\gamma_t |_{t=1}$ on $U$.  
Choose a diffeomorphism $\RR \cong \RR_{<1} $ which is the identity on $\RR_{\leq 0}$, thereby implementing an endomorphism $U\to U$ upon identifying $U=\RR^{n-k}\times CY = C(S^{n-k-1}\star Y) = (\RR\times S^{n-k-1}\star Y)_{/\sim}$.  
Define $V$ as the pullback of $W$ along this endomorphism.  
It is immediate that $V$ has the desired properties.  

Assume $r>1$ and write $X=X_-\cup_{RP} X_+$ as a collar-gluing in where both $X_\pm$ themselves can be written as $<r$-times iterated colar-gluings from basics.  
By induction there are $V_\pm$ and $V_0$ as in the statement for $X_\pm$ and $P$ respectively.

Choose orientation preserving diffeomorphisms $e_+\colon \RR \cong (1,\infty)$ and $e_-\colon \RR\cong  (-\infty,-1)$ which are the identity on $[2,\infty)$ and $(-\infty,-2]$ respectively.   These diffeomorphisms implement morphisms $E_\pm \colon X_\pm \to X$ such that  $\{E_\pm(X_\pm) , (-2,2)\times P\}$ is an open cover of $X$ with $E_-(X_-) \cap (-1,1)\times P = E_+(X_+) \cap (-1,1)\times P =\emptyset =  E_-(X_-)\cap E_+(X_+)$.  
Choose a partition of unity $\{\psi_\pm , \psi_0\}$ subordinate to this open cover.  
Denote the vector field $W$ on $X$ given as $\psi_\pm(x)\cdot D_x(E_\pm)\bigl(V_\pm(E_\pm^{-1}(x))\bigr)$ for $x\in E_\pm(X_\pm)$ and as zero vector field elsewhere -- $W$ is well-defined because the two sets $E_\pm(X_\pm)$ are disjoint.  
Denote the vector field $W_0$ on $X$ given as $\bigl(0,\psi_0(x)\cdot V_0(pr_V(x))\bigr)$ for $x\in \RR\times P$ and as the zero vector field elsewhere.  
These vector fields $W$ and $W_0$ are indeed conically smooth, and, being scalings of locally parallel vector fields, are parallel.  
Define $V = W+W_0$.

There is a conically smooth map $\delta \colon X\to\RR_{>0}$ for which the flow of $V$ gives a well-defined conically smooth map $\eta \colon [0,\delta)\times X \to X$. 
Moreover, by construction, for each $\epsilon\colon X\to \RR_{>0}$ there is a compact subset $K\subset X$ for which $\eta_\epsilon(X)\subset K$.  Indeed, take $K=E_-(K_-)-\cup [-2,2]\times K_0 \cup E_+(K_+)\subset E_-(X_-)\cup \RR\times V \cup E_+(X_+) = X$ where the other subscripted $K$'s exist by induction.

We point out that for each $t\in [0,1]$ the map $\beta_t$ is an open embedding, and in particular an isomorphism onto its image.  
Denote the closure $\ov{X}:=  \ov{\beta_{\frac{1}{2}}(X)}$ and the boundary $\partial \ov{X} = \ov{X} \smallsetminus \beta_{\frac{1}{2}}(X)$.  
Each point $x\in\partial \ov{X}$ lies on a unique flow line of $V$.  As so, $\beta$ extends to a continuous map $\ov{\beta}\colon [0,1]\times \ov{X} \to \ov{X}$ whose restriction to $[0,1]\times \partial \ov{X}$ is an embedding, and whose restriction to $X\cong \beta_{\frac{1}{2}}(X)$ is a an open embedding for each $t\in [0,1]$.  
With this, one can conclude that $\partial \ov{X} \to \ov{X}$ is a cofibration of topological spaces, and one can construct a homeomorphism $X \cong  (0,1/2]\times \partial \ov{X}    \coprod_{\partial \ov{X}} \ov{X}$ with the pushout.  It follows that $\ov{X} \to X$ is a cofibration.

\end{proof}

Recall the map of quasi-categories $\widehat{(-)}\colon \psnglr_n \to \cP(\bsc_n)$ given by the restricted Yoneda map.

\begin{lemma}\label{covers=colims}
Let $X$ be a finite singular $n$-manifold and let $\cU\subset \cO(X)$ be a countable subposet of open subsets.  Suppose each $O\in \cU$ is finite as a singular $n$-manifold, that the union $\bigcup_{O\in \cU} O = X$, and for each finite subset $\{O_i\}\subset \cU$ the collection $\{ O\in \cU \mid O\subset \bigcap O_i\}$ is an open cover of $\bigcap O_i$.  
Denote the functor $\cU^\triangleright \to \psnglr_n$ given by $O\mapsto O$ and $\infty \mapsto X_{\cU}$, the singular premanifold determined by the open cover $\cU$ (see Example~\ref{cover-refinement}).  
Then the composite 
\[
\cU^\triangleright \xra{\ov{p}} \psnglr_n \xra{\widehat{(-)}} \cP(\bsc_n)
\]
is a colimit diagram.

\end{lemma}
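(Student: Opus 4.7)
The plan is to reduce the statement to a pointwise equivalence of Kan complexes. Since colimits in $\cP(\bsc_n) \simeq \Fun(\bsc_n^{\op}, \cS)$ are computed pointwise and the Yoneda identification gives $\widehat{Y}(U) \simeq \psnglr_n(U, Y)$ for $U \in \bsc_n$, it suffices to show that for every basic $U$ the canonical map
\[
\Phi_U\colon \colim_{O \in \cU} \snglr_n(U, O) \longrightarrow \psnglr_n(U, X_\cU)
\]
is an equivalence in $\cS$. Invoking Corollary~\ref{maximal-atlas'} identifies the target with $\snglr_n(U, X)$, reducing the problem to showing that the subspaces $\snglr_n(U, O) \subset \snglr_n(U, X)$, organized homotopically by $\cU$, recover all of $\snglr_n(U, X)$.

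The main geometric input is the local scaling family $\gamma\colon \RR_{\geq 0} \to \bsc_n(U, U)$ of Lemma~\ref{local-basis}, whose images form a neighborhood basis of $0 \in \iota U$. For any $f \in \snglr_n(U, X)$, the path $t \mapsto f \circ \gamma_t$ continuously shrinks $f$ so that its image converges to the single point $f(0)$. Since $\cU$ covers $X$, there exists $O \in \cU$ with $f(0) \in O$, and for $t$ small enough $f \circ \gamma_t$ factors through $O$, giving surjectivity on $\pi_0(\Phi_U)$. The argument extends to higher homotopy: given a simplex $\sigma\colon \Delta^k \to \snglr_n(U, X)$, by continuity of $\sigma$ and compactness of $\Delta^k$ the set $\{\sigma(v)(0) \mid v \in \Delta^k\} \subset X$ is compact and hence covered by finitely many $O_1, \ldots, O_m \in \cU$; a uniformly small parameter $t$ then deforms $\sigma$ into one whose image is set-theoretically contained in $\bigcup_i O_i$.

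The main obstacle is passing from a simplex valued in a finite union $\bigcup_i O_i$ to one homotopically assembled from simplices factoring through individual $\cU$-elements, and this is where the covering hypothesis on finite intersections is essential. The strategy is to identify $\Phi_U$ with the natural map from the geometric realization of the $\cech$ nerve of the cover $\cU$ with coefficients in the presheaf $\snglr_n(U,-)$. The basis condition on $\cU$ makes the refinement from the Čech nerve to $\cU$ a cofinal diagram, so the two colimits agree. On the other hand, the presheaf $V \mapsto \snglr_n(U, V)$ is a homotopy cosheaf on $\cO(X)$, which follows from combining the local scaling $\gamma_t$ with the compact exhaustion of $U$ from Lemma~\ref{smush} and the conically smooth partitions of unity from Lemma~\ref{part-o-1} to glue local factorizations into global ones up to controlled homotopy. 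Assembling these two ingredients yields the claimed equivalence.
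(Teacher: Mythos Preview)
Your reduction to a pointwise statement and your use of the scaling family to shrink embeddings are both correct and match the paper's opening moves. The gap is in your final paragraph, where the real work should happen.

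You correctly identify the obstacle: a simplex $\sigma\colon \Delta^k \to \snglr_n(U,X)$ can be shrunk so that its image lies set-theoretically in a finite union $\bigcup_i O_i$, but this does not yet express $\sigma$ as something in the homotopy colimit over $\cU$. Your proposed resolution is to assert that $V \mapsto \snglr_n(U,V)$ is a homotopy cosheaf on $\cO(X)$ and that the \v{C}ech nerve is cofinal in $\cU$. But the cosheaf claim is essentially the lemma itself, and your justification for it --- ``combining the local scaling $\gamma_t$ with the compact exhaustion \ldots\ and partitions of unity \ldots\ to glue local factorizations into global ones up to controlled homotopy'' --- is a list of ingredients, not an argument. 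In particular, partitions of unity play no role here (there is nothing to average), and ``glue local factorizations'' is a restatement of the problem rather than a solution to it.

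The paper's proof takes a concrete point-set route that avoids this circularity. It replaces $\psnglr_n(U,-)$ by the equivalent subfunctor $\ov{\sS}(U,-)$ of embeddings that extend to a slightly larger $U'$, so that each embedding has compact closure of image. It then filters $\ov{\sS}(U,O)$ by the open subspaces $\ov{\sS}^{K^O_t}(U,O)$ of embeddings with image contained in the compact $K^O_t = \ov{\beta_t(O)}$ coming from Lemma~\ref{smush}. After commuting the filtered $t$-colimit past the $\cU$-colimit, the point is that for fixed $t$ the transition maps $\ov{\sS}^{K^O_t}(U,O) \to \ov{\sS}^{K^{O'}_{t'}}(U,O')$ are cofibrations, and the basis hypothesis on finite intersections makes the resulting $\cU$-diagram projectively cofibrant. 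Hence the homotopy colimit agrees with the strict colimit, which is manifestly $\ov{\sS}^{K_t}(U,X_\cU)$. This is the step your sketch is missing: a model-categorical replacement that turns the homotopy colimit into an honest colimit, rather than an appeal to an unproved cosheaf property.
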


\begin{proof}
Denote by $\ov{(-)}\colon \psnglr_n \to \Top^{\bsc_n^{\op}}$ the restricted Yoneda map of $\Top$-enriched categories which factors $\widehat{(-)}$ through the coherent nerve construction.  
We will show
\begin{equation}\label{first-hocolim}
\hocolim_{O\in \cU} \ov{O} \xra{\simeq} \ov{X}_{\cU}
\end{equation}
is an equivalence in the (enriched) projective model structure on $\Top^{\bsc_n^{\op}}$.  
For this it is sufficient to show that~(\ref{first-hocolim}) is an equivalence upon evaluating at each $U\in \bsc_n$.  

Fix $U\in \bsc_n$.  
We will explain the factorization of the arrow in~(\ref{first-hocolim}) as the following zig-zag of equivalences:
\begin{eqnarray}
\hocolim_{O\in \cU} \ov{O}(U) 
& = &
\hocolim_{O\in \cU} \psnglr_n(U,O)
\nonumber
\\
&\xla{\simeq}& \label{second}
\hocolim_{O\in \cU} \ov{\sS}(U,O)
\\
&\xla{\simeq}& \label{fourth}
\hocolim_{O\in \cU} \hocolim_{t} \ov{\sS}^{K^O_{t}}(U,O)
\\
&\simeq&\label{fifth}
\hocolim_{\{t\}} \hocolim_{O\in \cU} \ov{\sS}^{K^O_t}(U,O)
\\
&\xra{\simeq} &\label{sixth}
\hocolim_{\{t\}} \colim_{O\in \cU} \ov{\sS}^{K^O_t}(U,O)
\\
&\cong &\label{seventh}
\hocolim_{\{t\}}  \ov{\sS}^{K^O_t}(U,X_\cU)
\\
&\xra{\simeq} & \label{ninth}
\ov{\sS}(U,X_\cU)
\\
&\xra{\simeq}& \label{tenth}
\psnglr_n(U,X_\cU)
\end{eqnarray}

Write 
	$U=\RR^{n-k}\times C(Y)\cong C(S^{n-k-1}\star Y) = \bigl(\RR\times (S^{n-k-1}\star Y)\bigr)_{/\sim}$.  Denote by $U'$ the singular $n$-manifold which is the image of $U$ under the inclusion $\bsc_n \subset \snglr_n$.   
Fix a diffeomorphism $\RR \cong \RR_{<1}$ whose restriction to $\RR_{\leq 0}$ is the identity.  This then fixes a morphism $U\to U'$ by acting on the first coordinate.   We will identify $U$ with its image in $U'$.  

Notice the closed subspace $\ov{U} \subset U'$ consisting of those $[(t,(u,y))]$ with $t\leq 1$ -- this inclusion is a (trivial) cofibration of topological spaces.  
Because $Y$ is compact, so is $\ov{U}$.  The inclusion $U\to U'$ factors through $U\subset \ov{U}$ as a dense open subspace.  
Moreover, we have seen in the proof of Lemma~\ref{smush} the standard map $\beta\colon [0,1]\times \ov{U} \to \ov{U}$ with $\beta_0 = 1$ the identity and with $\beta_t(\ov{U})\subset U$ for $t>0$.  We point out that the linearly ordered set $[0,1]$, regarded as a category, is (homotopy) filtered.

Denote by $\ov{\sS}(U,-)\subset \psnglr_n(U,-)$ the subfunctor consisting of those maps from $U$ which extend to a map from $U'$.  By continuity, there is a continuous map $\ov{\sS}(U,-) \to \Top(\ov{U},-)$ to the space of continuous maps from $\ov{U}$.  Moreover, this continuous map factors through those maps from $\ov{U}$ which are cofibrations -- this follows from the inclusion $\ov{U} \to U'$ being a (trivial) cofibration and the maps from $U'$ being open maps.  
From the last two sentences of the previous paragraph, the inclusion $\ov{\sS}(U,-) \to \psnglr_n(U,-)$ is a point-wise equivalence.  
This establishes the equivalences~(\ref{second}) and~(\ref{tenth}).  

For $K\subset O$ a compact subspace, denote by $\ov{\sS}^K(U,O)\subset \ov{\sS}(U,O)$ the (open) subspace consisting of those maps $U\xra{f} O$ for which $f(U)\subset K$.  From Lemma~\ref{smush}, $\ov{\sS}(U,O) = \colim_{t\in (0,1]} \ov{\sS}^{K^O_t}(U,O)$ can be written as a (homotopy) filtered colimit = filtered homotopy colimit, where here $K^O_t = \ov{\beta_t(O)}\subset O$ is the closure of the image, which is compact by construction.  
This establishes the equivalences~(\ref{fourth}) and~(\ref{ninth}).  

The equivalence~(\ref{fifth}) is obtained by rewriting the nested colimit with the outer colimit still (homotopy) filtered.  The isomorphism~(\ref{seventh}) is the definition of $X_\cU$.  

It remains to explain the equivalence~(\ref{sixth}).  
From the construction of $\beta$, the inclusion $K^O_t \subset O$ is a (trivial) cofibration of topological spaces and for $0<t'<t\leq 1$ then $K^O_t\subset Int(K^O_{t'})$ is contained in the interior.  As so, for each pair $O\subset O'\in \cU$ and each $t\in (0,1]$, the map $K^O_t\subset O \subset O'$ is a cofibration of topological spaces.  
It follows that for each $O\subset O'\in \cU$ the map $\ov{\sS}^{K^O_t}(U,O) \to \ov{\sS}^{K^{O'}_{t'}}(U,O')$ is a cofibration for $t'\in (0,1]$ small enough.  
As so, the assignment $(\{t\},O)\mapsto \ov{\sS}(U,O)$ describes a diagram in $\Top$ where each arrow is a cofibration.  Because each finite intersection $\bigcap O_i$ of elements of $\cU$ is covered by elements of $\cU$, this diagram is cofibrant (in the projective model structure on diagrams in $\Top$).  It follows that the universal arrow~(\ref{sixth}) from the homotopy colimit to the colimit is an equivalence.

\end{proof}

Recall from Example~\ref{cover-refinement} that the canonical morphism $X_\cU \to X$ is a refinement.

\begin{theorem}\label{no-pre-mans}
Let $\cB$ be a quasi-category of basics.
The inclusion 
\[
\mfld(\cB) \xra{\simeq} \pmfld(\cB)
\]
is an equivalence of quasi-categories.
\end{theorem}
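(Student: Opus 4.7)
The plan is to show that the fully faithful inclusion $\snglr_n \hookrightarrow \psnglr_n$ of $\Top$-enriched categories (noted in~\textsection\ref{premanifolds}) induces a fully faithful inclusion of coherent nerves, whence by pullback along $\cP(\bsc_n)_{/\cB}$ the induced map $\mfld(\cB) \to \pmfld(\cB)$ is fully faithful. The content of the theorem is then essential surjectivity: every $\cB$-premanifold is equivalent in $\pmfld(\cB)$ to a $\cB$-manifold.

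To verify essential surjectivity, I would take a $\cB$-premanifold $(\dddot{X}, g)$, consisting of a premanifold $\dddot{X}$ and a map $g\colon \widehat{\dddot{X}} \to \cB$ of right fibrations. Applying Corollary~\ref{maximal-atlas'} yields an essentially unique manifold $X$ together with a refinement $r\colon \dddot{X} \to X$ given by the unit of the localization of Proposition~\ref{maximal-atlas}. The crux of the proof is then to establish that the induced map $\widehat{r}\colon \widehat{\dddot{X}} \to \widehat{X}$ is an equivalence in $\cP(\bsc_n)$. Once this is shown, the $\cB$-structure $g$ transports uniquely (up to contractible choice) along $\widehat{r}$ to a map $g'\colon \widehat{X} \to \cB$, and $r$ promotes to an equivalence $(\dddot{X}, g) \xrightarrow{\simeq} (X, g')$ in $\pmfld(\cB)$, exhibiting essential surjectivity.

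To prove $\widehat{r}$ is an equivalence, the approach is to identify both sides as the same colimit indexed by an atlas. Let $\cA$ be the atlas of $\dddot{X}$, viewed as a poset of basic charts $(U,\phi)$ with morphisms given by factorizations. Tautologically, $\widehat{\dddot{X}} \simeq \colim_{(U,\phi) \in \cA} \widehat{U}$ by the definition of $\dddot{X}$ as a singular premanifold. On the other hand, the same atlas $\cA$ determines a cover of the underlying space $\iota X$ satisfying the hypotheses of Lemma~\ref{covers=colims} (after closing $\cA$ under finite intersections, which is possible since the basic charts form a basis for the topology). Lemma~\ref{covers=colims} then identifies $\widehat{X} \simeq \colim_{(U,\phi) \in \cA} \widehat{U}$, and $\widehat{r}$ is visibly the resulting equivalence between these two colimit presentations.

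The main obstacle I anticipate is that Lemma~\ref{covers=colims} is only stated for finite singular manifolds, while an arbitrary $X$ need not be finite. To handle this, the plan is to exhaust $\iota X$ by a countable increasing sequence of open subsets $\{X_i\}$, each of which is finite (a countable such exhaustion exists because $\iota X$ is second countable with a basis of relatively compact charts, and each relatively compact open admits a finite sub-atlas by shrinking charts). Both $\widehat{\dddot{X}}$ and $\widehat{X}$ are then expressed as filtered colimits of their restrictions to these finite sub-(pre)manifolds, on each of which Lemma~\ref{covers=colims} applies directly; since filtered colimits of equivalences of right fibrations are equivalences, the global statement follows.
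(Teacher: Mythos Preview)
Your strategy has a genuine gap at the essential surjectivity step. Showing that $\widehat{r}\colon \widehat{\dddot{X}} \to \widehat{X}$ is an equivalence in $\cP(\bsc_n)$ is not enough to conclude that $r$ promotes to an equivalence in $\pmfld(\cB)$. The projection $\pmfld(\cB) \to \psnglr_n$ is the pullback of the right fibration $\cP(\bsc_n)_{/\cB}\to\cP(\bsc_n)$, hence itself a right fibration and in particular conservative; so a morphism in $\pmfld(\cB)$ is an equivalence if and only if its image in $\psnglr_n$ is. You therefore need $r$ to be an equivalence in the quasi-category $\psnglr_n$, and nothing in your outline establishes this---the restricted Yoneda map $\widehat{(-)}\colon \psnglr_n \to \cP(\bsc_n)$ is not shown to be conservative (indeed the paper remarks it is far from fully faithful).

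There is also a misreading of Lemma~\ref{covers=colims}. That lemma takes a finite singular manifold $X$ and a cover $\cU$ and exhibits $\widehat{X_\cU}$---the \emph{premanifold} determined by $\cU$---as the colimit of the $\widehat{O}$ over $O\in\cU$. Applied with $\cU=\cA$ the atlas of $\dddot{X}$, it yields exactly $\widehat{\dddot{X}} \simeq \colim_{\cA}\widehat{U}$, which is the statement you call ``tautological.'' It does \emph{not} identify $\widehat{X}$ (with its maximal atlas) as that same colimit; for that you would need a separate cofinality argument comparing $\cA$ to the maximal atlas, and that comparison is essentially the content of the theorem.

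The paper proceeds quite differently. After reducing to $\cB=\bsc_n$ via the pullback description, it shows directly that every refinement $r\colon \dddot{X}\to X$ is an equivalence in the quasi-category $\psnglr_n$. Using Lemma~\ref{refs-pullback} to pull back along charts, it reduces to the case $X=U$ a basic, and then builds an explicit homotopy inverse from the scaling endomorphisms $\beta^U_t$ of Lemma~\ref{local-basis}: since $\{\beta^U_t(U)\}_{t\geq 0}$ is a local basis at the cone point, some $\beta^U_{t_0}$ factors through a chart of $\dddot{U}$, giving a right inverse, and a concatenation of scalings in source and target produces the paths witnessing the left inverse. The entire argument stays inside $\psnglr_n$ and never appeals to $\cP(\bsc_n)$ or Lemma~\ref{covers=colims}.
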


\begin{proof}
Because $\mfld(\cB) = \snglr_n\times_{\psnglr_n} \pmfld(\cB)$ and $\pmfld(\cB) \to \psnglr_n$ is a right fibration, it is sufficient to prove the theorem for the case $\cB = \bsc_n$.  
It is enough to show that a refinement is an equivalence in $\psnglr_n$.  
Let $r\colon \dddot{X} \to X$  be a refinement.  
From Lemma~\ref{refs-pullback}, for every $U\xra{f}X$ there is the pullback diagram of singular premanifolds
\[
\xymatrix{
{\dddot{U}} \ar[d]^{r_|}  \ar[r]^{\dddot{f}}
&
{\dddot{X}}    \ar[d]^r
\\
U \ar[r]^f
&
X
}
\]
in where the morphism $r_|$ is a refinement.  To show that $r$ is an equivalence is to show $r_|$ is an equivalence for every $U\xra{f} X$.  
We can therefore reduce to the case $X=U$ is a basic.  When appropriate, we will denote this refinement with a subscript $r_U$.

Choose $p_0\in \RR^{n-k}\subset  U$.  
By translating as necessary, we can assume $p_0=0\in \RR^{n-k}$.  
From the definition of a refinement, there is a morphism $V\xra{a} \dddot{U}$ for which $0 \in f( V)$.  
Recall from Lemma~\ref{local-basis} the map $\beta$ which we denote as $\beta^U$ to remember its dependence.  
Recall that $\{\beta_t^U(U)\mid t\geq 0\}$ forms a local base for the topology about $0\in\RR^{n-k}$, that $\beta^U_0=1_U$ is the identity, and that $t<t'$ implies the closure $\ov{\beta^U_{t'}(U)}\subset \beta^U_{t}(U)$.
There is some $t_0\in \RR_{>0}$ for which $\beta^U_{t_0}( U) \subset  V$. 
We have witnessed a right inverse $g_U:=\beta^U_{t_0}$ to $r$ in the quasi-category $\psnglr_n$.  

Use the notation $P(Q) = \psnglr_n(P,Q)$.  
To show $g$ is a left inverse we show the map of spaces $\dddot{U}(W) \xra{r_W} U(W)\xra{g_W} \dddot{U}(W)$ is equivalent to the identity for any $W\in \bsc_n$.  Specifically, we exhibit for each $W\xra{b} \dddot{U}$ a path from $b$ to $g_Wr_W(b)$ which is canonical up to a contractible space of choices.   
Let $W\xra{b} \dddot{U}$ be arbitrary.  Write $l$ for the depth of $W$ and let $q\in \RR^l\subset  W$. 
There is some $t_b\in [0,\infty)$ for which the dragged image of $q$
\[
\beta^U_{t'}(b(q)) \subset a( V)
\]
for all $t'> t_b$ -- we point out that such a $t_b$ is unique.  Choose such a $t'$.  
Because $[0,t']$ is compact, and the collection $\{ f( V')\mid V'\xra{f'} \dddot{U}\}$ forms a basis for the topology of $ \dddot{U}\cong  U$,  there is a finite collection of morphisms $V'_i\xra{f'_i} \dddot{U}$ whose underlying spaces cover the image of the path $\beta^U_t(b(q)) \colon [0,t'] \to  U$.  
As so, we can choose a $T\in [0,\infty)$ so that for any $t\in [0,t']$ and any $t''>T$ there is some $V_i$ for which there is the containment $(\beta^U_t b)( \beta^W_{t''}\bigl( W)\bigr)\subset  f'_i( V'_i)$.  We point out that such a $T$ is unique.  Choose such a $t''$. 
The choice of a $t'$ and $t''$ as above determines the desired path $[0,t'+t'']\to \dddot{U}(W)$ from $b$ to $p_Wr_W(b)$.  This path is given by a smoothly reparametrizing the path $t\mapsto b\beta^W_t$ for $t\in [0,t'']$ and $t\mapsto \beta^U_{(t-t'')}b\beta^W_{t''}$ for $t\in [t'',t'+t'']$.  

\end{proof}

\begin{remark}
Theorem~\ref{no-pre-mans} says that in a weak sense there is no distinction between a local invariant of $\cB$-premanifolds and one of $\cB$-manifolds.  Indeed, in so much as the difference between the quasi-category of $\cB$-premanifolds and that of $\cB$-manifolds is measured by refinements, which we think of as covers, the theorem implies that an invariant of $\cB$-manifolds which is specified by its values on basics, when evaluated on a $\cB$-manifold,
 is determined by is values on open cover of that $\cB$-manifold.  
Said another way, an invariant of $\cB$-basics automatically satisfies a (co)sheaf condition.  This has important consequences, as we have seen.  
\end{remark}


\begin{thebibliography}{99}

\bibitem[A]{atiyah} Atiyah, Michael. Thom complexes, Proc. London Math. Soc. (3) , no. 11 (1961), 291--310.

\bibitem[Ba]{baas} Baas, Nils. On formal groups and singularities in complex cobordism theory. Math. Scand. 33 (1973), 303Ð313 (1974).

\bibitem[BD]{bd} Beilinson, Alexander; Drinfeld, Vladimir. Chiral algebras. American Mathematical Society Colloquium Publications, 51. American Mathematical Society, Providence, RI, 2004.

\bibitem[BoVo]{bv} Boardman, J. Michael; Vogt, Rainer. Homotopy invariant algebraic structures on topological spaces. Lecture Notes in Mathematics, Vol. 347. Springer-Verlag, Berlin-New York, 1973. x+257 pp.

\bibitem[B\"o]{bodig} B\"odigheimer, C.-F. Stable splittings of mapping spaces. Algebraic topology (Seattle, Wash., 1985), 174--187, Lecture Notes in Math., 1286, Springer, Berlin, 1987.

\bibitem[CG]{kevinowen} Costello, Kevin; Gwilliam, Owen. Factorization algebras in perturbative quantum field theory. Preprint. Available at http://www.math.northwestern.edu/~costello/renormalization

\bibitem[F1]{cotangent} Francis, John. The tangent complex and Hochschild cohomology of $\cE_n$-rings. To appear, Compositio Mathematica.

\bibitem[F2]{facthomology} Francis, John. Factorization homology of topological manifolds. Preprint.

\bibitem[Ga]{galatius:graphs} Galatius, S\o ren. Stable homology of automorphism groups of free groups, Ann. of Math. 173 (2011), 705Ð768. math/0610216 MR2784914

\bibitem[GM1]{goreskymacpherson} Goresky, Mark; MacPherson, Robert. Intersection homology theory. Topology 19 (1980), no. 2, 135--162.

\bibitem[GM2]{goreskymacpherson2} Goresky, Mark; MacPherson, Robert. Intersection homology. II. Invent. Math. 72 (1983), no. 1, 77--129.

\bibitem[Jo]{joyal} Joyal, Andr\'e. Quasi-categories and Kan complexes. Special volume celebrating the 70th birthday of Professor Max Kelly. J. Pure Appl. Algebra 175 (2002), no. 1-3, 207--222.

\bibitem[LS]{simple} Longoni, Riccardo; Salvatore, Paolo. Configuration spaces are not homotopy invariant. Topology 44 (2005), no. 2, 375--380. 

\bibitem[Lu1]{topos} Lurie, Jacob. Higher topos theory. Annals of Mathematics Studies, 170. Princeton University Press, Princeton, NJ, 2009. xviii+925 pp.

\bibitem[Lu2]{dag} Lurie, Jacob. Higher algebra. http://www.math.harvard.edu/$\sim$lurie/

\bibitem[Lu3]{cobordism} Lurie, Jacob. On the classification of topological field theories. Current developments in mathematics, 2008, 129--280, Int. Press, Somerville, MA, 2009.

\bibitem[May]{may} May, J. Peter. The geometry of iterated loop spaces. Lectures Notes in Mathematics, Vol. 271. Springer-Verlag, Berlin-New York, 1972. viii+175 pp.

\bibitem[Mat]{mather} Mather, John. Notes on topological stability. Mimeographed notes, Harvard University, 1970.

\bibitem[Mc]{mcduff} McDuff, Dusa. Configuration spaces of positive and negative particles.  Topology 14 (1975), 91--107. 

\bibitem[Sa]{salvatore} Salvatore, Paolo. Configuration spaces with summable labels. Cohomological methods in homotopy theory (Bellaterra, 1998), 375--395, Progr. Math., 196, Birkh\"auser, Basel, 2001.

\bibitem[Se1]{segal} Segal, Graeme. Configuration-spaces and iterated loop-spaces. Invent. Math. 21 (1973), 213--221. 

\bibitem[Se2]{segallocal} Segal, Graeme. Locality of holomorphic bundles, and locality in quantum field theory. The many facets of geometry, 164--176, Oxford Univ. Press, Oxford, 2010. 

\bibitem[Sp]{spivak} Spivak, Michael. A comprehensive introduction to differential geometry, Volume 1. Publish or Perish Inc, Wilmington, DE, 1979. xiv+668 pp.

\bibitem[Th]{thomas} Thomas, Justin. Kontsevich's Swiss Cheese Conjecture. Thesis (PhD) -- Northwestern University. 2010.

\bibitem[V]{voronov} Voronov, Alexander. The Swiss-cheese operad. Homotopy invariant algebraic structures (Baltimore, MD, 1998), 365--373, Contemp. Math., 239, Amer. Math. Soc., Providence, RI, 1999. 

\end{thebibliography}
\end{document}